\numberwithin{equation}{section}
\tikzstyle{dot}=[shape=circle,draw,color=black,fill=black,inner sep=1pt]
\theoremstyle{plain}
\newtheorem{introtheorem}{Theorem}
\newtheorem{introcorollary}[introtheorem]{Corollary}
\newtheorem*{maintheorem}{Main theorem}
\newtheorem{theorem}{Theorem}[section]
\newtheorem{lemma}[theorem]{Lemma}
\newtheorem{corollary}[theorem]{Corollary}
\newtheorem{proposition}[theorem]{Proposition}
\newtheorem{conjecture}[theorem]{Conjecture}
\newtheorem{convention}[theorem]{Convention}
\theoremstyle{definition}
\newtheorem{definition}[theorem]{Definition}
\newtheorem*{definition*}{Definition}
\newtheorem{example}[theorem]{Example}
\newtheorem{notation}[theorem]{Notation}
\theoremstyle{remark}
\newtheorem{remark}[theorem]{Remark}
\newtheorem{keyremark}[theorem]{Key Remark}
\newcommand{\R}{\mathbb{R}}
\newcommand{\C}{\mathbb{C}}
\newcommand{\Z}{\mathbb{Z}}
\newcommand{\N}{\mathbb{N}}
\newcommand{\E}{\mathbb{E}}
\newcommand{\F}{\mathbb{F}}
\newcommand{\proba}{\mathbb{P}}
\newcommand{\truc}{\diamond}
\newcommand{\calO}{\mathcal{O}}
\newcommand{\LocalField}{F}
\newcommand{\cH}{\mathcal{H}}
\newcommand{\cK}{\mathcal{K}}
\newcommand{\SL}{\operatorname{SL}}
\newcommand{\GL}{\operatorname{GL}}
\newcommand{\CB}{\operatorname{CB}}
\newcommand{\VN}{\operatorname{VN}}
\DeclareMathOperator{\LL}{\mathrm L}
\newcommand{\defeq}{\mathrel{\mathop{:}}=}
\newcommand{\abs}[1]{\lvert #1 \rvert}
\newcommand{\dabs}[1]{\left\lvert #1 \right\rvert}
\newcommand{\norm}[1]{\lVert #1 \rVert}
\newcommand{\gen}[1]{\langle #1 \rangle}
\newcommand{\lk}{\operatorname{lk}}
\newcommand{\CAT}{\operatorname{CAT}}
\DeclareMathOperator{\Conv}{Conv}
\newcommand{\mmod}{\mathrel{\text{mod}}}
\newcommand{\iE}{\tilde{E}}
\newcommandx{\diE}[1][1={}]{\smash{\widetilde{E^*_{#1}}}}
\newcommand{\ud}{\mathop{}\!\mathrm{d}}
\newcommand{\cb}{\mathrm{cb}}
\newcommand{\typ}{\operatorname{typ}}
\renewcommand{\P}{\mathcal{P}}
\renewcommand{\L}{\mathcal{L}}
\newcommand{\Pa}{\mathcal{P}^{\mathrm{a}}}
\newcommand{\La}{\mathcal{L}^{\mathrm{a}}}
\newcommand{\posproj}[1]{v(#1)}
\newcommand{\negproj}[1]{w(#1)}
\newcommand{\llb}{(\mkern-2mu(}
\newcommand{\rrb}{)\mkern-2mu)}
\newcommand{\lseries}[1]{\llb #1 \rrb}
\newcommand{\plussim}{\mathrlap{\raisebox{.25ex}{$+$}}\raisebox{-.5ex}{$\sim$}}
\newcommand{\lsim}[1]{{}^\sim #1}
\newcommand{\lplus}[1]{ {}^+ #1}
\newcommand{\lmoins}[1]{{}^- #1}
\newcommand{\ltruc}[1]{{}^\truc #1}
\newcommand{\lun}[1]{{}^{(1)} #1}
\newcommand{\lplussim}[1]{{}^{\plussim} #1}
\author[J.~L\'ecureux]{Jean L\'ecureux}
\address{Université Paris-Saclay, CNRS, Laboratoire de mathématiques d\'Orsay, 91405, Orsay, France.}
\thanks{JL was funded by the ANR grant AGIRA ANR-16-CE40-0022}
\email{jean.lecureux@universite-paris-saclay.fr}
\author[M.~de~la~Salle]{Mikael de la Salle}
\address{Université de Lyon, CNRS, France}
\thanks{MdlS was funded by the ANR grants AGIRA ANR-16-CE40-0022 and Noncommutative analysis on groups and quantum groups ANR-19-CE40-0002-01}
\email{delasalle@math.univ-lyon1.fr}
\author[S.~Witzel]{Stefan Witzel}
\address{JLU Gießen, Mathematical Institute, Arndtstr. 2, 35392 Gießen}
\thanks{SW was funded by a Feodor Lynen Fellowship of the Humboldt Foundation and the the DFG Heisenberg grant WI 4079/6}
\email{switzel@math.uni-giessen.de}
\begin{document}

\title[Strong Property (T) in $\tilde{A}_2$-buildings]{Strong Property (T), weak amenability and $\ell^p$-cohomology in $\tilde{A}_2$-buildings}
% Version Française : 

\alttitle{Propriété (T) renforcée, moyennabilité faible et cohomologie $\ell^p$ dans les immeubles $\tilde{A}_2$}

\begin{abstract}
We prove that cocompact (and more generally: undistorted) lattices on $\tilde{A}_2$-buildings satisfy Lafforgue's strong property (T), thus exhibiting the first examples that are not related to algebraic groups over local fields. Our methods also give two further results. First, we show that the first $\ell^p$-cohomology of an $\tilde{A}_2$-building vanishes for any finite $p$. Second, we show that the non-commutative $L^p$-space for $p$ not in $[\frac 4 3,4]$ and the reduced $C^*$-algebra associated to an $\tilde{A}_2$-lattice do not have the operator space approximation property and, consequently, that the lattice is not weakly amenable. 
\end{abstract}

% French abstract 
\begin{altabstract}
Nous montrons que les réseaux cocompacts (et plus généralement non distordus) dans les immeubles de type $\tilde{A}_2$ ont la propriété (T) renforcée de Lafforgue, ce qui fournit les premiers exemples qui ne proviennent pas de groupes algébriques sur des corps locaux. Les mêmes méthodes permettent d'obtenir d'autres résultats. D'une part nous montrons que, pour tout $p$ fini, la cohomologie $\ell^p$ d'un immeuble de type $\tilde{A}_2$ s'annule en degré $1$. D'autre part, nous montrons que l'espace $L^p$ non commutatif pour $p$ en dehors de l'intervalle $[\frac 4 3,4]$ ainsi que la $C^*$-algèbre réduite associés à un réseau dans immeuble de type $\tilde{A}_2$ n'ont pas la propriété d'approximation au sens des espaces d'opérateurs. Par conséquent, un tel réseau n'est pas faiblement moyennable.
\end{altabstract}

\subjclass{Primary 20F65;   % geometric gp thry;
                Secondary 51E24} % buildings and the geometry of diagrams

\keywords{Strong property (T), buildings, weak amenability}
\altkeywords{Propriété (T) renforcée, immeubles, moyennabilité faible}

\maketitle

\section*{Introduction}

Property (T) was introduced by Kazhdan in order to study algebraic properties of lattices in simple Lie groups. It turned out to be a powerful tool in many areas of mathematics, from geometric group theory to ergodic theory or operator algebras. In \cite{Lafforgue08}, V. Lafforgue, with K-theoretic applications in mind, introduced a property called \emph{strong property (T)}. This property can be formulated as an extension of property (T) for representations on Hilbert spaces which are not necessarily unitary, but with a mild growth condition on the norm. Lafforgue also considered variants for representations on (some) Banach spaces, see Definition \ref{def:strongT} for a precise definition. Such non-unitary representations indeed appear in Lafforgue's work on the Baum-Connes conjecture, so strong property (T) is a natural obstruction for this approach to work for some higher-rank lattices, see \cite{MR2732057}. Further motivation for studying them include the study of actions on hyperbolic graphs \cite{Lafforgue08}, on Banach spaces \cite{MR2574023}, on manifolds \cite{brownFisherHurtado,brownFisherHurtado2,borwndamjanovicZhang}, or the study of the geometry of Banach spaces and expander graphs \cite{Lafforgue08,MR2574023}.

The main result of \cite{Lafforgue08} is that for a local field $\LocalField$, Archimedean or not, the group $\SL_3(\LocalField)$, and more generally every almost simple algebraic group over $\LocalField$ whose Lie algebra contains the Lie algebra of $\SL_3(\LocalField)$, as well as their cocompact lattices have strong property (T). For the Banach space variants, the results from \cite{Lafforgue08} were, in the non-Archimedean case, improved in \cite{MR2574023} to cover all Banach spaces with nontrivial Rademacher type, see \S~\ref{sec:Banach_space_preliminaries}. Such results are not known for Lie groups, despite some efforts \cite{MR3474958,LaatMimuradlS}. Lafforgue's results were extended in \cite{liao,strongTsp4} to all higher rank simple algebraic groups, and later to their non-cocompact lattices \cite{dlSActa19}. Strong property (T) played an important role in the recent proof of Zimmer's conjecture \cite{brownFisherHurtado,brownFisherHurtado2,borwndamjanovicZhang}. However, the only know examples so far are related to (lattices in) simple algebraic groups.
% Question : Why not ``are''?
% Answer : There are two kinds of examples of groups with strong property that are not lattices in simple algebraic groups: (1) the lift in a non-algebraic cover (finite or infinite) of a lattice in a simple algebraic group. For example an extension of $\mathrm{SL}_3(\Z)$ of degree $2$, that has been shown by Deligne not to be linear (See \url{https://publications.ias.edu/sites/default/files/35_ExtensionsCentrales.pdf} or \url{http://people.uleth.ca/~dave.morris/talks/deligne-torsion.pdf}for the construction of Deligne). And (2) groups as $\mathrm{Sl}_3(\Z) \ltimes \Z^3$.}

We prove strong property (T) for $\tilde{A}_2$-lattices, providing the first examples outside the realm of algebraic groups:

\begin{maintheorem}
Let $X$ be an $\tilde{A}_2$-building, and let $\Gamma$ be an undistorted lattice on $X$.
Then $\Gamma$ has strong property (T) with respect to every admissible Banach space. The class of admissible Banach spaces contains $L^p$ spaces and more generally subspaces of quotients of non-commutative $L^p$ spaces ($1 < p < \infty)$.
\end{maintheorem}

Recall that if $\LocalField$ is a non-Archimedean local field then $\SL_{n+1}(\LocalField)$ acts on a Bruhat--Tits $\tilde{A}_n$-building \cite{MR327923,MR756316}. Conversely every $\tilde{A}_n$-building with $n \ge 3$ arises this way (possibly with $\LocalField$ non-commutative) \cite{MR546588,MR2468338}. However, there are infinitely many isomorphism classes of $\tilde{A}_2$-buildings that are not Bruhat--Tits and that admit lattices (called \emph{$\tilde{A}_2$-lattices}) \cite[Section~10]{MR3904159}, \cite[Corollary~E]{MR3928791}. While arithmetic lattices in $\SL_n(\LocalField)$ are undistorted \cite{MR1828742}, we need to assume undistortedness in our result. Note, however, that all known non-arithmetic $\tilde{A}_2$-lattices are cocompact and therefore undistorted. 

These non-arithmetic $\tilde A_2$ lattices, while being geometrically analogous to lattices in $\SL_3(\LocalField)$, exhibit some algebraic properties which are quite different from them, and in fact, from any linear group. For example, it has been proven that any linear representation of a non-arithmetic, cocompact $\tilde{A}_2$-lattice has finite image \cite{MR3904159}. It is moreover conjectured that these groups are virtually simple.

The class of admissible Banach spaces is introduced in Section~\ref{sec:good_banach_spaces} and formally depends on the building. In particular, the complete statement about which Banach spaces we show to be admissible is Theorem~\ref{thm:Banach_spaces_with_exponential_decay}. We conjecture a Banach space is admissible if and only if it has nontrivial type (Conjecture~\ref{conj:nontrivial_type}), as it is the case for Bruhat-Tits buildings.
% Proof that undistorted lattices in Bruhat-Tits $\tilde{A_2}$-buildings have strong (T) with respect to all spaces with nontrivial type: if $\Gamma$ is an undistorted Bruhat-Tits $\tilde{A}_2$-lattice, then it is an undistorted lattice in the automorphism group $G$ of $X$. But since $X$ is Bruhat-Tits, it also admits a cocompact arithmetic lattice $\Lambda$. We know from Lafforgue that $\Lambda$ has strong (T) wrt every Banach space with nontrivial type (and even (*) from \cite{dlSActa19}). So by the known permanence properties of strong (T) and (*) (Theorem 5.4 in \cite{dlSActa19}), we deduce that $\Gamma$ has strong (T). 

Strong property (T) immediately translates in terms of fixed points for affine actions \cite[Proposition 5.6]{MR2574023}:

\begin{introcorollary}
Let $\Gamma$ be an undistorted $\tilde{A}_2$-lattice.
Then for every admissible Banach space $E$ every action of $\Gamma$ by isometries on $E$ has a fixed point.
\end{introcorollary} 

In particular, we obtain the following generalization of \cite{Nowak}, which deals only with sufficiently  small $p>2$ (the precise condition depends on the building, but always implies $p\leq 2.106$) and small bounds on the uniformly bounded representation.

\begin{introcorollary}\label{cor:cohomology}
Let $\Gamma$ be an undistorted $\tilde{A}_2$-lattice.
For every uniformly bounded representation $\pi$ of $\Gamma$ on a $L^p$ space ($1<p<\infty$), or more generally a subspace of a quotient of a non-commutative $L^p$ space ($1<p<\infty$), one has $H^1(G,\pi) =0$.
\end{introcorollary}
Here a \emph{non-commutative $L^p$ space} is the $L^p$-space of a von Neumann algebra \cite{PisierXu}, see also \S~\ref{sec:OSpreliminaries}.

This is in sharp contrast with groups acting properly and cocompactly on bounded degree hyperbolic graphs, which admit proper actions on $L^p$ spaces for $p<\infty$ large enough \cite{MR2221161}, and proper actions on quotients of $L^p$ spaces for $p>1$ small enough \cite{MR3940900}. 

\medskip

Kazhdan's property (T) was already known to hold for cocompact $\tilde A_2$-lattices by the work of Cartwright and  M\l otkowski \cite{CartwrightMlotkowski94} and by Zuk \cite{Zuk} and Pansu \cite{Pansu} (this is generalized to non cocompact lattices in Theorem~\ref{thm:propertyT}). The proof of property (T) by \.Zuk and Pansu relies on a spectral estimate of links on vertices (see also the book \cite{BdlHV} for a nice exposition).  Our proof of strong property (T) is quite different and follows more closely Lafforgue's strategy for lattices in $\SL_3(\LocalField)$. Indeed, strong property (T) deals with representations with exponential growth rate at most some small number $\alpha$, that is, satisfying 
\[ \exists C, \forall g, \|\pi(g)\| \leq C e^{\alpha |g|}.\]
Therefore a proof of strong property (T) has to involve a local analysis at infinitely many different scales: local because the representation is a priori unbounded on the group, and at different scales because a representation with small exponential growth rate $\alpha$ but large constant $C$ cannot be distinguished from a representation with large exponential growth rate.

Lafforgue's strategy can be outlined as follows (see also \S\ref{sec:Lafforgue} below): first, he induces the representation (with small exponential growth rate) of $\Gamma$ to the ambient algebraic group $G=\SL_3(\LocalField)$. Then he defines a sequence of suitably chosen averaging operators on $G$, and is able to obtain the desired convergence by a clever use of harmonic analysis on a maximal compact subgroup $K$ of $G$. The first difference in our proof is that there is no ambient group $G$ to work with. We are therefore led to do a more "geometric" induction, to a space of functions on the building. Then main difficulty is then to replace the harmonic analysis on $K$ by a careful study of the (asymptotic) geometry of balls in the building.

It turns out that the tools used in the proof of the main theorem allow us to derive some other interesting properties of $\tilde A_2$-buildings and their lattices. The first is a vanishing result for $\ell^p$-cohomology of $\tilde{A}_2$-buildings. Recall that $\ell^p$ cohomology is a quasi-isometry invariant popularized by Gromov \cite{Gromov93}.

\begin{introtheorem}\label{thm:lp_cohomology}
Let $X$ be a locally finite $\tilde{A}_2$-building. Then $\ell^p H^1(X)=0$ for every $1<p<\infty$.
\end{introtheorem}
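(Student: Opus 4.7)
My plan is to prove the equivalent statement that every function $f\colon X^{(0)} \to \C$ with $df \in \ell^p(E)$ differs from a constant by an element of $\ell^p(X^{(0)})$. The approach couples an averaging scheme with exponential decay estimates of the kind developed elsewhere in the paper for the Main Theorem.

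First, I would introduce Hecke-type averaging operators $P_n$ on $X$, where $(P_n f)(x)$ is the average of $f$ over a sphere of combinatorial (Weyl) radius $n$ around $x$. The harmonic analysis of such operators on $\tilde{A}_2$-buildings goes back to Cartwright--M\l otkowski, and the tools developed here should give an $\ell^p$-version of an exponential decay $\|P_n - \Pi\|_{E \to E} \le C \rho^n$, where $\Pi$ is a projection onto an ``asymptotically constant'' subspace and $\rho < 1$. The value $\rho < 1$ reflects that the natural walk on an $\tilde{A}_2$-building mixes exponentially fast because of the higher rank structure --- precisely the ingredient that fails for trees, where $\ell^p H^1$ is in general nonzero for $p$ large.

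Second, I would establish a local Poincar\'e inequality at each scale $n$:
$$\|f - P_n f\|_{\ell^p(X^{(0)})} \le C_n \|df\|_{\ell^p(E)},$$
with $C_n$ at most polynomial in $n$. This should follow by writing $(P_n f)(x) - f(x)$ as an average of path-sums of $df$ along combinatorial geodesics of length $n$ starting at $x$, and applying Minkowski's inequality together with the bounded local geometry of $X$. The two estimates are then combined via a telescoping argument: choose scales $n_k$ with $\sum_k C_{n_k} \rho^{n_k} < \infty$, write $f = \lim_N P_{n_N} f + \sum_k (P_{n_{k-1}} f - P_{n_k} f)$, bound each telescoping term in $\ell^p$ using the Poincar\'e inequality composed with the exponential decay, and identify the pointwise limit of $P_{n_N} f$ with a constant via the spectral gap.

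The principal obstacle is calibrating the polynomial growth of the Poincar\'e constants $C_n$ against the exponential decay rate $\rho$, while keeping the decay rate valid on a space compatible with the $\ell^p$ norm (not merely the sup norm, since $f$ itself is not assumed bounded). This is where the $\tilde{A}_2$-geometry and the spectral analysis developed for the Main Theorem should be decisive: by working at the level of an admissible Banach space containing $\ell^p/\C$ and extracting a rate $\rho$ small enough to beat any polynomial $C_n$ for each $1 < p < \infty$, one obtains the required convergence. I expect the final statement to come in the stronger Banach-valued form, with $\ell^p H^1$ vanishing as a by-product.
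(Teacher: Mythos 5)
Your skeleton (sphere averages, exponential decay of consecutive averaging operators, a Poincar\'e-type bound feeding in $\delta f$, then a telescoping over scales) is genuinely the same architecture as the paper's argument, but two of your steps do not work as stated. First, the composition ``Poincar\'e inequality $\circ$ exponential decay'' is not available in the form you propose: neither $f$ nor $f$ minus any constant is known to lie in $\ell^p$ (that is exactly the conclusion), so an operator-norm bound $\|P_{n}-P_{n+1}\|_{\ell^p\to\ell^p}\le C\rho^n$ cannot be applied to $f$, and you cannot bound the telescoping term $(P_{n_{k-1}}-P_{n_k})f$ by inserting $f-P_mf$ without changing the term — the argument is circular. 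The paper resolves this by never letting the averaging operators act on $f$: it pairs against a finitely supported $g\in\ell^{p'}$, sums over all basepoints $o$ via the double-counting identities (Propositions~\ref{prop:double_counting} and~\ref{prop:double_counting_diamond} with $\Gamma$ trivial), and lets the boundary operators $T_{o,s}-T_{o,s+1}$ act on the based differences $\beta_{o,j}(\ell)=f(\ell_{o,j})-f(o)$, whose summed $\ell^p$-size $\le j\|\delta f\|_p$ is precisely your Poincar\'e factor; the decay then has to be run along the two singular directions of $\Lambda$ and recombined by the zig-zag Lemma~\ref{lem:lafforgue}, not along a single radial parameter $n$. Relatedly, your appeal to an ``admissible Banach space containing $\ell^p/\C$'' would only give decay in the restricted range coming from the Schatten estimates (essentially $p\notin[\frac43,4]$), whereas the theorem claims all $1<p<\infty$; the correct move is scalar-valued Riesz--Thorin interpolation between the Hilbert/Schatten bound and the trivial $L^1$, $L^\infty$ bounds, yielding decay $q^{-\theta s}$ with $\theta=\min(\frac1p,\frac1{p'})$, which beats the (linear) Poincar\'e constant for every $p$.

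Second, and more seriously, the identification of the limit is missing: ``identify the pointwise limit of $P_{n_N}f$ with a constant via the spectral gap'' is not an argument, since the limit function is not in $\ell^p$ and the spectral gap says nothing about it; a priori the limit of the sphere averages could depend on the center. The paper needs two specific devices here: (a) the reduction to a harmonic representative (Lemma~\ref{lemma:harmonic-representatives}, where the spectral gap and interpolation legitimately enter), so that $f(x)=\mathbb{E}f(x_n)$ for a lazy random walk and transience plus type-equidistribution relate $f(x)$ to the limit of $A_\lambda f(x)$; and (b) the edge-based averages $A_\lambda^\truc$, $\truc\in\{+,-,\sim\}$, together with the folding identities $\lambda^+=\lambda^\sim$ and $\lambda^\sim=\lambda^-$ on the walls of $\Lambda$, which force the three limits to coincide and hence force $f(x)=f_\infty(x,x')=f(x')$ for adjacent vertices, so that the limit is constant. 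Without some version of this adjacent-vertex comparison (or the harmonicity reduction), your scheme establishes at best that the sphere averages converge in $\ell^p$-pseudo-distance, not that the limit is constant, so the cohomology class has not been shown to vanish.
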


If $X$ is an $\tilde A_2$-building and $\Gamma$ is a cocompact lattice on $X$ then then $\ell^p H^1(X)$ is isomorphic to $H^1(\Gamma,\ell^p(\Gamma))$ \cite[Proposition~5]{MR3497258} and Theorem~\ref{thm:lp_cohomology} is a consequence of the main theorem. However, there are $2^{\aleph_0}$ $\tilde{A}_2$-buildings \cite{MR843395} while there are only countably many cocompact $\tilde{A}_2$-lattices. In fact, there is even a natural topology on the space of $\tilde A_2$ buildings for which one can prove that a generic building (of a fixed order) has a trivial automorphism group \cite{BarrePichot}.

The second application of our techniques concerns finite dimensional approximation properties of operator algebras, and their group counterparts such as weak amenability, see Section~\ref{sec:Banach_space_preliminaries} and the references therein for definitions and background. These approximation properties have been introduced and studied from the 80s after Haagerup's influential work \cite{Haagerup78} on the reduced $C^*$-algebra of the free group by various other mathematicians including de Cannières, Cowling, Kraus. They have turned to be very useful in the theory of operator algebras as they allow to distinguish some von Neumann algebras \cite{CowlingHaagerup} and more recently in Popa's deformation/rigidity theory, see for example \cite{OzawaPopa}. Our result in this direction generalizes \cite{Haagerup86,LafforguedlS} to $\tilde{A}_2$-buildings and answers a question by Ozawa \cite[p.~13]{OzawaSurvey}.

\begin{introtheorem}\label{introthm:AP}
Let $\Gamma$ be an $\tilde{A}_2$-lattice. Then $L^p(\VN( \Gamma))$ does not have the operator space approximation property for any $p \in (1,\infty) \setminus [\frac 4 3,4]$. For $p=\infty$, $C^*_{\mathrm{red}}(\Gamma)$ does not have the operator space approximation property, and $\VN(\Gamma)$ does not have the weak-* operator space approximation property.
\end{introtheorem}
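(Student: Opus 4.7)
The plan is to adapt the strategy of Lafforgue and De la Salle \cite{LafforguedlS} for lattices in $\SL_3$ over non-Archimedean local fields, replacing the algebraic harmonic analysis used there by the Banach-space valued harmonic analysis on the $\tilde A_2$-building developed for the Main Theorem. In that strategy, the obstruction to the approximation property is produced by a sequence of radial averaging operators in the group algebra whose cb-multiplier norms on $L^p(\VN(\Gamma))$ grow without bound, precisely in the range of $p$ outside the Plancherel-type window $[4/3,4]$.

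First, for each pair $(m,n) \in \N^2$ indexing the $\Gamma$-orbits on ordered pairs of vertices of fixed types at combinatorial displacement $(m,n)$, I would introduce the normalized characteristic function $\chi_{m,n} \in \C[\Gamma]$ of the corresponding bi-invariant sphere. Using the spherical matrix coefficients constructed for the Main Theorem, I would then build, for a two-parameter family of complex parameters $s$, a uniformly bounded (non-unitarizable) representation $\pi_s$ of $\Gamma$ on an admissible $L^p$-space whose matrix coefficient on a distinguished vector reproduces at scale $(m,n)$ the $\tilde A_2$-spherical function $\varphi_s$. For $s$ corresponding to $p\notin[4/3,4]$ in this parametrization, $\pi_s$ is still uniformly bounded on $L^p$ while $\varphi_s$ decays strictly slower than the reciprocal of the sphere cardinality. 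Testing $\chi_{m,n}$ against $\pi_s$ yields a lower bound
\[
\|\chi_{m,n}\|_{cb,\, L^p(\VN(\Gamma))} \;\geq\; c\,\rho(p)^{m+n}, \qquad \rho(p) > 1.
\]

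Second, I would invoke the standard dictionary between OSAP and pointwise approximation by Fourier multipliers: if $L^p(\VN(\Gamma))$ had OSAP, then after a $\Gamma$-averaging step along the lines of \cite{LafforguedlS}, one would obtain a net of finitely supported $\phi_\alpha : \Gamma \to \C$ with $\phi_\alpha \to 1$ pointwise and $\sup_\alpha \|M_{\phi_\alpha}\|_{cb,\,L^p} < \infty$. Pairing these multipliers against $\chi_{m,n}$ and passing to the limit would force $\|\chi_{m,n}\|_{cb,\,L^p}$ to remain bounded in $(m,n)$, contradicting the estimate from Step~1. The statement for $C^*_{\mathrm{red}}(\Gamma)$ is the $p=\infty$ specialization, while the statement for $\VN(\Gamma)$ follows from the weak-$*$ dual formulation of OSAP by a standard transposition argument.

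The main obstacle is carrying out Step~1 without the Bruhat--Tits machinery. A generic $\tilde A_2$-building carries no transitive action of an algebraic group, so neither the Iwahori--Hecke algebra structure nor the Satake/Harish-Chandra parametrization of spherical functions is available, and one cannot appeal to a classical Plancherel formula to locate the threshold $[4/3,4]$. The heart of the proof will therefore be a purely geometric construction of spherical-function-like objects on the building, together with the sharp exponential-growth estimates, extracted from the sector-wise matrix-coefficient bounds that underpin the Main Theorem. Once these estimates are in place with the correct rate in $(m,n)$, the abstract OSAP obstruction goes through verbatim.
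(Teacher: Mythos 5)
There is a genuine gap, and it lies in both halves of your plan. In Step~2 the dictionary you invoke is not the one OSAP provides: the operator space approximation property only gives a net of finite-rank maps converging to the identity in the \emph{stable point-norm} topology, with no uniform bound on completely bounded norms; after the averaging step you therefore obtain finitely supported multipliers converging to $1$ only in a weak sense, not with $\sup_\alpha\|M_{\phi_\alpha}\|_{cb}<\infty$. A uniformly cb-bounded net is exactly the characterization of the CBAP (weak amenability for $p=\infty$), so your argument could at best address that weaker statement, not the OAP claims of Theorem~\ref{introthm:AP}. Moreover, even granting such a bounded net, the proposed pairing does not control $\|\chi_{m,n}\|_{r\text{-cb}}$: pairing a multiplier with a fixed finitely supported function produces a scalar, and there is no mechanism by which pointwise convergence of bounded multipliers bounds the cb norm of $\chi_{m,n}$. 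A sanity check: for the free group the sphere projections have cb-multiplier norms growing linearly, yet the free group is weakly amenable; so growth of $\|\chi_{m,n}\|_{cb}$ by itself obstructs nothing.

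The logical direction also needs to be reversed, and this is where the actual work of the paper lies. To kill the OAP one needs an estimate valid for \emph{every} completely bounded Herz--Schur multiplier $\varphi$ on $S^r$, namely that the radial averages $\int\varphi\,d\mu_\lambda$ form a Cauchy net with rate $C_r\,q^{-N(\lambda)(\frac12-\frac2r)}\|\varphi\|_{r\text{-cb}}$ (Propositions~\ref{prop:limit_of_multipliers} and~\ref{prop:limit_of_multipliers_on_X}); this is obtained from the Schatten-norm bounds $\|T_{o,s}-T_{o,s+1}\|_{S^r}\lesssim q^{-s(\frac12-\frac2r)}$ of Proposition~\ref{prop:Schatten_norm_of_Ts-Ts-1} together with the double counting of Section~\ref{sec:averaging_basepoint} and the zig-zag Lemma~\ref{lem:lafforgue} — the window $[\frac43,4]$ appears as the set where $|\frac12-\frac2r|$ fails to be positive, not through a Plancherel parametrization. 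The limiting functional $u_\infty(T)=\lim_\lambda\int\tau(\gamma^{-1}T(\gamma))\,d\mu_\lambda$, transferred from multipliers to arbitrary $T\in\CB(V,V^{**})$ by Haagerup's trick, is stably point-norm continuous, equals $1$ on the identity and $0$ on finite-rank maps, and this — not a lower bound on cb norms of sphere functions — is the OAP obstruction. In particular no construction of spherical functions or uniformly bounded representations on $L^p$-spaces is needed (nor available on a generic building); the hard geometric input you defer to the end is precisely the part your outline does not supply, and it is also not the input the argument requires.
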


A group $\Gamma$ is \emph{weakly amenable} if $C^*_{\mathrm{red}}(\Gamma)$ has the operator space approximation property. As a particular case of Theorem \ref{introthm:AP}, we get

\begin{introcorollary}
No $\tilde{A}_2$-lattice is weakly amenable.
\end{introcorollary}

The proofs of the three theorems above follow the same general strategy. A crucial role is played by operators $A_\lambda$ that average a function on the building over the points at vectorial distance $\lambda$; thus $\lambda$ may be thought of as a refined radius. These operators form a non-unitary representation of the algebra introduced and studied in \cite{CartwrightMlotkowski94,CartwrightMlotkowskiSteger}. A significant part of the proofs is concerned with showing that $(A_\lambda)_\lambda$ converges as $\lambda$ goes to infinity. This involves a careful analysis of the large-scale structure of the building. In the case of a lattice $\Gamma$ in $G = \SL_3(F)$ with maximal compact subgroup $K$, these operators can be understood in terms of harmonic analysis on maximal compact subgroups. Since we do not have such powerful tools at our disposal, we need to develop a more geometric argument.

In analysing the large-scale structure of the building we are led to studying two kinds of combinatorial substructures of the building. Fixing a base vertex $o$ leads to the study of \emph{Hjelmslev planes}, which encode balls around $o$ and are classical objects of study. Fixing an incident point-line pair $(p,\ell)$ in the projective plane at infinity leads to studying what we call a \emph{biaffine plane}, the subgeometry consisting of points not incident with $\ell$ and lines not incident with $p$. A careful combinatorial analysis of the interplay of these structures leads to the needed estimates on operator norms.

Another significant part in our analysis is to prove,  for various kinds of functions on the building, statements of the kind \emph{harmonic implies constant}. The simplest instance is Theorem~\ref{thm:lp_cohomology}, which is equivalent to saying that the only harmonic functions on $X$ whose gradient is $\ell^p$-integrable for some finite $p$ are the constant functions. The idea is to introduce three variants of the operators $A_\lambda$ discussed above, which depend on pairs of adjacent vertices: the operators $A_\lambda^\truc$ for $\truc \in \{+,-,\sim\}$. With the combinatorial analysis that we develop, we are able to obtain similar convergence results for these operators. The statements \emph{harmonic implies constant} are then derived from a crucial exploitation of the folding of the building. This part of the analysis serves as a replacement for the analysis of $K$-finite but not $K$-invariant matrix coefficients of representations of $\SL_3(\LocalField)$ when $X$ is a Bruhat-Tits building, and is one of the key novelties that we introduce.

The article is organized as follows. Buildings of type $\tilde{A}_2$ are recalled in Section~\ref{sec:buildings}. In Section~\ref{sec:outline} we give an outline of the proof of the main theorem for cocompact lattices, reducing it to the main technical statement, Theorem~\ref{thm:strong_T_on_building}, which determines the limit of the operators $A_\lambda$. In Section~\ref{sec:Banach_space_preliminaries} we introduce the results we need on Banach spaces and their operators. In Section~\ref{sec:hjemlslev_biaffine} we analyse the Hjelmslev planes and biaffine planes associated to an $\tilde{A}_2$-building by respectively fixing a base vertex $o$ in the building and a point--line-pair $(p,\ell)$ in its boundary. These are used in Section~\ref{sec:norm_estimates} to establish bounds on the norm of certain operators 
that, by definition, guarantee admissibility. Section~\ref{sec:averaging_basepoint} removes the dependency on $o$ from the results of the previous two sections. Sections~\ref{sec:convergence_harmonic} and~\ref{sec:harmonic_constant} contain the proof of Theorem~\ref{thm:strong_T_on_building}: the first is concerned with the convergence of the net of operators $(A_\lambda)_\lambda$, the second with determining its limit. At this point, the main theorem has been proven in the cocompact case. Section~\ref{sec:non-uniform} proves the main theorem for non-cocompact lattices. Theorem~\ref{thm:lp_cohomology} is proven in Section~\ref{sec:lp_cohomology} and Theorem~\ref{introthm:AP} in Section~\ref{sec:operator_algebras}.

\subsection*{Acknowledgements}
Part of the research for this article was carried out during the stay of the JL and MdlS at IMPAN during the Geometric and Analytic Group Theory Simons Semester in 2019, which was supported by the grant 346300 for IMPAN from the Simons Foundation and the matching 2015-2019 Polish MNiSW fund. Another part was carried out while SW visited École polytechnique on a Feodor-Lynen Fellowship of the Humboldt Foundation. SW would like to thank École polytechnique, and Bertrand Rémy in particular, for this opportunity.  We thank the referees for their numerous suggestions that improved the presentation.

\setcounter{tocdepth}{1}
\tableofcontents
% a QI-constant, chamber at infinity
% b QI-constant
% c
% d metric
% e
% f function
% g group element (Sometimes gamma?!)
% h
% i index (among others related to vectorial distance)
% j index (among others related to vectorial distance)
% k field
% \ell line
% m probability measure
% n
% o origin of the building
% p point (except, $\ell^p$-cohomology in introduction)
% q degree of the building
% r \ell^r$-cohomology
% s Hjelmslev level, index (among others related to vectorial distance)
% t Hjelmslev level
% u averaging operator (cocompactness)
% v averaging operator (isotropy)
% w
% x vertex of the building
% y vertex of the building
% z vertex of the building

% ==============================================================================
\section{Buildings of type $\tilde{A}_2$}\label{sec:buildings}
% ==============================================================================

%\subsection{Buildings and their boundaries}

Let us first recall a few things about $\tilde A_2$-buildings and their boundaries. 
For more information about buildings, the reader is advised to consult the book \cite{AbramenkoBrown}. 

\subsection{Projective planes and $\tilde A_2$-buildings}

A \emph{projective plane} consists of a set $P$ of \emph{points}, a set $L$ of \emph{lines}, and an \emph{incidence relation} $\mathop{\sim} \subseteq P \times L$ satisfying the following axioms (where $p \in P$ and $\ell \in L$ are \emph{incident} if $p \sim \ell$):
\begin{enumerate}
    \item for every pair of points $p,p'\in P$ there exists a unique line $\ell\in L$ incident with both $p$ and $p'$;
    \item for every pair of lines $\ell,\ell'\in L$ there exists a unique point $p\in P$ incident with both $\ell$ and $\ell'$;
    \item there exists at least two lines, and every line is incident to at least three points.
\end{enumerate}

Note that if $(P,L,\sim)$ is a projective plane, then its dual $(L,P,\sim)$ is again a projective plane.

A \emph{generalized triangle}, or \emph{building of type $A_2$}, is the incidence graph of a projective plane: its vertices are the elements of $P\cup L$ and its edges connect incident vertices. Following the standard graph-theoretical terminology, a coloring of a graph by a set $S$ (the colors) is a map from the vertex set to $S$ such that no pair of adjacent vertices edges have the same image in $S$. 

\begin{definition}
A \emph{building of type $\tilde A_2$} is a 2-dimensional simplicial complex $X$ that is simply connected, such that the link of every vertex is a generalized triangle, and that is equipped with a coloring of the vertices by $\Z/3\Z$. The color of a vertex $x$ is called its \emph{type} and denoted $\typ(x)$.
An \emph{apartment} in $X$ is any subcomplex that is isomorphic, as a simplicial complex, to the tiling of the Euclidean plane by regular triangles.
\end{definition}

\begin{remark}
For our purposes it will be convenient to consider the type map as part of the structure of the building. This is not customary in the literature but note that the coloring is determined by the simplicial structure of $X$ up to a permutation of $\Z/3\Z$ (see e.g. \cite[Corollary 4.8]{AbramenkoBrown}).
\end{remark}

\begin{definition}
A simplicial map $\phi \colon Y \to Z$ between simplicial complexes $Y,Z$ whose vertices are colored by $\typ \colon Y,Z \to \Z/3\Z$ is said to be \emph{type-preserving} if $\typ(\varphi(x)) = \typ(x)$ for every vertex $x$. It is \emph{type-rotating} if there is an $i \in \Z/3\Z$ such that $\typ(\varphi(x)) = \typ(x) + i$ for all vertices $x$.
\end{definition}

If a group $G$ acts by simplicial automorphisms on a building $X$ of type $\tilde A_2$, then it induces a permutation on the set $\Z/3\Z$ of types. Therefore we get:

\begin{lemma}\label{lem:type-preserving}
Let $G$ be a group acting by simplicial automorphisms on an $\tilde A_2$-building. Then $G$ has a finite index subgroup which is type-preserving.
\end{lemma}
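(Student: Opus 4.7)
The plan is to construct a group homomorphism $\rho \colon G \to \mathrm{Sym}(\Z/3\Z)$ whose kernel is precisely the subgroup $G_0 \le G$ of type-preserving elements. Since $\mathrm{Sym}(\Z/3\Z)$ has order $6$, this immediately yields $[G : G_0] \le 6$, proving the lemma.

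To construct $\rho$, fix a chamber (triangle) $C_0$ of $X$ with vertices $v_0, v_1, v_2$ of types $0, 1, 2$ respectively. For $g \in G$, the image $g \cdot C_0$ is again a chamber and therefore carries exactly one vertex of each type; hence there is a unique permutation $\sigma \in \mathrm{Sym}(\Z/3\Z)$ with $\typ(g \cdot v_i) = \sigma(i)$ for $i=0,1,2$. Set $\rho(g) \defeq \sigma$. The essential point is that this permutation does not depend on the chamber chosen: if $C$ and $C'$ are chambers sharing an edge $e$, then $g(C)$ and $g(C')$ share the edge $g(e)$, and agreement on the two types carried by $e$ forces agreement on the third type as well. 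Since any two chambers of $X$ can be joined by a gallery, the permutation computed from any chamber is the same. From this chamber-independence one immediately deduces that $\rho$ is a homomorphism: the chamber $gh \cdot C_0$ coincides with $g \cdot (h \cdot C_0)$, and reading off types yields $\rho(gh) = \rho(g)\circ \rho(h)$.

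Finally, $g \in \ker \rho$ if and only if $\rho(g) = \mathrm{id}$, which by chamber-independence is equivalent to $\typ(g \cdot x) = \typ(x)$ for every vertex $x$ of $X$, i.e.\ $g \in G_0$. Hence $\ker \rho = G_0$, and the lemma follows. The only substantive point is the chamber-independence of $\rho(g)$, which rests on the fact that the chamber graph of a building is connected; I do not anticipate any further obstacle.
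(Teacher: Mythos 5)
Your proof is correct and follows the same route as the paper: the paper simply observes that any simplicial automorphism induces a permutation of the three types, so the resulting homomorphism $G \to \mathrm{Sym}(\Z/3\Z)$ has a kernel of index at most $6$, which is the type-preserving subgroup. You merely spell out the chamber-independence (via gallery connectedness) that the paper treats as immediate, so there is no substantive difference.
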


Every automorphism $\phi$ of a building $X$ of type $\tilde{A}_2$ induces an (anti-)automorphism $\phi_\infty$ of its projective plane at infinty (see Section~\ref{sec:boundary}). If $\phi$ is type-rotating then $\phi_\infty$ is an automorphism, i.e.\ it takes points to points and lines to lines; if $\phi$ is not type-rotating then $\phi_\infty$ is an anti-automorphism, i.e.\ it takes points to lines and lines to points. We therefore refer to type-rotating maps also as \emph{type-preserving at infinity}.

\begin{definition}
We call the \emph{dual building} $\overline{X}$ of $X$ the building that is isomorphic to $X$ as a simplicial complex but has the types $1$ and $2$ interchanged.
\end{definition}

\begin{keyremark}
 Our results analyzing balls in $X$ will generally be relative to some points and lines in the projective plane at infinity. Since the role of points and lines is formally symmetric (even though the projective plane at infinity may not admit an anti-automorphism), for each statement there is a different, dual statement which is obtained by ``exchanging the roles of points and lines''. This is made explicit as follows.
Once we have proven a result on general $\tilde A_2$ buildings, it applies to both the building $X$ and its dual $\overline X$. Thus we obtain the dual statement for $X$ by applying it to $\overline X$ and applying the identity $X \to \overline X$. Since identity map $X \to \overline X$ is an isomorphism that is not type-rotating, it takes points at infinity of $X$ to lines at infinity of $\overline{X}$ and vice versa.
\end{keyremark}

%Alternatively, one can define building in terms of \emph{apartments}. A \emph{model apartment} is a tessellation of the euclidean plane by equilateral triangles ; we will denote this simplicial complex $\Sigma$. An \emph{apartment} of an $\tilde A_2$-building is an embedded copy of $\Sigma$ (the more expert reader will in particular understand that we only consider here the complete apartment system of $X$).  The following proposition is in fact a characterization of $\tilde A_2$-buildings.
%
%\begin{proposition}
%If $X$ is an $\tilde A_2$-building, then 
%\begin{itemize}
%    \item Every two points of $X$ are contained in an apartment ;
%    \item For every two apartments $A,A'\subset X$ there exists a simplicial isomorphism $A\to A'$ fixing $A\cap A'$ pointwise.
%\end{itemize}
%\end{proposition}

\subsection{Convex hulls and apartments}

For the rest of the paragraph we fix an $\tilde{A}_2$-building $X$. It carries a unique metric in which edges have unit length that makes it a $\CAT(0)$-space. We denote the geodesic segment connecting two points $x,y\in X$ by $[x,y]$. An apartment is a subspace $\Sigma$ that is isometric to the Euclidean plane with respect to this metric. The \emph{combinatorial convex hull} of two vertices $x,y \in \Sigma$, denoted $\Conv(x,y)$, is the least convex subcomplex of $\Sigma$ that contains $x$ and $y$. Note that it contains the metric convex hull. Every apartment is convex in the following strong sense showing, in particular, the combinatorial convex hull is independent of the apartment.

\begin{proposition}[{\cite[Proposition~4.40]{AbramenkoBrown}}]\label{thm:apmt_convex}
Let $x, y \in X$ be vertices and let $\Sigma$ and $\Sigma'$ be apartments containing $x$ and $y$. Then $\Sigma'$ contains the combinatorial convex hull of $x$ and $y$ in $\Sigma$.
\end{proposition}

A convex subset of the Euclidean plane can be embedded into $X$ by embedding it into some apartment. In fact, any embedding arises this way:

\begin{theorem}[{\cite[Theorem 11.53]{AbramenkoBrown}}]\label{thm:isomappart}
Let $X$ be an $\tilde A_2$-building. Let $Y\subset X$ be a set that is isometric to a subset of the Euclidean plane. If $Y$ is convex in $X$ or if $Y$ has non-empty interior in $X$ then there is an apartment of $X$ containing $Y$.
\end{theorem}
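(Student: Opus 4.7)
The plan is to reduce to the $2$-dimensional case and then run a connectedness argument on $Y$ combined with a retraction onto a carefully chosen apartment. For the low-dimensional situations ($Y$ a point or lying on a line) Proposition~\ref{thm:apmt_convex} gives an apartment containing any two endpoints and hence, by convexity, the whole set. Likewise, a convex $Y$ whose Euclidean image has dimension $2$ automatically has non-empty interior in $X$, so the only case requiring real work is when $Y$ has non-empty interior.

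In that case, I would first choose an interior point $y_0$ of $Y$ lying in the open interior of some chamber $C$ of $X$: such a point exists because the interior of $Y$ is a $2$-dimensional open subset of $X$, and any $2$-dimensional open set must meet the interior of some chamber. By the building axioms, fix an apartment $\Sigma \supset C$. The claim is then $Y \subset \Sigma$, and I would establish it by showing that $S := Y \cap \Sigma$ is clopen in $Y$; since $Y$ is connected (or else I apply the argument componentwise) this would force $S = Y$. Closedness and non-emptiness are immediate: $\Sigma$ is closed in $X$, and $S$ contains the open Euclidean disk $Y \cap C$ around $y_0$. Everything therefore hinges on openness.

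For openness, the key tool is the retraction $\rho = \rho_{\Sigma, C}: X \to \Sigma$ based on $C$, which is $1$-Lipschitz, fixes $\Sigma$ pointwise, and restricts to an isometric isomorphism on every apartment containing $C$; in particular $d(y,c) = d(\rho(y), c)$ for every $y \in X$ and every $c \in C$. Given $y \in Y$ close to $S$, I would compare the Euclidean coordinates of $y$ in $Y$'s flat picture with those of $\rho(y)$ in $\Sigma$'s flat picture: both points have identical distances to every point of the open disk $C \cap Y$, and since this disk is $2$-dimensional these distances pin down the point uniquely in a Euclidean plane. Combined with the fact that $Y$ and $\Sigma$ induce the same Euclidean structure on their common open piece $C \cap Y$ (since both Euclidean structures come from the ambient inclusion into $X$), this should force $y = \rho(y) \in \Sigma$.

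The hard part will be promoting this rigidity from an equality of Euclidean coordinates to a genuine equality $y = \rho(y)$ in $X$ at the combinatorially singular strata, in particular at vertices of the building, where the space of directions is a full generalized hexagon rather than a circle and one cannot a priori exclude a geodesic in $Y$ from leaving $\Sigma$ and re-entering via a different chamber sharing the same vertex. Settling this requires a local analysis in each vertex link as a rank-$2$ spherical building: flatness of $Y$ together with the uniqueness of antipodes in apartments of the link should force every tangent direction of $Y$ at a vertex already lying in $\Sigma$ to stay inside $\Sigma$'s link, which then propagates $\Sigma$-membership to all of $Y$.
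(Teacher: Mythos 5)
There is a genuine gap, and it is located exactly where you place all the weight: the openness step. Fixing one chamber $C$ meeting the interior of $Y$ and one apartment $\Sigma\supset C$ cannot work, because the statement ``$Y\subset\Sigma$ for \emph{any} apartment containing $C$'' is simply false. Take $Y$ to be the union of two chambers $C\cup C'$ sharing an edge (a flat rhombus with non-empty interior), and let $\Sigma$ be an apartment that contains $C$ but crosses the common wall into a third chamber $C''\neq C'$; then $Y\cap\Sigma=C$ is not open in $Y$. Your rigidity argument does not rule this out: the retraction $\rho_{\Sigma,C}$ preserves distances to points of $C$, so for $y\in C'$ the points $y$ and $\rho(y)\in C''$ have \emph{identical} distances to the whole disk $C\cap Y$, yet $y\neq\rho(y)$. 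The phrase ``these distances pin down the point uniquely in a Euclidean plane'' is where the proof breaks: it would be valid inside a single flat, but in the building two mirror-image points in different chambers glued along the same wall are not distinguished by distances to $C$. So the failure already occurs at codimension-one faces (walls), not only at vertices as you anticipate, and no local link analysis at vertices can repair a strategy in which $\Sigma$ is chosen in advance from a single chamber. The apartment has to be built adaptively to $Y$ (e.g.\ by successively extending half-apartments or a maximal flat piece, in the spirit of Lemma~\ref{lem:extensionroot}, or by a maximality argument over convex chamber subcomplexes isomorphic to subcomplexes of the Coxeter complex); this adaptive construction is precisely the content of the cited result.

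Two smaller points. First, the paper does not prove this statement at all: it is quoted from \cite[Theorem~11.53]{AbramenkoBrown}, so there is no in-paper argument to compare with, and any proof you give must essentially redo that theorem. Second, your treatment of the convex low-dimensional case via ``an apartment containing the two endpoints'' only covers segments; convex flat subsets also include rays and complete geodesics, which have no endpoints and require a limiting or maximality argument of their own.
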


The following is well-known:

\begin{lemma}\label{lem:order}
In a building of type $\tilde{A}_2$ there is a cardinality $q$ such that the number of triangles around every edge is $q+1$.
\end{lemma}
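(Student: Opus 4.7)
The plan is to reduce the statement to the classical fact that every projective plane has a well-defined \emph{order} $q$ (every line has $q+1$ points and every point lies on $q+1$ lines) and then spread this locally defined invariant across the building using connectedness.

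First I would interpret triangles at an edge link-theoretically. Fix an edge $e = \{x,y\}$ in $X$. Triangles of $X$ containing $e$ correspond bijectively to common neighbors of $x$ and $y$ in the $1$-skeleton, which via the link of $x$ correspond to the vertices of $\lk(x)$ adjacent to the vertex $\bar y$ representing $y$. Since $\lk(x)$ is, by definition, the incidence graph of a projective plane $\pi_x$, this count equals either the number of lines of $\pi_x$ through a point or the number of points of $\pi_x$ on a line (depending on the type of $y$).

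Second, I would recall the classical combinatorial fact that these two quantities coincide and depend on neither the point nor the line chosen. Given two distinct lines $\ell, \ell'$ and a point $p$ incident to neither, the central projection $q \mapsto \langle p,q\rangle \cap \ell'$ is a bijection from the points on $\ell$ to the points on $\ell'$; the dual construction handles pencils; and applying both to a non-incident point-line pair identifies the two resulting cardinalities. Denote the common invariant by $q_x + 1$.

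Finally, counting triangles around $e$ from the perspective of $x$ and from the perspective of $y$ gives $q_x + 1 = q_y + 1$ for every adjacent pair, so $x \mapsto q_x$ is locally constant on the $1$-skeleton. Since $X$ is connected (being simply connected), this function is globally constant, yielding the desired uniform $q$. The only real subtlety is the verification that the order of a projective plane is well-defined; this is routine but requires using axiom (3) to select auxiliary points and lines in sufficiently generic position. Once that is in hand, the rest of the argument is a purely combinatorial synchronization across the connected complex $X$.
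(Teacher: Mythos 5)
Your proof is correct and takes essentially the same route as the paper: establish that the relevant count is constant within each vertex link (the incidence graph of a projective plane), then propagate this local invariant across the building using connectedness of the edge graph. The only difference is that you verify the well-definedness of the order of a projective plane by hand (via central projections), where the paper simply cites Corollaries~5.117 and~5.118 of Abramenko--Brown.
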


\begin{proof}
The link of each vertex is a building of type $A_2$. For the edges around a fixed vertex the claim therefore follows from \cite[Corollaries~5.117 and~5.118]{AbramenkoBrown}. It follows in general since the edge graph of $X$ is connected.
\end{proof}

In general, $q$ can be finite of infinite. When $q$ is finite (which we will assume from now on), that is, when $X$ is locally finite, the number $q$ is called its \emph{order}.

Let $\Sigma$ be an apartment. A \emph{wall} of $\Sigma$ is a (Euclidean) line formed by edges of the tessellation. A \emph{half-apartment} of $\Sigma$ is one of the two connected component of $\Sigma $ minus a wall.
If we fix an origin $o \in \Sigma$, the set of walls passing through $o$ divide $\Sigma$ into $6$ connected components that are called \emph{Weyl chambers}.
More generally, a \emph{wall} (resp. \emph{half-apartment},\emph{Weyl chamber}) of an $\tilde A_2$-building is a wall (resp. half-apartment, Weyl chamber) of any of its apartments.

The following lemma allows us to prove that some sets are contained in apartments whithout having to check the distances beforehand.

\begin{lemma}\label{lem:extensionroot}
Let $Y\subset X$ be a half-apartment, bounded by some wall $H$. Let $C$ be a simplex with an edge belonging to $Y$. Then $Y\cup C$ is contained in an apartment.
\end{lemma}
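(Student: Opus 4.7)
The only substantive case is when $C$ is a chamber (a $2$-simplex) not contained in $\Sigma$; a preliminary observation further reduces matters to the situation where the edge $e$ of $C$ in $Y$ lies on the boundary wall $H$. Indeed, if $e$ were in the interior of $Y$, both chambers of $\Sigma$ adjacent to $e$ would lie in $Y$ by convexity, and any apartment containing $Y\cup C$ would then have three distinct chambers incident to $e$, contradicting the fact that in a Euclidean apartment each edge bounds exactly two chambers.

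With $e\subset H$, my plan is to apply Theorem~\ref{thm:isomappart} by constructing an isometric embedding $\phi\colon Y\cup C\hookrightarrow\R^2$ whose image has non-empty interior. Identifying $\Sigma$ with $\R^2$ so that $Y$ is the upper half-plane and $H$ the $x$-axis, let $T\subset\Sigma$ be the unique chamber incident with $e$ and not contained in $Y$. I would set $\phi$ to be the identity on $Y$ and to map $C$ onto $T$ by the type-preserving simplicial isomorphism that fixes $e$ pointwise. The image $Y\cup T$ has non-empty interior in $\R^2$, so only the isometric character of $\phi$ remains to be checked.

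To verify $\phi$ is an isometry it suffices to show $d_X(x,y)=|\phi(x)-y|$ for $x\in C$ and $y\in Y$. For such a pair, let $D_y\subset Y$ be the chamber of $y$ and $D\subset Y$ the chamber containing $e$. Concatenating a minimal gallery from $D_y$ to $D$ in $\Sigma$ with the single step $D\to C$ yields a gallery in $X$ of length $d_\Sigma(D_y,D)+1$; this is minimal in $X$ since in any apartment containing $D_y$ and $C$ these chambers are separated exactly by the walls separating $D_y$ from $D$ in $\Sigma$ together with the wall through $e$. Being minimal, this gallery lies in some apartment $\Sigma_2$, which therefore contains $D_y$, $D$ and $C$. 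By Theorem~\ref{thm:apmt_convex}, $\Sigma_2$ contains the combinatorial convex hull $\Conv(D_y,D)$ taken in $\Sigma$, so $D_y$ and $D$ sit in $\Sigma_2$ at their $\Sigma$-positions. Since $C$ is the reflection of $D$ across $e$ in the apartment $\Sigma_2$, just as $T$ is in $\Sigma$, the Euclidean distance from $x$ to $y$ in $\Sigma_2$ equals the Euclidean distance from $\phi(x)$ to $y$ in $\Sigma$. This gives $d_X(x,y)=d_{\Sigma_2}(x,y)=|\phi(x)-y|$, and Theorem~\ref{thm:isomappart} then produces the desired apartment.

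The main delicate point is the rigidity assertion fixing the position of $C$ in $\Sigma_2$ relative to $D_y$: it is Theorem~\ref{thm:apmt_convex} that makes this precise, forcing the common convex subcomplex $\Conv(D_y,D)$ together with its two canonical chamber-extensions across $e$ to be embedded identically in $\Sigma_2$ and in the corresponding configuration $\Conv(D_y,D)\cup T\subset\Sigma$.
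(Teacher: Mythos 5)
The paper itself offers no argument here: its ``proof'' is a one-line citation to \cite[Exercise 5.83(a)]{AbramenkoBrown}, so any complete argument you give is necessarily your own route, and your overall strategy --- build an isometric copy of $Y\cup C$ inside a Euclidean plane and invoke Theorem~\ref{thm:isomappart} --- is a reasonable one. Two remarks on the framing. First, your ``preliminary observation'' is not a reduction but a correction of the statement: if $e$ lay in the interior of $Y$ and $C\not\subset\Sigma$, then $Y\cup C$ contains three chambers around $e$ and lies in no apartment, so the lemma must be read (as it is in fact used in Lemmas~\ref{lem:incidence_from_infinity} and~\ref{lem:sigma_gromov}) with the closed half-apartment and the edge $e$ on the wall $H$; you land on the right reading, but presenting the false case as one you may ``exclude'' is logically off. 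Second, the hypothesis of Theorem~\ref{thm:isomappart} is non-empty interior \emph{in $X$}, not of the planar image; this does hold because $Y\cup C$ contains open chambers, so this is only a slip of wording.

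The genuine gap is the minimality of the concatenated gallery, i.e.\ the equality $d(D_y,C)=d(D_y,D)+1$, which is really the heart of the lemma. Your justification --- that in any apartment containing $D_y$ and $C$ these chambers are separated exactly by the walls separating $D_y$ from $D$ in $\Sigma$ together with the wall through $e$ --- presupposes that such an apartment contains $D$ and the relevant walls of $\Sigma$, and that you already know how $C$ sits relative to them; that is essentially what has to be proved, so as written the step is circular. A short correct fix is the retraction $\rho_{\Sigma,D}$ onto $\Sigma$ centered at $D$: it fixes $\Sigma$ (hence $D_y$ and $e$) pointwise, sends $C$ to the unique chamber $T\neq D$ of $\Sigma$ containing $e$, and does not increase gallery distances, so $d(D_y,C)\geq d_\Sigma(D_y,T)=d_\Sigma(D_y,D)+1$ (since $D$ and $T$ lie on the same side of every wall of $\Sigma$ except $H$, which separates $Y$ from $T$), while your explicit gallery gives the reverse inequality. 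Once minimality is in place, your final rigidity step is essentially right, but it is cleaner to use the apartment isomorphism $\Sigma_2\to\Sigma$ fixing $\Sigma\cap\Sigma_2$ pointwise (e.g.\ $\rho_{\Sigma,D}|_{\Sigma_2}$): it fixes $y$, $D$ and $e$, hence carries $C$ to $T$ by exactly the map $\phi$, and being a simplicial isomorphism of apartments it is an isometry, which gives $d_{\Sigma_2}(x,y)=\lvert\phi(x)-y\rvert$ directly, rather than Theorem~\ref{thm:apmt_convex} plus an appeal to Euclidean rigidity.
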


\begin{proof}
This is \cite[Exercice 5.83a)]{AbramenkoBrown}.
\end{proof}

\subsection{Bruhat--Tits buildings and exotic buildings}\label{sec:brutit}

Classically, the interest in Euclidean buildings lies in fact that they arise as the \emph{Bruhat--Tits buildings} associated to reductive groups over (non-Archimedean) local fields (that is, locally compact, totally disconnected fields). Specifically there is a locally finite building of type $\tilde{A}_2$ associated to $\SL_3$ over a local field. We briefly describe these. Let $\LocalField$ be a non-Archimedean local field with ring of integers $\calO$ and let $\pi \in \calO$ be a uniformizing element. Let
\[
r \defeq \begin{pmatrix}
0 & \pi & 0\\
0 & 0 &1\\
1& 0 & 0
\end{pmatrix}
\in \GL_3(\LocalField)\text{.}
\]
The group $G \defeq \SL_3(\LocalField)$ has three conjugacy classes of maximal compact subgroups represented by
\[
K_0 \defeq K \defeq \SL_3(\calO)\text{,}
\]
$K_1 \defeq K^r$ and $K_2 \defeq K^{r^2}$. In the building $X$ associated to $G$ the group $K_i$ is the stabilizer of a vertex of type $i$. The group $G$ acts transitively on the vertices of each type, so the vertex set may be identified with $G/K_0 \cup G/K_1 \cup G/K_2$. Similarly, the groups $K_i \cap K_j$ are edge stabilizers and $K_0 \cap K_1 \cap K_2$ is a triangle-stabilizer. The order of $X$ is the size of the field $\calO/\pi\calO$. This construction is possible even if $\LocalField$ is a finite dimensional division algebra over a local field.

However, these are not the only locally finite buildings of type $\tilde{A}_2$. It has been long known that, up to isomorphism, there exist infinitely many $\tilde{A}_2$-buildings that are not Bruhat--Tits \cite{MR843395,MR908973} and also that such buildings exist with cocompact automorphism group \cite{MR843390,MR1232965,MR1232966}. The verification that there exist, up to isomorphism, infinitely many $\tilde{A}_2$-buildings that are not Bruhat--Tits and have a cocompact automorphism group is more recent \cite[Section~10]{MR3904159}, \cite[Corollary~E]{MR3928791}.

\subsection{The boundary}\label{sec:boundary}

Let $X$ be a building of type $\tilde A_2$. One can associate to $X$ another projective plane, called its \emph{building at infinity}, which we define now.

\begin{definition}
A \emph{singular ray} of an apartment is a half-line which is contained in a wall. A \emph{singular ray} of $X$ is the singular ray of some apartment. The \emph{type} of a singular ray is the cyclic order in which the type of vertices appear. Two singular rays (of $X$) are \emph{equivalent} if they contain subrays which are parallel rays in some apartment.
\end{definition}

There are two possible types of singular rays: $(0,1,2)$ or $(0,2,1)$. Note that two equivalent rays have the same type. Every Weyl chamber is bounded by by two singular rays, one from each type.

\begin{definition}
The \emph{projective plane at infinity} of $X$ is the projective plane ${(\P,\L,\sim)}$ defined as follows:
\begin{enumerate}
    \item Elements of $\P$ are equivalent classes of singular rays of type $(0,1,2)$,
    \item elements of $\L$ are equivalent classes of singular rays of type $(0,2,1)$,
    \item two classes of rays are incident if they contain rays that form the boundary of some Weyl chamber in $X$.
\end{enumerate}
The \emph{building at infinity} of $X$ is the incidence graph of this projective plane.
\end{definition}

The two following propositions are well-known:

\begin{proposition}
The triple $(\P,\L,\sim)$ is a projective plane.
\end{proposition}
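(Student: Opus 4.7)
The plan is to verify each of the three projective-plane axioms by reducing to combinatorics inside a single apartment, whose hexagon of six singular directions at infinity makes the situation elementary. The two key tools to set up beforehand are a normalization of representatives and a geometric existence lemma for apartments. First I would fix a base vertex $o \in X$ and show that every class $p \in \mathcal{P}$ (and similarly every $\ell \in \mathcal{L}$) has a representative singular ray emanating from $o$; this uses $\CAT(0)$-uniqueness of geodesic rays from $o$ in a prescribed asymptotic direction, together with the fact that a ray asymptotic to a singular ray is itself singular (which is immediate inside a common apartment and transfers via Theorem~\ref{thm:isomappart}).

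The central geometric input is the following lemma: given two distinct singular rays $\rho_1, \rho_2$ from $o$ (of any types), there is an apartment $\Sigma$ containing both. My plan is iterative: for each $N \in \mathbb{N}$, the combinatorial convex hull of the finite segments $\rho_1([0,N]) \cup \rho_2([0,N])$ is a flat triangular region, isometric to a compact convex subset of the Euclidean plane, so it lies in an apartment by Theorem~\ref{thm:isomappart}. One then extends and stabilizes using Lemma~\ref{lem:extensionroot} (or a compactness argument when $X$ is locally finite) to obtain an apartment containing the entire rays.

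With these two tools, each axiom reduces to a single-apartment question. Inside $\Sigma$, exactly six equivalence classes of singular rays emanate from $o$, alternating between types $(0,1,2)$ and $(0,2,1)$ in cyclic order, with Weyl chambers providing the adjacencies; this yields three points, three lines, and a hexagonal incidence pattern. For axiom (1), given $p_1 \neq p_2$, apply the apartment lemma to the $o$-rays of $p_1, p_2$ and read off the unique common line-neighbor in the hexagon; axiom (2) is symmetric. For axiom (3), a single apartment already provides three lines and three points, and Lemma~\ref{lem:order} gives $q+1 \geq 3$ Weyl chambers through each edge, so each line-ray $\sigma$ from $o$ supports at least three Weyl chambers with apex $o$, hence at least three incident points.

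The step I expect to be hardest is the uniqueness in axioms (1) and (2), because a second common line $\ell'$ could a priori be witnessed by Weyl chambers lying outside the particular apartment $\Sigma$ produced for $p_1, p_2$. I would handle this by pulling any such candidate back to an $o$-based ray $\sigma'$, using the apartment lemma again to place $\sigma'$ with $\rho_1$ and separately with $\rho_2$, and then comparing the hexagonal neighborhoods of $p_1$ and $p_2$ across these apartments to force $\ell' = \ell$. Ensuring the apartment-existence lemma passes through without a local-finiteness assumption on $X$ is the main additional technical point I anticipate.
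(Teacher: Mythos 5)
Your central geometric lemma is false, and the whole reduction to a single apartment rests on it. You claim that any two distinct singular rays emanating from $o$ lie in a common apartment. Take two distinct points $p\neq p'\in\P$ whose rays from $o$ share their first edge, i.e.\ $p=_{o,1}p'$; such pairs exist in abundance because the fibers of $\P\to\P_{o,1}$ are large (Lemma~\ref{lem:NumberEquilateral} gives $q^{2(s-1)}$ points of level $s$ over each point of level $1$, so the rays from $o$ form a branching tree, cf.\ Corollary~\ref{cor:regular_tree}). If an apartment $\Sigma$ contained both rays $[o,p)$ and $[o,p')$, then inside the Euclidean plane $\Sigma$ two geodesic rays from $o$ sharing a nondegenerate initial segment would have to coincide, contradicting $p\neq p'$. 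The same failure occurs for a point--line pair with $0<(p,\ell)_o<\infty$. Your proposed proof of the lemma is also circular: you assert that the combinatorial convex hull of $\rho_1([0,N])\cup\rho_2([0,N])$ is "a flat triangular region, isometric to a compact convex subset of the Euclidean plane", but flatness of that hull is exactly the point at issue (and is what fails in the example above); Theorem~\ref{thm:isomappart} takes the isometric embedding in the plane as a \emph{hypothesis}, it does not produce it, and Lemma~\ref{lem:extensionroot} only extends half-apartments by one simplex. Since both the existence and the uniqueness steps of axioms (1) and (2) in your plan are read off inside such an apartment, the argument does not go through; your own worry about uniqueness (common neighbors witnessed outside $\Sigma$) is real, but the problem already occurs at the existence stage.

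The statement is quoted in the paper without proof, as a well-known fact: $\P\cup\L$ with this incidence relation is the spherical building at infinity of the Euclidean building $X$, and a spherical building of type $A_2$ is precisely the incidence graph of a projective plane. The correct route is therefore through that theory (e.g.\ \cite[Chapter~11]{AbramenkoBrown}): with the complete apartment system, any two ideal simplices of $\partial_\infty X$ lie in the boundary of a common apartment of $X$ --- a statement much weaker than yours, since that apartment need not contain the rays based at your chosen $o$ --- and the axioms are then checked in the hexagon at infinity of that apartment, with uniqueness coming from the building axioms at infinity rather than from an ad hoc comparison of basepointed apartments. Alternatively, one can argue combinatorially with the Hjelmslev-type tools the paper develops later (Lemmas~\ref{lem:incidence_from_infinity}, \ref{lem:sigma_gromov} and \ref{lem:NumberEquilateral}), but some such input at infinity is unavoidable; the basepointed two-rays-in-one-apartment shortcut cannot be repaired.
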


\begin{proposition}
For every singular ray $\rho$, and every origin $o\in X$, there is a unique singular ray $\rho'$ which starts from $o$ and is equivalent to $\rho$.
\end{proposition}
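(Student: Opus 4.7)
The plan is to prove existence by constructing the desired ray inside a cleverly chosen apartment, and uniqueness via the \CAT(0) structure of $X$.

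For existence, I would extend $\rho$ inside its ambient apartment $\Sigma$ to a Weyl chamber $Q$ whose tip is a vertex on $\rho$ and one of whose two bounding singular rays is (a subray of) $\rho$. The goal is then to find an apartment $\Sigma'$ containing both $o$ and a final subray of $\rho$. Once this is done, within the Euclidean geometry of $\Sigma'$ there is a unique singular ray $\rho'$ starting at $o$ that is parallel in $\Sigma'$ to this subray of $\rho$, and $\rho'$ is then, by definition, equivalent to $\rho$.

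To produce $\Sigma'$, I would take a sequence $z_n$ of vertices on $\rho$ going to infinity. By the standard building-theoretic fact that any two points of $X$ lie in a common apartment (a consequence of Theorem~\ref{thm:isomappart}), there is an apartment $\Sigma_n$ containing both $o$ and $z_n$, and by Theorem~\ref{thm:apmt_convex} this apartment contains the combinatorial convex hull $\Conv(o,z_n)$. A Euclidean-geometry argument inside $\Sigma_n$ shows that $\Conv(o,z_n)$ is bounded by segments parallel to walls; as $z_n$ recedes along $\rho$, the hull must contain an arbitrarily long final subsegment of $\rho$ ending at $z_n$ (since that subsegment lies along a wall direction in $\Sigma$ that the folding $\Sigma_n \to \Sigma$ preserves on the common region). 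Taking $n$ large enough yields the desired apartment $\Sigma'$. An alternative formulation, avoiding the sequence and folding, is to invoke the standard sector axiom for $\tilde{A}_2$-buildings: for the Weyl chamber $Q \subset \Sigma$ and the point $o$, there is an apartment containing $o$ together with a sub-Weyl chamber $Q'\subset Q$; one bounding ray of $Q'$ is then the desired subray of $\rho$ lying in $\Sigma'$.

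For uniqueness, suppose $\rho'$ and $\rho''$ are two singular rays starting at $o$ and both equivalent to $\rho$. Parallel rays in a Euclidean apartment are at bounded Euclidean distance from each other, hence asymptotic in the \CAT(0) sense; composing, $\rho'$ and $\rho''$ are \CAT(0)-asymptotic to each other. Since $X$ is a complete \CAT(0) space and a geodesic ray in such a space is uniquely determined by its base point and its equivalence class of asymptotic rays, we conclude $\rho' = \rho''$.

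The main obstacle is the geometric step in the existence proof, namely showing that an apartment $\Sigma_n$ containing $o$ and a far point $z_n \in \rho$ must also contain a long initial piece of $\rho$ ending at $z_n$. Making this rigorous requires a careful analysis of the combinatorial convex hull $\Conv(o,z_n)$ in $\Sigma_n$ together with its image under the canonical folding to $\Sigma$; invoking the sector axiom (which is a packaged form of exactly this argument) is a cleaner route if one wishes to avoid repeating that analysis.
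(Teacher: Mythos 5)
The paper gives no proof of this proposition at all: it is stated as ``well-known'', so there is nothing of the authors' to compare against. Judged on its own, your argument is the standard one and is correct in its second (``sector axiom'') formulation, together with your \CAT(0) uniqueness argument.

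Two remarks. First, your initial existence route has a real gap: the assertion that $\Conv(o,z_n)$ must contain an arbitrarily long terminal subsegment of $\rho$ is precisely the nontrivial point, and the parenthetical appeal to the folding ``preserving the wall direction on the common region'' is not a proof; making it rigorous essentially amounts to reproving the sector lemma. Your fallback on the standard fact that for any Weyl chamber $Q$ and any point there is an apartment containing that point and a subsector $Q'\subseteq Q$ (see \cite{AbramenkoBrown}) is the right move. Second, two small precision points: the bounding ray of $Q'$ need not be a subray of $\rho$, only a ray parallel to it in $\Sigma$, hence equivalent to $\rho$; this suffices, but you then use transitivity of equivalence, which is most cleanly justified by identifying equivalence with \CAT(0)-asymptoticity, exactly as in your uniqueness step. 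Also, to see that the ray from $o$ parallel to $Q'$ inside the new apartment is \emph{singular}, use that $o$ is a vertex, so a wall in each singular direction passes through it. The uniqueness argument via convexity of $t\mapsto d(\rho'(t),\rho''(t))$ is correct and complete.
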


This allows us to identify $\P\cup \L$ with the set of singular rays starting at any fixed origin $o$. If $\xi \in \P\cup\L$, the singular ray starting from $o$ in the class of $\xi$ is also called the \emph{geodesic ray} from $o$ to $\xi$ and can be written $[o,\xi)$. This is justified by the fact that $\P\cup \L$ is a subset of the CAT(0) boundary $\partial_\infty X$ of $X$.

Note that the boundary of $\overline X$ is the projective plane $(\overline \P,\overline \L)=(\L,\P)$ with the same incidence relation. 

\subsection{Combinatorial distance and spheres}

Let $\E$ be a two-dimensional Euclidean vector space and let $\Lambda \subseteq \E$ be a linear simplicial cone with an opening angle of $60^\circ$. As a basis of $\E$ we choose the unit vectors in the bounding rays, and we denote by $|\cdot|_1$ the $\ell^1$-norm with respect to this basis. This the bounding rays are $[0,\infty) \times \{0\}$ and $\{0\} \times [0,\infty)$ and $\Lambda = \{(x,y) \mid x,y \ge 0\}$. We regard $\Lambda$ as tesselated by regular triangles with vertices in $\N \times \N$. Moreover, the type of $(s,t) \in \N \times \N$ is defined to be $s + 2t + 3\Z \in \Z/3\Z$. Thus for $i\in \{0,1,2\}$, we have the sets
\[
\Lambda_i = \{(s,t) \in \Lambda | s \equiv t+i \mmod 3\}
\]
of vertices of type $i$ in $\Lambda$.

Let $X$ be an $\tilde A_2$-building. If $x \in X$ is a vertex and $W$ is a Weyl chamber with tip $x$, there is a unique simplicial isomorphism $\rho_{x,W} \colon W \to \Lambda$ such that $\typ(\rho(y)) = \typ(y) - \typ(x)$ for every vertex $y$. By Proposition~\ref{thm:apmt_convex} these maps for arbitrary $W$ fit together to define a map $\rho_x \colon X \to \Lambda$. Note that it takes rays issuing at $x$ tending toward elements of $\P$ to $[0,\infty) \times \{0\}$ while it takes rays issuing at $x$ tending toward elements of $\L$ to $\{0\} \times [0,\infty)$.

\begin{definition}
The \emph{combinatorial distance} between two vertices $x,y\in X$ is $\sigma(x,y) = \rho_x(y) \in \Lambda$.
\end{definition}

Note that if $\sigma(x,y)=(s,t)$ then the combinatorial convex hull of two vertices $x$ and $y$ is a (possibly degenerate) parallelogram whose sides have length $s$ and $t$.

If $\lambda \in \Lambda$ and $x \in X$ are vertices, we define the \emph{$\lambda$-sphere} around $x$ to be
\begin{equation*}
S_\lambda(x) = \{y \in X \mid \sigma(x,y)=\lambda\}.
\end{equation*}

\begin{notation}\label{notation:bar}
Every set or quantity attached to $X$ which depends on the choice of types can be considered also in $\overline X$, and we will denote it by using a $\overline{\phantom x}$ (but we identify the set of vertices of $X$ and $\overline X$). For $\lambda=(i,j)\in \Lambda$ we also denote $\overline \lambda=(j,i)$. For example, $\overline \sigma(x,y)$ is the $\sigma$-distance between $x$ and $y$ in $\overline X$. Similarly we can talk about $\overline S_\lambda(x) \;(=S_{\overline \lambda}(x))$ and $\overline \P=\L,\overline \L=\P$, etc.
\end{notation}

\begin{lemma}\label{lemma:counting}
Let $\lambda_1,\lambda_2 \in \Lambda$ be vertices. Then for vertices $x_1,x_2 \in X$, the cardinality of $S_{\lambda_1}(x_1) \cap S_{\lambda_2}(x_2)$ depends only on $\sigma(x_1,x_2)$.
\end{lemma}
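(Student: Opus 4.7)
\emph{Proof proposal.}

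The plan is an induction on $n = |\lambda_1|_1 + |\lambda_2|_1$. The base cases, in which $\lambda_1 = 0$ or $\lambda_2 = 0$, are immediate because $S_0(x) = \{x\}$: the intersection is then at most a singleton, and its cardinality is determined by the single test $\mu = \lambda_1$ or $\overline\mu = \lambda_2$, both of which depend only on $\mu$.

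For the induction step, assume $\lambda_1 \neq 0$, and partition $S_{\lambda_1}(x_1) \cap S_{\lambda_2}(x_2)$ according to ``the first step of a combinatorial geodesic from $x_1$ toward $y$''. Concretely, for $y$ in the intersection, the combinatorial convex hull $\Conv(x_1, y)$ is a parallelogram of dimensions $\lambda_1 = (s,t)$, and (assuming $s>0$) it contains a unique neighbor $z=z(y)$ of $x_1$ of type $\typ(x_1)+1$; for this $z$ one has $\sigma(z,y) = \lambda_1 - e_1$ where $e_1=(1,0)$. The map $y \mapsto z(y)$ produces the decomposition
\[
S_{\lambda_1}(x_1) \cap S_{\lambda_2}(x_2) \;=\; \bigsqcup_{z} \bigl( S_{\lambda_1 - e_1}(z) \cap S_{\lambda_2}(x_2) \bigr),
\]
where $z$ ranges over neighbors of $x_1$ of type $\typ(x_1)+1$ (with an analogous decomposition using $e_2$ if $s=0$ and $t>0$). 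By the induction hypothesis each inner cardinality is a function of $\sigma(z, x_2)$ alone.

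It remains to show that the multiset $\{\sigma(z, x_2) : z\}$ depends only on $\mu$. The number of neighbors of $x_1$ of a given type is $q^2+q+1$ by Lemma~\ref{lem:order} (since it indexes the points or lines of the projective plane $\lk(x_1)$ of order $q$). For each such $z$, Lemma~\ref{lem:extensionroot} applied to a half-apartment through $\{x_1, x_2\}$ produces an apartment containing $\{x_1, x_2, z\}$, inside which $\sigma(z, x_2)$ is computed as a piecewise-linear function of $\mu$ determined by which Weyl chamber of $x_1$ the vertex $z$ falls into. The main obstacle is the case analysis: how many of the $q^2 + q + 1$ candidate neighbors $z$ lie in each such sector, and what value of $\sigma(z, x_2)$ results. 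When $\mu$ is in the interior of a Weyl chamber, one can distinguish generic neighbors (whose contribution is governed purely by $q$) from those lying on walls through $x_1$, whose count is again a $q$-dependent projective-plane enumeration. The situation where $\mu$ itself lies on a wall is treated analogously but requires finer bookkeeping. Summing over sectors, each contributing a quantity depending only on $\mu$ (and the fixed parameters $\lambda_1, \lambda_2, q$), the induction closes.
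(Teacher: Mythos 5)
Your induction step contains a genuine gap: the claimed partition
\[
S_{\lambda_1}(x_1) \cap S_{\lambda_2}(x_2) \;=\; \bigsqcup_{z} \bigl( S_{\lambda_1 - e_1}(z) \cap S_{\lambda_2}(x_2) \bigr)
\]
is false. It is true that every $y$ in the left-hand side determines a unique $z(y)\in S_{(1,0)}(x_1)$ with $\sigma(z(y),y)=\lambda_1-e_1$ (this is the multiplicity-one part of Lemma~\ref{CartwrightMlotkowski}), but the converse fails: a vertex $y$ with $\sigma(z,y)=\lambda_1-e_1$ and $\sigma(x_1,z)=(1,0)$ need not satisfy $\sigma(x_1,y)=\lambda_1$; by the dual form of Lemma~\ref{CartwrightMlotkowski} it can lie at $\sigma$-distance $(s,t)$, $(s-2,t+1)$ or $(s-1,t-1)$ from $x_1$. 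Concretely, for $\lambda_1=(2,0)$ the set $S_{(1,0)}(z)$ contains, besides points at distance $(2,0)$ from $x_1$, also common neighbours of $x_1$ and $z$ (at distance $(0,1)$ from $x_1$), and each such point lies in $S_{(1,0)}(z')$ for $q+1$ different $z'\in S_{(1,0)}(x_1)$. So the right-hand side both overcounts and fails to be disjoint. The correct fibres are $\{y : \sigma(z,y)=\lambda_1-e_1,\ \sigma(x_1,y)=\lambda_1,\ \sigma(x_2,y)=\lambda_2\}$, which are three-point conditions, and your two-point induction hypothesis does not apply to them; as written the induction does not close. It can be repaired — e.g.\ sum $|S_{\lambda_1-e_1}(z)\cap S_{\lambda_2}(x_2)|$ over \emph{all} $z\in S_{(1,0)}(x_1)$, use Lemma~\ref{CartwrightMlotkowski} in both directions to identify this sum with $\sum_\nu c_\nu\,|S_\nu(x_1)\cap S_{\lambda_2}(x_2)|$ where $\nu$ ranges over $\lambda_1$, $(s-2,t+1)$, $(s-1,t-1)$ with constants $c_\nu\in\{1,q,q^2\}$, and absorb the two correction terms by induction on $|\lambda_1|_1$ — but that is a different argument from the one you wrote.

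A secondary weakness is that the ingredient you do need, namely that the multiset $\{\sigma(z,x_2): z\in S_{(1,0)}(x_1)\}$ depends only on $\sigma(x_1,x_2)$ (with multiplicities $q^2,q,1$ on $\mu^+,\mu^\sim,\mu^-$), is exactly \cite[Lemma~2.1]{CartwrightMlotkowski94}, quoted later in the paper as Lemma~\ref{CartwrightMlotkowski}; your sketch of it via half-apartments and sector bookkeeping stops at ``requires finer bookkeeping'' precisely where the content lies. For comparison, the paper does not give a combinatorial proof at all: it deduces Lemma~\ref{lemma:counting} as a special case of \cite[Theorem~5.21]{Parkinson06} on regular affine buildings, using Lemma~\ref{lem:order} to guarantee the regularity hypothesis. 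So an elementary inductive proof along your lines is a legitimately different route, but it needs the corrections above to be valid.
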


\begin{proof}
This is a particular case of \cite[Theorem 5.21]{Parkinson06}. The reference has a standing assumption of \emph{regularity} which refers to the fact that $X$ has an order by Lemma~\ref{lem:order}.
\end{proof}

For the rest of the article we will not have occasion to consider points of buildings other than vertices. We therefore make the following notational

\begin{convention}
The expressions $x \in X$ and $\lambda \in \Lambda$ are understood to refer to vertices throughout.
\end{convention}

% ==============================================================================
\section{Outline of proof}\label{sec:outline}
% ==============================================================================

Throughout the article, $X$ will denote a locally finite building of type $\tilde{A}_2$ and of order $q$ and $\Gamma$ will be group (possibly trivial) acting properly on $X$. The action of $\Gamma$ will be assumed to be cocompact in Sections~\ref{sec:convergence_harmonic} and~\ref{sec:harmonic_constant} while in Sections~\ref{sec:non-uniform} and~\ref{sec:operator_algebras} it is only assumed to be an undistorted lattice on $X$ that may not be cocompact (see definitions in Section~\ref{sec:non-uniform}). Finally, in Section~\ref{sec:lp_cohomology}, the group $\Gamma$ does not appear at all and $X$ is not assumed to have any non-trivial automorphism.

\subsection{Strong property (T)}
One of the characterizations of property (T) for a locally compact group $G$ is that the maximal $C^*$ algebra $C^*(G)$ of the group $G$ contains a projection $P$ such that $\pi(P)$ is a projection onto the space of invariant vectors for every unitary representation $\pi$. In that case, $P$ is unique and is called the \emph{Kazhdan projection}. When one no longer works with unitary representations, many characterizations of property (T) make sense, but they do not remain equivalent. When defining strong property (T) (resp. its Banach space variant) in \cite{Lafforgue08}, Lafforgue chose to adapt this characterization in terms of Kazhdan projections, but allowing more generally representations with small exponential growth on a Hilbert space (resp. some Banach space).

We will only consider finitely generated groups, and for them we will work with the following definition of strong property (T), which is more convenient.

\begin{definition}\label{def:strongT}
  Let $\Gamma$ be a finitely generated group with word length $|\cdot |$ (associated to a fixed generating set) and $\mathcal E$ a class of Banach spaces. $\Gamma$ has \emph{strong property (T)} with respect to $\mathcal E$ if there is $\alpha>0$ and a sequence $\mu_n$ of finitely supported probability measures on $\Gamma$ such that, for every representation of $\Gamma$ on a Banach space $E \in \mathcal E$ satisfying $\sup_{\gamma \in \Gamma} \frac{\|\pi(\gamma)\|_{B(E)}}{e^{\alpha |\gamma|}}<\infty$, the sequence $\pi(\mu_n) = \sum_\gamma \mu_n(\gamma) \pi(\gamma) $ converges in norm to a projection on the space $E^\pi$ of $\pi(\Gamma)$-invariant vectors.
\end{definition} 
\begin{remark}
This definition is formally a bit stronger than Lafforgue's original definition, which corresponds to allowing $\mu_n$ to be complex measures, but no example is known that satisfy the original form but not this one;  see for example \cite[Section 3]{MR4014781} for a detailed explanation of the relation with the original definition. It is in this (stronger) form that strong property (T) was applied to the study of group actions on manifolds \cite{brownFisherHurtado,brownFisherHurtado2,borwndamjanovicZhang}. We therefore adopt this definition in this paper.
\end{remark}
In \S~\ref{subsection:outline_nonuniform}, we will construct measures $\mu_n$ that are not finitely supported but merely have good exponential integrability properties. The following lemma shows that this implies strong property (T).
\begin{lemma}\label{lem:truncation}
 Let $\Gamma$ be a finitely generated group with word-length $|\cdot|$ and $\mathcal E$ a class of Banach spaces. Assume that there is $\alpha>0$ and a sequence $\mu_n$ of  probability measures on $\Gamma$ such that
\[ \sum_\gamma \mu_n(\gamma) e^{\alpha |\gamma|}<\infty,\]
and for every representation of $\Gamma$ on a Banach space $E \in \mathcal E$ satisfying $\sup_{\gamma \in \Gamma} \frac{\|\pi(\gamma)\|_{B(E)}}{e^{\alpha |\gamma|}}<\infty$, the sequence $\pi(\mu_n) = \sum_\gamma \mu_n(\gamma) \pi(\gamma) $ converges in norm to a projection on the space $E^\pi$ of $\pi(\Gamma)$-invariant vectors.

Then $\Gamma$ has strong property (T).
\end{lemma}
\begin{proof} This follows by a straightforward truncation argument. Indeed, there is a sequence $R(n)$ such that $\lim_n \sum_\gamma 1_{|\gamma|>R(n)} \mu_n(\gamma) e^{\alpha|\gamma|}=0$. Then the truncated measure $\mu'_n =\mu(\cdot \cap B_{R(n)})/\mu(B_{R(n)})$ satisfies the definition of strong property (T), because $\|\pi(\mu_n) - \pi(\mu'_n)\| \to 0$.
\end{proof}
\subsection{Lafforgue's proof for Bruhat-Tits buildings}\label{sec:Lafforgue}
We briefly recall Lafforgue's proof of the main theorem when $\Gamma$ is a cocompact lattice in a Bruhat-Tits building associated to a $\SL_3$ over a local field $\LocalField$. By an induction argument, the theorem is equivalent to strong property (T) for the locally compact group $G=\SL_3(\LocalField)$. The maximal compact subgroup $K=\SL_3(\calO)$ then plays a crucial role. The main technical result asserts that $K$-finite matrix coefficients of representations of $\SL_3(\LocalField)$ with small exponential growth have a limit with an explicit rate of convergence. We state it in a sub-optimal form, but still enough to deduce strong property (T) for $G$. Here, if $V$ is a finite dimensional representation of a compact group $K$ and $\pi$ is an arbitrary representation of $K$ on $E$, a vector $\xi \in E$ is said to be of $K$-type $V$ if the linear span of $\pi(K) \xi$ is isomorphic to a subrepresentation of $V^{\oplus d}$ for some integer $d$.
\begin{theorem}\label{thm:Lafforgue_s_strategy}\cite{Lafforgue08}
Let $\pi$ be a representation of $G=\SL_3(\LocalField)$ with small enough exponential growth on a Hilbert space $E$. For every finite dimensional representation $V$ of $K=\SL_3(\calO)$, there is a function $\varepsilon_V \in C_0(G)$ such that for any unit vectors $\xi,\eta \in E$ of $K$-type $V$, there is $c_\infty(\xi,\eta) \in \C$ such that
\[ |\langle \pi(g)\xi,\eta\rangle - c_\infty(\xi,\eta)| \leq C(V) \varphi(g).\]
\end{theorem}
When $\xi,\eta$ are $K$-invariant (that is $V$ is the trivial representation), the theorem asserts that the operators $\iint_K \pi(k gk') dk dk'$ (average with respect to the Haar measure on $K$) are Cauchy when $g \to \infty$, and therefore converge. The case of general $V$ allows to identify the limit as a projection on the space of invariant vectors, as in Definition~\ref{def:strongT}, adapted to non-discrete group.

Theorem~\ref{thm:Lafforgue_s_strategy} is obtained by combining two distinct ingredients. One way of expressing the first is as a form of Theorem~\ref{thm:Lafforgue_s_strategy} but for the pair $(K,G)$ replaced by the pair $(U,K)$, where $U = \begin{pmatrix} 1 & 0 \\ 0& \SL_2(\calO)\end{pmatrix}$.
\begin{proposition}\label{prop:lafforguesStrategy_compact}
For every finite dimensional unitary representation $V$ of $U$, there is a constant $C(V)$ such that for every unitary representation $\pi$ of $K$ on $E$, every vector $\xi,\eta$ of $U$-type $V$ and every $\delta \in \calO$,
\[ | \langle \pi(k_\delta) \xi,\eta\rangle - \langle \pi(k_0) \xi,\eta\rangle| \leq C(V) \sqrt{|\delta|}\]
where $k_\delta = \begin{pmatrix} \delta & 1 & 0 \\ -1&0&0\\0&0&1\end{pmatrix}$.
\end{proposition}
The second ingredient can be expressed as an efficient exploration of the Weyl chamber $K\backslash G/K$ by the images of $\{k_\delta \mid \delta \in \calO\}$ in $U\backslash K/U$.

\subsection{Strategy of the proof}\label{sec:strategy}

We now explain the proof of the main theorem. We first consider the case of a cocompact lattice, because it is the most interesting case and also because it is technically easier. In this case the classical Milnor--Schwarz lemma asserts the following:

\begin{lemma}\label{lem:orbit_map_quasiisometric}
Let $X$ be an $\tilde{A}_2$-building and let $\Gamma$ act properly and cocompactly on $X$. Let $\Omega$ be a set of representatives $\Gamma \backslash X$.
There exist positive real numbers $a,b$ such that, for every $x \in \Omega$ and $y \in X$, there is $\gamma \in \Gamma$ such that $\gamma y \in \Omega$ and $|\gamma| \leq a d(x,y)+b$.
\end{lemma}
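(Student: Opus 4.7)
The plan is to adapt the classical Milnor--Schwarz argument to the simplicial setting of the vertex set of $X$. Since $X$ is locally finite and $\Gamma$ acts properly and cocompactly, the representative set $\Omega$ is finite. Fix a finite generating set $T$ of $\Gamma$ (whose existence will emerge from the argument) and let $|\cdot|$ be the corresponding word length. The first preparatory step is to introduce the auxiliary set
\[
S \defeq \{\gamma \in \Gamma : \gamma\Omega \cap B(\Omega,1) \neq \emptyset\},
\]
where $B(\Omega,1)$ is the ball of combinatorial radius $1$ around $\Omega$ in the vertex set. Local finiteness makes $B(\Omega,1)$ finite, and properness then forces $S$ to be finite; similarly each vertex stabilizer $\Gamma_x$ for $x \in \Omega$ is finite.

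The core step is a telescoping estimate along a geodesic. Given $x \in \Omega$ and $y \in X$, pick a combinatorial edge-geodesic $x = x_0, x_1, \ldots, x_n = y$ with $n = d(x,y)$. For each $i$, let $\gamma_i \in \Gamma$ be the unique element with $\gamma_i x_i \in \Omega$, so $\gamma_0 = e$ and $\gamma_n$ is precisely the desired $\gamma$. Set $s_i = \gamma_i\gamma_{i-1}^{-1}$. Then $s_i(\gamma_{i-1}x_{i-1}) = \gamma_i x_{i-1}$, and since $\gamma_{i-1}x_{i-1} \in \Omega$ while $\gamma_i x_{i-1}$ is adjacent to $\gamma_i x_i \in \Omega$, we conclude $s_i \in S$. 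Telescoping gives $\gamma = \gamma_n = s_n s_{n-1}\cdots s_1$, an $S$-word of length at most $n$.

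The final step converts this $S$-length bound into a $|\cdot|$-bound. A variant of the telescoping argument (applied to $y = \gamma^{-1}x$ for arbitrary $\gamma \in \Gamma$) shows that every element of $\Gamma$ is a product of $S$-elements followed by an element of some $\Gamma_x$, so $S \cup \bigcup_{x\in\Omega}\Gamma_x$ is a finite generating set of $\Gamma$. Any two finite generating sets yield Lipschitz-equivalent word lengths, so writing each element of $S$ as a $T$-word of length at most $C$ gives $|\gamma| \le Cn = C\, d(x,y)$, proving the statement with $a = C$ and $b = 0$ (a positive $b$ can be absorbed trivially). The main obstacle is purely organizational: one must correctly identify $\gamma_n$ with the $\gamma$ of the statement, which uses crucially that $\Omega$ is a genuine set of orbit representatives (so $\gamma_i$ is unique and $\gamma_0 = e$), and one must keep the finite-generation bookkeeping clean.
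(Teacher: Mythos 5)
Your argument is correct and is exactly the classical Milnor--Schwarz telescoping argument that the paper itself simply invokes for this lemma without writing out a proof, so the approaches coincide. Two harmless slips worth noting: $\gamma_i$ is unique only up to left multiplication by the finite stabilizer of the representative (any choice with $\gamma_0 = e$ works, which is all the telescoping needs), and the edge-path length $n$ is the graph distance rather than the $\CAT(0)$ distance $d(x,y)$, but these metrics are bi-Lipschitz on vertices, so the discrepancy is absorbed into the constants $a$ and $b$.
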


The proof of the main theorem in the cocompact case will be completed in Section~\ref{sec:harmonic_constant}. The necessary modifications for non-cocompact (but undistorted) lattices are explained in Section~\ref{sec:non-uniform}.

The main difference between our setup and that of existing proofs of strong property (T) is that there is no algebraic group over a local field $G$ that contains $\Gamma$. In fact, the automorphism group of an exotic building $X$ is discrete in most known cases, see \cite{MR3719076}, \cite{MR3928791}. It turns out that the building $X$ can serve as a replacement for $G$. This is illustrated by the following induction, that imitates induction of representations from $\Gamma$ to $G$. Actually, this induction is the perfect analogue of the $K$-invariant vectors for the induced representation of $G$, where $K$ is a maximal compact subgroup of $G$, cf. Section~\ref{sec:brutit}. The fact that there is no analogue of the full induced representation is one of the main difficulties that we will have to overcome, mainly in Section~\ref{sec:harmonic_constant}.

Given a representation $\pi$ of $\Gamma$ on a Banach space $E$ we define
\[
\iE = \{f \colon X \to E \mid \forall \gamma \in \Gamma, x \in X, f(\gamma x) =\pi(\gamma) f(x) \}
\]
and equip it with the norm $\norm{f} = \left(\sum_{x \in \Omega} \frac{1}{\abs{\Gamma_x}}\norm{f(x)}^2\right)^{1/2}$, where $\Gamma_x$ is the stabilizer in $\Gamma$ of the point $x$. Here and throughout $\Omega$ is a set of orbit representatives for $\Gamma \backslash X$. Note that the norm on $\iE$ depends on $\Omega$ but the topology does not. Note also that $\iE$ is isometric to the $\ell^2$-direct sum $\bigoplus_{x \in \Omega} E^{\Gamma_x}$ via the isomorphism
\begin{align}
\bigoplus_{x \in \Omega} E^{\Gamma_x} &\to \iE\label{eq:etilde_decomposition}\\
(\xi_x)_{x \in \Omega} & \mapsto (\gamma.x \mapsto \abs{\Gamma_x}^{\frac 1 2} \pi(\gamma)(\xi))\text{.}\nonumber
\end{align}

Moving on with the proof, there is a small reduction step. Recall from Lemma \ref{lem:type-preserving} that $\Gamma$ has a finite index subgroup $\Gamma'$ that preserves types. If we prove that $\Gamma'$ has strong property (T) then it will also follow for $\Gamma$ (for the same family of spaces). Therefore we can and will assume throughout that $\Gamma=\Gamma'$ acts on $X$ preserving types. This also implies that the action on $\partial X$ preserves types. Also recall that $\Lambda_i$ denotes the subset of $\Lambda$ of type $i$, for $i\in \{0,1,2\}$, and in particular that $\Lambda_0$ is the set of vertices of $\Lambda$ of the same type as $0$.

For every $\lambda \in \Lambda$ we define the linear map $A_\lambda$ on the space of all functions $X \to E$ by
\begin{equation}\label{eq:def_Alambda}
A_\lambda f(x) = \frac{1}{|S_\lambda(x)|} \sum_{y \in S_\lambda(x)} f(y)\text{.}
\end{equation}
Note that $A_\lambda$ maps $\iE$ to itself because the action of $\Gamma$ on $ X$ is type-preserving. The operators $A_\lambda$ form a (non unitary) representation of the commutative algebra $\mathcal A$ of \emph{biradial operators} associated to an $\tilde{A}_2$-building in \cite{CartwrightMlotkowski94}. The key statement in the proof, which we will verify by Section~\ref{sec:harmonic_constant} is the following:

\begin{theorem}\label{thm:strong_T_on_building}
Assume that $\Gamma$ is cocompact and that $E$ is admissible. There is $\alpha_0$ depending on $\Gamma$ and $E$ such if the representation $\pi$ of $\Gamma$ on $E$ satisfies $\sup_{\gamma} e^{-\alpha_0 |\gamma|} \|\pi(\gamma)\|<\infty$, as $\lambda \in \Lambda_0$ tends to infinity, the sequence $(A_\lambda)_{\lambda}$ converges in the norm of $B(\iE)$ towards a projection $P$ on the subspace of functions in $\iE$ which are constant on the vertices of each type. Moreover the convergence is exponentially fast : $\|A_\lambda - P\| = O( e^{-\alpha_0 |\lambda|_1})$.
\end{theorem}

The main theorem follows readily from the theorem in the cocompact case. Specifically, the sequence of measures can be taken to be $\mu_\lambda$ where $\lambda \in \Lambda$ tend to infinity and $\mu_\lambda$ is defined as follows. For $y \in X$, denote by $m_{y}$ the uniform probability measure on the finite set of all $\gamma \in \Gamma$ such that  $y\in \gamma\Omega$. Then
\begin{equation}\label{eq:def_mulambda}
\mu_\lambda \defeq \frac{1}{Z} \sum_{x \in \Omega} \frac{1}{\abs{\Gamma_x}} \frac{1}{\abs{S_\lambda(x)}} \sum_{y \in S_\lambda(x)} m_{y}\text{,}
\end{equation}
where $Z = \sum_{x \in \Omega} \frac{1}{\abs{\Gamma_x}}$.

To see this we introduce two linear maps $u \colon E \to \iE$ and $v \colon \iE \to E$ defined by $u(\xi)(y)=\pi(m_y)(\xi)$ for $y \in X$ and $v(f) = \frac{1}{Z} \sum_{x \in \Omega} \frac{1}{\abs{\Gamma_x}}f(x)$ for $f \in \iE$.

These definitions are made so that
\begin{equation}\label{eq:mu_lambda}
v \circ A_\lambda \circ u = \pi(\mu_\lambda)\text{.}
\end{equation}

\begin{proof}[Proof of the main theorem for cocompact $\Gamma$]
Let $P$ be the projection from Theorem~\ref{thm:strong_T_on_building}.
We claim that the operator $v \circ P \circ u \colon E \to E$ is a projection onto the space $E^\Gamma$ of $\pi(\Gamma)$-invariants. To see this note first that if $\xi \in E^\Gamma$ then $u(\xi) \colon X \to E$ is constant. As a consequence $P \circ u(\xi) = u(\xi)$ and $v \circ u(\xi) = \xi$. Thus $v \circ P \circ u$ restricts to the identity on $E^\Gamma$. Second, note that every $f \colon X \to E$ that lies in the image of $P$ (i.e.\ is constant on vertices of each type) has values in $E^\Gamma$: for $\pi(\gamma) f(x) = f(\gamma x) = f(x)$ as $x$ and $\gamma x$ have the same type by our standing assumption. In particular, $v(f) \in E^\Gamma$. Thus the image of $v \circ P \circ u$ lies in $E^\Gamma$ and the claim is proved.

Now Theorem~\ref{thm:strong_T_on_building} implies that $v \circ A_\lambda \circ u$ converges in norm to $v \circ P \circ u$ as $\lambda \in \Lambda_0$ tends to infinity:
\[
\norm{v \circ (A_\lambda - P) \circ u}_{B(E)} \leq \norm{v}_{B(\iE,E)} \norm{A_\lambda -P}_{B(\iE)} \norm{u}_{B(E,\iE)} \to 0
\]
In view of \eqref{eq:mu_lambda} this completes the proof.
\end{proof}
It is perhaps worth mentioning that by Lemma~\ref{lem:orbit_map_quasiisometric}, $\mu_\lambda$ is supported in the ball of radius $a|\lambda|_1+O(1)$ around the identity. Since the convergence is exponential in Theorem~\ref{thm:strong_T_on_building}, we deduce that there is a sequence $\mu_n$ of probability measures supported in the ball of radius $n$ in $\Gamma$ such that for every admissible $E$ and every representation with small enough exponential growth, $\pi(\mu_n)$ converges exponentially fast to the projection $P$. The existence of such a sequence $\mu_n$ is in fact a formal consequence of the main theorem, see  \cite[Corollary 3.12 and \S~3.5]{MR4014781}.
\begin{definition}\label{def:Lambda0-harmonic}
A map $f \colon X\to E$ is \emph{harmonic} if $A_\lambda f=f$ for all $\lambda \in \Lambda$. It is \emph{$\Lambda_0$-harmonic} if $A_\lambda f=f$ for all $\lambda \in \Lambda_0$.
\end{definition}

The proof of Theorem \ref{thm:strong_T_on_building} is done in two steps. We first show in Section~\ref{sec:convergence_harmonic} that $(A_\lambda)_{\lambda}$ indeed converges, but only to a projection on the space of $\Lambda_0$-harmonic elements of $\iE$ (Proposition~\ref{prop:harmonic_limit}). %This first step is the perfect analogue of the case $V=1$ in Theorem~\ref{thm:Lafforgue_s_strategy}. 
The second step in Section~\ref{sec:harmonic_constant} is to prove that the $\Lambda_0$-harmonic elements are constant on vertices of each type (Proposition~\ref{prop:harmonic=constants}). Contrary to the first step, which was the perfect analogue of the case $V=1$ in Theorem~\ref{thm:Lafforgue_s_strategy}, this second step is not a translation of a known statement for Bruhat-Tits buildings. Indeed, the case $V\neq 1$ in Theorem~\ref{thm:Lafforgue_s_strategy} cannot be translated for an exotic building. 

In order to show that $(A_{\lambda})_\lambda$ is a Cauchy sequence when $\lambda \in \Lambda_0$ tend to infinity, we establish certain norm estimates in Section~\ref{sec:norm_estimates}. These in turn rely on certain building combinatorics that are developped in Section~\ref{sec:hjemlslev_biaffine}. These estimates play the role of Lafforgue's Proposition~\ref{prop:lafforguesStrategy_compact} on harmonic analysis for the pair $(K,U)$. The proof of the Cauchy criterion for $(A_{\lambda})_\lambda$ using these estimates is similar to the exploration of the Weyl chamber $K\backslash G/K$ by copies of $U\backslash K/U$. The proof that $\Lambda_0$-harmonic elements are constant on vertices of each type rely on the same tools.

% ------------------------------------------------------------------------------
\subsection{Induction and duals}
% ------------------------------------------------------------------------------

We close the section with some considerations concerning the interplay of induction and formation of duals. If we apply the induction from above after dualizing both $E$ and $L^2(X)$ we obtain the module
\[
\diE = \{g \colon X \to E^* \mid \forall \gamma \in \Gamma, x \in X g(\gamma x) = \pi(\gamma^{-1})^* g(x)\}
\]
with the norm $\left(\sum_{x \in \Omega} \norm{g(x)}^2\right)^{\frac{1}{2}}$.
The essence of what we are about to say is that induction and dualization commute in the category of topological vector spaces but not in the category of Banach spaces. That is, $\diE$ and $\iE^*$ are linearly homeomorphic but not isometric. To make this precise we define a dual pairing $\langle \cdot ,\cdot\rangle \colon \diE \times \iE \to \C$ by
\begin{equation}\label{eq:duality_pairing}
\gen{g,f}= \sum_{x \in \Omega} \frac{1}{\abs{\Gamma_x}} \gen{g(x),f(x)}\text{.}
\end{equation}
We then have

\begin{lemma}\label{lemma:dual} There is a constant $C'$ such that:
\begin{enumerate}
\item For every $g \in \diE$, the map $f \mapsto \langle g,f\rangle$ is a continuous linear form on $\iE$ of norm $\leq \|g\|$.
\item For every continuous linear form $\varphi \colon \iE \to \C$, there is a unique $g \in \diE$ such that $\varphi(f) = \langle g,f\rangle$ for every $f \in \iE$. It has norm $\norm{g} \leq C' \norm{\varphi}$.
\end{enumerate}
\end{lemma}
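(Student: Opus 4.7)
The plan is to prove (1) directly by Cauchy--Schwarz and (2) by combining the isometric decomposition \eqref{eq:etilde_decomposition} with Hahn--Banach extension and averaging over the finite vertex stabilizers.

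For (1), I would bring the absolute value inside the sum in \eqref{eq:duality_pairing}, split the weight as $\abs{\Gamma_x}^{-1} = \abs{\Gamma_x}^{-1/2}\cdot\abs{\Gamma_x}^{-1/2}$, and apply Cauchy--Schwarz to get
\[
\abs{\langle g,f\rangle} \le \Bigl(\sum_{x\in\Omega} \frac{\norm{g(x)}^2}{\abs{\Gamma_x}}\Bigr)^{1/2}\Bigl(\sum_{x\in\Omega} \frac{\norm{f(x)}^2}{\abs{\Gamma_x}}\Bigr)^{1/2} \le \norm{g}\,\norm{f},
\]
where the last step uses the trivial bound $1/\abs{\Gamma_x}\le 1$ to absorb the mismatch between the weights in the two norms.

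For (2), I would use \eqref{eq:etilde_decomposition} to identify $\iE$ isometrically with the $\ell^2$-direct sum $\bigoplus_{x\in\Omega}^{\ell^2} E^{\Gamma_x}$. By the standard description of duals of $\ell^2$-sums of Banach spaces, this identifies continuous linear forms on $\iE$ isometrically with families $(\eta_x)\in \bigoplus_x^{\ell^2} (E^{\Gamma_x})^*$, with $\sum_x \norm{\eta_x}^2 = \norm{\varphi}^2$. For each $x\in\Omega$, extend $\eta_x$ to $\widetilde\eta_x\in E^*$ with $\norm{\widetilde\eta_x}=\norm{\eta_x}$ via Hahn--Banach, and set
\[
\mu_x \defeq \frac{1}{\abs{\Gamma_x}}\sum_{\gamma\in\Gamma_x} \pi(\gamma^{-1})^* \widetilde\eta_x \;\in\; (E^*)^{\Gamma_x}.
\]
Because the averaging projection $P_x = \frac{1}{\abs{\Gamma_x}}\sum_{\gamma\in\Gamma_x}\pi(\gamma)$ is the identity on $E^{\Gamma_x}$, we have $\mu_x|_{E^{\Gamma_x}}=\eta_x$ and $\norm{\mu_x}\le \norm{P_x}\norm{\eta_x}$. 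I would then define $g(x) \defeq \abs{\Gamma_x}^{1/2}\mu_x$ for $x\in\Omega$ and extend equivariantly by $g(\gamma x)=\pi(\gamma^{-1})^*g(x)$; the scaling $\abs{\Gamma_x}^{1/2}$ is forced by the asymmetric weights in the two norms, and a short direct computation confirms $\langle g,f\rangle=\varphi(f)$. The bound $\norm{g}^2 = \sum_{x\in\Omega}\abs{\Gamma_x}\norm{\mu_x}^2 \le C'^{\,2}\norm{\varphi}^2$ then follows with $C'=\max_{x\in\Omega}\bigl(\abs{\Gamma_x}^{1/2}\norm{P_x}\bigr)$, which is finite because $\Omega$ is finite (cocompactness) and each $\pi(\gamma)$ is bounded.

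For uniqueness, if $\langle g,f\rangle=0$ for every $f\in\iE$, testing against $f$ supported on a single $\Gamma$-orbit would show that $g(x)$ vanishes on $E^{\Gamma_x}$ for each $x\in\Omega$; since $g(x)\in(E^*)^{\Gamma_x}$ satisfies $g(x)=P_x^*g(x)$, this gives $g(x)(\xi)=g(x)(P_x\xi)=0$ for every $\xi\in E$, so $g=0$. The only real technical subtlety I anticipate is bookkeeping the normalization factor $\abs{\Gamma_x}^{1/2}$ produced by the asymmetric weighting of the norms on $\iE$ and $\diE$; this is precisely the reason why $C'$ cannot in general be taken equal to $1$, reflecting the discrepancy noted before the lemma between topological and isometric duality of the induction construction.
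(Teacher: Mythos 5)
Your proof is correct, and for part (1) it is the same Cauchy--Schwarz computation as in the paper. For part (2) you ultimately build the same functional as the paper does --- unwinding your construction, $g(x)=\abs{\Gamma_x}^{1/2}\,\eta_x\circ P_x$ on the orbit representatives, extended equivariantly --- but you reach it by a different route: you identify $\iE$ with the $\ell^2$-direct sum via \eqref{eq:etilde_decomposition}, invoke the standard isometric duality $(\bigoplus^{\ell^2} V_x)^*\cong\bigoplus^{\ell^2} V_x^*$, and then pass from $(E^{\Gamma_x})^*$ to $(E^*)^{\Gamma_x}$ by Hahn--Banach followed by averaging over $\Gamma_x$. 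The paper instead works directly with the auxiliary functions $f_{x,\xi}$ (which are exactly the images of $P_x\xi$ under \eqref{eq:etilde_decomposition}), defines $g(x)$ by pairing $\varphi$ against them, and obtains the norm bound by a bootstrap argument with norming vectors $\xi_x$, yielding the constant $\max\{\norm{\pi(\gamma)} : \gamma\in\bigcup_{x\in\Omega}\Gamma_x\}$; your route gives the comparable constant $\max_{x\in\Omega}\abs{\Gamma_x}^{1/2}\norm{P_x}$, and both are finite precisely because of the cocompactness standing in this section, which you correctly flag. Two small remarks: the Hahn--Banach step is dispensable, since $\eta_x\circ P_x$ is already a globally defined $\Gamma_x$-invariant functional extending $\eta_x$ (the image of $P_x$ is $E^{\Gamma_x}$), so your $\mu_x$ equals $\eta_x\circ P_x$ regardless of the chosen extension; and in a full write-up you should state explicitly that the $\Gamma_x$-invariance of $\mu_x$ is what makes the equivariant extension $g(\gamma x)=\pi(\gamma^{-1})^*g(x)$ well defined. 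Your uniqueness argument (testing on one orbit and using $g(x)=P_x^*g(x)$) is the same as the paper's, just phrased through the projection rather than through $f_{x,\xi}$. What your approach buys is a cleaner, more modular norm estimate imported from $\ell^2$-sum duality; what the paper's buys is self-containedness, avoiding any appeal to Hahn--Banach or to the dual description of direct sums.
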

\begin{proof}
The fact that $\iE \to \C, f \mapsto \langle g,f\rangle$ has norm $\leq \norm{g}$ follows from the triangle and the Cauchy--Schwarz inequality:
\begin{align*}
\abs{\gen{g,f}} & \leq \sum_{x \in \Omega} \frac{1}{\abs{\Gamma_x}} \abs{\gen{g(x),f(x)}}\\
  &\leq \sum_{x \in \Omega} \frac{1}{\abs{\Gamma_x}} \norm{g(x)} \norm{f(x)} \\
  &\leq \bigg( \sum_{x \in \Omega} \frac{1}{\abs{\Gamma_x}} \norm{g(x)}^2\bigg)^{\frac{1}{2}} \bigg(\sum_{x \in \Omega} \frac{1}{\abs{\Gamma_x}} \norm{f(x)}^2\bigg)^{\frac{1}{2}}\text{.}
\end{align*}

For the second claim we first define auxiliary functions $f_{x,\xi} \colon X \to E$ for $x \in X$ and $\xi \in E$ by
\[
f_{x,\xi}(x') = \frac{1}{\abs{\Gamma_x}}\sum_{\substack{\gamma \in \Gamma\\\gamma x=x'}} \pi(\gamma) \xi\text{.}
\]
Conceptually $f_{x,\xi}$ is the image of $\xi$ under the composition $E \to E^{\Gamma_x} \to \tilde{E}$ where the first map is natural projection and the second is \eqref{eq:etilde_decomposition}.
These functions satisfy the equivariance relation $f_{\gamma x,\xi} = f_{x,\pi(\gamma)^{-1}\xi}$ for every $\gamma \in \Gamma$. Note also that $f_{x,\xi}$ is zero outside of the $\Gamma$-orbit of $x$ and that $f = \sum_{x \in \Omega} f_{x,f(x)}$ provided $f \in \tilde{E}$. Finally we observe that the norm of $f_{x,\xi}$ can be bounded by
\begin{equation}\label{eq:norm_of_f_xxi}
\norm{f_{x,\xi}(x)}_{E} \leq \max_{\gamma \in \Gamma_x} \norm{\pi(\gamma)} \norm{\xi}
\end{equation}
for $x \in \Omega$.

To check uniqueness let $g, g' \in \diE$ defining the same linear form on $\iE$. For $x \in \Omega$ and $\xi \in E$ we have 
\[
\gen{g(x),\xi} = \gen{g(x),f_{x,\xi}(x)} = \abs{\Gamma_x}\gen{g,f_{x,\xi}} = \abs{\Gamma_x} \gen{g',f_{x,\xi}} = \gen{g'(x),\xi}.
\] Since $\xi$ was arbitrary it follows that $g(x) = g'(x)$ and since $x$ was arbitrary, it follows that $g = g'$.

Now let $\varphi \colon \tilde{E} \to \C$ be arbitrary. We take $g \colon X \to E^*$ to be characterized by
\[
\forall \xi \in E, \langle g(x),\xi\rangle = \abs{\Gamma_x} \varphi(f_{x,\xi})
\]
The equivariance of the  auxiliary functions implies $g \in \diE$:
\[
\gen{g(\gamma x),\xi} = \varphi(f_{\gamma x,\xi}) = \varphi(f_{x,\pi(\gamma^{-1})\xi}) = \gen{g(x),\pi(\gamma^{-1}) \xi}\text{.}
\]

To see that $\gen{g,\cdot}$ coincides with $\varphi$ we take $f \in \iE$ and compute
\[
\gen{g,f} = \sum_{x \in \Omega} \frac{1}{\abs{\Gamma_x}} \gen{g(x),f(x)} = \sum_{x \in \Omega} \varphi(f_{x,f(x)}) = \varphi\bigg( \sum_{x \in \Omega}  f_{x,f(x)}\bigg) = \varphi(f)\text{.}
\]

Finally, we estimate the norm of $g$. For every $x \in \Omega$, pick some $\xi_x \in E$ such that $\langle g(x),\xi_x\rangle = \|g(x)\|^2$.
We have
\begin{multline}
    \norm{g}^2  = \sum_{x \in \Omega} \frac{1}{\abs{\Gamma_x}} \gen{g(x),\xi_x} 
     =\varphi(\sum_{x \in \Omega} f_{x,\xi_x})\\
     \leq \norm{\varphi} \cdot \Big\Vert \sum_{x \in \Omega} f_{x,\xi_x} \Big\Vert
     \leq \norm{\varphi} \left( \sum_{x \in \Omega} \frac{1}{\abs{\Gamma_x}} \max_{\gamma \in \Gamma_x} \|\pi(\gamma)\|^2 \|\xi_x\|^2\right)^{\frac 1 2},
\end{multline}   
where on the last line we used \eqref{eq:norm_of_f_xxi}. Taking the infimum over all $\xi_x$ satisfying $\langle g(x),\xi_x\rangle = \|g(x)\|^2$, we obtain
\[ \norm{g}^2 \leq  \norm{\varphi} \max \{ \|\pi(\gamma)\| \mid \gamma \in \cup_{x \in \Omega} \Gamma_x\} \norm{g}.\]
This proves the lemma with $C'= \max\{ \|\pi(\gamma)\| \mid \gamma \in \cup_{x \in \Omega} \Gamma_x\}$.
\end{proof}

The following observation implies in particular that the dual pairing of $\diE$ and $\iE$ does not depend on $\Omega$:

\begin{lemma}\label{lem:fundamental_domain_irrelevant}
For $f \in \iE$ and $g \in \diE$, the map $(x,y) \mapsto \langle f(x),g(y)\rangle$ is $\Gamma$-invariant. That is,
\[
\gen{ f(\gamma x),g(\gamma y)} = \gen{ f(x),g(y)}
\]
for $\gamma \in \Gamma$ and $x,y \in X$.
\end{lemma}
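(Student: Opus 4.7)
The plan is to verify the identity by a one-line computation, unfolding the equivariance properties of $f \in \widetilde E$ and $g \in \widetilde{E^*}$ and then invoking the defining property of the Banach-space adjoint.

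First I would substitute the equivariance relations directly. By the definition of $\widetilde E$ we have $f(\gamma x) = \pi(\gamma) f(x)$, and by the definition of $\widetilde{E^*}$ we have $g(\gamma y) = \pi(\gamma^{-1})^* g(y)$. Hence
\[
\langle f(\gamma x), g(\gamma y)\rangle = \langle \pi(\gamma) f(x), \pi(\gamma^{-1})^* g(y)\rangle.
\]

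Next I would use the defining property of the adjoint operator, namely $\langle T\xi, \eta\rangle_{E,E^*} = \langle \xi, T^* \eta\rangle_{E,E^*}$ for any bounded operator $T \colon E \to E$, applied to $T = \pi(\gamma)$. This yields
\[
\langle \pi(\gamma) f(x), \pi(\gamma^{-1})^* g(y)\rangle = \langle f(x), \pi(\gamma)^* \pi(\gamma^{-1})^* g(y)\rangle = \langle f(x), (\pi(\gamma^{-1}) \pi(\gamma))^* g(y)\rangle.
\]
Since $\pi$ is a representation, $\pi(\gamma^{-1}) \pi(\gamma) = \pi(e) = \id_E$, whose adjoint is $\id_{E^*}$, and therefore the right-hand side equals $\langle f(x), g(y)\rangle$, as desired.

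There is essentially no obstacle here: the statement is a formal consequence of how the induced modules $\widetilde E$ and $\widetilde{E^*}$ were set up, in particular of the choice to twist the equivariance on the dual side by $\pi(\gamma^{-1})^*$ rather than $\pi(\gamma)^*$. That choice is exactly what is needed to make the bilinear map $(\xi,\eta) \mapsto \langle \xi,\eta\rangle$ on $E \times E^*$ descend to a $\Gamma$-invariant bilinear map on $X \times X$ through $(f,g)$; the lemma merely records this compatibility. As a corollary, the sum $\sum_{x \in \Omega}\frac{1}{|\Gamma_x|}\langle g(x),f(x)\rangle$ defining the pairing \eqref{eq:duality_pairing} is independent of the choice of fundamental domain $\Omega$, since changing representatives amounts to applying elements of $\Gamma$ in each orbit, under which the integrand is invariant.
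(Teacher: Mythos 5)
Your computation is correct and is essentially the paper's own proof: the paper likewise unfolds $\gen{f(\gamma x),g(\gamma y)} = \gen{\pi(\gamma)f(x),\pi(\gamma^{-1})^*g(y)}$ and cancels via the adjoint relation in a single line. The closing remark about independence of the pairing from the choice of $\Omega$ matches the role the paper assigns to this lemma.
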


\begin{proof}
By the definitions we have
\[
\gen{f(\gamma x),g(\gamma y)} = \gen{\pi(\gamma)f(x),\pi(\gamma^{-1})^*g(y)} = \gen{f(x),g(y)}\text{.}\qedhere
\]
\end{proof}

% ==============================================================================
\section{Functional analysis preliminaries}\label{sec:Banach_space_preliminaries}
% ==============================================================================

This section contains reminders without proofs of known functional analysis facts and references that will be used in the paper.

To avoid confusion with points in the building at infinity denoted by $p$, we avoid as much as possible the standard letter $p$ for $L^p$ spaces and use $L^r$ spaces instead.
\subsection{Schatten norms }\label{sec:schatten_preliminaries}
If $\cH,\cK$ are Hilbert spaces and $1 \leq r<\infty$, the \emph{Schatten $r$-class} $S^r(\cH,\cK)$ is the space of bounded linear operators $T \colon \cH \to \cK$ such that $\|T\|_{S^r}:= (Tr(|T|^r))^{\frac 1 r} <\infty$, where $|T| = (T^*T)^{\frac{1}{2}} \in B(\cH)$. This is a norm for which $S^r(\cH,\cK)$ is complete, the \emph{Schatten $r$-norm}. This space can also be described as the compact operators $\cH \to \cK$ whose sequence of singular values belong to $\ell^r$. The operator norm corresponds to $r=\infty$.

The Schatten norms behave very much as the $\ell^r$ norms. For example, Hölder's inequality remains true for the Schatten norms \cite[Theorem~2.8]{simon05}:
\begin{equation}\label{eq:Hoelder}
\|AB\|_{S^r} \leq \|A\|_{S^{r_1}} \|B\|_{S^{r_2}}\quad\text{ whenever }\quad\frac 1 r = \frac{1}{r_1} + \frac{1}{r_2}.\end{equation}

When $\cH=\cK=\C^n$, the space $S^r(\cH,\cK)$ is simply denoted $S^r_n$. It is the space of $n \times n$ complex matrices with the norm
\[ \|a\|_{S^r_n} = \left(Tr( (a^*a)^{\frac r 2}\right)^{\frac 1 r}.\]

\subsection{Banach-space valued $L^r$ spaces, regular operators}        
We start with some reminders on Banach-space valued $L^r$ spaces ($r<\infty$), see for example \cite[\S 1.1]{MR3617459} for details. If $(\Omega,\mu)$ is a measure space and $E$ is a Banach space, a function $f \colon \Omega \to E$ is said to be Bochner-measurable if it is the pointwise limit of a sequence of simple (meaning taking finitely many different values) measurable functions. The space of Bochner-measurable functions $f$ such that $\int \|f(\omega)\|^r d\mu(\omega)<\infty$, modulo the space of almost everywhere vanishing functions, is denoted $L^r(\Omega,\mu;E)$ (the measure or the measure space may be dropped from notation if they are clear from the context). It is a Banach space for the norm $\left(\int \|f(\omega)\|^r d\mu(\omega)\right)^{\frac 1 r}$. For $r<\infty$, it contains $L^r(\Omega,\mu) \otimes E$ as a dense subspace, and can therefore also be abstractly viewed at the completion of $L^r(\Omega,\mu) \otimes E$ for the $L^r$ norm. 

Given an operator $T\colon L^r(\Omega) \to L^r(\Omega')$ and a Banach space $E$, it is in general a very subtle question to decide whether $T \otimes \mathrm{id}_E$ extends to a bounded operator $L^r(\Omega;E) \to L^r(\Omega';E)$ and to obtain estimates on its norm, which we denote $\norm{T}_{B(L^r(\Omega;E),B(L^r(\Omega';E)))}$. The operators that do extend to a bounded operator for every Banach space $E$ are called \emph{regular operators} and are rather well understood. For a regular operator $T$, the supremum
\[
\norm{T}_{\text{reg}} \defeq \sup_{\mathclap{E\text{ Banach}}} \ \norm{T}_{B(L^r(\Omega;E),B(L^r(\Omega';E)))}
\]
is finite and is called the \emph{regular norm} of $T$. As recalled in \cite[\S 1.2]{MR2732331}, the regular norm is also attained if the supremum is restricted to $E \in \{\ell^1_n, n\geq 1\}$ (finite-dimensional $\ell^1$-spaces), and also when it is restricted to $E \in \{\ell^\infty_n, n\geq 1\}$ (finite-dimensional $\ell^\infty$-spaces). We also say that an operator is a regular isometry if $T \otimes \mathrm{id}_E$ is a (not necessarily surjective) isometry from $L^r(\Omega;E)$ into $L^r(\Omega;E')$ for every $E$. For more on this, see \cite{MR1128093}, see also \cite{MR2732331} for a concise account.

\begin{example}\label{ex:regular_for_averages}Let $\nu$ be a complex measure on $\Omega \times \Omega'$, such that the marginals of its absolute value $|\nu|$ are absolutely continuous with respect to $\mu$ and $\mu'$ respectively. We denote by $T_\nu$ the operator given by $\int (T_\nu f) g = \int f(\omega) g(\omega') d\nu(\omega,\omega')$. This expression a priori makes sense only for bounded measurable $f,g$ (and by the absolute continuity assumption does not change if $f$ and $g$ are changed on measure $0$ subsets), therefore it a priori only defines a bounded operator $L^\infty(\Omega) \to L^1(\Omega')$. But $T_\nu$ extends to a regular operator $L^r(\Omega) \to L^r(\Omega')$ if and only if $T_{|\nu|}$ extends to a bounded operator $L^r(\Omega) \to L^r(\Omega')$, and $\|T_\nu\|_{reg} = \|T_{|\nu|}\|_{L^r(\Omega) \to L^r(\Omega')}$. When $\Omega$ and $\Omega'$ are finite measure spaces (or even when $\nu$ is absolutely continuous with respect to $\mu \otimes \mu'$), this is a classical fact, see for example \cite[\S 1.3]{MR2732331}. The general case follows by discretization.
\end{example}
In this article, we almost only consider $r=2$. In general, the question of estimating $\|T\|_{B(L^2(\Omega;E),B(L^2(\Omega';E)))}$ is often related to type and cotype properties of $E$. Recall (see \cite[\S 10.4]{MR3617459} and the references therein), that a space has type $p$ and cotype $q$ if there is a constant $C$ such that
\[
C^{-1} (\sum_i \|x_i\|^q)^{\frac 1 q} \leq \mathbb E\| \sum_i \varepsilon_i x_i\| \leq C (\sum_i \|x_i\|^p)^{\frac 1 p}
\]
for every finitely supported sequence $(x_i)$ in $E$, where the $\varepsilon_i$ are independent variables uniformly distributed in $\{-1,1\}$. Every Banach space $E$ has type $1$ and cotype $\infty$ (with $C=1$), and we say that $E$ has nontrivial type (resp. cotype) if it has type $p$ for some $p>1$ (resp. cotype $q$ for some $q<\infty$). We also say that $E$ has trivial type (resp. cotype) if it does not have nontrivial type (resp. cotype). A celebrated theorem of Maurey and Pisier \cite{MR443015, MR571585} asserts that $E$ has trivial type (resp. cotype) if and only if $E$ contains the $\ell^1_n$ (resp. $\ell^\infty_n$) uniformly, that is, there are $n$-dimensional subspaces $F_n \subset E$ and injective linear maps $u_n \colon \ell^1_n \to F_n$ (resp. $\ell^\infty_n \to F_n$) such that $\lim_n \|u_n\| \|u_n^{-1}\| = 1$. This implies that, for every $T \colon L^2(\Omega) \to L^2(\Omega')$
\begin{equation}\label{eq:regular_norm_and_type}\|T\|_{reg} = \|T\|_{B(L^2(\Omega;E),B(L^2(\Omega';E)))} \textrm{ if $E$ has trivial type}.\end{equation}

We conclude these Banach space preliminaries by recalling a result from \cite{MR3474958} that we will use later. For a Banach space $E$ and an integer $n$, let $d_n(E)$ be the supremum, over all subspaces of $E$ of dimension $n$, of its Banach-Mazur distance to $\ell^2_n$ (we set $d_n(E)=1$ if $\dim E<n$). It is known  that $E$ has nontrivial type if and only if there is $n$ such that $d_n(E)<n^{\frac 1 2}$, and in that case $\lim_n n^{-\frac 1 2} d_n(E) =0$ \cite{MR467255}, and that, if $E$ has type $p$ and cotype $q$, then there is a constant $C$ such that $d_n(E) \leq C n^{\frac 1 p - \frac 1 q}$ (combine \cite[Proposition 27.4]{MR993774} by König-Retheford-Tomczak-Jaegermann and \cite[Theorem 5.2]{MR3781331} by Pisier). Moreover we have
\begin{proposition}{\cite[(6) and Proposition 5.3]{MR3781331}}\label{prop:norm_and_Schatten} Let $E$ be a Banach space and $C>0,r>0,\varepsilon>0$ such that $d_n(E) \leq C n^{\frac 1 r - \varepsilon}$ for every $n$. Then there is a constant $L$ (depending on $r,C,\varepsilon$) such that for every operator $T\in S^r(L^2(\Omega),L^2(\Omega'))$
\[
\|T\|_{B(L^2(\Omega;E),B(L^2(\Omega';E)))} \leq L \|T\|_{S^r}.
\]
\end{proposition}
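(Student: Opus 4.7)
The plan is to reduce $\|T\otimes\mathrm{id}_E\|_{B(L^2(\Omega;E),L^2(\Omega';E))}$ to the Schatten norm $\|T\|_{S^r}$ by combining a singular value decomposition of $T$ with a dyadic splitting along the singular values and Hölder's inequality; the hypothesis $d_n(E)\leq Cn^{1/r-\varepsilon}$ enters through a single rank-$n$ estimate.

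The first step is a rank-$n$ estimate: for every $T\colon L^2(\Omega)\to L^2(\Omega')$ of rank at most $n$, I aim to show
\[
\|T\otimes\mathrm{id}_E\|_{B(L^2(\Omega;E),L^2(\Omega';E))}\leq d_n(E)^2\,\|T\|_{S^\infty}.
\]
Writing $T=VDU$ by SVD, where $U\colon L^2(\Omega)\to\ell^2_n$ is a partial isometry (orthogonal projection onto the span of the right singular vectors), $V\colon\ell^2_n\to L^2(\Omega')$ is an isometric embedding whose range is spanned by orthonormal functions $v_1,\dots,v_n$, and $D$ is diagonal, one has $\|D\otimes\mathrm{id}_E\|=\|T\|_{S^\infty}$ trivially. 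The key observation is $\|V\otimes\mathrm{id}_E\|\leq d_n(E)$: given $(x_1,\dots,x_n)\in E^n$, the subspace $F=\mathrm{span}(x_1,\dots,x_n)\subseteq E$ has dimension at most $n$, so there is an isomorphism $\phi\colon F\to\ell^2_n$ with $\|\phi^{-1}\|=1$ and $\|\phi\|\leq d_n(E)$; setting $y_i=\phi(x_i)$, orthonormality of the $v_i$ yields
\[
\int_{\Omega'}\bigl\|\sum_i v_i(\omega)y_i\bigr\|_{\ell^2_n}^2\,d\omega=\sum_i\|y_i\|_{\ell^2_n}^2\leq d_n(E)^2\sum_i\|x_i\|_E^2,
\]
and $\|V\otimes\mathrm{id}_E(x)(\omega)\|_E\leq\|\sum_i v_i(\omega)y_i\|_{\ell^2_n}$ since $\|\phi^{-1}\|=1$. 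A dual argument controls $U\otimes\mathrm{id}_E$ and produces the second factor of $d_n(E)$.

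The second step is a dyadic decomposition along the singular values $s_1\geq s_2\geq\cdots$ of $T$: write $T=\sum_{j\geq 0}T_j$ where $T_j$ collects the singular values indexed by $(2^{j-1},2^j]$, so that $T_j$ has rank at most $2^{j-1}$ and $\|T_j\|_{S^\infty}\leq s_{2^{j-1}}$. Combining Step~1 with the hypothesis gives $\|T_j\otimes\mathrm{id}_E\|\leq C^2\,2^{2j(1/r-\varepsilon)}s_{2^{j-1}}$. Setting $a_j=2^{j/r}s_{2^{j-1}}$, so that $\sum_j a_j^r\asymp\|T\|_{S^r}^r$, the summation reduces to Hölder's inequality applied to $\sum_j 2^{j(1/r-2\varepsilon)}a_j$.

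The main obstacle I expect is that the rank-$n$ estimate above produces a factor of $d_n(E)^2$, and the Hölder step then closes only under the stronger assumption $\varepsilon>1/(2r)$ rather than $\varepsilon>0$. To cover the full range of the statement, the rank-$n$ bound has to be sharpened to an exponent $1+\delta$ for $\delta>0$ arbitrarily small. I would attempt this by interpolating the crude $d_n(E)^2$ estimate against the unconditional bound $\|T\otimes\mathrm{id}_E\|\leq\|T\|_{S^2}$, which follows from Cauchy--Schwarz applied to the integral kernel of $T$ and holds for every Banach space $E$; complex interpolation along the scale of Schatten classes should convert the product of a geometry-sensitive estimate and an unconditional Hilbert--Schmidt estimate into an exponent $d_n(E)^{1+\delta}$ sufficient to close the Hölder summation for any $\varepsilon>0$. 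This balancing is the genuine technical core of the proposition.
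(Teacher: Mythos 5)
First, a remark on context: the paper does not prove this proposition at all — it is imported from the cited work of Pisier — so the question is whether your elementary argument recovers that result. Your skeleton (dyadic decomposition along the singular values, a rank-$n$ vector-valued estimate in terms of $d_n(E)$, then summation using $s_m\leq m^{-1/r}\|T\|_{S^r}$) is indeed the natural route, and several pieces are correct: the synthesis estimate $\|V\otimes\mathrm{id}_E\|\leq d_n(E)$, the universal bound $\|T\otimes\mathrm{id}_E\|\leq\|T\|_{S^2}$, and the dyadic bookkeeping. But the proposal does not prove the stated proposition, for two reasons.

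First, in Step 1 the ``dual argument'' for the analysis factor does not produce a second factor of $d_n(E)$: pairing against the unit ball of $\ell^2_n(E^*)$ and applying your synthesis estimate to $U^*\colon\ell^2_n\to L^2(\Omega)$ gives $\|U\otimes\mathrm{id}_E\|\leq d_n(E^*)$, and $n$-dimensional subspaces of $E^*$ are the duals of $n$-dimensional quotients of $E$, so $d_n(E^*)$ is not controlled by the hypothesis (there is no general inequality $d_n(E^*)\lesssim d_n(E)$; passing through type/cotype duality loses further factors). Second, and more seriously, the repair you describe as the technical core provably cannot work. Interpolating the rank-$n$ bound $\|T\otimes\mathrm{id}_E\|\leq d_n(E)^2\|T\|_{S^\infty}$ against $\|T\otimes\mathrm{id}_E\|\leq\|T\|_{S^2}$ yields, for $\theta\in[0,1]$, the bound $d_{n}(E)^{2\theta}\|\cdot\|_{S^{p_\theta}}$ with $\frac{1}{p_\theta}=\frac{1-\theta}{2}$; applied to the block $T_j$ (rank $\leq 2^{j}$, $\|T_j\|_{S^{p_\theta}}\leq 2^{j/p_\theta}s_{2^{j-1}}$, $s_{2^{j-1}}\lesssim 2^{-j/r}\|T\|_{S^r}$) the exponent of $2^{j}$ is $2\theta(\frac1r-\varepsilon)+\frac{1-\theta}{2}-\frac1r$, an affine function of $\theta$ equal to $\frac12-\frac1r>0$ at $\theta=0$ (the case $r\leq 2$ being trivial anyway, and $r>4$ in the paper's applications) and to $\frac1r-2\varepsilon$ at $\theta=1$. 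So it is negative for some $\theta$ only when it is already negative at $\theta=1$, i.e.\ only when $\varepsilon>\frac{1}{2r}$: the Hilbert--Schmidt endpoint encodes the trivial $n^{1/2}$ loss, which is worse than $n^{1/r}$, and mixing it in can never beat the crude two-factor bound. What the stated range of $\varepsilon$ forces, and what the citation to Pisier supplies, is a rank-$n$ (equivalently Schatten) estimate of single-factor strength, essentially $\|T\otimes\mathrm{id}_E\|\leq L\,d_n(E)\|T\|$ for rank-$n$ operators; the obstruction in your factorization is precisely that the ``into coordinates'' half is only controlled by $d_n(E^*)$ or by $\sqrt n$, and overcoming this needs genuinely different tools than SVD plus interpolation with the $S^2$ bound. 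As written, your argument only yields the proposition under the stronger hypothesis $d_n(E)\leq Cn^{\frac{1}{2r}-\varepsilon}$ (or with an additional hypothesis on $E^*$), which in the paper's application would shrink the admissible class from $d_n(E)=O(n^{1/4-\varepsilon})$ to $O(n^{1/8-\varepsilon})$.
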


\subsection{A few facts about complex interpolation}\label{subsection:interpolation}
Complex interpolation of complex Banach spaces will be occasionally used. We very briefly recall some facts, and refer to standard textbooks as \cite{MR0482275} for details. 

The simplest incarnation of complex interpolation is the Riesz-Thorin theorem \cite[Theorem 1.1.1]{MR0482275}. It states that, if $(\Omega,\mu)$ and $(\Omega',\mu')$ are measure spaces, and for $i=0,1$ we have $1 \leq r_i,s_i \leq \infty$ and bounded linear operators $T_i: \LL^{r_i}(\Omega) \to \LL^{s_i}(\Omega')$ that agree on $\LL^{r_0} \cap \LL^{r_1}$, then for every $0<\theta<1$, if $\frac{1}{r_\theta} = \frac{1-\theta}{r_0}+ \frac{\theta}{r_1}$ and $\frac{1}{s_\theta} = \frac{1-\theta}{s_0}+ \frac{\theta}{s_1}$, then there is an operator $T_\theta : \LL^{r_\theta}(\Omega) \to \LL^{s_\theta}(\Omega')$ that agrees with $T_0$ (and $T_1$) on $\LL^{r_0}\cap \LL^{r_1}$, and its norm $M_\theta = \|T_\theta\|$ is log-convex. It is customary to write all operators $T_\theta$ by the same letter $T$, and in this phrasing the Riesz-Thorin theorem simply asserts that, if an operator is bounded $\LL^{r_0}\to \LL^{s_0}$ with norm $M_0$, and $\LL^{r_1}\to \LL^{s_1}$ with norm $M_1$, then it is bounded $\LL^{r_\theta}\to \LL^{s_\theta}$ with norm $\leq M_0^{1-\theta} M_1^{\theta}$.

We will also use a version of the Riesz-Thorin theorem which is valid for vector valued functions. It states that if $E,E'$ are any Banach space and if a linear operator $T$ is bounded, for $i=0,1$, $\LL^{r_i}(\Omega;E) \to \LL^{s_i}(\Omega';E')$ with norm $M_i$, then it is bounded $\LL^{r_\theta}(\Omega;E) \to \LL^{s_\theta}(\Omega';E')$ with norm $\leq M_0^{1-\theta} M_1^{\theta}$. Combine Theorem 5.1.2, Theorem 4.1.2 and Definition 2.4.3 in \cite{MR0482275}.

These two examples are two particular cases of complex interpolation, which is a way to construct, given two Banach spaces $E$ and $F$ that are compatible in the sense that they are both given with linear continuous embeddings into a common topological vector space, a family $((E,F)_\theta)_{0 \leq \theta \leq 1}$ of "intermediate" Banach spaces which can be thought of as a kind of "geometric mean of parameter $\theta$ between $E$ and $F$". For example, when $E$ is $\LL^{r_0}(\Omega)$ and $F$ is $\LL^{r_1}(\Omega)$ (both seen inside $\LL^0(\Omega)$, the space of complex measurable functions on $\Omega$ with the measure topology), then $(E,F)_\theta$ is $\LL^{r_\theta}(\Omega)$, and similarly for vector-valued spaces. A form of the Riesz--Thorin theorem is valid in this generality \cite[Theorem 4.1.2]{MR0482275}. This is not central for us, so we refer to \cite{MR0482275} for definitions and results.

\subsection{Operator spaces and their approximation properties}\label{sec:OSpreliminaries}
We briefly recall some definitions and basic facts about operator spaces that will be used in Section~\ref{sec:operator_algebras}. We refer to \cite{PisierOS,EffrosRuan} for further details.

An operator space $V$ is a subspace of $B(\cH)$ for some Hilbert space $\cH$. If $W\subset B(\cK)$ is another operator space, we denote by $V\otimes_{\min} W$ the closure of $V \otimes W$ in $B(\cH \otimes_2 \cK)$, where $\cH \otimes_2 \cK$ is the Hilbert space tensor product.

If $V,W$ are operator spaces, denote by $\CB(V,W)$ (or simply $\CB(V)$ if $V=W$) the space of completely bounded operators $V \to W$, that is, the space of linear maps such that $\mathrm{id}\otimes T \colon \cK(\ell^2) \otimes_{\min} V \to \cK(\ell^2) \otimes_{\min} W$ is bounded. Here $\cK(\ell^2) \subset B(\ell^2)$ is the operator space of all compact operators on $\ell^2$. The completely bounded norm, denoted $\|T\|_{\cb}$, is the norm of $\mathrm{id}\otimes T$. The stable point-norm topology on $\CB(V)$ is the locally convex topology on $\CB(V)$ defined by the seminorms $T \mapsto \|\mathrm{id}\otimes T(x)\|$ for $x \in \cK(\ell^2) \otimes_{\min} V$.

If $(\mathcal M,\tau)$ is a von Neumann algebra with a faithful normal trace $\tau$, the noncommutative $L^r$ space $L^r(\mathcal M,\tau)$ (or $L^r(\mathcal M)$ when $\tau$ is implicit) is the completion of $\{ x \in \mathcal M \mid \|x\|_r <\infty\}$ for the norm $\|x\|_r = (\tau((x^*x)^{r/2}))^{\frac 1 r}$. For example, when $\mathcal M$ is $B(\cH)$ with its usual trace, we recover $S^r(\cH)$. Other examples that are important for us are group von Neumann algebras. If $\Gamma$ is a discrete group, its von Neumann algebra $\VN(\Gamma)$ is the bicommutant $\lambda(\Gamma)'' \subset B(\ell^2(\Gamma))$, where $\lambda$ is the left-regular representation. It is equipped with a normal trace $\tau$ which it determined by $\tau(\lambda(\gamma)) = 1_{\gamma = e}$.

Noncommutative $L^r$ spaces admit a natural operator space structure \cite{PisierOS}, which satisfy that a map $T \colon L^r(\mathcal M) \to L^r(\mathcal N)$ is completely bounded if and only if $\mathrm{id}\otimes T$ is bounded $L^r(B(\ell^2) \overline{\otimes}\mathcal M) \to L^r(B(\ell^2) \overline{\otimes}\mathcal N)$, where $B(\ell^2) \overline{\otimes}\mathcal M$, is the von Neumann algebra tensor product, that is the von Neumann algebra generated by $B(\ell^2) \otimes \mathcal M$ inside $B(\ell^2 \otimes_2 \mathcal H)$ if $\mathcal M \subset B(\mathcal H)$. In that case, the completely bounded norm of $T$ is equal to the norm of $\mathrm{id}\otimes T$.

An operator space has the operator space approximation property (OAP) if the space of finite rank operators $V \to V$ is dense in $\CB(V)$ in the stable point-norm topology.

An operator space has the completely bounded approximation property (CBAP) if there is a constant $C$ such that every operator $T \in \CB(V)$ is a limit in the stable point norm-topology of a net of finite rank operators with completely bounded norm bounded by $C \|T\|_{\cb}$.

% ==============================================================================
\section{Hjelmslev planes and biaffine planes}\label{sec:hjemlslev_biaffine}
% ==============================================================================

In this section we discuss two independent notions that will play a role in establishing the operator estimates in the next section. Both arise by distinguishing elements of a building.

Hjelmslev planes arise when a vertex $o$ in a Euclidean building is fixed. The link of $o$ is the first example of a Hjelmslev planes. Further examples consist of the vertices that lie at a fixed distance $r$ along singular rays emanating from $o$.

The term biaffine plane is non-standard and is based on the following consideration. If in a projective plane one distinguishes a line $\ell$, one obtains an affine plane, consisting of the lines distinct from $\ell$ and the points not on $\ell$. A dual affine plane is obtained by distinguishing a point $p$ rather than a line. A \emph{biaffine plane} is obtained by distinguishing an incident pair of a point $p$ and a line $\ell$ and consists of the points not on $\ell$ and the lines that do not contain $p$. In our case this will be applied to the links of a vertex.

% ------------------------------------------------------------------------------
\subsection{Hjelmslev planes}
% ------------------------------------------------------------------------------

Let $X$ be a building of type $\tilde{A}_2$ and of order $q$. 

\subsubsection{Incidence and Gromov product}
Throughout this paragraph we will fix a base vertex $o \in X$. Virtually all the notions discussed here will depend on $o$. Our notation will not reflect this here in order to keep it clearer. Later, when the dependence becomes important, the base vertex $o$ will always be the first subscript: we will for instance write $\P_{o,s}$ instead of $\P_s$.

If $p \in \P$ is a point we let $p_s$ denote the vertex at distance $s$ from $o$ on the unique geodesic from $o$ to $p$. If $p_s = p'_s$ we write $p =_s p'$. The set $\P_s \defeq \{p_s \mid p \in \P\}$ is the set of \emph{points of level $s$}. Analogous notation applies to lines and we obtain a set $\L_s$ of \emph{lines of level $s$}. Note that the points and lines of level $1$ are simply the points and lines in the link of $o$.

\begin{definition}
A point $p_s$ and a line $\ell_s$ of level $s$ are \emph{incident} if the convex hull of $o$, $p_s$ and $\ell_s$ is a regular triangle. In that case we also say that $p$ and $\ell$ are \emph{incident of level $s$} and write $p \sim_s \ell$.
\end{definition}

The combinatorial structure formed by the triple $(\P_s, \L_s, \sim_s)$  is a prominent example of a \emph{Hjelmslev plane} for each $s$, see \cite{VanMaldeghem88,HanssensVanMaldeghem89} for a discussion in the context of Euclidean buildings. For $s \le t$ there is a projection $(\P_t,\L_t) \to (\P_s,\L_s)$ taking $x_t$ to $x_s$. A fact from the above references that we are not going to use is that system of planes together with the projections determines $X$. It will be convenient to allow $s = \infty$ where $\P_\infty = \P$ and $\L_\infty = \L$ and their elements are regarded as rays $[o,p)$ and $[o,\ell)$.

For future reference we observe the following:

\begin{lemma}\label{lem:incidence_from_infinity}
Let $p \in \P$ and $\ell \in \L$. The following are equivalent:
\begin{enumerate}
\item $p \sim_s \ell$
\item there is a $p' \in \P$ such that $p' \sim \ell$ and $p' =_s p$
\item there is an $\ell' \in \L$ such that $\ell' \sim p$ and $\ell' =_s \ell$.
\end{enumerate}
\end{lemma}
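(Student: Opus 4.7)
The directions $(2) \Rightarrow (1)$ and $(3) \Rightarrow (1)$ are immediate from the definitions: whenever $p' \sim \ell$ in the boundary, the singular rays $[o,p')$ and $[o,\ell)$ bound a Weyl chamber with tip $o$, so their initial segments up to level $s$ span a regular triangle $\Conv(o, p'_s, \ell_s)$; the hypothesis $p' =_s p$ means $p'_s = p_s$, so this triangle coincides with $\Conv(o, p_s, \ell_s)$, witnessing $p \sim_s \ell$. The argument for $(3) \Rightarrow (1)$ is identical with the roles of $p$ and $\ell$ interchanged.

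The heart of the proof is $(1) \Rightarrow (2)$. Set $T := \Conv(o, p_s, \ell_s)$, a regular triangle by hypothesis. The strategy is to produce an apartment $\Sigma^{*}$ containing both $T$ and the singular ray $[o,\ell)$. Once $\Sigma^{*}$ is available, the Weyl chamber $W^{*}$ of $\Sigma^{*}$ with tip $o$ extending $T$ has two singular boundary rays; one of these extends $[o,\ell_s]$ to infinity and therefore coincides with $[o,\ell)$ by uniqueness of a straight singular extension inside the Euclidean plane $\Sigma^{*}$, while the other extends $[o,p_s]$ to some $[o,p')$ with $p' \in \P$. Since $[o,p_s]$ is an initial segment of $[o,p')$ we automatically get $p' =_s p$, and the two boundary rays of $W^{*}$ define incident points at infinity, so $p' \sim \ell$.

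To produce $\Sigma^{*}$ I plan to apply Lemma \ref{lem:extensionroot} iteratively. By Theorem \ref{thm:isomappart} choose an apartment $\Sigma_0$ containing $[o,\ell)$, and let $Y_0 \subset \Sigma_0$ be the half-apartment bounded by the wall $H$ through $[o,\ell)$. The $s$ chambers of $T$ incident to the edge $[o,\ell_s] \subset H$ can be attached to $Y_0$ via direct applications of Lemma \ref{lem:extensionroot}; the inner chambers of $T$ are then absorbed inductively by applying the lemma to successor half-apartments bounded by walls passing through edges introduced at earlier stages. The main obstacle will be verifying that the iteration truly accumulates all chambers of $T$ inside a single apartment, rather than successively replacing portions as each new apartment is chosen. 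This is handled via two invariants: first, $Y_0$ persists as one half of every successor apartment (since $H$ remains a wall and $Y_0$ is already a complete half-plane bounded by $H$), so the ray $[o,\ell)$ is never lost; second, the chambers of $T$ all lie on a single fixed side of $H$ in any apartment containing $T$, so they can be enumerated in an order in which each new chamber is attached across the bounding wall of the current half-apartment without disturbing the earlier ones. After $s^2$ such steps the resulting apartment contains all of $T$ together with $[o,\ell)$.

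Finally, $(1) \Rightarrow (3)$ follows by applying $(1) \Rightarrow (2)$ in the dual building $\overline{X}$: the relation $p \sim_s \ell$ depends only on the unordered triangle $\Conv(o, p_s, \ell_s)$ and so transports to $\overline{X}$ after swapping the roles of $\P$ and $\L$; the conclusion provided there yields the required $\ell' \in \L$ with $\ell' =_s \ell$ and $\ell' \sim p$.
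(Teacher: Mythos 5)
Your reduction of $(1)\Rightarrow(2)$ to finding an apartment containing $T=\Conv(o,p_s,\ell_s)$ together with the ray $[o,\ell)$ is reasonable, but the iterative construction you propose for that apartment fails, and the stronger configuration you try to maintain does not even exist in general. Lemma~\ref{lem:extensionroot} only adds a \emph{single} chamber across the \emph{bounding wall} of a half-apartment. Once you have attached one bottom-row chamber $C_1$ of $T$ (the one with edge $[o,\ell_1]\subset H$) to $Y_0$, you can never attach a second bottom-row chamber $C_2$ (edge $[\ell_1,\ell_2]\subset H$) without losing something: inside any apartment containing $Y_0$, the only wall containing the edge $[\ell_1,\ell_2]$ is the wall bounding $Y_0$, so the only half-apartments to which $C_2$ is attached in the sense of Lemma~\ref{lem:extensionroot} are bounded by $H$; such a half-apartment either is the one on the $Y_0$-side, which does not contain $C_1$, or lies on the other side and does not contain $Y_0$. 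Thus your invariant that ``$Y_0$ persists'' while each new chamber is ``attached across the bounding wall of the current half-apartment'' cannot be maintained as soon as $s\ge 2$: any half-apartment containing $Y_0\cup C_1$ is bounded by a wall parallel to $H$ strictly on the $T$-side, and then $C_2$ meets it only along an edge interior to it, where the lemma does not apply (and where the conclusion is false, since in a plane an edge lies in exactly two triangles). Worse, the target itself is generally unreachable: in any apartment containing $Y_0\cup C_1$, the entire row of chambers adjacent to $H$ on the $T$-side is forced by $Y_0$ and $C_1$ (argue in the links of $\ell_1,\ell_2,\dots$, which are incidence graphs of projective planes: a $6$-cycle through a given path of length $4$ is unique because two vertices at distance $2$ have a unique common neighbour), and these forced chambers depend on the arbitrary initial choice of $\Sigma_0$, not on $T$; so for $s\ge 2$ and a generic $Y_0$ no apartment contains $Y_0\cup T$ at all.

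What is true, and what the paper's proof establishes, is obtained by growing in the opposite direction: one never holds on to an infinite half-apartment through $[o,\ell)$, but only to the finite trapezoid $Q=\Conv(o,\ell_{s+1},p_s)$, which \emph{does} lie in a half-apartment bounded by the wall through the single edge $[p_s,x]$ along which the new chamber is attached; one application of Lemma~\ref{lem:extensionroot} per level produces a regular triangle of size $s+1$ extending the previous one, agreeing with $p$ up to level $s$, and iterating gives an increasing family of triangles whose union is a sector at $o$ with boundary ray $[o,\ell)$, whose other boundary ray is the required $p'$. If you insist on your formulation, you would have to prove the existence of an apartment containing $T\cup[o,\ell)$ by some such level-by-level argument anyway, so the detour through $Y_0$ buys nothing. (A minor additional remark: your $(2)\Rightarrow(1)$ uses that two incident ideal elements are joined by a Weyl chamber with tip at the prescribed vertex $o$, which is standard but not literally the definition; since the paper also treats that direction as immediate, this is not the main issue.)
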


\begin{proof}
It suffices to prove that the first statement implies the second.  
Using induction it suffices to find $p'$ such that $p' \sim_{s+1} \ell$ and $p' =_s p$. For this purpose consider the combinatorial convex hull of $o$, $\ell_{s+1}$, and $p_{s}$. It is a combinatorial quadrangle $Q$ whose fourth vertex we denote $x$, see Figure~\ref{fig:NumberEquilateral} where the subscripts are smaller by one. By Theorem~\ref{thm:isomappart} it is contained in some apartment (and in fact in some half-apartments of this apartment). 

There are $(q+1)$ simplices $S$ attached to the edge $[p_s,x]$, $q$ of them being outside the quadrangle. By Lemma~\ref{lem:extensionroot} the set $Q\cup S$ is contained in some apartment, and therefore is in fact a regular triangle of size $s+1$. Hence we can take $p'_{s+1}$ to be any of the $q$ vertices 
incident with the edge $[p_s,x]$ not inside the quadrangle.
\end{proof}

\begin{definition}
Let $p\in \P$ be a point and let $\ell \in \L$ be a line. The \emph{Gromov product} based at $o$ is defined as
\[
(p,\ell)_o = \sup \{ s \mid p \sim_{s} \ell\} \in \N \cup \{\infty\}\text{.}
\]
\end{definition}

\begin{remark}\label{rmk:gromov} In the extreme cases we have
\begin{enumerate}
\item $(p,\ell)_o=\infty$ if and only if $p$ and $\ell$ are incident,
\item $(p,\ell)_o=0$ if and only if there is a geodesic ray from $p$ to $\ell$ passing through $o$.
\end{enumerate}
Indeed, the first case is equivalent (by Lemma~\ref{lem:incidence_from_infinity}) to the convex hull of $[o,p)$ and $[o,\ell)$ being a Weyl chamber. For the second case, the reverse implication is clear. For the direct implication, if $(p,\ell)_o=0$ then $\angle_o(p,\ell)$ being different from $60^\circ$ must be equal to $180^\circ$ (because $p_1$ and $\ell_1$ have distinct type). Hence $[o,p)\cup [o,\ell)$ is a local geodesic, and therefore a geodesic line (by the CAT(0) inequality).
\end{remark}

The basic counts in Hjelmslev planes are as follows

\begin{lemma}\label{lem:NumberEquilateral}
Let $1 \le t \le s$. Let $\ell_s \in \L_s$ be a line of level $s$ and let $p_t \in \P_t$ be a point of level $t$. Then
\[
\abs{\{p_s' \in \P_s \mid p_s' =_t p_t \text{ and } p_s' \sim_s \ell_s\}} = 
\begin{cases}
q^{s-t} & \text{ if } p_t \sim_t \ell_t\\
0 & \text{ if } p_t \not\sim_t \ell_t\text{.}
\end{cases}
\]
In particular,
\[
\abs{\{p_s' \in \P_s \mid p_s' \sim_s \ell_s\}} = (q+1)q^{s-1}\text{.}
\]

Furthermore
\[\abs{\{p_s'\in \P_s \mid p_s' =_t p_t \}}=q^{2(s-t)}
\]
The same is true with the roles of points and lines exchanged.
\end{lemma}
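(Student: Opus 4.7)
The strategy is to prove all three counts by induction on $n = s - t$, with the main combinatorial input being a fiber analysis of the projection $\P_s \to \P_{s-1}$. I would begin by recording a monotonicity principle: if $p' \sim_s \ell$, then the regular triangle with vertices $o, p'_s, \ell_s$ of side $s$ contains the subtriangle with vertices $o, p'_{s-1}, \ell_{s-1}$, so $p' \sim_{s-1} \ell$. In particular, if $p_t \not\sim_t \ell_t$ then no lift $p'_s =_t p_t$ can satisfy $p'_s \sim_s \ell_s$, which disposes of the vanishing case of the first formula.

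For the first formula with $p_t \sim_t \ell_t$, the base case $n=0$ is trivial and for the inductive step I would decompose over the intermediate projection:
\[
\{p'_s \mid p'_s =_t p_t,\ p'_s \sim_s \ell_s\} \;=\; \bigsqcup_{p'_{s-1} =_t p_t} \{p'_s \mid p'_s =_{s-1} p'_{s-1},\ p'_s \sim_s \ell_s\}.
\]
By induction the outer set contains $q^{s-1-t}$ elements with $p'_{s-1} \sim_{s-1} \ell_{s-1}$, the others giving empty inner sets by monotonicity. So it remains to prove that each nonempty inner fiber has size exactly $q$. This is the $n=1$ case, which I would establish by the quadrangle construction already used in the proof of Lemma~\ref{lem:incidence_from_infinity}: the combinatorial convex hull $Q$ of $o$, $p'_{s-1}$ and $\ell_s$ lies in an apartment by Theorem~\ref{thm:apmt_convex}, and its edge $[p'_{s-1},x]$ opposite to $[o,\ell_{s-1}]$ borders exactly $q+1$ simplices in $X$ by Lemma~\ref{lem:order}; one of them is interior to $Q$, while each of the remaining $q$ combines with $Q$ into a new apartment by Lemma~\ref{lem:extensionroot} and determines a distinct valid $p'_s$. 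Exhaustiveness follows because any $p'_s$ in the fiber forces the regular triangle $(o, p'_s, \ell_s)$ to contain $Q$ and its simplex at $[p'_{s-1},x]$ to sit on the outer side.

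The ``in particular'' formula is then obtained by setting $t=1$ and summing: the link of $o$ is a projective plane of order $q$, so the line $\ell_1$ contains exactly $q+1$ incident points $p_1$, each lifting to $q^{s-1}$ elements of $\P_s$. For the third formula I would run the same induction, reducing matters to showing that every fiber of $\P_s \to \P_{s-1}$ has size $q^2$. Geometrically, a fiber element over $p_{s-1}$ is a singular edge issuing from $p_{s-1}$ that extends the ray $[o,p_{s-1}]$; such an edge corresponds to a vertex of the link of $p_{s-1}$ lying at graph distance $3$ in the generalized hexagon from the germ of $[p_{s-1},o]$. In the incidence graph of a projective plane of order $q$, the opposite sphere of any vertex consists of the non-incident vertices of the other type, of which there are $(q^2+q+1)-(q+1)=q^2$. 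The symmetric statement with points and lines exchanged is immediate from Notation~\ref{notation:bar} by passing to the dual building $\overline X$.

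The main obstacle I anticipate is the exhaustiveness part of the quadrangle step: ensuring that every valid lift $p'_s$ really does place its attached simplex on one of the $q$ outside positions, which requires invoking Theorem~\ref{thm:apmt_convex} to force the entire parallelogram $Q$ to sit inside any putative regular triangle $(o, p'_s, \ell_s)$ of side $s$.
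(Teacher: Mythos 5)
Your proof is correct and follows essentially the same route as the paper: reduce to the one-level step by induction, analyse the quadrangle $Q$ and the $q+1$ chambers around the edge $[p_{s-1},x]$ using Lemma~\ref{lem:extensionroot} and the argument of Lemma~\ref{lem:incidence_from_infinity} (including the same convexity argument for exhaustiveness), and count the fibers of $\P_s\to\P_{s-1}$ by counting vertices opposite to the germ of $[p_{s-1},o]$ in the link, which is the paper's ``affine plane'' count of $q^2$. The only slip is a citation: the containment of the three-point hull $Q$ in an apartment should be justified by Theorem~\ref{thm:isomappart} (as in the proof of Lemma~\ref{lem:incidence_from_infinity}), not by Proposition~\ref{thm:apmt_convex}, which presupposes an apartment containing the two points.
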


\begin{figure}[htb]
\centering
\begin{tikzpicture}[scale=.7]
\node[dot] (o) at (0,0) {};
\node[dot] (ls) at (120:5) {};
\node[dot] (pt) at (60:4) {};
\node[dot] (x) at ($(60:4) + (120:1)$) {};
\draw (o.center) -- (ls.center) -- (x.center) -- (pt.center) -- cycle;
\draw[dashed] (120:4) -- (60:4);
\coordinate (p) at ($(x) + (-8:.8)$);
\coordinate (pprime) at ($(x) + (10:1.1)$);
\draw (x) -- (p) -- (pt);
\draw (x) -- (pprime) -- (pt);
\draw[dotted] (p) -- (pprime);
\node[anchor=south east] at (ls) {$\ell_s$};
\node[anchor=west] at (pt) {$p_{s-1}$};
\node[anchor=north] at (o) {$o$};
\node[anchor=south] at (x) {$x$};
s\end{tikzpicture}
\caption{The configuration in  Lemma~\ref{lem:incidence_from_infinity} and Lemma~\ref{lem:NumberEquilateral} }
\label{fig:NumberEquilateral}
\end{figure}
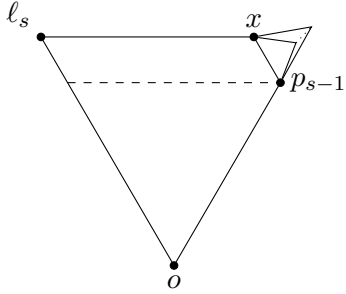

\begin{proof}
The second statement follows from the first since the number of points $p_1'$ of level $1$ incident with a line $\ell_1$ is $q+1$, as it is the number of simplices attached to the edge $[o \ell_1].$

For the first statement, remark first that if $p_t\not \sim \ell_t$, then it is clear that we cannot have $p_s' =_t p_t \text{ and } p_s' \sim \ell_s$. If $p_t\sim \ell_t$, then an induction (on $s$) will conclude
once we have proven the statement for $s = t+1$, so we consider this case. The combinatorial convex hull of $o$, $\ell_s$ and $p_t$ is a quadrangle whose fourth point $x$ is the last vertex of the open segment $(\ell_s,p_s)$, see Figure~\ref{fig:NumberEquilateral}. There are  $q$ vertices in the link of the edge $[p_{s-1},x]$ that are not the one contained in the quadrangle.  With the same argument as in the proof of Lemma~\ref{lem:incidence_from_infinity}, any vertex $p'_s$ in this link satisfies $p'_s\sim \ell_s$. Conversely, if $p'_s$ satisfies $p'_s\sim \ell_s$ and $p'_s=_{s-1} p_{s-1}$ then the regular triangle $o\ell_s p_s$ contains $p_{s-1}$, hence the edge $[p_{s-1},x]$. So $p'_s$ must be in the link of the edge $[p_{s-1},x]$.

For the last statement, by induction on $s$ it suffices to treat the case when $s=t+1$. Choosing a point $p_{t+1}$ amounts to choosing a vertex in the link of $p_t$ which is opposite to $p_{t-1}$. There are $q^2$ such vertices, as it is the number of points in an affine plane of order $q$.
\end{proof}

\subsubsection{Calculation of combinatorial distances}

In this section we use the Gromov product to calculate the combinatorial distance between various points.

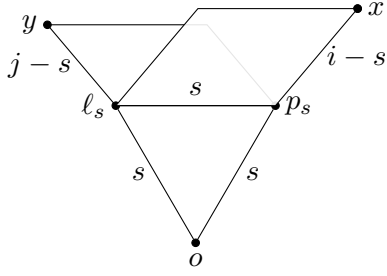
\begin{figure}[htb]
\centering
\begin{tikzpicture}[scale=.7]
\def\i{5.4}
\def\j{5}
\def\s{3}
\node[dot] (o) at (0,0) {};
\node[dot] (y) at ($(120:\s) + (130:\j-\s)$) {};
\coordinate (x) at ($(60:\s) + (50:\i-\s)$);
\node[dot] (x) at ($(60:\s) + (50:\i-\s)$) {};
\node[dot] (ps) at (60:\s) {};
\node[dot] (ls) at (120:\s) {};
%\node[dot] (xa) at (60:\i) {};
\coordinate (a) at ($(120:\s) + (50:\i-\s)$);
\coordinate (b) at ($(60:\s) + (130:\j-\s)$);
\draw (o) -- (ls);
\draw (o) -- (ps);
\path[draw=black] (ls.center) -- (y.center) -- (b) -- (ps.center);
\path[draw=black,fill=white,fill opacity=.9] (x.center) -- (ps.center) -- (ls.center) -- (a) -- cycle;
\node[dot] at (x) {};
\draw (ps) edge node[anchor=south] {$s$} (ls);
\node[anchor=east] at (y) {$y$};
\node[anchor=east] at (ls) {$\ell_s$};
\node[anchor=west] at (x) {$x$};
\node[anchor=west] at (ps) {$p_s$};
\node[anchor=north] at (o) {$o$};
\node[anchor=west] at (60:\s/2) {$s$};
\node[anchor=east] at (120:\s/2) {$s$};
\node[anchor=west] at ($1/2*(ps) + 1/2*(x)$) {$i-s$};
\node[anchor=east] at ($1/2*(ls) + 1/2*(y)$) {$j-s$};
\end{tikzpicture}
\caption{The configuration in Lemma~\ref{lem:sigma_gromov}. Shown are three half-apartments that meet in a line containing the segment $[p_s,\ell_s]$. The upper two contain $x$ and $y$. The front two contain $o$, $\ell_s$ and $x$. The back two contain $o$, $p_s$ and $y$.}
\label{fig:sij}
\end{figure}

\begin{lemma}\label{lem:sigma_gromov}
Let $i,j \in \N$. Let $o \in X$, $p \in \P$, and $\ell \in \L$ and put $x = p_{o,i}$ and $y = \ell_{o,j}$ and $s = (p,\ell)_o$. Then $\sigma(x,y) = (t, i+j - 2t)$ where $t = \min\{i,j,s\}$.
\end{lemma}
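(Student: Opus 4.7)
The plan splits on $s \gtreqless \min(i, j)$. In the easy regime $s \geq \min(i, j)$, Lemma~\ref{lem:incidence_from_infinity} allows one to reduce to the case $p \sim \ell$: if $s \geq i$ it provides $p' \sim \ell$ with $p' =_i p$, preserving $x = p_i$ while promoting the Gromov product to $\infty$, and symmetrically it provides an incident $\ell' =_j \ell$ if $s \geq j$. In the resulting incident configuration both $[o, p)$ and $[o, \ell)$ bound a common Weyl chamber at $o$ inside a single apartment $\Sigma$, where $x$ and $y$ sit on the two coordinate axes at distances $i$ and $j$. Writing $y - x$ as a non-negative combination of two adjacent singular directions at $x$ then yields $\sigma(x, y) = (\min(i, j), |i - j|)$, which matches the target $(t, i + j - 2t)$ with $t = \min(i, j) = \min(i, j, \infty)$.

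In the hard regime $s < \min(i, j)$, so $t = s$ and the target is $\sigma(x, y) = (s, i + j - 2s)$, the strategy is to exhibit an apartment $\Sigma'$ containing the four vertices $\{p_s, \ell_s, x, y\}$ and then perform the calculation there. The decisive geometric input is that $[p_s, x]$ is the straight continuation of $[o, p_s]$ past $p_s$, so the CAT(0) angle at $p_s$ between $[p_s, x]$ and $[p_s, \ell_s]$ equals $\pi - \pi/3 = 2\pi/3$ (the supplement of the interior angle of the equilateral triangle $o p_s \ell_s$ at $p_s$), and symmetrically at $\ell_s$. Placing lattice coordinates in $\Sigma'$ with $p_s = (0, 0)$, $[p_s, \ell_s]$ in the singular direction $e_2 - e_1$ (so $\ell_s = (-s, s)$), and $[p_s, x]$ in direction $+e_1$ on the side of the wall through $[p_s, \ell_s]$ opposite to the (absent) $o$, I obtain $x = (i - s, 0)$ and $y = (-j, s)$, hence $y - x = s(e_2 - e_1) + (i + j - 2s)(-e_1)$. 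A quick trace of the alternating type pattern among the six neighbors of $x$ (which has type $i \bmod 3$) identifies $e_2 - e_1$ as a $\P$-direction and $-e_1$ as an $\L$-direction from $x$, yielding $\sigma(x, y) = (s, i + j - 2s)$. The degenerate sub-case $s = 0$ is separate: there $[o, p) \cup [o, \ell)$ is a single geodesic line through $o$, the vertices $x$ and $y$ lie at distance $i + j$ on opposite sides of $o$, and $\sigma(x, y) = (0, i + j)$ directly in any apartment containing the line.

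The main obstacle is constructing $\Sigma'$ in the hard case: the rays $[o, p)$ and $[o, \ell)$ leave any apartment containing the triangle $o p_s \ell_s$ precisely at $p_s$ and $\ell_s$, following one of the $q^2 - 1$ non-apartment antipodes in each link. My plan is to first secure an apartment $\Sigma_\ell$ containing $o, p_s$ and the entire ray $[o, \ell)$ (by embedding the half-strip consisting of the Weyl sector at $o$ bounded by the extension of $[o, p_s]$ and $[o, \ell)$ via Theorem~\ref{thm:isomappart}), then take the half-apartment of $\Sigma_\ell$ opposite to $o$ across the wall through $[p_s, \ell_s]$, and iteratively adjoin simplices of $[p_s, x]$ past $p_s$ via Lemma~\ref{lem:extensionroot}. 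Verifying at each step that the simplex to be adjoined shares an edge with the growing half-apartment will be the main combinatorial task, with the thickness of the building providing the needed flexibility. An alternative I might pursue if the iterative construction becomes fragile is to compute the pairwise distances among $\{p_s, \ell_s, x, y\}$ directly using $\Sigma_\ell$ and the analogous apartment $\Sigma_p$ containing $[o, p)$ and $\ell_s$, check that they match a Euclidean configuration with the prescribed $2\pi/3$ angles, and invoke Theorem~\ref{thm:isomappart}.
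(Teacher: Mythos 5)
Your easy regime ($s \ge \min(i,j)$, handled via Lemma~\ref{lem:incidence_from_infinity}) and the degenerate case $s=0$ (via Remark~\ref{rmk:gromov}) are fine, and the reduction to an incident pair is arguably slicker than the paper's treatment of the sub-case $s\ge i$, $s<j$. The problem is the hard case $0<s<\min\{i,j\}$: your argument never uses that $s$ is the \emph{maximal} level of incidence. Every input you invoke there --- the equilateral triangle $o\,p_s\,\ell_s$, the $2\pi/3$ angles at $p_s$ and $\ell_s$ (which, incidentally, need the link/type-parity argument: the CAT(0) triangle inequality alone only gives $\ge 2\pi/3$) --- already holds whenever $p\sim_s\ell$, i.e.\ whenever $(p,\ell)_o\ge s$. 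If your construction of $\Sigma'$ and the ensuing coordinate computation were valid, they would yield $\sigma(x,y)=(s,i+j-2s)$ for \emph{any} $s$ with $p\sim_s\ell$ and $s<\min\{i,j\}$; taking $p\sim\ell$, $i=j=2$ and "$s$"$=1$ this contradicts the incident case, where $\sigma(x,y)=(2,0)$. So the maximality of $s$ must enter somewhere, and in the paper it enters exactly once: the rays from $p_s$ toward $p$ (through $x$) and toward $\ell$ make angle $\pi$, since otherwise $p\sim_{p_s,1}\ell$ and hence $p\sim_{o,s+1}\ell$; similarly at $\ell_s$. This is what glues the two combinatorial parallelograms $\Conv(x,\ell_s)$ and $\Conv(p_s,y)$ into one flat parallelogram, to which Theorem~\ref{thm:isomappart} is then applied.

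Concretely, the step of your plan that breaks is the extension of the half-apartment. Lemma~\ref{lem:extensionroot} requires the adjoined simplex to share an \emph{edge} with the half-apartment, but the first triangle of $[p_s,x]$ beyond $p_s$ meets the closed half-apartment $H\subset\Sigma_\ell$ a priori only in the vertex $p_s$: in $\lk(p_s)$ the direction toward $p_{s+1}$ is at distance $2$ from the direction toward $\ell_s$ (your $2\pi/3$), and it would have to be adjacent to the prolongation of the wall of $\Sigma_\ell$ beyond $p_s$ --- but that prolongation inside $\Sigma_\ell$ need not coincide with the wall of the apartment you are aiming for, and such an adjacency simply need not hold (indeed, when $(p,\ell)_o>s$ no apartment can contain $H\cup[p_s,x]$ in the position you prescribe, so an obstruction of exactly this kind must occur). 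Even granted some apartment through $\{p_s,\ell_s,x,y\}$, knowing only the $2\pi/3$ angles leaves two possible directions for $[p_s,x]$ (one on each side of the wall), and choosing the correct one again requires the maximality of $s$. Your fallback does not repair this: the pairwise distances among $\{p_s,\ell_s,x,y\}$ other than $d(x,y)$ do not determine $d(x,y)$ (which is essentially what the lemma computes), and Theorem~\ref{thm:isomappart} needs a set that is convex in $X$ or two-dimensional, so applying it presupposes the isometric planar embedding you are trying to establish. To fix the proof, follow the paper: build the two parallelograms $\Conv(x,\ell_s)$ and $\Conv(p_s,y)$, use $(p,\ell)_o=s$ (not merely $\ge s$) to force the angle-$\pi$ alignments at $p_s$ and $\ell_s$, and only then invoke Theorem~\ref{thm:isomappart} on their union.
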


\begin{proof}
First, using Theorem \ref{thm:isomappart} there exists an apartment containing $o,p_s$ and $\ell_s$. 

Assume first that $\min\{i,j,s\}=i$ (the case when $\min\{i,j,s\}=j$ being similar).
If $s \geq i$ and $s\geq j$ then it follows that this apartment also contains $x$ and $y$. In this apartment one can verify directly that $\sigma(x,y)=(i,j-i)$.

Assume now that $s\geq i$ and $s<j$. Again there is an apartment containing the triangle $T$ with vertices $o,p_s$ and $\ell_s$ (and therefore containing $x$). Applying twice  Lemma~\ref{lem:extensionroot} we find an apartment containing $T$ and $\ell_{s+1}$. By successive applications of this lemma we get that there is an apartment containing $T$ and $\ell_j=y$. Again in this apartment it is easy to see that $\sigma(x,y)=(i,j-i).$

So assume $s < i$ and $s<j$. By the same successive applications of Lemma~\ref{lem:extensionroot} as above, we see that there exists an apartment containing $o,\ell_s$ and $x$ (and therefore $p_s$). In this apartment we see that the convex hull of $x$ and $\ell_s$ is a parallelogram that has $p_s$ as a vertex, and we call the fourth vertex $a$. Similarly, the convex hull of $y$ and $p_s$ is a parallelogram with vertices $\ell_s$ and $b$, say (see Figure~\ref{fig:sij}).

The geodesic ray from $p_s$ to $p$ contains $x$ while the geodesic ray from $p_s$ to $\ell$ contains $b$. Since $(p,\ell)_o = s$ these rays include an angle of $\pi$: otherwise we would have $p \sim_{p_s,1} \ell$ meaning that $p \sim_{o,s+1} \ell$. Similarly the ray from $\ell_s$ to $p$ contains $a$ and the ray from $\ell_s$ to $\ell$ contains $y$ and these rays include an angle of $\pi$. It follows that $[x,b]$ contains $p_s$, that $[a,y]$ contains $\ell_s$.  Therefore the union of the two parallelogramps $axp_s\ell_s$ and $p_s\ell_syb$ is a parallelogram, which is contained in an apartment by Theorem~\ref{thm:isomappart}. Hence it is the convex hull of $x$ and $y$. In an apartment containing this convex hull it is readily verified that
\begin{multline*}
\sigma(x,y) = (d(x,a),d(x,b)) = (d(p_s,\ell_s),d(x,p_s) + d(p_s,b))\\
= (s, d(x,p_s) + d(\ell_s,y)) = (s, i-s + j -s)
\end{multline*}
as claimed.
\end{proof}

\begin{lemma}\label{lem:general_pl}
Let $o,x,y \in X$. If $\sigma(o,x) = (i,0)$ and $\sigma(o,y) = (0,j)$ then there exist $p \in \P$ and $\ell \in \L$ such that $x = p_{i}$, $y = \ell_{j}$, and $(p,\ell)_o \le \min\{i,j\}$.
\end{lemma}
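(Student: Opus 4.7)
The plan is to first extend the singular segments $[o,x]$ and $[o,y]$ to arbitrary rays to the boundary, and then, if the resulting Gromov product exceeds $\min\{i,j\}$, to modify one of the extensions at a single level so as to bring it down.

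Since $\sigma(o,x)=(i,0)$ and $\sigma(o,y)=(0,j)$, the segments $[o,x]$ and $[o,y]$ are singular. Embedding each in an apartment (Theorem~\ref{thm:isomappart}) and extending within this apartment produces some $p'\in\P$ with $p'_i=x$ and some $\ell'\in\L$ with $\ell'_j=y$. Assume without loss of generality that $i\le j$, so $\min\{i,j\}=i$. If $(p',\ell')_o\le i$, the conclusion holds with $p=p'$ and $\ell=\ell'$.

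Otherwise $(p',\ell')_o\ge i+1$, which by definition means $p'_{i+1}\sim_{i+1}\ell'_{i+1}$. Because $j\ge i+1$, the vertex $\ell'_{i+1}$ is determined by $y$ alone, while we are still free to alter the extension of $x$ past level $i$. Supposing first $i\ge 1$, Lemma~\ref{lem:NumberEquilateral} (last statement) gives exactly $q^{2}$ vertices $p_{i+1}\in\P_{i+1}$ with $(p_{i+1})_i=x$, and by the first statement of the same lemma at most $q$ of them are incident with $\ell'_{i+1}$. Since every line of a projective plane is incident with at least three points we have $q\ge 2$, hence $q^{2}-q>0$ choices of $p_{i+1}$ satisfy $(p_{i+1})_i=x$ and $p_{i+1}\not\sim_{i+1}\ell'_{i+1}$. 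Prolonging any such $p_{i+1}$ to a ray (by the same apartment argument as above) yields $p\in\P$ with $p_i=x$; and the non-incidence at level $i+1$ forces $(p,\ell')_o\le i$, so that $p$ and $\ell=\ell'$ are as required. The borderline case $i=0$ is handled in the same spirit directly in the link of $o$, a projective plane of order $q$: among its $q^{2}+q+1$ points, only $q+1$ are incident with $\ell'_1$, leaving $q^{2}$ non-incident choices for $p_1$, which then extends to the desired $p$.

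No serious obstacle is expected: the entire argument is a counting application of Lemma~\ref{lem:NumberEquilateral} combined with the standard fact that any singular segment issuing from $o$ can be prolonged to a singular ray inside an apartment. The only mildly delicate point is to notice that, for $i\le j$, the level $i+1$ at which $\ell'$ is rigid is precisely the first level at which we can still perturb $p'$ freely, so one single counting inequality suffices.
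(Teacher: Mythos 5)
Your proof is correct and is essentially the paper's argument: fix one extension arbitrarily and use the counting of Lemma~\ref{lem:NumberEquilateral} ($q^{2}$ extensions at level $\min\{i,j\}+1$ versus at most $q$ incident ones) to choose the other extension so as to break incidence at that level. The only differences are cosmetic — the paper takes $i\ge j$ and perturbs the line side rather than the point side, and you additionally spell out the level-$0$ boundary case.
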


\begin{proof}
We assume without loss of generality $i \ge j$. Let $p \in \P$ be such that $x = p_{i}$. Let $\ell \in \L$ be such that $y = \ell_{j}$ and $p \not\sim_{{j+1}} \ell$ which can be arranged by Lemma~\ref{lem:NumberEquilateral} : indeed, the number of $\ell'_j\in \L_{j+1}$ such that $\ell'=_j \ell_j$ is equal to $q^2$ whereas by Lemma~\ref{lem:NumberEquilateral} the number of such $\ell'$ also satisfying $p\sim_j \ell'$ is equal to either $0$ or $q$.
\end{proof}

\begin{corollary}\label{cor:Position_poi_loj}
Let $i, j\geq s$. Let $o\in X$, $p\in\P$, $\ell\in\L$ such that $(p,\ell)_o=s$. Let $x=p_{o,i}$ and $y=\ell_{o,j}$. Then $\sigma(x,y)=(s,i+j-2s)$.

Conversely, if $o, x,y\in X$ are such that $\sigma(x,y)=(s,i+j-2s)$, $\sigma(o,x)=(i,o)$ and $\sigma(o,y)=(0,j)$, then there exists $p\in\P$, $\ell\in\L$ such that $(p,\ell)_o=s$ and $x=p_{o,i}$, $y=\ell_{o,j}$.
\end{corollary}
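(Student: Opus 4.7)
The plan is to deduce both implications directly from the two preceding lemmas; no additional geometric work seems necessary.

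For the direct implication, I would observe that when $(p,\ell)_o = s$ and $i,j \geq s$, the minimum $\min\{i,j,s\}$ equals $s$, so Lemma~\ref{lem:sigma_gromov} applied with $x = p_{o,i}$ and $y = \ell_{o,j}$ gives $\sigma(x,y) = (s, i+j-2s)$ immediately.

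For the converse, starting from $o, x, y$ satisfying the hypothesized $\sigma$-values, my plan is to first apply Lemma~\ref{lem:general_pl} to produce $p \in \P$ and $\ell \in \L$ with $x = p_{o,i}$, $y = \ell_{o,j}$, and $(p,\ell)_o \leq \min\{i,j\}$. Setting $s' = (p,\ell)_o$, the inequality $s' \leq \min\{i,j\}$ gives $\min\{i,j,s'\} = s'$, so Lemma~\ref{lem:sigma_gromov} yields $\sigma(x,y) = (s', i+j-2s')$. Comparing first coordinates against the hypothesis $\sigma(x,y)=(s, i+j-2s)$ then forces $s' = s$, i.e., $(p,\ell)_o = s$, and the existence claim is proved.

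The point to be careful about is that the extensions of the segments $[o,x]$ and $[o,y]$ to singular rays $[o,p)$ and $[o,\ell)$ are not unique, so a naive choice of extension need not satisfy $(p,\ell)_o = s$: if one simply extended within an apartment containing $o$, $x$ and $y$, one could conceivably land on a pair $(p,\ell)$ with $(p,\ell)_o > s$ (for instance when $\min\{i,j\} = s$). The role of Lemma~\ref{lem:general_pl} is precisely to supply an extension in the regime $(p,\ell)_o \leq \min\{i,j\}$, in which Lemma~\ref{lem:sigma_gromov} reads the Gromov product off of the first coordinate of $\sigma(x,y)$ unambiguously. Beyond this bookkeeping, I do not anticipate any substantive obstacle.
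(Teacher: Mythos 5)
Your proof is correct and follows exactly the paper's argument: the direct implication is the special case $t=\min\{i,j,s\}=s$ of Lemma~\ref{lem:sigma_gromov}, and the converse combines Lemma~\ref{lem:general_pl} with Lemma~\ref{lem:sigma_gromov} to force $(p,\ell)_o=s$ from the first coordinate of $\sigma(x,y)$. Your remark on why the bound $(p,\ell)_o\le\min\{i,j\}$ from Lemma~\ref{lem:general_pl} is essential is exactly the point implicit in the paper's proof.
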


\begin{proof}
The first claim is a special case of Lemma~\ref{lem:sigma_gromov}. For the second one take $p$ and $\ell$ as in Lemma~\ref{lem:general_pl}. Then in Lemma~\ref{lem:sigma_gromov} we have $t = s$ and the second claim follows.
\end{proof}

The following is a variant of Lemma~\ref{lem:general_pl} for two adjacent basepoints.

\begin{lemma}\label{lem:Position_poi_loj_p'oi}
Let $i \ge j\geq s>0$. 
Let $o \in X$, $p\in\P$, $\ell\in\L$ be such that $(p,\ell)_o=s$. Let $x=p_{o,i}$ and $y=\ell_{o,j}$.
Let further $o' \in X$ be such that $\sigma(o,o')=(0,1)$ and $p \sim_{o,1} o'$. Let $x'=p_{o',i-1}$.

Then
\[
\sigma(x,y)=(s,i+j-2s)\text{,} \quad \sigma(x,x') =(1,0)
\]
and
\begin{equation*}
\sigma(x',y) = \begin{cases}(s-1,i+j-2s) & \textrm{if }\ell_{o,1} = o'\text{,}\\
(s+1,i+j-2s-1) & \textrm{if }\ell_{o,1} \neq o' \textrm{ and }i>s\text{,}\\
(s,1) & \textrm{if }\ell_{o,1} \neq o' \textrm{ and }i=s\text{.}
\end{cases}
\end{equation*}
\end{lemma}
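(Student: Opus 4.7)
The formulas $\sigma(x,y)=(s,i+j-2s)$ and $\sigma(x,x')=(1,0)$ are immediate. For the first, apply Lemma~\ref{lem:sigma_gromov} with $t=\min\{i,j,s\}=s$, using $s\le j\le i$. For the second, I would fix an apartment $\Sigma$ containing $o$, $o'$, and $[o,p_{o,i}]$ (existing by Theorem~\ref{thm:isomappart}), with coordinates $o=(0,0)$, $p_{o,k}=(k,0)$, and $o'=(0,1)$---consistent because $p\sim_{o,1}o'$ makes $\triangle(o,p_{o,1},o')$ a regular triangle. A parallel-ray argument then identifies $x'=(i-1,1)$, and $\sigma((i,0),(i-1,1))=(1,0)$ follows by type inspection.

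\textbf{Case 1} ($\ell_{o,1}=o'$). Since $o'$ lies on $[o,\ell)$, one has $\ell_{o',k}=\ell_{o,k+1}$ for all $k\ge 0$, and in particular $y=\ell_{o',j-1}$ is also a line-ray vertex from $o'$. The plan is to show $(p,\ell)_{o'}=s-1$ and invoke Lemma~\ref{lem:sigma_gromov} with base $o'$. The lower bound $(p,\ell)_{o'}\ge s-1$ is witnessed by the sub-triangle $\triangle(o',p_{o',s-1},\ell_{o',s-1})$ of side $s-1$ sitting inside the regular triangle $\triangle(o,p_{o,s},\ell_{o,s})$. For the upper bound, if $(p,\ell)_{o'}\ge s$, I would start from an apartment containing the regular triangle $\triangle(o',p_{o',s},\ell_{o',s})$ and extend it via Lemma~\ref{lem:extensionroot} (or Theorem~\ref{thm:isomappart}) to include both $o$ and $p_{o,s+1}$. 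In the extended apartment, $\ell_{o,s+1}=\ell_{o',s}$ (since $o'=\ell_{o,1}$), $p_{o,s+1}$ lies parallel to $p_{o',s}$, and $\triangle(o,p_{o,s+1},\ell_{o,s+1})$ becomes a regular triangle of side $s+1$, contradicting $(p,\ell)_o=s$. Applying Lemma~\ref{lem:sigma_gromov} with base $o'$, indices $i-1,j-1$, and Gromov product $s-1$ (and using $i-1,j-1\ge s-1$) then yields $\sigma(x',y)=(s-1,i+j-2s)$.

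\textbf{Cases 2 and 3} ($\ell_{o,1}\ne o'$). The key new feature is that $y$ no longer lies on any singular ray from $o'$, so Lemma~\ref{lem:sigma_gromov} cannot be used with $o'$ as base. Instead I would work in an apartment $\Sigma$ containing $\{o,o',p_{o,i},\ell_{o,s}\}$, which exists by Theorem~\ref{thm:isomappart} after verifying the pairwise $\sigma$-distances (each computable from Lemma~\ref{lem:sigma_gromov} and the already-established formulas) match a Euclidean configuration. Choose coordinates $o=(0,0)$, $p_{o,k}=(k,0)$, $o'=(0,1)$, and---since $\ell_{o,1}\ne o'$---$\ell_{o,1}=(1,-1)$, so that $\ell_{o,k}=(k,-k)$ for $0\le k\le s$ and the rays $[o,p)$, $[o,\ell)$ issue from $o$ at Euclidean angle $60^\circ$. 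Then $x'=(i-1,1)$, and the vector $\ell_{o,s}-x'=(s-i+1,-s-1)$ lies in the Weyl chamber at $x'$ whose line-direction is $-e_1$ and point-direction is $-e_2$, yielding $\sigma(x',\ell_{o,s})=(s+1,i-s-1)$ when $i>s$ (Case 2 with $j=s$) and $\sigma(x',\ell_{o,s})=(s,1)$ when $i=s$ (Case 3, forcing $j=s$ as $s\le j\le i=s$).

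The main obstacle I expect is Case 2 with $j>s$, where $\ell_{o,j}\notin\Sigma$: if it were in $\Sigma$, the ray $[o,\ell)$ would persist in $\Sigma$ alongside $[o,p)$, forcing $p$ and $\ell$ to form a chamber at infinity of $\Sigma$ and hence $(p,\ell)_o=\infty$, a contradiction. My plan is to induct on $j$, using $\sigma(\ell_{o,j-1},\ell_{o,j})=(0,1)$ along the ray $[o,\ell)$. For the inductive step, one finds via Theorem~\ref{thm:isomappart} an apartment $\Sigma_j$ containing $\{x',\ell_{o,j-1},\ell_{o,j}\}$, and the crux is to verify that the edge $[\ell_{o,j-1},\ell_{o,j}]$ points in the line-direction of the Weyl chamber at $x'$ that contains $\ell_{o,j-1}$, so that $\sigma(x',\ell_{o,j})=\sigma(x',\ell_{o,j-1})+(0,1)$. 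Iterating from the base case $j=s$ then yields $\sigma(x',\ell_{o,j})=(s+1,i+j-2s-1)$ as claimed.
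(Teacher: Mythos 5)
Your route genuinely differs from the paper's. The paper proves all three cases at once by choosing a single apartment $\Sigma$ containing $y$ and the edge $[x,x']$ and reading the answer off the parallelogram $\Conv(x,y)$ of Lemma~\ref{lem:sigma_gromov}: in Case~1 the segment $[p_{o,s-1},x']$ runs parallel to $[p_{o,s},x]$ one step inside the parallelogram, in Case~2 the segment $[p_{o',s},x']$ runs parallel one step outside, and in Case~3 one inspects an apartment containing the triangle $o,x,y$ and $o'$. Your Case~1 (shift the basepoint to $o'$, show $(p,\ell)_{o'}=s-1$, reapply Lemma~\ref{lem:sigma_gromov}) is a correct alternative, and your coordinate computation for $j=s$ in Cases~2--3 gives the right values (small slip: for $i=s$ the vector $\ell_{o,s}-x'$ lies in the chamber spanned by the $\ell$-direction and $-e_2$, not in the one spanned by $-e_1,-e_2$, but the value $(s,1)$ is correct). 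One technical point: Theorem~\ref{thm:isomappart} cannot be invoked for finite point sets, since a finite set is neither convex nor has interior, and matching pairwise $\sigma$-distances do not by themselves yield a common apartment. For $\{x',\ell_{o,j-1},\ell_{o,j}\}$ you should instead quote the building axiom that any two simplices (here a vertex and an edge) lie in a common apartment, and for $\{o,o',p_{o,i},\ell_{o,s}\}$ you need the paper's style of argument: take an apartment containing $o,p_{o,i},\ell_{o,s}$ as in the proof of Lemma~\ref{lem:sigma_gromov} and extend the half-apartment bounded by the wall through $[o,p_{o,i}]$ across the chamber $\{o,p_{o,1},o'\}$ using Lemma~\ref{lem:extensionroot}.

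The genuine gap is the inductive step in Case~2 for $j>s$, which you yourself flag as the crux but do not prove. Verifying that $[\ell_{o,j-1},\ell_{o,j}]$ points in the line-direction of the Weyl chamber at $x'$ containing $\ell_{o,j-1}$ is essentially equivalent to the statement being proved: it requires knowing how the tail of $[o,\ell)$ sits relative to $x'$, and this is not visible in an apartment chosen only to contain $x'$ and that one edge, so as written the step risks circularity. The step is nevertheless true and can be closed purely combinatorially, with no further apartment inspection: by Lemma~\ref{CartwrightMlotkowski} applied to the pair $(x,x')$, using $\sigma(x,\ell_{o,j})=(s,i+j-2s)$ from Lemma~\ref{lem:sigma_gromov}, one has $\sigma(x',\ell_{o,j})\in\{(s,i+j-2s+1),(s+1,i+j-2s-1),(s-1,i+j-2s)\}$; applying the same lemma in $\overline{X}$ to the pair $(\ell_{o,j-1},\ell_{o,j})$ together with the inductive hypothesis $\sigma(x',\ell_{o,j-1})=(s+1,i+j-2s-2)$ gives $\sigma(x',\ell_{o,j})\in\{(s+1,i+j-2s-1),(s+2,i+j-2s-3),(s,i+j-2s-2)\}$, and the two lists intersect only in $(s+1,i+j-2s-1)$, as required. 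Alternatively, one can simply follow the paper's single-apartment argument, which avoids the induction altogether.
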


\begin{figure}[htb]
\centering
\begin{tikzpicture}[scale=.7]
\def\i{5.4}
\def\j{5}
\def\s{3}
\node[dot] (o) at (0,0) {};
\node[dot] (y) at ($(120:\s) + (130:\j-\s)$) {};
\coordinate (x) at ($(60:\s) + (50:\i-\s)$);
\node[dot] (x) at ($(60:\s) + (50:\i-\s)$) {};
\node[dot] (ps) at (60:\s) {};
\node[dot] (ls) at (120:\s) {};
%\node[dot] (xa) at (60:\i) {};
\coordinate (a) at ($(120:\s) + (50:\i-\s)$);
\coordinate (b) at ($(60:\s) + (130:\j-\s)$);
\draw (o) -- (ls);
\draw (o) -- (ps);
\path[draw=black] (ls.center) -- (y.center) -- (b) -- (ps.center);
\draw[dashed] ($(ps) + (180:1)$) -- ($(b) + (180:1)$);
\draw[dotted] (ps) -- ($(ps) + (0:1)$) -- ($(b) + (0:1)$) -- (b);
\path[draw=black,fill=white,fill opacity=.9] (x.center) -- (ps.center) -- (ls.center) -- (a) -- cycle;
\node[dot] at (x) {};
\node[dot] (oprime) at (120:1) {};
\node[dot] (xprime) at ($(x) + (180:1)$) {};
\node[dot] (odouble) at (0:1) {};
\node[dot] (xdouble) at ($(x) + (50:-1) + (0:1)$) {};
\node[anchor=east] at (y) {$y$};
\node[anchor=east] at (ls) {$\ell_s$};
\node[anchor=south] at (x) {$x$};
\node[anchor=south] at (xprime) {$x'$};
\node[anchor=west] at (xdouble) {$x''$};
\node[anchor=west] at (ps) {$p_s$};
\node[anchor=north] at (o) {$o$};
\node[anchor=east] at (oprime) {$o'$};
\node[anchor=north west] at (odouble) {$\smash{o''}\phantom{o}$};
\draw[dashed] (oprime) -- ($(ps) + (180:1)$) -- (xprime);
\draw[dotted] (o) -- (odouble) -- ($(ps) + (0:1)$) -- (xdouble) -- ($(a) + (50:-1)$);
\end{tikzpicture}
\begin{tikzpicture}[scale=.7]
\def\s{3}
\node[dot] (o) at (0,0) {};
\node[dot] (ps) at (60:\s) {};
\node[dot] (ls) at (120:\s) {};
\draw (o) -- (ls) -- (ps) -- (o);
\node[dot] (otriple) at (0:1) {};
\node[dot] (xtriple) at ($(ps) + (-60:1)$) {};
\node[anchor=east] at (y) {$\phantom{y}$};
\node[anchor=south] at (ls) {$y$};
\node[anchor=south] at (x) {$\phantom{x}$};
\node[anchor=south west] at (xtriple) {$x'''$};
\node[anchor=south] at (ps) {$x$};
\node[anchor=north] at (o) {$o$};
\node[anchor=north west] at (otriple) {$\smash{o'''}\phantom{o}$};
\draw[dashed] (o) -- (odouble) -- (xtriple) -- (ps);
\draw[dashed] (xtriple) -- ($(ls) + (120:-1)$);
\end{tikzpicture}
\caption{The configurations in Lemma~\ref{lem:Position_poi_loj_p'oi}. The left picture illustrates the first two cases $x'$ (dashed) and $x''$ (dotted). The right picture illustrates the third case $x'''$.}
\label{fig:sij'}
\end{figure}
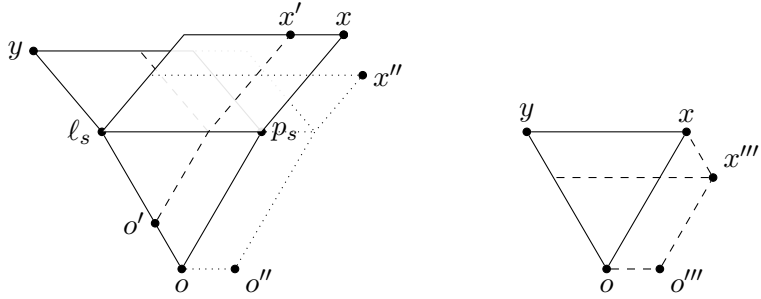

\begin{proof}
The formula for $\sigma(x,y)$ is Lemma \ref{lem:sigma_gromov}. The second claim can be easily seen in an apartment that contains the edges $[o,o']$ and $[x,x']$.

For the last claim we consider the three cases separately and compare them to the situation in Lemma~\ref{lem:sigma_gromov}, see Figure~\ref{fig:sij'}. We take $\Sigma$ to contain $y$ as well as the edge $[x,x']$. In the first case $p_{s-1}$ is the point at distance $1$ from $p_s$ on the segment $[p_s,\ell_s]$. Hence, in $\Sigma$ the segment $[p_{s-1},x']$ is parallel to $[p_s,x]$ and lies at distance $1$ inside the parallelogram for $x$ and $y$. The claim can now be seen by inspecting the situation in $\Sigma$.

In the second case $p_s$ is the point at distance $1$ from $p_{o',s}$ on the segment $[p_{o',s},\ell_s]$. Thus $[p_{o',s},x'']$ is parallel to $[p_s,p_{o,s-1}]$ at distance one outside the parallelogram for $x$ and $y$. Again the situation can be inspected inside $\Sigma$.

In the last case there is an apartment containing the triangle with vertices $o$, $x$, and $y$ as well as $o'$ and this apartment also contains $x'$. The situation can be inspected in this apartment.
\end{proof}

% ------------------------------------------------------------------------------
\subsection{Biaffine planes}\label{sec:biaffine_planes}
% ------------------------------------------------------------------------------

An \emph{affine plane} can be obtained from any projective plane by removing a ``line at infinity" and all the points incident with it. Symmetrizing points and lines we introduce the following:

\begin{definition}
Let $(P,L,\sim)$ be a projective plane and let $a = (p^0,\ell^0)$ be a pair of a point $p^0 \in P$ and a line $\ell^0$ that are incident, $p \sim \ell$. The associated \emph{biaffine plane} $(P^a,L^a,\sim)$ consists of the points and lines that are incident with neither $p^0$ nor $\ell^0$: $P^a = \{p \in P \mid p \not\sim \ell^0\}$ and $L^a = \{\ell \in L \mid \ell \not\sim p^0\}$.
\end{definition}

In other words, a \emph{biaffine plane} is obtained by removing from an affine plane a parallel class of lines.

As in the affine setting there is a natural notion of parallelism, now between points as well as between lines: two lines $\ell, \ell'$ are \emph{parallel}, denoted $\ell \parallel \ell'$ if they intersect in $p^0$. Similarly two points $p,p'$ are \emph{parallel}, denoted $p \parallel p'$ if they span $\ell^0$.

Let $(\P,\L)$ be the projective plane in the boundary of a $\tilde{A}_2$-building $X$. We fix an incident pair $a=(p^0,\ell^0)$ of a point $p^0\in \P$ and a line $\ell^0 \in \L$. We can apply the above construction to obtain a biaffine plane $(\P^a,\L^a)$ but this is not what we are going to do.

Instead we want to combine it with the Hjelmslev setup from the last section. So let us again fix a base vertex $o \in X$, which this time we partly represent in our notation. Let $(\P_{o,1},\L_{o,1})$ be the Hjelmslev plane of level $1$, the link of $o$. We let $(\P^a_{o,1},\L^a_{o,1})$ be the biaffine plane associated to the pair $(p^0_{o,1},\ell^0_{o,1})$. For every $s \ge 1$ (including $s = \infty$) we let $(\P^a_{o,s},\L^a_{o,s})$ be the preimage of $(\P
^a_{0,1},\L^a_{o,1})$ under the projection $(\P_{o,s},\L_{o,s}) \to (\P_{o,1},\L_{o,1})$. It consists of points $p_s$ and lines $\ell_s$ such that $p_1 \in \P^a_{o,1}$ respectively $\ell_1 \in \L^a_{o,1}$.

A different description is as follows:

\begin{lemma}\label{lem:infinite_biaffine_plane}
The set $\Pa_o = \Pa_{o,\infty}$ consists of points $p \in \P$ such that $o$ lies on a geodesic line from $p$ to $\ell^0$. Similarly $\La_o = \La_{o,\infty}$ consists of lines $\ell \in \L$ such that $o$ lies on a geodesic line from $\ell$ to $p^0$. Moreover, for $1 \le s < \infty$
\[
\P^a_{o,s} = \{p_s \mid p \in \P^a_{o}\}\quad \text{and} \quad \L^a_{o,s} = \{\ell_s \mid \ell \in \L^a_{o}\}\text{.}
\]
\end{lemma}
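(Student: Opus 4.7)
The proof is essentially an unwinding of definitions coupled with the geometric interpretation of the Gromov product given in Remark~\ref{rmk:gromov}, so I expect it to be short.

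The plan is first to establish the characterization of $\Pa_o$, with the analogous statement for $\La_o$ following by the symmetric role of points and lines. By construction, $\Pa_o = \Pa_{o,\infty}$ is the preimage of $\Pa_{o,1}$ under the projection $\P = \P_{o,\infty} \to \P_{o,1}$, so $p \in \Pa_o$ is equivalent to $p_{o,1} \not\sim \ell^0_{o,1}$ in the Hjelmslev plane of level $1$, i.e.\ $p \not\sim_{o,1}\ell^0$. Since the Gromov product $(p,\ell^0)_o \in \N \cup \{\infty\}$ is nonnegative and satisfies $(p,\ell^0)_o \geq 1$ iff $p \sim_{o,1}\ell^0$, this condition is equivalent to $(p,\ell^0)_o = 0$. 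By Remark~\ref{rmk:gromov}(2), the latter is equivalent to $[o,p)\cup [o,\ell^0)$ being a bi-infinite geodesic line, i.e.\ $o$ lying on a geodesic line from $p$ to $\ell^0$.

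For the identity at finite levels $1 \leq s < \infty$, I would simply unravel the definitions: $\Pa_{o,s}$ is by construction the preimage of $\Pa_{o,1}$ under the projection $\P_{o,s} \to \P_{o,1}$, and every element of $\P_{o,s}$ is of the form $p_s$ for some (not necessarily unique) $p \in \P$. The projection sends $p_s$ to $p_1$, so $p_s \in \Pa_{o,s}$ iff $p_1 \in \Pa_{o,1}$ iff $p \in \Pa_o$, yielding $\Pa_{o,s} = \{p_s \mid p \in \Pa_o\}$. The same argument with points and lines exchanged gives the $\La_{o,s}$ identity.

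I do not foresee any substantive obstacle: the lemma is a bookkeeping statement whose only nontrivial input, Remark~\ref{rmk:gromov}, has already been proven. The only point requiring a small amount of care is to keep straight which incidence relation---$\sim_s$ between vertices of $X$, or the incidence $\sim$ on the Hjelmslev plane of level $s$ induced on the link---is intended at each step, and to note that the Gromov product, being the supremum over incidence levels, vanishes precisely when incidence already fails at level~$1$.
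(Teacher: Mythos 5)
Your proposal is correct and follows essentially the same route as the paper's own proof: translate $p\in\Pa_o$ into $(p,\ell^0)_o=0$ and invoke Remark~\ref{rmk:gromov}, then for finite $s$ observe that membership in $\Pa_{o,s}$ depends only on the level-$1$ projection, which is determined by $p_s$. No gaps; nothing further is needed.
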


\begin{proof}
A point $p \in \P$ lies in $\P^a_o$ if $p \not\sim_{o,1} \ell^0$, that is, if $(p,\ell^0)_o = 0$. The first statement now follows from Remark~\ref{rmk:gromov}. For the second it suffices to notice that whether or not $p \sim_{o,1} \ell^0$ only depends on $p_1$ which is determined by $p_s$.
\end{proof}

Note that $\P^a_{o} \subsetneq \P^a$ and $\L^a_{o} \subsetneq \L^a$ are proper inclusions as are $\P^a_{o,s} \subsetneq \P_{o,s}$ and $\L^a_{o,s} \subsetneq \L_{o,s}$ for $s > 1$.

The following lemma, which is not valid for $(\P,\L)$ instead of $(\Pa_o,\La_o)$, is the main reason to work in the biaffine setting. It also explains why we fixed $a = (p^0,\ell^0)$ in the plane at infinity $(\P,\L)$ rather than in $(\P_{o,1},\L_{o,1})$.

\begin{lemma}\label{lem:continuation}
Let $s\geq 1$ and $x\in \La_{o,s}$, $y \in \Pa_{o,s}$. Then
\[
\La_{x}=\{\ell \in \La_o\mid \ell_{o,s}=x\}\quad\text{and}\quad\Pa_{y} = \{p \in \Pa_o\mid p_{o,s} = y\}\text{.}
\]
Consequently,
\[
\La_{x,t}=\{ \ell\in \La_{o,s+t}\mid \ell_s=x\}\quad\text{and}\quad\Pa_{y,t} =  \{p \in \Pa_{o,s} \mid p_{o,s+t} = y\}\text{.}
\]
for every $t \geq 0$.
\end{lemma}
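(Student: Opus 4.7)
My plan is to reduce both equalities to the characterizations of $\La_o$ and $\La_x$ provided by Lemma~\ref{lem:infinite_biaffine_plane}: a line $\ell \in \L$ belongs to $\La_o$ (respectively $\La_x$) exactly when $o$ (respectively $x$) lies on a geodesic line from $\ell$ to $p^0$, and analogously for $\Pa_o$ and $\Pa_y$ with respect to $\ell^0$. Once the $\La_x$ identity is proved, the $\Pa_y$ identity follows by passing to the dual building $\overline X$ (Notation~\ref{notation:bar}), which swaps the roles of $\P$ and $\L$ as well as of $p^0$ and $\ell^0$. So I would concentrate on proving $\La_x = \{\ell \in \La_o \mid \ell_{o,s} = x\}$.

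For the inclusion $\supseteq$, suppose $\ell \in \La_o$ with $\ell_{o,s} = x$. Then $x$ lies on the ray $[o,\ell)$ at distance $s$ from $o$. Since $\ell \in \La_o$, there is a geodesic line $L$ from $\ell$ to $p^0$ passing through $o$, and $[o,\ell)$ is the half of $L$ pointing toward $\ell$. In particular $x \in L$, so $x$ too lies on a geodesic line from $\ell$ to $p^0$, i.e.\ $\ell \in \La_x$.

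For the inclusion $\subseteq$ I would use the hypothesis $x \in \La_{o,s}$: by Lemma~\ref{lem:infinite_biaffine_plane}, $x = \ell'_{o,s}$ for some $\ell' \in \La_o$, so the ray $[x,o]$ is the first segment of a geodesic ray from $x$ to $p^0$ (obtained by continuing $[x,o]$ along the line from $\ell'$ to $p^0$ past $o$). Now if $\ell \in \La_x$, the line $M$ from $\ell$ to $p^0$ passing through $x$ contains a geodesic ray from $x$ to $p^0$; by uniqueness of geodesic rays from a point to a point at infinity in a CAT(0) space, this ray coincides with the one just constructed, and in particular passes through $o$. Hence $M$ passes through $o$, proving $\ell \in \La_o$. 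Moreover, $x$ sits on $M$ on the $\ell$-side of $o$ (because going from $x$ toward $p^0$ on $M$ one first meets $o$), so $x \in [o,\ell)$ at distance $s$, giving $\ell_{o,s} = x$. The CAT(0) uniqueness step is the main (and essentially only) nontrivial point of the argument.

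Finally, for the consequence, I would invoke the second part of Lemma~\ref{lem:infinite_biaffine_plane} at the basepoint $x$: $\La_{x,t} = \{\ell_{x,t} \mid \ell \in \La_x\}$. Using the identity just proved and the observation that for $\ell \in \La_o$ with $\ell_{o,s} = x$ the ray $[x,\ell)$ is the tail of $[o,\ell)$ starting at $x$, so that $\ell_{x,t} = \ell_{o,s+t}$, I would conclude $\La_{x,t} = \{\ell_{o,s+t} \mid \ell \in \La_o,\ \ell_{o,s} = x\} = \{\ell \in \La_{o,s+t} \mid \ell_s = x\}$, as required. The $\Pa$-consequence follows from the $\Pa$-identity in exactly the same way.
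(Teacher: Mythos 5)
Your proposal is correct and follows essentially the same route as the paper: the easy inclusion is proved by locating $x$ on the geodesic line through $o$, and the reverse inclusion uses exactly the paper's key observation that $o \in [x,p^0)$ (justified, as you make explicit, by the existence of some $\ell'\in\La_o$ with $\ell'_{o,s}=x$ together with uniqueness of the ray from $x$ to $p^0$), after which the consequence is read off from the second statement of Lemma~\ref{lem:infinite_biaffine_plane}. Your duality argument for the point case and the explicit identity $\ell_{x,t}=\ell_{o,s+t}$ are just slightly more detailed versions of the paper's ``completely analogous'' remarks.
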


\begin{proof}
Let $\ell\in \La_o$ be such that $\ell_{o,s}=x$. By Lemma~\ref{lem:infinite_biaffine_plane} there is a geodesic line $\rho$ from $p^0$ to $\ell$ through $o$. Since $x$ lies on the ray $[o,\ell)$ it follows that $x \in \rho$, i.e.\ $\ell \in \La_x$. This also shows that $o \in [x,p^0)$.

Now let $\ell \in \La_x$, i.e.\ there is a geodesic line $\rho$ from $p^0$ to $\ell$ through $x$. We just saw that $o \in [x,p^0) \subseteq \rho$, so $\ell \in \La_o$ and $x \in [o,\ell)$. That $\ell_{o,s} = x$ is clear since $d(o,x) = s$ by assumption.

The statement for points is completely analogous. The second statement follows from the second statement of Lemma~\ref{lem:infinite_biaffine_plane}.
\end{proof}

\begin{corollary}\label{cor:regular_tree}
The spaces $\bigcup \{[o,\ell] \mid \ell \in \La\}$ and $\bigcup \{[o,p] \mid p \in \Pa\}$ are rooted $q^2$-regular trees on the vertex sets $\bigcup_{s \ge 0} \La_{o,s}$ respectively $\bigcup_{s \ge 0} \Pa_{o,s}$.
\end{corollary}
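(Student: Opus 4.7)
The plan is to prove the statement for $\La_o$ only; the case of $\Pa_o$ then follows by swapping the roles of points and lines (and of $p^0$ and $\ell^0$). First I would fix the intended simplicial structure on $T \defeq \bigcup_{\ell \in \La_o}[o,\ell]$. By Lemma~\ref{lem:infinite_biaffine_plane} its vertex set is exactly $\{o\}\cup\bigcup_{s\ge 1}\La_{o,s}$ (setting $\La_{o,0}=\{o\}$ by convention), and its edges are precisely the building edges $[\ell_{o,s},\ell_{o,s+1}]$ for $\ell\in\La_o$ and $s\ge 0$, since each ray $[o,\ell)$ passes through $o=\ell_{o,0},\ell_{o,1},\ell_{o,2},\dots$ with consecutive vertices adjacent in~$X$.

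Next I would verify that $T$ is a tree. Connectedness is immediate because every edge of $T$ lies on some segment $[o,\ell]$ that contains $o$. For acyclicity I would argue that each vertex $v\in\La_{o,s}$ with $s\ge 1$ has a unique neighbor of strictly smaller level in $T$, namely its \emph{parent} $v_{o,s-1}$: the $\CAT(0)$ geodesic $[o,v]$ is unique and passes through $v_{o,s-1}$, and the edges of $T$ only connect vertices at consecutive levels (two vertices of $T$ lying at the same level $s$ have equal type $2s\bmod 3$ and so cannot be adjacent in $X$). Any connected graph in which every non-root vertex admits a unique parent strictly decreasing some non-negative level function is a rooted tree, so $T$ is as claimed.

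Finally I would count children. At the root, the children of $o$ in $T$ are the elements of $\La_{o,1}$, i.e.\ the lines in the link of $o$ not incident with $p^0_{o,1}$. Since the link is a projective plane of order $q$, one removes the $q+1$ lines through a fixed point from the $q^2+q+1$ total lines, leaving $|\La_{o,1}|=q^2$. For an interior vertex $v\in\La_{o,s}$ with $s\ge 1$, the children of $v$ in $T$ are the vertices $\ell_{o,s+1}$ with $\ell\in\La_o$ and $\ell_{o,s}=v$; combining both parts of Lemma~\ref{lem:continuation} (applied with $t=1$), this set is exactly $\La_{v,1}$, the level-$1$ biaffine plane based at $v$ with respect to the same ideal pair $(p^0,\ell^0)$. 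The link of $v$ is a projective plane of order $q$ by Lemma~\ref{lem:order}, so the same count gives $|\La_{v,1}|=q^2$.

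There is no real obstacle in this argument; it is a matter of organizing the bookkeeping so that Lemma~\ref{lem:continuation} can be invoked uniformly at every level. The only slightly delicate point is the justification that the edges of $T$ really only connect vertices at consecutive levels, which is where the type argument enters; everything else is a direct consequence of $\CAT(0)$-uniqueness of geodesics and the projective-plane parameter count in the link.
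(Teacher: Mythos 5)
Your proof is correct and follows essentially the same route as the paper: both rest on the count $\abs{\La_{v,1}}=q^2$ (the level-one biaffine plane at each vertex) together with Lemma~\ref{lem:continuation} applied with $t=1$ to identify the children of a level-$s$ vertex, proceeding by induction on the level. The extra bookkeeping you supply (unique parent via $\CAT(0)$ geodesics and the type argument ruling out same-level adjacencies) just makes explicit the tree structure that the paper's two-line proof leaves implicit.
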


\begin{proof}
For every vertex $x$ the sets $\Pa_{x,1}$ and $\La_{x,1}$ consist of $q^2$ elements. The claim follows from Lemma~\ref{lem:continuation} by induction on $s$.
\end{proof}

We do some basic counting in $(\Pa_s,\La_s)$, which can be compared to Lemma~\ref{lem:NumberEquilateral}:

\begin{lemma}\label{lem:counting}
For $0 \leq t < s$ and $p \in \Pa_{o},\ell \in \La_{o}$, we have
\[
\abs{\{\ell'_s \in\La_{o,s} \mid \ell'=_{o,t} \ell\}}=q^{2(s-t)}
\]
and
\[
\abs{\{\ell'_s \in\La_{o,s} \mid \ell'=_{o,t}\ell \text{ and } \ell'\sim_{o,s} p\}}=
\begin{cases}
q^{s-t} & p \sim_{o,t} \ell\\
0 & p \not\sim_{o,t} \ell\text{.}
\end{cases}
\]
The same is true with the roles of points and lines exchanged. 
\end{lemma}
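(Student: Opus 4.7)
The plan is to reduce both counts to Lemma~\ref{lem:NumberEquilateral} combined with the tree structure of Corollary~\ref{cor:regular_tree}, the pivotal observation being that for $t\ge 1$ the biaffine condition $\ell'\in\La_{o,s}$ is automatic as soon as $\ell'=_{o,t}\ell$ and $\ell\in\La_o$. Indeed, projecting to the link of $o$ gives $\ell'_1=\ell_1\in\La_{o,1}$. So for $t\ge 1$ the ``a''-superscripts can effectively be dropped and one can appeal directly to the counts in $(\P_s,\L_s)$.

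For the first equality, I would apply Lemma~\ref{lem:continuation} at $x=\ell_{o,t}$ when $t\ge 1$, which identifies the set in question with $\La_{\ell_t,\, s-t}$, and then iterate Corollary~\ref{cor:regular_tree} rooted at $\ell_t$ to get cardinality $q^{2(s-t)}$. The $t=0$ case is just $|\La_{o,s}|=q^{2s}$, the same iteration rooted at $o$.

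For the second equality with $t\ge 1$, the reduction above turns the count into $|\{\ell'_s\in\L_s\mid \ell'=_t\ell,\ \ell'\sim_s p\}|$, which Lemma~\ref{lem:NumberEquilateral} evaluates as $q^{s-t}$ when $p\sim_{o,t}\ell$ and as $0$ otherwise. The case $t=0$ needs a separate argument: the hypothesis $p\sim_{o,0}\ell$ is vacuous, and I have to count $\ell'\in\La_{o,s}$ with $\ell'\sim_s p$. I plan to count first at level $1$: since $p\in\Pa_o$ forbids $p_1\sim\ell^0_1$, we have $p_1\neq p^0_1$, so exactly one of the $q+1$ lines of $\L_{o,1}$ through $p_1$ also contains $p^0_1$, leaving $q$ candidates in $\La_{o,1}$. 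Each of them extends by Lemma~\ref{lem:NumberEquilateral} (at level $1$) to $q^{s-1}$ lines in $\L_s$ through $p_s$, which all remain in $\La_{o,s}$ by the key observation above; the product $q\cdot q^{s-1}=q^s$ is the advertised count.

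The analogous statement with points and lines exchanged follows by the symmetry of the chosen incident pair $a=(p^0,\ell^0)$. I do not anticipate a serious difficulty; the one subtlety is that the $t=0$ case of the second count cannot be absorbed into the general reduction and must be handled by hand at the link level.
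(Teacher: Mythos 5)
Your proposal is correct and follows essentially the same route as the paper's proof: for $t\ge 1$ you drop the biaffine constraint via the level-$1$ observation and invoke Lemma~\ref{lem:NumberEquilateral}, you get the first count from the tree structure of Corollary~\ref{cor:regular_tree} (via Lemma~\ref{lem:continuation}), and you handle $t=0$ by the classical level-$1$ count of the $q$ lines through $p_1$ avoiding $p^0_1$ followed by Lemma~\ref{lem:NumberEquilateral}. This matches the paper's argument step for step.
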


\begin{proof}
The first statement is immediate from Corollary~\ref{cor:regular_tree}.

The second statement for $t \ge 1$ follows from Lemma~\ref{lem:NumberEquilateral} using that $\ell' =_{o,1} \ell \in \La_o$ implies $\ell' \in \La_o$. The case $(s,t) = (1,0)$ is classical: among the $q+1$ neighbors of $p_1$ one is incident with $p^0_1$ and the other $q$ lie in $\La_{o,1}$. The remaining cases with $t = 0$ now follow again using Lemma~\ref{lem:NumberEquilateral}.
\end{proof}

\begin{lemma}\label{lem:characterization_of_Sa} 
Let $\lambda = (s,t) \in \Lambda$. For $y \in S_\lambda(o)$ there exist unique vertices $\posproj{y} \in \P_{o,s}$ and $\negproj{y} \in \L_{o,t}$ such that $\sigma(o,\posproj{y}) = (s,0) = \sigma(\negproj{y},y)$ and $\sigma(o,\negproj{y}) = (0,t) = \sigma(\posproj{y},y)$. Moreover, the following are equivalent.
\begin{enumerate}
 \item $\posproj{y} \in \Pa_{o,s}$ and $\negproj{y} \in \La_{o,t}$.\label{item:sy_yt}
 \item $\posproj{y} \in \Pa_{o,s}$ and $y \in \La_{\posproj{y},t}$.\label{item:u}
 \item $\negproj{y} \in \Pa_{o,t}$ and $y \in \La_{\posproj{y},t}$.\label{item:v}
\end{enumerate}
\end{lemma}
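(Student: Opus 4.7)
The statement splits into existence/uniqueness of $\posproj y, \negproj y$ and the three equivalences.

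For existence and uniqueness, since $\sigma(o,y) = (s,t)$ the combinatorial convex hull $\Conv(o,y)$ is an $(s,t)$-parallelogram (per the remark after the definition of combinatorial distance) and by Theorem~\ref{thm:isomappart} lies in some apartment $\Sigma$. Its four corners are $o, y$ and two more; I declare $\posproj y$ (characterized by $\sigma(o,\posproj y)=(s,0)$) and $\negproj y$ (characterized by $\sigma(o,\negproj y)=(0,t)$) to be the remaining two. The parallelogram identities in $\Sigma$ then yield $\sigma(\negproj y, y) = (s,0)$ and $\sigma(\posproj y, y) = (0,t)$. Uniqueness is forced since $\Conv(o,y)$ is intrinsic (Proposition~\ref{thm:apmt_convex}) and its corners are pinned down by the distance conditions via Lemma~\ref{lem:sigma_gromov}: if $v$ satisfies $\sigma(o,v)=(s,0)$ and $\sigma(v,y)=(0,t)$, write $v=p_{o,s}$ and $y=\ell_{v,t}$ for appropriate $p\in\P,\ell\in\L$; the lemma identifies the parallelogram spanned by $o,v,y$ with $\Conv(o,y)$.

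For the equivalences, fix such an apartment $\Sigma$, and let $p \in \P$ and $\ell \in \L$ be the boundary points toward which $[o,\posproj y)$ and $[o,\negproj y)$ tend in $\Sigma$. Parallelism within $\Sigma$ shows that $[\posproj y, y]$ extends to a ray tending to $\ell$ and $[\negproj y, y]$ to a ray tending to $p$, so $\posproj y = p_{o,s}$, $\negproj y = \ell_{o,t}$, and $y = \ell_{\posproj y,t} = p_{\negproj y,s}$. Via Lemma~\ref{lem:infinite_biaffine_plane} the three conditions translate to: (1) $p \in \Pa_o$ and $\ell \in \La_o$; (2) $p \in \Pa_o$ and $\ell \in \La_{\posproj y}$; (3) $\ell \in \La_o$ and $p \in \Pa_{\negproj y}$, where I read (3) as the symmetric dual of (2), correcting an apparent typo in the statement.

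By the symmetry between points and lines (passage to $\overline X$) it suffices to prove $(1) \Leftrightarrow (2)$; since both contain $p \in \Pa_o$, the substance is $\ell \in \La_o \Leftrightarrow \ell \in \La_{\posproj y}$. The assumption $p \in \Pa_o$ places both $o$ and $\posproj y = p_{o,s}$ on the geodesic line $[p,\ell^0]$. Assuming additionally $\ell \in \La_o$, the line $[\ell,p^0]$ passes through $o$. The crucial input is the boundary incidence data: $p \sim \ell$ (from the parallelogram), $p \not\sim \ell^0$ (from $p \in \Pa_o$, using Remark~\ref{rmk:gromov}), $\ell \not\sim p^0$ (from $\ell \in \La_o$), and $p^0 \sim \ell^0$ (the standing assumption on $a$). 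This makes $\{p, \ell, p^0, \ell^0\}$ extend to a non-degenerate hexagon at infinity (a projective triangle with extra vertex $\ell \cap \ell^0$ and extra side through $p$ and $p^0$), which I plan to realize as the boundary of a single apartment $\Sigma'$ in $X$ containing $[o, \posproj y]$, by starting from $\Conv(o,y)$ and iteratively attaching half-apartments via Lemma~\ref{lem:extensionroot} to include the singular rays $[o,\ell^0)$ and $[o,p^0)$, then invoking Theorem~\ref{thm:isomappart}. In $\Sigma'$, parallel transport gives a geodesic line from $\ell$ to $p^0$ through $\posproj y$, hence $\ell \in \La_{\posproj y}$; the converse interchanges $o$ and $\posproj y$. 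The main obstacle is precisely this apartment construction: one must verify that the hexagon at infinity can be realized compatibly with the pre-existing segment $[o, \posproj y]$ and the line $[p, \ell^0]$, which is a matter of careful combinatorial book-keeping in the link of each vertex along an extending path.
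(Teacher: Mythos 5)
Your existence/uniqueness argument matches the paper's (convex hull of $o$ and $y$ is an $(s,t)$-parallelogram in an apartment, and is intrinsic), and your translation of the three conditions in terms of boundary points $p,\ell$ of an apartment containing $\Conv(o,y)$ — including reading item (3) as the point--line dual of (2) — is a correct reduction: everything comes down to showing that, when $p\in\Pa_o$ (and $p\sim\ell$, $p^0\sim\ell^0$), one has $\ell\in\La_o$ if and only if $\ell\in\La_{\posproj{y}}$.

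The problem is that your proof of precisely this step is not a proof but a plan, and the step you defer is the entire mathematical content of the lemma. You propose to realize the hexagon at infinity spanned by $p,\ell,p^0,\ell^0$ as the boundary of an apartment $\Sigma'$ containing $[o,\posproj{y}]$, and then conclude $(p^0,\ell)_{\posproj{y}}=0$ by parallelism inside $\Sigma'$; you yourself flag the construction of $\Sigma'$ as ``the main obstacle'' and describe it as combinatorial book-keeping. It is not book-keeping: what is needed is that the two singular geodesic lines through $o$ determined by $p\in\Pa_o$ and $\ell\in\La_o$ (equivalently, the two opposite sectors at $o$ bounded by $[o,p),[o,\ell)$ and by $[o,\ell^0),[o,p^0)$) lie in a common apartment, and the only tool you invoke, Lemma~\ref{lem:extensionroot}, merely extends a \emph{half-apartment} by a single chamber --- you do not even have a half-apartment to begin the iteration. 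Producing one (for instance, showing that the convex hull of the line $[o,p)\cup[o,\ell^0)$ together with the ray $[o,\ell)$ is flat) is a statement of the same order of difficulty as the conclusion $(p^0,\ell)_{\posproj{y}}=0$ itself, so the argument as written is circularly deferred rather than completed; making it work honestly requires genuinely more machinery (opposite sector germs / parallel sets, or an infinite gluing argument along the wall) than anything you cite. Note also that the paper's own proof avoids any global apartment with prescribed boundary hexagon: it stays local at $o$, observing that membership in each of $\Pa_{o,s}$, $\La_{o,t}$, $\La_{\posproj{y},t}$, $\Pa_{\negproj{y},s}$ is governed by level-one data and that the combinatorial convex hulls of $o$ with each of the pairs $(\posproj{y},\negproj{y})$, $(\posproj{y},y)$, $(\negproj{y},y)$ all contain the projection chamber of $y$ at $o$, so that each condition reduces to that chamber's point--line pair lying in $(\Pa_{o,1},\La_{o,1})$. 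Either adopt that local argument or actually supply the apartment construction; as it stands there is a genuine gap at the crux.
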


\begin{figure}[htb]
\centering
\begin{tikzpicture}[rotate=180]
\draw (300:3) node[left] {$\ell_{s+t}$}--(0,0) node[below] {$o$}--(240:3) node[right] {$p_{s+t}$};
\draw[dashed] (240:3)--(300:3);
\draw[shift=(240:1)] (0,0) node[right] {$\posproj{y} = p_s$} -- (300:2) node[above] {$y$};
\draw[shift=(300:2)] (0,0) node[left] {$\ell_t = \negproj{y}$} -- (240:1);
\end{tikzpicture}
\caption{$S_\lambda^a(o)$}\label{fig:ps_ls_y}
\end{figure}
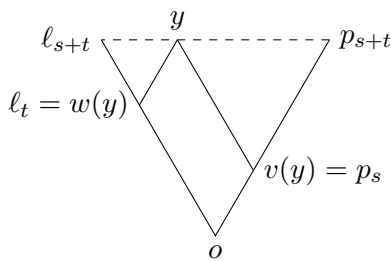

\begin{remark}
The notation $\posproj{y}$ and $\negproj{y}$ does not reflect the dependence of $\lambda$. After this lemma the notation will only be used in Proposition~\ref{prop:sousadditif} where $\lambda$ will be fixed and conciseness will be more important than expressiveness.
\end{remark}

\begin{proof}
In the first statement existence can be seen by taking any apartment that contains $o$ and $y$. Uniqueness follows from the fact that the combinatorial convex hull of $o$ and $y$ is contained in every apartment that contains $o$ and $y$.

The combinatorial convex hull of $o$ and any one of the pairs $(\posproj{y},\negproj{y})$, $(\posproj{y},y)$, and $(\negproj{y},y)$ visibly contains the projection of $y$ onto $o$ (the maximal simplex of $\Conv(o,y)$ containing $o$). Each of the three conditions is satisfied if that convex hull meets $(\P_{o,1},\L_{o,1})$ in $(\Pa_{o,1},\La_{o,1})$.
\end{proof}

\begin{definition}\label{def:Sa_lambda} We denote by $S^a_\lambda(o)$ the set of $y \in S_\lambda(o)$ satisfying the equivalent conditions in Lemma \ref{lem:characterization_of_Sa}.
\end{definition}

% ==============================================================================
\section{Norm estimates}\label{sec:norm_estimates}
% ==============================================================================

Throughout this section we fix a base vertex $o \in X$. This allows us in to make use of the results in the last section. We will again mostly omit the subscript referring to the origin $o$.

% ------------------------------------------------------------------------------
\subsection{Biaffine setting}
% ------------------------------------------------------------------------------

In this paragraph we put ourselves in a biaffine setting as in Section~\ref{sec:biaffine_planes}. That is, we fix a pair $a=(p^0,\ell^0)$ of a point $p^0\in \P$ and a line $\ell^0 \in \L$. The reason for doing so is that in this situation we can use an induction that would not as easily be feasible in a projective setting. The dependence on the chamber will be removed in the next paragraph.

We equip the finite sets $\Pa_s$ and $\La_s$ with their uniform probability measure, and equip $\Pa$ and $\La$ with the probability measure  of the inverse limit, which we denote $\mu^a$ (or $\mu^a_o$) in both cases. These measures are characterized by the property that their images under the maps $\Pa \to \Pa_s$ and $\La \to \La_s$ are the uniform probability measures on $\Pa_s$ and $\La_s$ respectively.

We denote by $\LL^2(\Pa)$ and (for $0 \le s < \infty$) by $\LL^2(\Pa_s)$ the space of $\LL^2$ functions on $\Pa$ and $\Pa_s$ (with respect to $\mu^a$). Furthermore in this section we denote by $\int_{\Pa} \cdot dp$ (and $\int_{\La}\cdot d\ell$) the integration with respect to $\mu^a$. Our previous discussion implies that there is an isometric inclusion $\LL^2(\Pa_s) \to \LL^2(\Pa)$ sending $f$ to the function $p
\mapsto f(p_s)$. We make these inclusions implicit, regarding $\LL^2(\Pa_s)$ as a subspace of $\LL^2(\Pa)$. Its elements are
functions that are constant on the fibers of the map $p \mapsto p_s$.  We denote by $E_s$ the corresponding conditional expectations,
that is, the orthogonal projections $\LL^2(\Pa) \to \LL^2(\Pa_s)$ given by (the normalization being provided by Lemma~\ref{lem:counting})
\[
E_s(f)(p) = q^{2s} \int_{\Pa} f(p')1_{p' =_s p} dp'\text{.}
\]
Everything we just said applies completely analogously to lines and we use the same notation for the operator $E_s \colon \LL^2(\La) \to \LL^2(\La_s)$.

The purpose of this paragraph is to estimate the differences between the following operators. For $s \in \N$ define $M^a_{o,s}=M^a_s \colon \LL^2(\La) \to \LL^2(\Pa)$ by
\[
M^a_s(f)(p) = \E[f(\ell) \mid p \sim_s \ell]\text{.}
\]
Let $f \in \LL^2(\La)$ and $p \in \Pa$. We have $\int_{\La} 1_{p \sim_s \ell} d\ell = q^{-s}$ by Lemma \ref{lem:counting}, so 
\[
M^a_s(f)(p) = q^s \int_{\La} f(\ell) 1_{p \sim_s \ell} d \ell\text{.}
\]
In particular,
\begin{align}
(q M^a_s - M^a_{s+1})(f)(p) &= q^{s+1} \int_{\La} f(\ell) (1_{p \sim_s \ell} - 1_{p \sim_{s+1} \ell}) d \ell\nonumber\\
  &= q^{s+1} \int_{\La} f(\ell) 1_{(p,\ell)_o=s} d \ell\text{.}\label{eq:Madiff_explicit}
\end{align}
Since $\int_{\La} 1_{(p,\ell)_o=s} d\ell = (q-1)/q^{s+1}$ we obtain
\begin{equation}\label{eq:Tas}
\E[f(\ell) \mid (p,\ell)_o=s] = \frac{1}{q-1} (q M^a_s - M^a_{s+1})(f)(p)\text{.}
\end{equation}

\begin{lemma}\label{lemma:formula_for_EtTs} For arbitrary $s,t \in \N$, we have
\[ E_t M^a_s = M^a_s E_t = M^a_{\min(s,t)}.\]
\end{lemma}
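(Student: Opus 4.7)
The plan is to treat the two cases $t \ge s$ and $t < s$ separately, since the first is essentially formal while the second is the one where Lemma~\ref{lem:counting} really does the work.

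For the formal case $t \ge s$, I would exploit the fact that both factors in the identity only "see" information at level $s$. Concretely, $M^a_s f$ is a function of $p_s$ by construction, hence lies in $\LL^2(\Pa_s) \subseteq \LL^2(\Pa_t)$, so $E_t M^a_s f = M^a_s f$ automatically; and the kernel $\ell \mapsto \1_{p \sim_s \ell}$ in the integral defining $M^a_s$ depends only on $\ell_s$, hence is $\La_t$-measurable, so by self-adjointness of the conditional expectation $M^a_s E_t f = M^a_s f$ as well. Since $\min(s,t)=s$ in this range, both sides equal $M^a_{\min(s,t)}$.

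For the substantive case $t < s$ I would compute $E_t M^a_s f(p)$ by unfolding both operators and applying Fubini, reducing matters to evaluating
\[
\int_{\Pa} \1_{p'=_t p}\,\1_{p' \sim_s \ell}\,dp'.
\]
Because the integrand is $\Pa_s$-measurable and $\mu^a$ pushes to the uniform measure on $\Pa_s$ with $|\Pa_s|=q^{2s}$ (Corollary~\ref{cor:regular_tree}), this integral equals $|\{p'_s \in \Pa_s : p'=_t p,\ p' \sim_s \ell\}|/q^{2s}$. Applying Lemma~\ref{lem:counting} (in its point-line-exchanged form) the cardinality is $q^{s-t}\1_{p \sim_t \ell}$ when $p \sim_t \ell$ and $0$ otherwise, so the integral collapses to $q^{-s-t}\1_{p \sim_t \ell}$. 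Substituting back the $q^{2t+s}$ prefactor leaves $q^t \int_\La f(\ell) \1_{p \sim_t \ell} d\ell = M^a_t f(p)$, which is exactly $M^a_{\min(s,t)}$.

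The identity $M^a_s E_t = M^a_{\min(s,t)}$ for $t < s$ is handled by the dual computation: move $E_t$ onto the indicator using self-adjointness, and evaluate $E_t(\1_{p \sim_s \cdot})(\ell)$ by the same counting argument, which yields $q^{t-s}\1_{p \sim_t \ell}$. The only real obstacle is keeping the normalizations straight: three different powers of $q$ appear (the $q^s$ in the definition of $M^a_s$, the $q^{2t}$ in $E_t$, and the $q^{2s}$ from the size of $\Pa_s$) and one must check they combine to produce the $q^t$ appearing in $M^a_t$. With this bookkeeping done, the conclusion follows immediately.
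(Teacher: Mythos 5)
Your proof is correct and follows essentially the same route as the paper: the case $t\ge s$ is handled by noting $M^a_s f$ is level-$s$ measurable, and the case $t<s$ by unfolding the definitions, applying Fubini, and using Lemma~\ref{lem:counting} to evaluate $\int_{\Pa}\1_{p'=_t p}\1_{p'\sim_s\ell}\,dp' = q^{-s-t}\1_{p\sim_t\ell}$, with the normalizations combining to give $M^a_t$. Your explicit treatment of $M^a_sE_t$ via self-adjointness of $E_t$ is just a spelled-out version of the paper's remark that the other equality follows by the same argument or by taking adjoints and exchanging points and lines.
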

\begin{proof}
It is clear from the definition of $M^a_s$ that $M^a_sf(p)$ depends only on $p_s$, that is, $M^a_s f \in \LL^2(\Pa_s)$. For $t \ge s$ it follows that $E_t M^a_s = M^a_s$.

Assume now $t<s$. Let $f \in \LL^2(\La)$. Applying successively the definitions, Fubini and Lemma \ref{lem:counting}, we obtain
  \begin{align*} E_t M^a_s f(p) &= q^{s+2t} \int_{\Pa} \int_{\La} f(\ell) 1_{\ell \sim_s p'} 1_{p=_t p'} d\ell dp'\\
    &= q^{s+2t} \int_{\La} f(\ell) \left(\int_{\Pa} 1_{p'\sim_s \ell \textrm{ and } p=_t p'} dp' \right) d\ell\\
    &= q^{s} \int_{\La} f(\ell) q^{-(s-t)}1_{p \sim_t \ell} d\ell \\
    & = M^a_t f(p).
  \end{align*}
That is, $E_tM_s^a = M_t^a$.

Putting both cases together we get $E_t M^a_s = M^a_{\min(s,t)}$. The other equality follows with the same proof, or by taking the adjoint and exchanging the roles of points and lines.
\end{proof}

Rather than complex-valued functions we want to consider Banach space-valued functions. So let $E$ be a Banach space. As recalled in Section~\ref{sec:Banach_space_preliminaries}, tensoring an operator $\LL^2(\La) \to \LL^2(\Pa)$ with the identity $E \to E$ we obtain an operator $\LL^2(\La;E) \to \LL^2(\Pa;E)$ that we will systematically denote by the same symbol.

\begin{proposition}\label{prop:sousadditif}
For every integer $s \geq 1$ let $C_{s,E}$ denote (the supremum over all choices of a base vertex $o$ of) the norm of
\[
M^a_s - M^a_{s-1} \colon \LL^2(\La;E) \to \LL^2(\Pa;E)\text{.}
\]
We have $C_{s+t,E} \leq C_{s,E} C_{t,E}$ for every integers $s,t \geq 1$. 
\end{proposition}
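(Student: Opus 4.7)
The plan is to prove submultiplicativity by constructing a factorization of the operator $M^a_{o,s+t} - M^a_{o,s+t-1}$ of the form $A \circ B$, with the norms of $A$ and $B$ controlled by $C_{s,E}$ and $C_{t,E}$ respectively. The key structural tool is the rooted $q^2$-regular tree on $\La_o$ from Corollary~\ref{cor:regular_tree}, and the crucial flexibility comes from the fact that $C_{t,E}$ is defined as a supremum over base vertices, so the inner averaging can be performed relative to a shifted base vertex without penalty.

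Concretely, the identification $\La_{\ell_s} = \{\ell \in \La_o : \ell_{o,s} = \ell_s\}$ from Lemma~\ref{lem:continuation} yields an isometric direct-sum decomposition
\[
L^2(\La_o;E) = \bigoplus_{\ell_s \in \La_{o,s}} L^2(\La_{\ell_s};E)
\]
with uniform weights, and similarly for $L^2(\Pa_o;E)$. I would then group the defining sum for $M^a_{o,s+t} f(p)$ by the intermediate projection $\ell_{o,s}$, using Lemma~\ref{lem:NumberEquilateral} to count the $q^t$ admissible extensions of each $\ell_s$. This rewrites $M^a_{o,s+t}$ as a two-step average: an outer level-$s$ incidence average over $\ell_s$ with $p \sim_{o,s} \ell_s$, composed with an inner averaging over each fiber $\La_{\ell_s}$ which should be recognisable as an $M^a_{o',t}$-type operator at a new base vertex $o'$ in the apartment determined by the triangle $(o,p_s,\ell_s)$. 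Performing the analogous decomposition for $M^a_{o,s+t-1}$ and subtracting, the inner parts combine into an $M^a_{o',t} - M^a_{o',t-1}$ difference operator on each fiber (of norm $\leq C_{t,E}$ uniformly in $\ell_s$, using the supremum over base vertices), while the outer part reorganises into an $M^a_{o,s} - M^a_{o,s-1}$-type averaging over $\ell_s$ (of norm $\leq C_{s,E}$). Orthogonality across the fiber decomposition then produces the desired bound $\|T^a_{o,s+t}\| \leq C_{s,E}C_{t,E}$, and taking the supremum over $o$ finishes the proof.

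The main technical hurdle is choosing the new base vertex $o'$ and verifying the accompanying geometric dictionary: once the intermediate projections $p_s, \ell_s$ are fixed with $p_s \sim_s \ell_s$, the level-$(s+t)$ incidence $p \sim_{o,s+t} \ell$ must translate into a level-$t$ incidence $p \sim_{o',t} \ell$ in a suitable biaffine plane at $o'$, and the induced decomposition must be compatible with the norm identifications on both the line side and the point side. The necessary combinatorial input is supplied by the Hjelmslev-plane lemmas of Section~\ref{sec:hjemlslev_biaffine}, in particular Lemma~\ref{lem:sigma_gromov} and Corollary~\ref{cor:Position_poi_loj}, which control the relative positions of vertices inside the triangles at different levels and ensure that the count of admissible extensions is the same as what an $M^a_{o',t}$ operator would produce.
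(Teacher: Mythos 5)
Your overall plan — factor the difference operator through an intermediate space, use the fibration of Lemma~\ref{lem:continuation} to make the factors block-diagonal with blocks that are $M^a$-type operators at shifted base vertices, and exploit that $C_{s,E}$ is a supremum over base vertices — is the right family of argument, and it is what the paper does. But two of the steps you rely on do not work as you state them. The first is the geometric dictionary. You fiber the domain over $\ell_s\in\La_{o,s}$ and want the fiberwise operator on $\LL^2(\La_{\ell_s};E)$ to be an $M^a_{o',t}$-type operator for a base vertex $o'$ attached to the fiber; but any translation of the level-$(s+t)$ incidence $p\sim_{o,s+t}\ell$ into a level-$t$ incidence of the \emph{pair} $(p,\ell)$ forces the auxiliary base vertex to track $p$ (in the correct dictionary the level-$t$ incidence takes place at the base $p_{o,s}$), so the fiberwise operator would depend on the evaluation point and the block-diagonal structure over $\La_{o,s}$ collapses. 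The paper's resolution is to factor the incidence through an intermediate vertex rather than restate it for $(p,\ell)$: $\ell_{o,s+t}\sim_{o,s+t}p_{o,s+t}$ iff there is a unique $y\in S^a_{(s,t)}(o)$ with $y\sim_{\negproj{y},s}\ell_{o,s+t}$ and $y\sim_{\posproj{y},t}p_{o,s+t}$, with $\posproj{y}\in\Pa_{o,s}$, $\negproj{y}\in\La_{o,t}$ as in Lemma~\ref{lem:characterization_of_Sa}. The intermediate Hilbert space is $\LL^2(S^a_{(s,t)}(o);E)$, and the two factors $A,B$ are fibered over \emph{different} index sets — the line-side factor over $\La_{o,t}$ (blocks $M^a_{v,s}$) and the point-side factor over $\Pa_{o,s}$ (blocks $M^a_{u,t}$) — not both over $\La_{o,s}$.

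The second gap is the more serious one: a single subtraction cannot produce a difference operator in both factors. If you write $M^a_{s+t}$ as (outer)$\circ$(inner) and subtract the analogous decomposition of $M^a_{s+t-1}$, at most one factor acquires a difference structure; the outer factor remains an averaging-type operator which sends constant sections to constants and therefore has norm bounded below independently of $s$, whereas $C_{s,E}$ can be as small as $q^{-s/2}$ (Proposition~\ref{prop:operator_norm_of_Ms-Ms-1}). So the scheme as sketched yields at best a bound of order $C_{t,E}$, not $C_{s,E}C_{t,E}$, and submultiplicativity — hence the exponential decay the paper needs — is lost. The missing device is Lemma~\ref{lemma:formula_for_EtTs}: since $E_uM^a_s=M^a_sE_u=M^a_{\min(s,u)}$, one has
\[
M^a_{s+t}-M^a_{s+t-1}=(E_{s+t}-E_{s+t-1})\,M^a_{s+t}\,(E_{s+t}-E_{s+t-1})=\bigl[(E_{s+t}-E_{s+t-1})A\bigr]\bigl[B(E_{s+t}-E_{s+t-1})\bigr],
\]
and the conditional-expectation differences restrict fiberwise to $E_{v,s}-E_{v,s-1}$ and $E_{u,t}-E_{u,t-1}$ at the shifted base vertices, so that the two brackets become direct sums of genuine difference operators $M^a_{v,s}-M^a_{v,s-1}$ and $M^a_{u,t}-M^a_{u,t-1}$, of norms at most $C_{s,E}$ and $C_{t,E}$ respectively (see also Remark~\ref{rem:factorization_of_Ms-Ms-1}). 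Without this identity, or an equivalent mechanism for distributing the difference onto both factors, the proposed proof does not reach the stated inequality.
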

\begin{proof}
Fix $s,t \ge 1$ and let $\lambda = (s,t) \in \Lambda$. We will use the set $S^a_\lambda(o)$ from Definition \ref{def:Sa_lambda}, and for $y \in S^a_\lambda(o)$ we let $\posproj{y}$ and $\negproj{y}$ denote the unique vertices in $\Conv(o,y) \cap S_{(s,0)}(o)$ and $\Conv(o,y) \cap S_{(0,t)}(o)$ respectively, see Lemma~\ref{lem:characterization_of_Sa} and Figure~\ref{fig:ps_ls_y}.

Observe that, for every $\ell_{s+t} \in \La_{o,{s+t}}$ and $p_{s+t} \in \Pa_{o,{s+t}}$, the number of $y \in S^a_\lambda(o)$ such that $ y \sim_{\posproj{y}} p_{s+t}$ and $y \sim_{\negproj{y}} \ell_{s+t}$ is $1$ if $\ell \sim_{s+t} p$ (in which case $y$ is the unique point at distance $t$ from $\ell_{s+t}$ on $[\ell_{s+t}, p_{s+t}]$), and $0$ otherwise. It follows that for $f \in \LL^2(\La_{s+t};E)$ and $x \in \Pa_{s+t}$ we can decompose
\begin{align*}
 M^a_{s+t} (f)(x)& = \frac{1}{q^{s+t}} \sum_{z \in \La_s} f(z) 1_{z \sim_{o,s+t} x}\\
   & =  \bigg(\frac{1}{q^{t}} \sum_{y \in S^a_\lambda(o)} 1_{y \sim_{\posproj{y},t} x}\bigg) \bigg(\frac{1}{q^{s}} \sum_{z \in \La_s} f(z) 1_{y \sim_{\negproj{y},s} z}\bigg).
   \end{align*}

This means that there is a decomposition $M^a_s = AB$ into
\[
\LL^2(\La_s;E) \overset{B}{\longrightarrow} \LL^2(S^a_\lambda(o);E) \overset{A}{\longrightarrow} \LL^2(\Pa_s;E)
\]
where $S^a_\lambda(o)$ carries the uniform probability measure. The operators are given by
\begin{align*}
A(f)(x) &= \frac{1}{q^{t}} \sum_{y \in S^a_\lambda(o)} f(y) 1_{y \sim_{\posproj{y},t} x}\text{,}\\
B(f)(y) &= \frac{1}{q^{s}} \sum_{z \in \La_s} f(z) 1_{y \sim_{\negproj{y},s} z}\text{.}
\end{align*}
Using Lemma \ref{lemma:formula_for_EtTs}, we obtain
\[
M^a_{s+t-1} - M^a_{s+t-1} = (E_{s+t} - E_{s+t-1}) M^a_{s+t} (E_{s+t} - E_{s+t-1}) = (E_{s+t} - E_{s+t-1}) A B(E_{s+t} - E_{s+t-1})\text{.}
\] 
We claim that the norm of
\[
B(E_{s+t} - E_{s+t-1}) \colon \LL^2(\La;E) \to \LL^2(S^a_\lambda(o);E)
\]
is bounded by $C_{s,E}$ and that the norm of
\[
(E_{s+t} - E_{s+t-1})A \colon \LL^2(S^a_\lambda(o);E)) \to \LL^2(\Pa;E)
\]
is bounded by $C_{t,E}$. Partitionning the set of $z\in\La_{s+t}$ according to the value of $z_{o,t}$ we can decompose $\LL^2(\La;E)$ as the $\ell^2$-direct sum indexed by $v \in \La_{t}$, of $\LL^2(\{\ell \in \La \mid \ell_{t} = v\};E)$ which are simply $\LL^2(\La_{v,s};E)$ by Lemma \ref{lem:continuation}.

In this decomposition, $B$ decomposes as the direct sum of the operators $M^a_{v,s}$, and $E_{s+t} - E_{s+t-1}$ corresponds to the direct sum of $E_{v,s} - E_{v,s-1}$. By Lemma \ref{lemma:formula_for_EtTs} again, we obtain that $B(E_{s+t} - E_{s+t-1})$ decomposes as the direct sum of the operators $M^a_{s,v} - M^a_{s-1,v}$. This yields the desired inequality.

Similarly, we can partition $S^a_\lambda(o)$ according to the value of $\posproj{y}$ and decompose $\LL^2(S^a_\lambda(o);E)$ as the $\ell^2$-direct sum indexed by $u \in \Pa_s$ of the $\LL^2(\{y \in S^a_\lambda(o) \mid \posproj{y} = u\})$) which are $\LL^2(\Pa_{u,s})$ by Lemma~\ref{lem:characterization_of_Sa}).

In this decomposition $A$ decomposes as the direct sum of the $M^a_{u,s}$, and $(E_{s+t} - E_{s+t-1})$ decomposes as the direct sum of $E_{u,t} - E_{u,t-1}$.
\end{proof}

\begin{remark}\label{rem:factorization_of_Ms-Ms-1}
The proof shows that, when $s,t \geq 1$, the operator $M^a_{s+t}-M^a_{s+t-1}$ can be factorized as

\[
\begin{tikzcd}
\bigoplus_{z' \in \La_{s_2}} L^2(\Pa_{z',s_1}) \arrow[r,"V"] 
&  \bigoplus_{z \in \Pa_{s_1}} L^2(\La_{z,s_2}) \arrow[d,"B"] \\
\bigoplus_{z' \in \La_{s_2}} L^2(\La_{z'}) \arrow[u,"A"] 
& \bigoplus_{z \in \Pa_{s_1}} L^2(\Pa_{z})\arrow[d,"U"]\\
L^2(\La_o) \arrow[r,"M^a_{s+t}-M^a_{s+t-1}"]\arrow[u,"W"] & L^2(\Pa_o)
\end{tikzcd}
\]

where $U ,V,W$ are unitaries, $A = \bigoplus_{z \in \Pa_{s_1}} (M^a_{s_2,z} - M^a_{s_2-1,z})$, and $B = \bigoplus_{z' \in \La_{s_2}} (M^a_{s_1,z'} - M^a_{s_1-1,z'})$. More precisely, the operators $U,V,W$ are not only unitaries but also regular isometries  (see Section~\ref{sec:Banach_space_preliminaries} for the terminology).
\end{remark}

This proposition implies that we have a dichotomy:

\begin{corollary}
Depending on the Banach space $E$
\begin{enumerate}
\item either $C_{s,E}\geq 1$ for every $s$\label{item:E_bad}
\item or there are $C,\varepsilon>0$ such that $C_{s,E} \leq Ce^{-\varepsilon s}$ for every $s$.\label{item:E_good}
\end{enumerate}
\end{corollary}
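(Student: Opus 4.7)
The plan is to apply a standard submultiplicativity (Fekete-type) argument to the sequence $(C_{s,E})_{s \geq 1}$, using two inputs: the just-proved inequality $C_{s+t,E} \leq C_{s,E} C_{t,E}$ of Proposition~\ref{prop:sousadditif}, and a uniform upper bound on each $C_{s,E}$.

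First I would establish the uniform bound. Each operator $M^a_s$ is, by definition, a conditional expectation (averaging over points at Gromov product at least $s$), so it is given by the averaging operator $T_\nu$ of Example~\ref{ex:regular_for_averages} associated to a probability measure, and hence is a regular operator with $\|M^a_s\|_{\text{reg}} \leq 1$. Therefore $\|M^a_s\|_{B(L^2(\La;E),L^2(\Pa;E))} \leq 1$ for every Banach space $E$, and by the triangle inequality $C_{s,E} \leq 2$ for every $s \geq 1$.

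Now suppose we are not in case \eqref{item:E_bad}, so there exists $s_0$ with $C_{s_0,E} < 1$. Set $\delta = -\log C_{s_0,E} > 0$ and $\varepsilon = \delta / s_0$. For arbitrary $s \geq 1$ write $s = k s_0 + r$ with $0 \leq r < s_0$ via Euclidean division. Iterating Proposition~\ref{prop:sousadditif} gives
\[
C_{s,E} \;\leq\; C_{s_0,E}^{k} \, C_{r,E} \;\leq\; 2 \, e^{-k \delta}
\]
with the convention $C_{0,E} := 1$ (or simply absorbing the finitely many small values of $r$ into a constant using the bound $C_{r,E} \leq 2$). Since $k \geq s/s_0 - 1$, this gives $C_{s,E} \leq (2 e^{\delta}) e^{-\varepsilon s}$, which is the desired exponential decay with $C = 2 e^{\delta}$.

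There is no real obstacle here; the only tiny point to verify is the uniform boundedness of $C_{s,E}$, which is immediate from the fact that $M^a_s$ is a conditional expectation and therefore has regular norm $\leq 1$. The rest is Fekete's lemma for submultiplicative sequences in disguise, yielding the clean dichotomy between $\inf_s C_{s,E}^{1/s} \geq 1$ (equivalent to $C_{s,E} \geq 1$ for all $s$ by submultiplicativity together with $C_{1,E}\leq 2$) and $\inf_s C_{s,E}^{1/s} < 1$ (which upgrades to uniform exponential decay).
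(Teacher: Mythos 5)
Your proof is correct and follows essentially the same route the paper intends: the corollary is stated as an immediate consequence of the submultiplicativity $C_{s+t,E}\leq C_{s,E}C_{t,E}$ of Proposition~\ref{prop:sousadditif}, combined with the trivial uniform bound $C_{s,E}\leq 2$ (each $M^a_s$ is an averaging operator with nonnegative kernel of total mass one, hence of regular norm at most $1$, so the bound persists after tensoring with any $E$), and your Euclidean-division/Fekete argument is exactly this. The only cosmetic slip is calling $M^a_s$ a ``conditional expectation''---it maps functions on $\La$ to functions on $\Pa$, so it is not literally one---but the property you actually use (regular norm $\leq 1$, as in Example~\ref{ex:regular_for_averages}) is correct.
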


The spaces in case \eqref{item:E_good} are the ones for which our proof works. There are, however, also Banach spaces in case \eqref{item:E_bad}. The admissible spaces will be discussed in more detail in Section~\ref{sec:good_banach_spaces} below. We close this paragraph by showing that Hilbert spaces belong to the good case \eqref{item:E_good}.

\begin{proposition}\label{prop:operator_norm_of_Ms-Ms-1}
Assume that $E$ is a Hilbert space. Then for every $s\in \N^*$ we have $\norm{M^a_s-M^a_{s-1}}_{\LL^2(\La;E) \to \LL^2(\Pa;E)} \leq \frac{1}{\sqrt q^s}.$
\end{proposition}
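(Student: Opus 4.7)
My plan is to prove this by induction on $s$, with the inductive step being a direct application of Proposition~\ref{prop:sousadditif}. That proposition gives $C_{s,\mathcal H} \leq C_{s-1,\mathcal H}\cdot C_{1,\mathcal H}$, so it suffices to establish the base case: $C_{1,\mathcal H} \leq q^{-1/2}$ for any Hilbert space $\mathcal H$. Since tensoring by the identity of a Hilbert space preserves operator norms between $L^2$-spaces, this reduces to the scalar statement
\[
\|M^a_1 - M^a_0\|_{L^2(\La)\to L^2(\Pa)} \leq q^{-1/2}.
\]

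For this scalar bound, I would exploit the factorization $M^a_1 - M^a_0 = (E_1-E_0) M^a_1 (E_1-E_0)$, which follows from Lemma~\ref{lemma:formula_for_EtTs} since $E_0 M^a_1 = M^a_1 E_0 = M^a_0$. Thus the operator effectively acts from the $(q^2-1)$-dimensional space $L^2_0(\La_1)$ of mean-zero functions on $\La_1$ to $L^2_0(\Pa_1)$, and the task becomes computing the operator norm of $M^a_1$ on mean-zero functions. Identifying $L^2(\La_1)$ and $L^2(\Pa_1)$ with $\mathbb C^{q^2}$ equipped with the normalized inner product, $M^a_1$ is represented by $(1/q) I$ where $I$ is the $q^2 \times q^2$ incidence matrix of the biaffine plane at level $1$, so I need to show that the largest non-trivial singular value of $I$ is $\sqrt q$.

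The main work is therefore the spectral analysis of $I^*I$, and the structure of parallel classes in the biaffine plane (Section~\ref{sec:biaffine_planes}) makes this a clean computation. Using Lemma~\ref{lem:counting} I get that $(I^*I)_{\ell,\ell'}$ equals $q$ when $\ell=\ell'$, equals $0$ when $\ell\neq\ell'$ but $\ell \parallel \ell'$ (they meet at a point of $\ell^0$, hence outside $\Pa$), and equals $1$ otherwise (unique intersection point in $\Pa$). This gives $I^*I = q \cdot \mathrm{Id} + J - K$, where $J$ is the all-ones matrix and $K$ is block-diagonal with an all-ones block of size $q$ per parallel class. The obvious orthogonal decomposition of $\mathbb C^{q^2}$ into constants, functions constant on parallel classes with zero total sum, and functions with mean zero on each parallel class diagonalizes $I^*I$ with eigenvalues $q^2$, $0$, and $q$ respectively. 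The non-trivial singular values of $I$ are therefore $\sqrt q$ (multiplicity $q^2-q$) and $0$ (multiplicity $q-1$), so $\|M^a_1\|$ on mean-zero functions is $\sqrt q/q = q^{-1/2}$. The only obstacle worth mentioning is bookkeeping in the spectral computation, but the parallel-class structure makes it routine.
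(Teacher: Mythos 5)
Your proof is correct and takes essentially the same route as the paper: reduce to the scalar case, reduce to $s=1$ via the submultiplicativity of Proposition~\ref{prop:sousadditif}, and then carry out a spectral computation in the level-one biaffine plane organized by parallel classes. Your diagonalization of $I^*I=q\,\mathrm{Id}+J-K$ on the three-block orthogonal decomposition is just a repackaging of the paper's identity $(M^a_1-M^a_0)^*(M^a_1-M^a_0)={M^a_1}^*M^a_1-{M^a_0}^*M^a_0=\tfrac{1}{q}(\mathrm{Id}-P)$, where $P$ is the projection onto functions constant on parallelism classes.
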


\begin{proof} Decomposing in an orthonormal basis of $E$, we can assume that $E=\C$.

In view of Proposition \ref{prop:sousadditif}, it suffices to prove
the inequality for $s=1$, i.e $\Vert M^a_1-M^a_0\Vert \leq
\frac{1}{\sqrt q}$.  For $f\in \LL^2(\La)$ and $g\in \LL^2(\Pa)$
we have
\[
\gen{M^a_s f,g}= q^s\int_{\La\times\Pa} 1_{p \sim_s \ell} \gen{f(\ell),g(p)} d\ell dp\text{,}
\]
so that for all $s>0$ the operator ${M^a_s}^*:\LL^2(\Pa)\to \LL^2(\La)$ is given by the formula ${M^a_s}^*(f)(\ell)= \E[ f(p) \mid \ell\sim_s p]$. 

Using Lemma \ref{lemma:formula_for_EtTs} and that $E_s = E_s^*$ we see that ${M^a_1}^*M^a_0={M^a_0}^*M^a_1={M^a_0}^*M^a_0$. Hence
\[
(M^a_1-M^a_0)^*(M^a_1-M^a_0)={M^a_1}^*M^a_1-{M^a_0}^*M^a_0\text{.}
\]
Moreover, $M^a_1-M^a_0=(M^a_1-M^a_0)E_1$ is supported on $\LL^2(\Pa_1)$, so we only need to compute the norm on this subspace.

From the definitions we see that ${M^a_0}^*M^a_0 f=\E(f)$. On level $1$ we compute the following where all $x,x'\in \Pa_1$ and $z,z'\in\La_1$:
\begin{align*}
{M^a_1}^*M^a_1 f(z)&=\frac{1}{q}\sum_{x \sim z} M^a_1f(x)\\
&=\frac{1}{q^2}\sum_{x\sim z}\sum_{z' \sim x} f(z')\\
&=\frac{1}{q^2}\left( qf(z)+\sum_{z'\not\parallel z} f(z')\right)\\
&=\frac{1}{q} f(z)+\frac{1}{q^2}\sum_{z'} f(z')-\frac{1}{q^2} \sum_{z'\parallel z} f(z')\text{.}
\end{align*}

Hence we get $({M^a_1}^*M^a_1-{M^a_0}^*M^a_0)(f)(\ell)=\frac{1}{q}f(\ell)-\frac{1}{q^2}\sum_{\ell'\parallel \ell } f(\ell')$. In other words, ${M^a_1}^*M^a_1-{M^a_0}^*M^a_0=\frac{1}{q}(\mathrm{Id}-P)$ where $P:\LL^2(\La_1)\to \LL^2(\La_1)$ is the orthogonal projection on the subspace formed by functions constant on parallelism classes (i.e.\ such that $f(\ell)=f(\ell')$ whenever $\ell \parallel \ell'$). It follows that $q({M^a_1}^*M^a_1-{M^a_0}^*M^a_0)$ is an orthogonal projection (of rank $q^2-q$) and thus has norm $1$. Hence $\norm{M^a_1-M^a_0} =\frac{1}{\sqrt q}$.  
\end{proof}

% ------------------------------------------------------------------------------
\subsection{Projective setting}\label{sec:norm_estimates_projective}
% ------------------------------------------------------------------------------

Our goal is now to prove estimates similar to the previous one, but on 
$\P$ and $\L$ instead of $\Pa$ and $\La$. We first $\P_s$ and $\L_s$ with the uniform measures. We write $\proba_o$ for the measure which is the inverse limit of these probability measures on $\P$, and the same for the measure on $\L$. We also write $\proba_o$ for the product measure on $\P\times \L$.

Observe that the probability measure $\mu^a$ on $\Pa$ is also the probability measure $\proba_o$ conditionned on $\Pa$: for every measurable $A \subset\Pa$ we have $\mu^a(A)=\proba_o(A)/\proba_o(\Pa)$. 

From now on the integration of a function $f$ with respect to the measure $\proba_o$ (on $\L$ or on $\P$) will be denoted $\int_{\L} f(\ell) d\ell $ or $\int_{\P} f(p) dp$.

We will be interested in the operators $T_{s} \colon L^2(\L,\proba_o)\to L^2(\P,\proba_o)$ defined by 
\[
T_s (f)(p)=\E[f(\ell) \mid (\ell,p)_o=s]\text{.}
\]
In view of Lemma~\ref{lem:NumberEquilateral}, for $s \geq 1$ and every $p \in \P$, 
\begin{align*}\proba_o((\ell,p)_o=s)&=\proba_o(\ell_s \sim_s \ell_s) - \proba_o(\ell_{s+1} \sim_{s+1} \ell_{s+1})\\ 
& = \frac{(q+1) q^{s-1}}{(q^2+q+1)q^{2s-2}} - \frac{(q+1) q^{s}}{(q^2+q+1)q^{2s}}\\
& = \frac{(q-1)(q+1)}{(q^2+q+1)q^{s}}
\end{align*}
the explicit formula for $s \ge 1$ is
\[
T_s(f)(p) = \frac{(q^2+q+1)q^{s}}{(q-1)(q+1)}\int_{ \L} f(\ell) 1_{(\ell,p)_o = s} d\ell\text{.}
\]

We will see that they are obtained as some average, over all possible pairs $a=(p^0,\ell^0)$, of the operators in \eqref{eq:Tas}. So we will derive norm estimates on the $T_s$ from those on the $M_s^a$ from the last paragraph by averaging over $a$.

Concretely let $\Pi$ be the set of incident point-line pairs $a = (p,\ell)$ in $\lk(o)$. Let $N = (q^2+q+1)(q+1)$ be the cardinal of $\Pi$.
Recall that for $a\in \Pi$, $(\P^{a}_o,\L^{a}_o)$ is well-defined since the biaffine structure depends only on level $1$. Let $\hat{\L}$ be the disjoint union of the $\L^{a}$ (for $a\in \Pi$) equipped with the probability measure $\mu$ that is $1/N$ times the sum of the measures $\mu^{a}$. Thus
\[
\int_{\hat\L} f(\ell) d\mu(\ell) = \frac{1}{N} \sum_{a\in \Pi}\int_{\L^{a}} f(\ell) d\mu^{a}(\ell)\text{.}
\]

Define the operator $U \colon \LL^2(\L,\proba_o) \to L^2(\hat{\L},\mu)$ by $U(f)(\ell) = f(\ell)$ for all $f \in \LL^2(\L)$ and $\ell \in \L^{a}$ with $a\in\Pi$. The same discussion applies to points and we obtain an operator $V \colon \LL^2(\P) \to \LL^2(\hat{\P})$.

\begin{lemma}\label{lem:proj_biaff_isometries}
The operators $U \colon \LL^2(\L) \to \LL^2(\hat{\L})$ and $V \colon \LL^2(\P) \to \LL^2(\hat{\P})$ are (non surjective) regular isometries.
\end{lemma}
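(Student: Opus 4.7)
The plan is a direct computation of $\|U(f)\|_{\LL^2(\hat\L)}^2$, whose core is a combinatorial identity counting incident flags in $\lk(o)$. Regularity will then follow at no extra cost because the computation is insensitive to replacing scalars by vectors in any Banach space.

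First, I would pin down the Radon--Nikodym density of $\mu^{a}$ with respect to $\proba_o$ on $\L$. Since the biaffine structure depends only on level $1$ and since the level-$1$ projection sends $\proba_o$ to the uniform measure on $\L_{o,1}$, the mass $\proba_o(\L^{a})$ equals the probability that a uniform line in the projective plane $\lk(o)$ of order $q$ does not pass through $p^0_1$, which is $c := q^2/(q^2+q+1)$. Together with the relation $\mu^{a}(A) = \proba_o(A)/\proba_o(\L^{a})$ for $A \subset \L^{a}$ recalled just above the lemma, this gives $d\mu^{a} = c^{-1}\mathbbm{1}_{\L^{a}}\,d\proba_o$.

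Second, I would unfold the norm:
\[
\|U(f)\|_{\LL^2(\hat\L)}^2 = \frac{1}{N}\sum_{a\in\Pi}\int_{\L^{a}}|f(\ell)|^2\,d\mu^{a}(\ell) = \frac{1}{Nc}\int_\L|f(\ell)|^2\bigg(\sum_{a\in\Pi}\mathbbm{1}_{\L^{a}}(\ell)\bigg)\,d\proba_o(\ell).
\]
The key claim is that the inner sum is constant, equal to $Nc$. This is pure plane combinatorics: $\ell\in\L^{a}$ for $a=(p^0,\ell^0)$ means that $p^0_1$ is a point of $\lk(o)$ off the line $\ell_1$. In a projective plane of order $q$ there are exactly $q^2$ such points, and each is contained in $q+1$ incident flags. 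Hence the sum equals $q^2(q+1) = Nc$, which gives $\|U(f)\| = \|f\|$.

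Finally, the exact same computation applies word-for-word with $|f(\ell)|^2$ replaced by $\|f(\ell)\|_E^2$ for any Banach space $E$, so $U\otimes\mathrm{id}_E$ is an isometry $\LL^2(\L;E)\to\LL^2(\hat\L;E)$ for every $E$; this is the definition of a regular isometry recalled in Section~\ref{sec:Banach_space_preliminaries}. The operator $V$ is handled identically by exchanging the roles of points and lines. I do not foresee any genuine obstacle: once the multiplicity count $\sum_{a}\mathbbm{1}_{\L^{a}}(\ell) = Nc$ is noticed, everything else is bookkeeping. The only mild subtlety is remembering that elements of $\hat\L$ are labelled pairs $(a,\ell)$, so a single line of $\L$ contributes to $\hat\L$ with a multiplicity that is captured precisely by this indicator sum.
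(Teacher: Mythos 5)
Your proof is correct and follows essentially the same route as the paper: both reduce the isometry to the fact that the pushforward of $\mu$ under the forgetful map $\hat\L \to \L$ is $\proba_o$, with the key input being the flag count $\#\{a \in \Pi \mid \ell \in \L^{a}\} = q^2(q+1) = Nc$, independent of $\ell$ (the paper phrases this as proportionality of measures with a constant independent of $a$ plus normalization, while you make the density $d\mu^{a} = c^{-1}\mathbbm{1}_{\L^{a}}\,d\proba_o$ and the count explicit). Your observation that the computation is unchanged for $E$-valued functions is exactly how regularity is meant to follow, so no gap there.
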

Recall (see Section~\ref{sec:Banach_space_preliminaries}) that a regular isometry is a map that remains an isometry when tensorizing with $E$ for every Banach space $E$.
\begin{proof}
We only prove the statement for $U$, the statement for $V$ being analogous. It suffices to show that the measure induced from $\mu$ via $U$ is $\proba_o$, that is,
\[
\int_{\L} f(\ell)d\ell = \int_{\hat{\L}} U(f)(\ell) d\mu(\ell)
\]
for $f \in \LL^2(\L)$. 
This follows from three observations: First, the restriction of both measures to each $\L^{a}$ is proportional to  measure $\mu^a$ on $\L^{a}$. Second, the proportionality constant is the same for each $a\in \Pi$: indeed, for $\ell_1 \in \L_1$ the number of $a\in \Pi$ such that $\ell_1 \in \L^{a}_1$ is independent of $\ell_1$, so that the induced measure of $\{\ell' \in \L \mid \ell' \sim_1 \ell\}$ is independent of $\ell_1$. Third, both measures are probability measures. It follows that the measures coincide.
\end{proof}

\begin{lemma}\label{lem:ts_averaged}
With $U$ and $V$ as above we have
\[
T_{s} = V^*\left(\bigoplus_{a\in \Pi} \frac{1}{q-1} (q M^{a}_s - M^{a}_{s+1}) \right) U\text{.}
\]
\end{lemma}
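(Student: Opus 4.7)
The plan is to verify the identity by computing the bilinear form $\gen{T_s f, g}$ for arbitrary $f \in \LL^2(\L)$ and $g \in \LL^2(\P)$ in two ways. From the explicit formula for $T_s$ displayed just above the statement,
\[
\gen{T_s f, g} = \frac{(q^2+q+1)q^s}{(q-1)(q+1)} \int_{\P \times \L} f(\ell)\overline{g(p)}\,1_{(p,\ell)_o=s}\,d\proba_o(p)\,d\proba_o(\ell).
\]
The task is then to derive the same expression from the right-hand side of the claim.

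By adjointness, $\gen{V^*(\bigoplus_a \tfrac{1}{q-1}(qM_s^a - M_{s+1}^a))\, Uf,\, g} = \gen{(\bigoplus_a \tfrac{1}{q-1}(qM_s^a - M_{s+1}^a))\, Uf,\, Vg}_{\LL^2(\hat\P)}$. Applying \eqref{eq:Madiff_explicit} summand by summand, unfolding the definitions of $U$, $V$, and of the measure $\mu = \tfrac{1}{N}\sum_a \mu^a$ on $\hat\P$ (and analogously on $\hat\L$), and using that $\mu^a$ equals the restriction of $\proba_o$ to the relevant biaffine plane renormalized by the factor $\tfrac{q^2+q+1}{q^2}$, the pairing becomes
\[
\frac{q^{s+1}(q^2+q+1)^2}{N(q-1)q^4} \int_{\P \times \L} f(\ell)\overline{g(p)}\,1_{(p,\ell)_o = s}\cdot \#\{a \in \Pi : p \in \Pa,\,\ell \in \La\}\,d\proba_o(p)\,d\proba_o(\ell).
\]

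The main step is the combinatorial evaluation $\#\{a \in \Pi : p \in \Pa, \ell \in \La\} = q^3$ on the support $\{(p,\ell): (p,\ell)_o = s \ge 1\}$. On this support $p_1 \sim \ell_1$ in $\lk(o)$, so one must count incident pairs $a = (p^0_1, \ell^0_1)$ satisfying $p_1 \not\sim \ell^0_1$ and $\ell_1 \not\sim p^0_1$. There are $q^2$ lines $\ell^0_1$ not incident with $p_1$, each necessarily distinct from $\ell_1$, and on each such line the intersection $\ell^0_1 \cap \ell_1$ is the unique point incident with $\ell_1$, leaving $q$ admissible choices of $p^0_1$, for a total of $q^3$. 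Substituting this count together with $N = (q^2+q+1)(q+1)$ collapses the coefficient to $\tfrac{(q^2+q+1)q^s}{(q-1)(q+1)}$, matching the formula for $\gen{T_s f, g}$ above; since $f, g$ were arbitrary, the operator identity follows.

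The essential content is the combinatorial count $q^3$; everything else is measure-theoretic bookkeeping tracking how the probability measures $\mu^a$ on the biaffine planes recombine, with the right multiplicities, into $\proba_o$ on the projective plane at infinity.
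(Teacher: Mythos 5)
Your proposal is correct and follows essentially the same route as the paper's own proof: both compute the pairing $\gen{\cdot,\cdot}$ against an arbitrary $g$, unfold $U$, $V$ and the measure $\mu=\frac{1}{N}\sum_{a}\mu^{a}$, apply \eqref{eq:Madiff_explicit} summand by summand, and rest on the key fact that the number of $a\in\Pi$ with $p\in\Pa$, $\ell\in\La$ is constant on incident pairs $(p,\ell)$. The only (minor) difference is that you evaluate this constant explicitly as $q^3$, whereas the paper avoids the count by comparing the total masses of the two incidence measures ($1/q$ versus $(q+1)/(q^2+q+1)$); note also that, like the paper (which dismisses $s=0$ as trivial), your argument as written treats only $s\geq 1$, though the same computation with the modified count $q^3-q$ on non-incident pairs handles $s=0$ as well.
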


\begin{proof}
First note that the two measures on incident point line pairs given by
\[
\int_{\L}\int_{\P} f(\ell,p) 1_{\ell \sim_1 p} dp d\ell \quad \text{and} \quad \frac{1}{N} \sum_{a\in \Pi} \int_{\L^{a}}\int_{\P^{a}} f(\ell,p) 1_{\ell \sim_1 p} d\mu^a(p) d\mu^a(\ell)
\]
are proportional (because the number of $a\in \Pi$ that contains $(\ell,p)$ is independent of $(\ell,p)$). Moreover the total measure ($f \equiv 1$) is $(q+1)/(q^2+q+1)$ for the left one and is $1/q$ for the right one.

Now let $B$ denote the operator on the right-hand side of the claim. For $s = 0$ the statement is trivial so we assume $s \ge 1$.
For $f \in L^2(\L,\proba_o)$ and $g \in L^2(\P,\proba_o)$ and $s \ge 1$ we get using \eqref{eq:Madiff_explicit} and the above comparison

\begin{align*}
\gen{Bf,g} &= \Big\langle \bigg(\bigoplus_{a\in \Pi} \frac{1}{q-1} (q M^{a}_s - M^{a}_{s+1}) \bigg)Uf, Vg\Big\rangle\\
&= \frac{1}{N} \sum_{a} \frac{q^{s+1}}{q-1} \int_{\L^{a}} \int_{\P^{a}} f(\ell) \overline{g(p)} 1_{(p,\ell)_o=s} dp d\ell\\
&= \frac{q^2+q+1}{(q+1)q} \frac{q^{s+1}}{q-1} \int_{\L} \int_{\P} f(\ell) \overline{g(p)} 1_{(p,\ell)_o=s} dp d\ell\\
&= \frac{(q^2+q+1)q^s}{(q+1)(q-1)} \int_{\L} \int_{\P} f(\ell) \overline{g(p)} 1_{(p,\ell)_o=s} dp d\ell\\
& = \gen{T_{s} f,g}\text{.}\qedhere
\end{align*} 
\end{proof}

From this we conclude the main result of the section. Combined with Proposition~\ref{prop:operator_norm_of_Ms-Ms-1} it gives the estimate that we need for Hilbert spaces:

\begin{proposition}\label{prop:operator_norm_of_Ts-Ts-1}
For every Banach space $E$,
\begin{equation*}
\norm{T_{o,s} - T_{o,s+1}} \leq \frac{q+1}{q-1}\sup_a \sup_{t\geq s} \norm{M^{a}_t - M^{a}_{t+1}}
\end{equation*}
as operators $L^2(\L;E) \to L^2(\P;E)$, where the norm is the operator norm.
\end{proposition}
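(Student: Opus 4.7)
The plan is to combine Lemma~\ref{lem:ts_averaged} with the triangle inequality and with a suitable Banach-valued contractivity statement for $U$ and $V^*$. Applying Lemma~\ref{lem:ts_averaged} to both $T_{o,s}$ and $T_{o,s+1}$ and subtracting gives
\[
T_{o,s} - T_{o,s+1} = V^* \left( \bigoplus_{a \in \Pi} \frac{q(M^a_s - M^a_{s+1}) - (M^a_{s+1} - M^a_{s+2})}{q-1} \right) U.
\]
The triangle inequality bounds each summand's norm as an operator $L^2(\La;E) \to L^2(\Pa;E)$ by
\[
q \norm{M^a_s - M^a_{s+1}} + \norm{M^a_{s+1} - M^a_{s+2}} \leq (q+1)\sup_{t \geq s} \norm{M^a_t - M^a_{t+1}}.
\]
Since $L^2(\hat{\L};E)$ and $L^2(\hat{\P};E)$ decompose as weighted $\ell^2$-direct sums of the summand spaces indexed by $a \in \Pi$, the norm of the direct-sum operator in the middle equals the supremum of the summand norms, hence is at most $\frac{q+1}{q-1}\sup_a \sup_{t \geq s}\norm{M^a_t - M^a_{t+1}}$.

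It then remains to verify that $U$ and $V^*$ act as contractions in the Banach-valued setting. For $U$ this is immediate from Lemma~\ref{lem:proj_biaff_isometries}, which asserts $U$ is a regular isometry. For $V^*$, I would first compute its kernel explicitly. Using that the Radon--Nikodym derivative of $\mu^a$ with respect to $\proba_o$ on $\Pa$ is the constant $\frac{q^2+q+1}{q^2}$, and that the multiplicity $\abs{\{a \in \Pi : p \in \Pa\}}$ equals $(q+1)q^2$ independently of $p$ (choose the line $\ell^0$ among the $q^2$ lines of $\L_1$ not through $p_1$, then the point $p^0$ among the $q+1$ points of $\ell^0$), the dual pairing $\gen{Vf,g}_{\hat{\P}} = \gen{f,V^*g}_{\P}$ yields
\[
V^*g(p) = \frac{1}{(q+1)q^2}\sum_{a : p \in \Pa} g_a(p),
\]
i.e., a uniform average of $(q+1)q^2$ values. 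By Jensen's inequality applied pointwise, $\norm{V^*g(p)}_E^2 \leq \frac{1}{(q+1)q^2}\sum_{a : p \in \Pa}\norm{g_a(p)}_E^2$, and integrating against $\proba_o$ while using the density computation once more gives $\norm{V^* g}_{L^2(\P;E)} \leq \norm{g}_{L^2(\hat{\P};E)}$.

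The main obstacle is precisely this last step: the Banach-valued contractivity of $V^*$ does not formally follow from $V$ being a regular isometry, and must be verified via the explicit averaging formula. Once it is in place, combining the three norm estimates above yields the claimed bound.
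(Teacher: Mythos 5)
Your proof is correct and takes essentially the same route as the paper: apply Lemma~\ref{lem:ts_averaged} to $T_{o,s}$ and $T_{o,s+1}$, use the triangle inequality on each block, and note that the middle direct sum acts diagonally so its Banach-valued norm is the supremum of the block norms. Your explicit verification that $V^*\otimes \mathrm{id}_E$ is a contraction (via the averaging formula $V^*g(p)=\frac{1}{(q+1)q^2}\sum_{a:\,p\in\Pa}g_a(p)$ and convexity) is accurate and fills in a point the paper leaves implicit; it can equivalently be seen from the fact that $V^*$ is a positive averaging operator, hence its regular norm equals its operator norm, which is $1$ since $V$ is an isometry.
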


\begin{proof}
From Lemma~\ref{lem:ts_averaged} we have
\begin{align*}
\norm{T_s -T_{s+1}} &\le \frac{1}{q-1} \sup_a \norm{q M_s^a - M^a_{s+1} - qM^a_{s+1} + M^a_{s+2}}\\
& \le \frac{1}{q-1} \sup_a \bigg(q\norm{M^a_s - M^a_{s+1}} + \norm{M^a_{s+1} - M^a_{s+2}}\bigg)
\end{align*}
and the claim follows.
\end{proof}

% ------------------------------------------------------------------------------
\subsection{Schatten norms estimates}\label{sec:schatten_norms}
% ------------------------------------------------------------------------------

The following generalizes Proposition \ref{prop:operator_norm_of_Ms-Ms-1} to Schatten norms (see Subsection \ref{sec:schatten_preliminaries}).

\begin{proposition}\label{prop:Schatten_norm_of_Ms-Ms-1}
For every $s\in \N^*$ and $r \in [1,\infty)$, we have
\begin{equation}\label{eq:schatten_norm_estimateMs} \Vert M^a_s-M^a_{s-1}\Vert_{S^r(\LL^2(\La),\LL^2(\Pa))} \leq q^{-s(\frac 1 2 - \frac{2}{r})}.\end{equation}
\end{proposition}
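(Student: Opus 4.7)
The plan is to reduce the Schatten estimate to two ingredients that are essentially already in hand: a rank bound on $M^a_s - M^a_{s-1}$, and the operator norm bound of Proposition~\ref{prop:operator_norm_of_Ms-Ms-1}.

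First, I would exploit the identities from Lemma~\ref{lemma:formula_for_EtTs}, namely $E_s M^a_s = M^a_s E_s = M^a_s$ and $E_{s-1} M^a_s = M^a_s E_{s-1} = M^a_{s-1}$. A direct computation (as already used just before the statement of Proposition~\ref{prop:operator_norm_of_Ms-Ms-1}) gives
\[
M^a_s - M^a_{s-1} = (E_s - E_{s-1})\,M^a_s\,(E_s - E_{s-1}),
\]
so $M^a_s - M^a_{s-1}$ factors through the range of the projection $E_s - E_{s-1}$. By Corollary~\ref{cor:regular_tree} we have $|\Pa_s| = |\La_s| = q^{2s}$, so this range has dimension $q^{2s} - q^{2(s-1)} = (q^2-1)q^{2(s-1)} \le q^{2s}$. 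In particular,
\[
\operatorname{rank}(M^a_s - M^a_{s-1}) \le q^{2s}.
\]

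Second, I would invoke the elementary inequality $\|T\|_{S^r} \le \operatorname{rank}(T)^{1/r}\|T\|_{\mathrm{op}}$, which is immediate from $\|T\|_{S^r}^r = \sum_i \sigma_i(T)^r \le \operatorname{rank}(T)\cdot \|T\|_{\mathrm{op}}^r$. Combining this with the rank bound above and the operator norm estimate $\|M^a_s - M^a_{s-1}\|_{\mathrm{op}} \le q^{-s/2}$ from Proposition~\ref{prop:operator_norm_of_Ms-Ms-1} yields
\[
\|M^a_s - M^a_{s-1}\|_{S^r} \le q^{2s/r} \cdot q^{-s/2} = q^{-s\left(\frac{1}{2} - \frac{2}{r}\right)},
\]
which is the announced inequality.

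There is no real obstacle here, since both ingredients are already available: the whole content of the estimate is packaged in the $r = \infty$ case (the operator norm bound of Proposition~\ref{prop:operator_norm_of_Ms-Ms-1}) together with the finite-rank factorization through the eigenspace of $E_s - E_{s-1}$. One could equivalently phrase the argument by complex interpolation of Schatten norms between the $r = \infty$ bound $q^{-s/2}$ and the crude $r = 1$ bound $\operatorname{rank}(M^a_s - M^a_{s-1})\cdot q^{-s/2} \le q^{3s/2}$, which via $\|T\|_{S^r} \le \|T\|_{S^1}^{1/r}\|T\|_{S^\infty}^{1-1/r}$ gives the same exponent; but the direct rank-times-operator-norm bound is the most economical route.
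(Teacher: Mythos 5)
Your proof is correct, and it is genuinely a different (and more economical) route than the one in the paper. You bound the rank by $\operatorname{rank}(M^a_s-M^a_{s-1})\leq \dim\bigl(\mathrm{ran}(E_s-E_{s-1})\bigr)=q^{2s}-q^{2(s-1)}\leq q^{2s}$ via the identity $M^a_s-M^a_{s-1}=(E_s-E_{s-1})M^a_s(E_s-E_{s-1})$ (which indeed follows from Lemma~\ref{lemma:formula_for_EtTs}) and the tree count of Corollary~\ref{cor:regular_tree}, then apply $\|T\|_{S^r}\leq \operatorname{rank}(T)^{1/r}\|T\|_{\mathrm{op}}$ together with the operator norm bound $q^{-s/2}$ of Proposition~\ref{prop:operator_norm_of_Ms-Ms-1}; the exponents match exactly, so the argument closes. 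The paper instead runs the induction directly at the Schatten level: the base case $s=1$ uses the exact computation that $q(M^a_1-M^a_0)^*(M^a_1-M^a_0)$ is a projection of rank $q^2-q$, and the inductive step combines the factorization of Remark~\ref{rem:factorization_of_Ms-Ms-1} (direct sums over $\Pa_{s}$ and $\La_{t}$, of cardinalities $q^{2s}$ and $q^{2t}$) with H\"older's inequality \eqref{eq:Hoelder} for Schatten norms. In effect you outsource the multiplicative structure to Proposition~\ref{prop:operator_norm_of_Ms-Ms-1} (whose proof already contains the submultiplicativity induction of Proposition~\ref{prop:sousadditif}), whereas the paper redoes it intrinsically for $S^r$; both yield exactly the bound $q^{-s(\frac12-\frac2r)}$, your version being shorter, the paper's keeping the tensor-like factorization explicit and giving a marginally sharper (unused) constant at $s=1$. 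Your closing remark that one may equivalently interpolate between the $S^1$ and $S^\infty$ bounds is also fine.
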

\begin{proof} We prove by induction on $s$ that \eqref{eq:schatten_norm_estimateMs} holds for every choice of an origin $o$. When $s=1$, we have seen in the proof of Proposition \ref{prop:operator_norm_of_Ms-Ms-1} that $q(M^a_1-M^a_0)^*(M^a_1-M^a_0)$ is an orthogonal projection of rank $q^2-q$. So we have
\[
\norm{M^a_1 - M^a_0}_{S^r} = q^{-\frac{1}{2}} (q^2-q)^{\frac{1}{r}} \leq q^{-\frac{1}{2} + \frac{2}{r}}\text{.}
\]
Combining Hölder's inequality \eqref{eq:Hoelder} with Remark \ref{rem:factorization_of_Ms-Ms-1} we obtain that, $\|M^a_{s+t} - M^a_{s+t-1}\|_{S^r}$ is bounded above by
\[
\bigg(\sum_{z \in \Pa_{s}} \norm{M^a_{t,z} - M^a_{t-1,z}}_{S^{r_1}}^{r_1}\bigg)^{\frac{1}{r_1}}  \bigg(\sum_{z' \in \La_{t}} \norm{M^a_{s,z'} - M^a_{s-1,z'}}_{S^{r_2}}^{r_2}\bigg)^{\frac{1}{r_2}}\text{.}
\]
Recall from Lemma~\ref{lem:counting} that $\Pa_{s}$ and $\La_{t}$ have cardinality $q^{2s}$ respectively $q^{2t}$. So if we assume that \eqref{eq:schatten_norm_estimateMs} holds for $s,t$ and every choice of the origin, we obtain
\[ \norm{M^a_{s+t} - M^a_{s+t-1}}_{S^r} \leq \left(q^{2s} q^{-tr_1(\frac{1}{2} - \frac{2}{r_1})}\right)^{\frac{1}{r_1}} \left(q^{2t} q^{-sr_2(\frac{1}{2} - \frac{2}{r_2})}\right)^{\frac{1}{r_2}} = q^{\frac{2(s+t)}{r}-\frac{s+t}{2}}\text{.}
\]
In summary \eqref{eq:schatten_norm_estimateMs} for $s$ and $t$ and every choice of the origin implies \eqref{eq:schatten_norm_estimateMs} for $s+t$ and every choice of the origin. Together with the case $s = 1$ this completes the proof.
\end{proof}

The following proposition is the analogue of Proposition~\ref{prop:operator_norm_of_Ts-Ts-1} for Schatten norms.

\begin{proposition}\label{prop:Schatten_norm_of_Ts-Ts-1}
For every integer $s$ and every $r\in [4,\infty)$,
\begin{equation*} \Vert T_{s} - T_{s+1}\Vert_{S^r(\LL^2(\La),\LL^2(\Pa))} \leq 4 q^{\frac 3 r} q^{-s(\frac 1 2 - \frac{2}{r})}.\end{equation*}
\end{proposition}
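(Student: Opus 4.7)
My approach mirrors the proof of Proposition \ref{prop:operator_norm_of_Ts-Ts-1} but keeps track of Schatten norms in place of operator norms, using Proposition \ref{prop:Schatten_norm_of_Ms-Ms-1} as the key input. The extra ingredient is that the Schatten $r$-norm of a direct sum is an $\ell^r$-sum rather than a supremum, which produces a factor $|\Pi|^{1/r}$ that turns out to match the desired $q^{3/r}$ growth, since $|\Pi| = (q^2+q+1)(q+1) = \Theta(q^3)$.

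First I would start from Lemma \ref{lem:ts_averaged}, which gives the factorization
\[
(q-1)(T_s - T_{s+1}) \;=\; V^*\!\bigg(\bigoplus_{a\in\Pi} X^a\bigg) U,
\qquad X^a \defeq q(M^a_s - M^a_{s+1}) - (M^a_{s+1} - M^a_{s+2}).
\]
Since $U$ and $V$ are (regular) isometries by Lemma \ref{lem:proj_biaff_isometries}, both $U$ and $V^*$ have operator norm $1$, and the ideal property of Schatten norms yields
\[
\norm{T_s - T_{s+1}}_{S^r} \;\leq\; \frac{1}{q-1}\,\bigg\|\bigoplus_{a\in\Pi} X^a\bigg\|_{S^r}
\;=\; \frac{1}{q-1}\bigg(\sum_{a\in \Pi} \norm{X^a}_{S^r}^r\bigg)^{1/r}
\;\leq\; \frac{|\Pi|^{1/r}}{q-1}\,\sup_{a}\norm{X^a}_{S^r}.
\]

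Next I would bound $\norm{X^a}_{S^r}$ by the triangle inequality and Proposition \ref{prop:Schatten_norm_of_Ms-Ms-1}: letting $\beta = \tfrac{1}{2}-\tfrac{2}{r}\geq 0$ (since $r \geq 4$), we get
\[
\norm{X^a}_{S^r} \;\leq\; q\cdot q^{-(s+1)\beta} + q^{-(s+2)\beta} \;\leq\; (q+1)\,q^{-(s+1)\beta},
\]
where the last inequality uses $\beta \geq 0$ to absorb the second term into the first. Substituting this into the previous display produces
\[
\norm{T_s - T_{s+1}}_{S^r} \;\leq\; \frac{q+1}{q-1}\,|\Pi|^{1/r}\,q^{-(s+1)\beta}.
\]

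Finally, to obtain the claimed constant $4$, I would make the elementary estimates $(q+1)/(q-1) \le 3$ and $|\Pi| \le (q+1)^3$ for $q \geq 2$, so that $|\Pi|^{1/r} \le (q+1)^{3/r} \leq \bigl(\tfrac{3}{2}\bigr)^{3/r}q^{3/r}$, and use $q^{-(s+1)\beta} = q^{-\beta}\cdot q^{-s\beta} \le q^{-s\beta}$. Combining these with a direct numerical check for the worst case (which is $q = 2$, $r = 4$) should yield the constant $4$. The only subtlety here is tracking that $|\Pi|^{1/r}$ is genuinely comparable to $q^{3/r}$ and not to something larger.

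The main (and essentially only) obstacle is the bookkeeping around the direct sum: in the operator norm case (Proposition \ref{prop:operator_norm_of_Ts-Ts-1}) one loses only a factor $(q+1)/(q-1)$, whereas now one must pay the additional $|\Pi|^{1/r}$ factor, and one has to verify that this is exactly balanced by the improvement in Proposition \ref{prop:Schatten_norm_of_Ms-Ms-1} relative to the operator norm estimate of Proposition \ref{prop:operator_norm_of_Ms-Ms-1}. Everything else is a routine triangle-inequality calculation.
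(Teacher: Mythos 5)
Your proposal is correct and follows essentially the same route as the paper: factor $T_s-T_{s+1}$ through the biaffine direct sum via Lemma~\ref{lem:ts_averaged}, use that the Schatten norm of a direct sum is an $\ell^r$-sum (paying $|\Pi|^{1/r}$), bound each block by the triangle inequality and Proposition~\ref{prop:Schatten_norm_of_Ms-Ms-1}, and finish with the elementary estimate $\frac{q+1}{q-1}\,|\Pi|^{1/r}\leq 4\,q^{3/r}$ for $q\geq 2$, $r\geq 4$. The only point worth noting is that your intermediate crude bound $|\Pi|^{1/r}\leq(3/2)^{3/r}q^{3/r}$ alone just overshoots $4$ at $q=2$, $r=4$, but the direct check you propose there gives $\approx 3.82\leq 4$, so the argument closes.
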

\begin{proof}
Observe that if $T = \bigoplus_{i=1}^N T_i$ for $T_i \colon \cH_i \to \cK_i$, then
\[\|T\|_{S^r} = \bigg(\sum_i \|T_i\|_{S^r}^r\bigg)^{\frac 1 r} \leq N^{\frac 1 r} \max_i \|T_i\|_s.\]
From Lemma~\ref{lem:ts_averaged} and Hölder's inequality we deduce
\begin{align*}
\norm{T_s -T_{s+1}}_{S^r} & \le \frac{1}{q-1}  \bigg\lVert \bigoplus_a q M_s^a - M^a_{s+1} - qM^a_{s+1} + M^a_{s+2} \bigg\rVert_{S^r}\\
& \le \frac{1}{q-1} N^\frac{1}{r} \sup_a \bigg(q\norm{M^a_s - M^a_{s+1}}_{S^r} + \norm{M^a_{s+1} - M^a_{s+2}}_{S^r}\bigg)
\end{align*}
Using Proposition~\ref{prop:Schatten_norm_of_Ms-Ms-1}, whose bound is monotonically decreasing in $s$ for $r \ge 4$, our last quantity can be bounded above by $\frac{q+1}{q-1} N^\frac{1}{r} q^{-s(\frac{1}{2} - \frac{2}{r})}$. The claim follows because $\frac{q+1}{q-1} N^{\frac 1 r} \leq 4 q^{\frac 3 r}$ for every $r \geq 4$.
\end{proof}

% ------------------------------------------------------------------------------
\subsection{Banach space-valued estimates}\label{sec:good_banach_spaces}
% ------------------------------------------------------------------------------
We say that a Banach space $E$ is \emph{admissible for $X$}  (or just \emph{admissible} when $X$ is fixed) if there exist $L$ and $\theta > 0$ such that for every $o \in X$ and every $s\geq 0$
\begin{equation}\label{eq:E-valued_Hoelder_bound}
\norm{T_{o,s} - T_{o,s+1}}_{B(L^2(\P;E),L^2(\L;E))} \leq L e^{-\theta s}
\end{equation}
and the same statement holds for $\bar{X}$.

The main result of the section is:

\begin{theorem}\label{thm:Banach_spaces_with_exponential_decay} The class of admissible Banach spaces
\begin{enumerate}
\item contains all spaces such that $d_n(X) = O(n^{\frac 1 4-\varepsilon})$ for some $\varepsilon>0$,
\item is stable under duality, subspaces, quotients, ultrapowers,
\item is stable under the operation $E \mapsto (E,F)_\theta$ for every other Banach space $F$ and every $\theta<1$, 
\item is stable by the operation $E\mapsto \LL^r(\Omega,\mu;E)$
for every $1<r<\infty$.
\end{enumerate}
\end{theorem}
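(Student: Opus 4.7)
My plan is to prove the four items in succession, with (3) serving as the technical engine that drives (4). For item (1), given $E$ with $d_n(E) \leq C n^{1/4 - \varepsilon}$, I pick $r$ slightly larger than $4$, specifically in $\bigl(4, \tfrac{4}{1-4\varepsilon}\bigr)$, so that $d_n(E) \leq C n^{1/r - \varepsilon'}$ for some $\varepsilon' > 0$. Proposition~\ref{prop:norm_and_Schatten} then converts Schatten $r$-norm bounds into $L^2(\cdot;E)$ operator-norm bounds; combined with Proposition~\ref{prop:Schatten_norm_of_Ts-Ts-1}, this gives
\[
\|T_{o,s} - T_{o,s+1}\|_{B(L^2(\L;E), L^2(\P;E))} \leq L' \, q^{-s(1/2 - 2/r)},
\]
which decays exponentially since $r > 4$. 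The same proof applies in $\overline{X}$.

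For (2), stability under isometric inclusions and quotients is immediate: an embedding $F \hookrightarrow E$ or a quotient $E \twoheadrightarrow F$ induces a corresponding map on $L^2(\L; \cdot)$ that intertwines the scalar-valued operator $T_{o,s}$, so the operator norm can only decrease. Stability under ultrapowers follows because the relevant norms are determined by the finite-dimensional subspaces of $E$, which an ultrapower $E_{\mathcal{U}}$ shares uniformly. For duality, the adjoint of $T_{o,s}$ on $X$ corresponds (after swapping the roles of $\P$ and $\L$) to $T_{o,s}$ on the dual building $\overline{X}$; since admissibility is by definition symmetric in $X$ and $\overline{X}$, combining this with a local reflexivity argument handling the identification of $L^2(\L; E^*)$ with subspaces of $L^2(\L; E)^*$ transfers admissibility from $E$ to $E^*$.

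For (3), since $T_{o,s}$ is a bistochastic averaging operator it is regular with $\|T_{o,s}\|_{\mathrm{reg}} \leq 1$ (cf.\ Example~\ref{ex:regular_for_averages}), so
\[
\|T_{o,s} - T_{o,s+1}\|_{B(L^2(\L;F), L^2(\P;F))} \leq 2
\]
for every Banach space $F$. Combined with the hypothesized exponential bound on $L^2(\L;E)$, the vector-valued Riesz--Thorin theorem (Subsection~\ref{subsection:interpolation}) yields the bound $2^{\theta} L^{1-\theta} e^{-\alpha(1-\theta) s}$ on $L^2(\L; (E,F)_\theta)$, which decays exponentially as long as $\theta < 1$.

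For (4), I first dispatch the case $r = 2$: the isometry $L^2(\L; L^2(\Omega;E)) = L^2(\Omega; L^2(\L;E))$ given by Fubini has $T_{o,s}$ acting fiberwise on the outer $L^2(\Omega;\cdot)$, so the bound for $E$ transfers directly. For general $r \in (1,\infty) \setminus \{2\}$, I invoke the standard vector-valued interpolation identity $L^r(\Omega;E) = (L^2(\Omega;E), L^s(\Omega;E))_\theta$ with $(s,\theta) = (\infty, 1 - 2/r)$ if $r > 2$ and $(s,\theta) = (1, 2/r - 1)$ if $r < 2$; since $\theta < 1$ in both cases, applying (3) with $F = L^s(\Omega;E)$ concludes. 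I expect the main obstacle to be the duality step in (2): non-reflexive Banach spaces obstruct the naive identification $L^2(\L;E)^* = L^2(\L;E^*)$, and the argument needs a local reflexivity / finite-dimensional approximation step, crucially using the symmetric requirement of admissibility on both $X$ and $\overline{X}$.
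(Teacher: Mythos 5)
Your proposal is correct, and for item (1) it is exactly the paper's argument: choose $r\in\bigl(4,\tfrac{4}{1-4\varepsilon}\bigr)$ and combine Proposition~\ref{prop:Schatten_norm_of_Ts-Ts-1} with Proposition~\ref{prop:norm_and_Schatten}, the same estimate applying verbatim in $\overline X$. For items (2) and (3) the paper does not write a proof at all but cites them as well-known (\cite[Lemma~3.1]{MR3474958}); your sketches are the standard arguments behind that citation, including the key point for duality that the adjoint of $T_{o,s}$ is the corresponding operator with $\P$ and $\L$ exchanged, i.e.\ the operator for $\overline X$ — which is precisely why admissibility is required symmetrically for $X$ and $\overline X$. (Local reflexivity is not really needed there: the isometric embedding $L^2(\P;E^*)\hookrightarrow L^2(\P;E)^*$ together with this identification of $T_{o,s}^*$ suffices.) The only genuinely different route is item (4). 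The paper fixes $E$ and interpolates in the exponent: by vector-valued Riesz--Thorin the class of spaces with exponential decay of $T_{o,s}-T_{o,s+1}$ on $\LL^r(\cdot;E)$ is independent of $1<r<\infty$, and Fubini at exponent $r$ then concludes. You fix the exponent $2$ and interpolate in the Banach-space variable, reducing (4) to (3) via $\LL^r(\Omega;E)=(\LL^2(\Omega;E),\LL^s(\Omega;E))_\theta$. That works, but the identification of vector-valued complex interpolation spaces (Theorem~5.1.2 of \cite{MR0482275}) is stated for finite exponents, so for $r>2$ take a finite $s$ with $r<s<\infty$ rather than $s=\infty$, where the endpoint identification needs extra care; with that adjustment your reduction is clean, and the paper's exponent-interpolation route simply sidesteps the issue.
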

In particular, it contains all spaces of type $p$ and cotype $q$ satisfying $\frac 1 p - \frac 1 q<\frac 1 4$, all $\LL^r$ spaces or non-commutative $L^r$ spaces in the range $1<r<\infty$ and their subspaces and quotients. The spaces appearing in Theorem \ref{thm:Banach_spaces_with_exponential_decay} are exactly the spaces for which strong property (T) of $\SL_3(\R)$ is known. 
\begin{proof} The first item is obtained by combining Proposition \ref{prop:Schatten_norm_of_Ts-Ts-1} and Proposition \ref{prop:norm_and_Schatten}. The second and third items are well-known, see for example \cite[Lemma 3.1]{MR3474958}. The last item is Fubini if $r=2$. The general case follows also from Fubini and from the following consequence of complex interpolation, see Subsection~\ref{subsection:interpolation}: the class of Banach spaces for which there exist $L$ and $\theta$ such that 
\[\forall s, \|T_{o,s} - T_{o,s+1}\|_{B(\LL^r(\P;E),\LL^r(\L;E))} \leq L e^{-\theta s}\]
does not depend on $1<r<\infty$.
\end{proof}

Not every Banach space is admissible:
\begin{lemma}\label{lem:not_nontrivial_type}
If $E$ has trivial type, then
\[
\norm{T_{s} - T_{s+1}}_{B(\LL^2(\L;E),\LL^2(\P;E))} = 2\]
and the constant from Proposition~\ref{prop:sousadditif} is
\[
C_{s,E} = 2 - \frac 2 q
\]
for every $s \geq 0$.
\end{lemma}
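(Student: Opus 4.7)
The key reduction is to the regular norm. By the Maurey--Pisier theorem, a Banach space of trivial type contains $\ell^1_n$'s uniformly, so \eqref{eq:regular_norm_and_type} tells us that for such $E$ the operator norm on $L^2(\cdot;E)$ coincides with the regular norm. By Example~\ref{ex:regular_for_averages} (applied to the signed measures with densities $K_s-K_{s+1}$, resp.\ $K_s-K_{s-1}$), the regular norm equals the $L^2\to L^2$ norm of the operator whose kernel is the pointwise absolute value. So in each of the two cases it suffices to compute this latter scalar norm.

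For $T_s-T_{s+1}$: the kernel of $T_s$ is proportional to $\1_{(p,\ell)_o=s}$ (see the formula just before Lemma~\ref{lemma:formula_for_EtTs}), so the supports of the kernels of $T_s$ and $T_{s+1}$ are disjoint subsets of $\P\times\L$. Consequently the absolute-value kernel of $T_s-T_{s+1}$ equals the sum of the two kernels, i.e.\ the scalar operator $T_s+T_{s+1}$. Each $T_s$ is a conditional expectation for the probability measure $\proba_o$, with row sums and column sums equal to $1$ (column sums because the joint density $\1_{(p,\ell)_o=s}$ is symmetric). Hence $T_s+T_{s+1}$ is bounded by $2$ on $L^1$ and on $L^\infty$, and attains $2$ on the constant function. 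Riesz--Thorin gives $\|T_s+T_{s+1}\|_{L^2\to L^2}=2$, which yields the first equality.

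For $M^a_s-M^a_{s-1}$ the supports are nested rather than disjoint: $\{p\sim_s\ell\}\subseteq\{p\sim_{s-1}\ell\}$. Using the explicit expression $M^a_s = q^s\int(\cdot)\1_{p\sim_s\ell}\,d\ell$, the absolute-value kernel takes the value $q^s-q^{s-1}$ on $\{p\sim_s\ell\}$ and $q^{s-1}$ on $\{p\sim_{s-1}\ell\}\setminus\{p\sim_s\ell\}$. Integrating over $\ell$ for fixed $p$, Lemma~\ref{lem:counting} (which gives $\mu^a(\{\ell : p\sim_t\ell\}) = q^{-t}$) yields
\[
(q^s-q^{s-1})q^{-s}+q^{s-1}(q^{-(s-1)}-q^{-s}) \;=\; \tfrac{q-1}{q}+\tfrac{q-1}{q}\;=\;2-\tfrac{2}{q}.
\]
By the symmetry between $p$ and $\ell$ in the incidence relation, the column sums equal the same number $2-2/q$. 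Hence the absolute-value-kernel operator is bounded by $2-2/q$ on $L^1$ and on $L^\infty$ and attains $2-2/q$ on the constant function; Riesz--Thorin closes the argument, giving $C_{s,E}=2-2/q$.

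The only conceptual input is the identification of operator norm with regular norm for $E$ of trivial type; everything else is bookkeeping, although one must be careful that the marginals of the signed kernels are absolutely continuous so that Example~\ref{ex:regular_for_averages} actually applies. The main step that is \emph{not} purely formal is the combinatorial count for the row sum of the absolute-value kernel of $M^a_s-M^a_{s-1}$, which relies on the counting formulas in Lemma~\ref{lem:counting} for the biaffine plane.
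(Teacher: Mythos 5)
Your proof is correct and takes essentially the same route as the paper: trivial type reduces the $E$-valued operator norm to the regular norm via \eqref{eq:regular_norm_and_type}, Example~\ref{ex:regular_for_averages} converts that to the scalar $L^2$-norm of the absolute-value kernel, and then disjointness of supports (for $T_s$, $T_{s+1}$) resp.\ the row/column-sum count (for $M^a_s-M^a_{s-1}$) yields $2$ and $2-\frac{2}{q}$. The paper phrases the second computation as the total variation of the difference of the two uniform measures, which, because the kernel has constant marginals, is exactly the quantity your explicit row-sum plus Riesz--Thorin argument computes; the only difference is the level of detail.
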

\begin{proof} The operators $T_{o,s}$ are of the form $T_{\nu_s}$ (as in Example \ref{ex:regular_for_averages}) for probability measures on $\L \times \P$ supported on $\{(\ell,p)\mid (\ell,p)_o = s\}$. Since the $\nu_s$ are disjointly supported, we have from \eqref{eq:regular_norm_and_type} and Example \ref{ex:regular_for_averages} that
\[  \|T_{o,s} - T_{o,s+1}\|_{B(\LL^2(\L;E),\LL^2(\P;E))} =  \|T_{o,s} + T_{o,s+1}\|_{B(\LL^2(\L),\LL^2(\P))}.\]
This quantity is at most $2$ by the triangle inequality, and equal to $2$ as it maps the constant $1$ to the constant $2$. The same argument applies to show that for every $a$ and $s$, the operators $M_{a,s} - M_{a,s+1}$ from Proposition \ref{prop:sousadditif} have regular norm $2-\frac 2 q$, the total variation of the difference between the uniform measures on $\{(\ell,p) \in \La \times \Pa |\mid \ell \sim_s p\}$ for $s$ and $s+1$. 
\end{proof}
We conjecture that Lemma~\ref{lem:not_nontrivial_type} is the only obstruction to admissibility:

\begin{conjecture}\label{conj:nontrivial_type}
A Banach space is admissible if and only if it has nontrivial type.
\end{conjecture}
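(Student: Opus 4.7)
The forward direction (admissible implies nontrivial type) is immediate from Lemma \ref{lem:not_nontrivial_type}: if $E$ has trivial type then $\|T_{o,s}-T_{o,s+1}\|_{B(\LL^2(\L;E),\LL^2(\P;E))}=2$ for every $s$, which rules out any exponential decay. The substance of the conjecture is therefore the converse, and the ultimate target is to match what is known in the Bruhat--Tits case \cite{MR2574023}, where strong property (T) of $\SL_3(\LocalField)$ holds for every space of nontrivial type.

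My plan for the converse is to sharpen the Schatten chain Proposition \ref{prop:Schatten_norm_of_Ms-Ms-1} $\to$ Proposition \ref{prop:Schatten_norm_of_Ts-Ts-1} $\to$ Theorem \ref{thm:Banach_spaces_with_exponential_decay} so that it matches the Hilbert bound $q^{-s/2}$ of Proposition \ref{prop:operator_norm_of_Ms-Ms-1} at $r=2$. Concretely, one would aim for
\[
\|M^a_s - M^a_{s-1}\|_{S^r(\LL^2(\La),\LL^2(\Pa))} \leq C_r\, q^{-s(1/2 - 1/r)}
\]
valid for every $r>2$; combined with Proposition \ref{prop:norm_and_Schatten} this would lower the exponent $1/4$ appearing in Theorem \ref{thm:Banach_spaces_with_exponential_decay} down to $1/2$, i.e.\ to the Maurey--Pisier threshold for nontrivial type. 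The current proof loses a factor of two because the induction through Remark \ref{rem:factorization_of_Ms-Ms-1} applies Hölder's inequality once for the point direction and once for the line direction. I would instead try to compute the singular values of $M^a_s - M^a_{s-1}$ directly: the operators $A,B$ arising in the proof of Proposition \ref{prop:sousadditif} are averaging operators on the rooted $q^2$-regular trees of Corollary \ref{cor:regular_tree}, and combined with the commutativity of the Cartwright--Mlotkowski--Steger algebra $\mathcal A$ this should yield an explicit spectral description.

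The main obstacle, however, lies downstream of any such Schatten improvement. Proposition \ref{prop:norm_and_Schatten} requires a polynomial rate $d_n(E) \leq C n^{1/r-\varepsilon}$, whereas nontrivial type only guarantees $d_n(E) = o(n^{1/2})$ with no effective rate, and this gap is known to be unbridgeable purely at the level of finite-dimensional subspaces. To reach the full conjecture one must therefore replace the Schatten-to-operator transference by a direct vector-valued argument: a decomposition of $M^a_s - M^a_{s-1}$ into pieces whose norms as operators $\LL^2(\cdot;E) \to \LL^2(\cdot;E)$ are controlled by a single Rademacher/martingale projection inequality available as soon as $E$ has nontrivial type. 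This is the strategy of \cite{MR2574023} for $\SL_3(\LocalField)$, where harmonic analysis on the maximal compact $K = \SL_3(\calO)$ provides the equivariant structure. In our setting no $K$ is available, so the Rademacher decomposition must be built from the geometric substitutes for $K$-finite matrix coefficients developed in Sections \ref{sec:convergence_harmonic} and \ref{sec:harmonic_constant}, and in particular from the retraction operators $A_\lambda^\truc$. Making these substitutes sharp enough to produce a Rademacher estimate uniform in the scale $s$ is, I expect, the most delicate point, and the probable reason the statement has been left as a conjecture.
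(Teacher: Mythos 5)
The statement you are addressing is stated in the paper as a \emph{conjecture}: the paper contains no proof of the direction ``nontrivial type $\Rightarrow$ admissible'', and explicitly records that the only known strategy (the one that works for Bruhat--Tits buildings in \cite{MR2574023}) reduces to a property of certain biaffine monodromy groups which the authors could not verify for arbitrary $\tilde A_2$-buildings. Your forward direction is fine --- Lemma~\ref{lem:not_nontrivial_type} does show that trivial type forces $\norm{T_{o,s}-T_{o,s+1}}=2$ for all $s$, hence non-admissibility --- but that is the only part of your proposal that is actually proved; the converse is a research plan, so the proposal does not establish the conjecture, and there is no proof in the paper to compare it with.

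Beyond that, the first concrete step of your plan is provably impossible. You hope to sharpen Proposition~\ref{prop:Schatten_norm_of_Ms-Ms-1} to $\norm{M^a_s-M^a_{s-1}}_{S^r}\leq C_r\,q^{-s(\frac12-\frac1r)}$ for $r>2$, attributing the exponent $\frac12-\frac2r$ to a double use of H\"older in the induction. In fact that exponent is sharp. Using the incidence counts of Lemma~\ref{lem:counting}, the kernel of $M^a_s$ gives $\mathrm{Tr}({M^a_s}^*M^a_s)=q^{-4s}\cdot q^{2s}\cdot(\#\{(p,\ell)\in\Pa_s\times\La_s \mid p\sim_s\ell\})=q^{s}$, and since $E_{s-1}$ is an orthogonal projection with $M^a_{s-1}=M^a_sE_{s-1}=E_{s-1}M^a_s$ (Lemma~\ref{lemma:formula_for_EtTs}), one gets exactly $\norm{M^a_s-M^a_{s-1}}_{S^2}^2=q^s-q^{s-1}$. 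Since the rank of $M^a_s-M^a_{s-1}$ is at most $\dim \LL^2(\La_s)=q^{2s}$, H\"older gives, for $r>2$, $\norm{M^a_s-M^a_{s-1}}_{S^r}\geq \norm{M^a_s-M^a_{s-1}}_{S^2}\,q^{-2s(\frac12-\frac1r)}=(1-\tfrac1q)^{1/2}\,q^{-s(\frac12-\frac2r)}$, which for large $s$ contradicts any bound of the form $C_rq^{-s(\frac12-\frac1r)}$. So the loss is not an artifact of the induction, no singular-value computation will improve it, and the window $[\frac43,4]$ and the exponent $\frac14$ in Theorem~\ref{thm:Banach_spaces_with_exponential_decay} are intrinsic to the Schatten-class route (this is also consistent with the situation for $\SL_3(\R)$). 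Consequently the entire weight of the conjecture falls on the part you defer: bypassing Proposition~\ref{prop:norm_and_Schatten} altogether and proving the vector-valued decay \eqref{eq:E-valued_Hoelder_bound} directly from nontrivial type, uniformly in $s$ and in the base point. That is precisely the point at which the paper itself stops (the unverified property of biaffine monodromy groups), so your proposal, as it stands, leaves the conjecture open.
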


Evidence for this conjecture is that it is known for classical buildings \cite{MR2574023}. We should mention, however, that a Fourier transform argument in the proof reduces to a property of certain biaffine monodromy groups that does not hold for arbitrary $\tilde{A_2}$-buildings. More precisely, we could not verify the property for $\tilde{A}_2$-buildings whose links are classical projective planes but computer experiments have shown that the property is definitely not satisfied in buildings in which even the link of some vertex is exotic and of order less than $24$ (and there are such buildings that admit uniform lattices \cite{MR3656296}).

% ==============================================================================
\section{Averaging over the basepoint}\label{sec:averaging_basepoint}
% ==============================================================================

Virtually everything we have done so far depended on a basepoint $o \in X$. In this section we will prove results that eliminate this dependency by averaging over the possible basepoints. If the action of $\Gamma$ on $X$ is cocompact, as in the main theorem on strong property (T), this is obviously meaningful. For use in Theorem~\ref{thm:lp_cohomology} it will be convenient to also allow for $\Gamma$ to not act cocompactly and, in particular, for it to be trivial. In those cases we need an assumption that ensures the average to be well-defined and we will assume that the function to be integrated is real-valued and non-negative.

We begin with a statement on functions on the vertices $X$ that will be used in Section~\ref{sec:convergence_harmonic} to show convergence of the operators $A_\lambda$ to projection on harmonic functions and also in the results on operator algebras in Section~\ref{sec:operator_algebras}. We then introduce a set $X^{(1)}$ of directed edges and prove analogous but more technical results for functions on it. Those will be used in Section~\ref{sec:harmonic_constant} to show that harmonic functions are constant and also to prove vanishing of $\LL^p$-cohomology in Section~\ref{sec:lp_cohomology}.

\begin{proposition}\label{prop:double_counting}
Let $i \ge j \geq s$. Let $\lambda = (s,i+j-2s)$. Let $F\colon X \times X \to \C$ be a $\Gamma$-invariant function. Assume that $\Gamma$ acts cocompactly or that $F$ is real-valued, non-negative.
We have
\begin{equation}\label{eq:double_counting}
\sum_{o \in \Omega} \frac{1}{\abs{\Gamma_o}} \E_o[F(p_{o,i},\ell_{o,j}) \mid (p,\ell)_o=s] = \sum_{x \in \Omega} \frac{1}{\abs{\Gamma_x}} \frac{1}{\abs{S_\lambda(x)}} \sum_{y \in S_\lambda(x)} F(x,y)\text{.}
\end{equation}
\end{proposition}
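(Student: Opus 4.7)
The plan is to rewrite both sides as sums over the $\Gamma$-invariant set of triples
\[
\mathcal T \defeq \{(o,x,y) \in X^3 \mid \sigma(o,x) = (i,0),\ \sigma(o,y) = (0,j),\ \sigma(x,y) = \lambda\}
\]
and compare them via a double counting.

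First I would reformulate the conditional expectation combinatorially. By Corollary~\ref{cor:Position_poi_loj}, for a fixed $o$ the map $(p,\ell) \mapsto (p_{o,i}, \ell_{o,j})$ sends pairs with $(p,\ell)_o = s$ onto the fiber $\mathcal T_o = \{(x,y) : (o,x,y) \in \mathcal T\}$, and every element of $\mathcal T_o$ arises in this way. Since the cylinder measure $\proba_o$ is uniform on cylinder sets at each level and $p_{o,i}$, $\ell_{o,j}$ only depend on $(p,\ell)$ up to levels $i$ and $j$ respectively, the conditional law of $(p_{o,i}, \ell_{o,j})$ given $(p,\ell)_o = s$ is the uniform measure on $\mathcal T_o$. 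By Lemma~\ref{lemma:counting} (combined with the symmetry $\sigma(o,x)=(i,0)\Leftrightarrow \sigma(x,o)=(0,i)$, which lets us see $\mathcal T_o$ as the intersection of spheres around two vertices) the cardinality $T\defeq|\mathcal T_o|$ does not depend on $o$. Hence
\[
\E_o[F(p_{o,i}, \ell_{o,j}) \mid (p,\ell)_o = s] = \frac{1}{T} \sum_{(x,y) \in \mathcal T_o} F(x,y),
\]
and the LHS of \eqref{eq:double_counting} becomes $\frac{1}{T} \sum_{o \in \Omega} \frac{1}{|\Gamma_o|} \sum_{(x,y) \in \mathcal T_o} F(x,y)$.

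Next I would invoke a double counting identity on the $\Gamma$-set $\mathcal T$. Since $F$ and $\mathcal T$ are $\Gamma$-invariant, a standard orbit--stabilizer calculation shows that both expressions
\[
\sum_{o \in \Omega} \frac{1}{|\Gamma_o|} \sum_{(x,y) : (o,x,y) \in \mathcal T} F(x,y) \quad \text{and} \quad \sum_{x \in \Omega} \frac{1}{|\Gamma_x|} \sum_{(o,y) : (o,x,y) \in \mathcal T} F(x,y)
\]
equal $\sum_{[(o,x,y)] \in \Gamma \backslash \mathcal T} \frac{F(x,y)}{|\Gamma_{(o,x,y)}|}$. If $\Gamma$ acts cocompactly, $\Omega$ is finite and each inner sum is finite (the relevant spheres being finite), so all sums are absolutely convergent; if instead $F \ge 0$ the equality holds unconditionally in $[0,+\infty]$ by Tonelli. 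This is the key step and also the only delicate point, since it is what forces either cocompactness or non-negativity.

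Finally I would evaluate the second sum explicitly. For $(x,y)$ with $\sigma(x,y) = \lambda$, the number $N$ of $o \in X$ such that $(o,x,y) \in \mathcal T$ equals the size of an intersection of two spheres and hence depends only on $\lambda$ by Lemma~\ref{lemma:counting}. Counting $|\mathcal T|$ by fixing an $x$-coordinate shows that there are exactly $N \cdot |S_\lambda(x)|$ triples with $x$-coordinate fixed, while counting by fixing the $o$-coordinate gives $T$ triples; since both counts are independent of the chosen vertex we obtain $T = N \cdot |S_\lambda|$ where $|S_\lambda|$ denotes the common cardinality $|S_\lambda(x)|$. Plugging the double counting identity into the expression for the LHS gives
\[
\mathrm{LHS} = \frac{N}{T} \sum_{x \in \Omega} \frac{1}{|\Gamma_x|} \sum_{y \in S_\lambda(x)} F(x,y) = \sum_{x \in \Omega} \frac{1}{|\Gamma_x|} \frac{1}{|S_\lambda(x)|} \sum_{y \in S_\lambda(x)} F(x,y) = \mathrm{RHS},
\]
which completes the argument.
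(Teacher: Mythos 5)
Your route coincides with the paper's own proof: the same set of triples $\mathcal T=\{(o,x,y):\sigma(o,x)=(i,0),\,\sigma(o,y)=(0,j),\,\sigma(x,y)=\lambda\}$, the same two disintegrations of the weighted orbit sum (with respect to $o$ and with respect to $x$), the same use of Corollary~\ref{cor:Position_poi_loj} to identify the conditional expectation with the uniform average over the fiber $\mathcal T_o$, and the same use of Lemma~\ref{lemma:counting} to see that the relevant counts are constant. Up to the last step your argument is the paper's argument.

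The genuine gap is your justification of $T=N\,\abs{S_\lambda}$. Knowing that every fiber of the projection $(o,x,y)\mapsto o$ has cardinality $T$ and every fiber of $(o,x,y)\mapsto x$ has cardinality $N\,\abs{S_\lambda}$ does not imply these two constants coincide: they are fibers of two \emph{different} maps, and the ``count $\abs{\mathcal T}$ in two ways'' argument you are implicitly invoking only works over a finite index set, whereas here both projections range over the infinite vertex set of $X$ (formally you only get $\infty=\infty$). This is precisely the point the paper singles out (``it remains to justify that $Z=Z'$''). Two ways to close it: in the cocompact case, apply the identity you have already established between the two disintegrations to $F\equiv 1$ (all sums are then finite), which forces $T=N\,\abs{S_\lambda}$; in general, factor both constants, namely $T=\abs{S_{(i,0)}(o)}\cdot c$ with $c=\abs{S_\lambda(x)\cap S_{(0,j)}(o)}$ (choose $x$, then $y$; $c$ depends only on $\sigma(o,x)=(i,0)$ by Lemma~\ref{lemma:counting}) and $N\,\abs{S_\lambda(x)}=\abs{\{o:\sigma(o,x)=(i,0)\}}\cdot c$ with the \emph{same} $c$ (choose $o$, then $y$), and conclude using $\abs{\{o:\sigma(o,x)=(i,0)\}}=\abs{S_{(0,i)}(x)}=\abs{S_{(i,0)}(o)}$, which is how the paper argues. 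A minor further remark, at the same level of detail as the paper itself: when $s=\min(i,j)$ the event $(p,\ell)_o=s$ involves level $s+1$ and is therefore not determined by $(p_{o,i},\ell_{o,j})$ alone, so your one-line uniformity claim needs the extra observation (via Lemma~\ref{lem:NumberEquilateral}) that the conditional probability of $p\not\sim_{s+1}\ell$ given $(p_{o,i},\ell_{o,j})=(x,y)$ is the same for all $(x,y)$ in the fiber.
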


\begin{proof}
We consider the set 
\begin{equation*}
A \defeq \{(o,x,y) \in X\mid \sigma(o,x)=(i,0),\sigma(o,y)=(0,j),\sigma(x,y) = \lambda\}
\end{equation*}
and compute in two ways the integral $I$ of $F(x,y)$ over the set $\Gamma \backslash A$ with respect to the measure giving weight $1/\abs{\Gamma_{o,x,y}}$ to the element $\Gamma(o,x,y)$ (where $\Gamma_{o,x,y} = \Gamma_o \cap \Gamma_x \cap \Gamma_y$).
We first disintegrate with respect to $o$. Denoting $A_o = \{(x,y)| (o,x,y) \in A\}$, we obtain
\[
I = \sum_{o \in \Omega} \sum_{(x,y)\in \Gamma_o\backslash A_o} \frac{1}{|\Gamma_{o,x,y}|} F(x,y) = \sum_{o \in \Omega} \sum_{(x,y)\in A_o} \frac{1}{|\Gamma_o|} F(x,y)
\]
because $\abs{\Gamma_o}/\abs{\Gamma_{o,x,y}}$ is the cardinality of the $\Gamma_o$-orbit of $(x,y)$. By Lemma \ref{lemma:counting}, the cardinality of $A_o = \bigcup_{x \in S_{(i,0)}(o)} S_\lambda(x) \cap S_{(0,j)}(o)$ does not depend on $o$, denote it by $Z$.

By Corollary~\ref{cor:Position_poi_loj}, the set $A_o$ is equal to the set of pairs $(p_{o,i},\ell_{o,j})$, for $p\in\P$, $\ell\in \L$ with $(p,\ell)_o = s$.
So by the definition of the measure $\proba_o$, the uniform average of $F$ on $A_o$ is equal to $\E_o[F(p_{o,i},\ell_{o,j}) \mid (p,\ell)_o=s]$. We therefore obtain
\[
I = Z \sum_{o \in \Omega} \frac{1}{\abs{\Gamma_o}} \E_o[F(p_{o,i},\ell_{o,j}) \mid (p,\ell)_o=s]\text{.}
\]

Let us now disintegrate with respect to $x$. Denoting $A^x = \{(o,y) \mid (o,x,y) \in A\}$, we obtain
\[
I = \sum_{x \in \Omega} \sum_{(o,y) \in A^{x}/\Gamma_{x}} \frac{1}{\abs{\Gamma_{o,x,y}}} F(x,y) = \sum_{x \in \Omega} \sum_{(o,y) \in A^x} \frac{1}{\abs{\Gamma_{x}}} F(x,y)\text{.}
\]
By Lemma \ref{lemma:counting}, the number $Z_1$ of $o$ such that $(o,x,y) \in A$ does not depend on $x$ nor $y$ as long as $\sigma(x,y)=\lambda$. So we can write
\[
I = \sum_{x \in \Omega} \frac{Z_1 \abs{S_\lambda(x)}}{\abs{\Gamma_x}} \frac{1}{\abs{S_\lambda(x)}} \sum_{y \in S_\lambda(x)} F(x,y)\text{.}
\]
By Lemma \ref{lemma:counting} again, $Z':=Z_1 \abs{S_\lambda(x)}$ does not depend on $x$ and we obtain
\[
I = Z' \sum_{x \in \Omega} \frac{1}{\abs{\Gamma_x}} \frac{1}{\abs{S_\lambda(x)}} \sum_{y \in S_\lambda(x)} F(x,y)\text{.}
\]
It remains to justify that $Z=Z'$. If $\Gamma$ is cocompact this can be verified by evaluating the expression for the function $F$ that is constant $1$. 

Otherwise we observe that for $(o,x,y) \in A$ we have
\begin{align*}
Z &= \abs{S_{(i,0)}(o)} \cdot \abs{S_\lambda(x) \cap S_{(0,j)}(o)}\\
Z' &= \abs{S_{(-i,0)}(x)} \cdot  \abs{S_\lambda(x) \cap S_{(0,j)}(o)}
\end{align*}
and that $\abs{S_{(i,0)}(o)} = \abs{S_{(-i,0)}(x)}$.
\end{proof}

For many of the results that we have proven for the set $X$ of vertices, we will need to verify similar but more intricate statements for the set of edges. More precisely we introduce the set
\[
X^{(1)} \defeq \{(x,x')\mid \sigma(x,x') = (1,0)\}
\]
consisting of edges with orientation given by the cyclic orientation of types. If we assume that the action of $\Gamma$ is type rotating, which in view of Lemma~\ref{lem:type-preserving} means no loss of generality, then $\Gamma$ acts on $X^{(1)}$ and we let $\Omega^{(1)}$ denote a set of representatives for $\Gamma \backslash X^{(1)}$.

Given an edge $(x,x') \in X^{(1)}$ it will be important to understand the possible values of $\sigma(x',y)$ knowing $\sigma(x,y)$. In order to describe these, we associate to $\lambda \in \Lambda$ the elements $\lambda^+, \lambda^-, \lambda^\sim \in \Lambda$ as follows (see Figure~\ref{fig:lambda_star}). If $\lambda = (0,0)$ then $\lambda^+=\lambda^-=\lambda^\sim=(0,1)$. If $\lambda = (i,j) \neq (0,0)$ then
\begin{align*}
\lambda^+ &=(i,j+1)\text{,} &
\lambda^- &= \begin{cases} (i-1,j) & \textrm{if }i\neq 0\\
(1,j-1) & \textrm{if }i=0,\end{cases} &
\lambda^\sim &= \begin{cases} (i+1,j-1) & \textrm{if }j\neq 0\\
 (i,1) & \textrm{if }j=0.\end{cases} &
\end{align*}

\begin{figure}
\centering
\begin{tikzpicture}[scale=1.6]
\begin{scope}
\clip(-1.7,-.7) rectangle (1.7,4);
\foreach \ang in {0,60,...,300} {
\draw[dashed] (0,0) -- (\ang:5);
}
\end{scope}
\node[dot,fill=white] (x) at (0,0) {};
\node[dot, outer sep=5pt] (y) at ($(90:3)$) {};
%\node[anchor=west] at (x) {$x$};
\node[dot] (minus) at ($(y)-(60:1)$) {};
\node[dot] (plus) at ($(y)-(-60:1)$) {};
\node[dot] (tilde) at ($(y)-(180:1)$) {};
%\foreach \ang in {0,120,240} {
%\node[dot,fill=gray] at ($(y)-(\ang:1)$) {};
%\foreach \ang in {0,60,120,180,240,300} {
%\draw (y) -- ($(y)-(\ang:1)$);
%}
%\node[anchor=west] at (y) {$y$};
\draw (x) edge[->] node[near end,anchor=west] {$\lambda$} (y);
\draw (x) edge[-{>[width=3]}] node[near end,anchor=east] {$\lambda^-$} (minus);
\draw (x) edge[-{>[width=3]}] node[near end,anchor=west] {$\lambda^\sim$} (tilde);
\draw (x) edge[-{>[width=3]}] node[near end,anchor=east] {$\lambda^+$} (plus);
\draw (0,0) edge[->] node[anchor=west] {$(1,0)$} (60:1);
\draw (0,0) edge[->] node[anchor=east] {$(0,1)$} (120:1);
\end{tikzpicture}
\hspace{.5cm}
\begin{minipage}[b]{4.8cm}
\begin{tikzpicture}[scale=1.6]
\begin{scope}
\clip(-2,-.2) rectangle (1,2);
\foreach \ang in {0,60,...,300} {
\draw[dashed] (0,0) -- (\ang:5);
}
\end{scope}
\node[dot,fill=white] (x) at (0,0) {};
\node[dot, outer sep=5pt] (y) at ($(120:2)$) {};
\node[dot] (minus) at ($(y)-(60:1)$) {};
\node[dot] (tilde) at ($(y)-(180:1)$) {};
\draw (x) edge[->] node[near end,anchor=west] {$\lambda$} (y);
\draw (x) edge[dashed,-{>[width=3]}]  (minus);
\draw (x) edge[-{>[width=3]}] node[near end,anchor=west] {$\lambda^-$} (tilde);
\end{tikzpicture}
\begin{tikzpicture}[scale=1.6]
\begin{scope}
\clip(-1,-.2) rectangle (2,2);
\foreach \ang in {0,60,...,300} {
\draw[dashed] (0,0) -- (\ang:5);
}
\end{scope}
\node[dot,fill=white] (x) at (0,0) {};
\node[dot, outer sep=5pt] (y) at ($(60:2)$) {};
%\node[anchor=west] at (x) {$x$};
\node[dot] (plus) at ($(y)-(-60:1)$) {};
\node[dot] (tilde) at ($(y)-(180:1)$) {};
%\node[anchor=west] at (y) {$y$};
\draw (x) edge[->] node[near end,anchor=west] {$\lambda$} (y);
\draw (x) edge[dashed,-{>[width=3]}]  (tilde);
\draw (x) edge[-{>[width=3]}] node[near end,anchor=east] {$\lambda^\sim$} (plus);
\draw (0,0) edge[->] node[anchor=west] {$(1,0)$} (60:1);
\draw (0,0) edge[->] node[anchor=east] {$(0,1)$} (120:1);
\end{tikzpicture}
\end{minipage}

\caption{The vectors $\lambda^+, \lambda^\sim, \lambda^-$. The generic cases are shown on the left. The singular cases reflect the folding of the building and are shown on the right.}
\label{fig:lambda_star}
\end{figure}
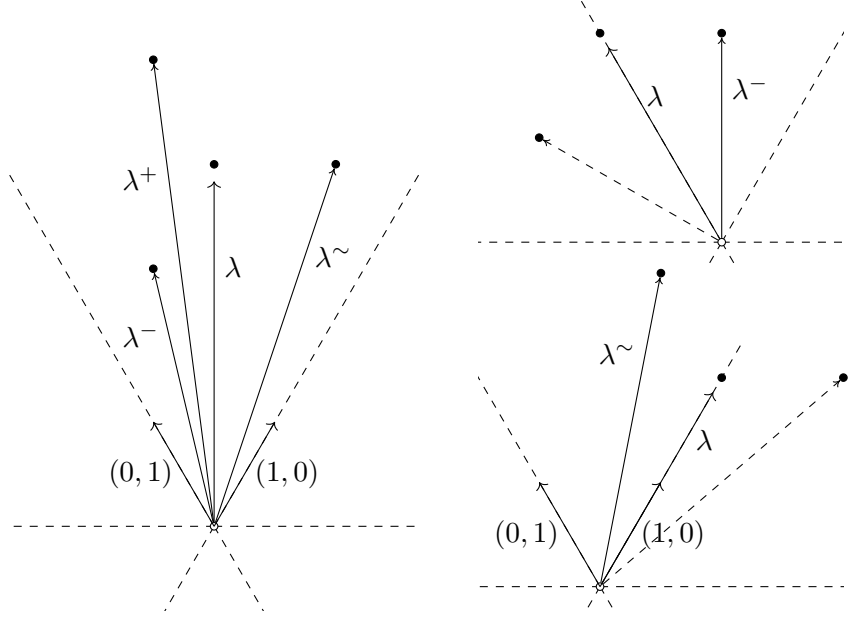

The relationship with elements of $X^{(1)}$ is given by the following, \cite[Lemma 2.1]{CartwrightMlotkowski94}.

\begin{lemma}\label{CartwrightMlotkowski} Let $x,y \in X$, and $\lambda = \sigma(x,y)$. When $x'$ varies in $S_{(1,0)}(x)$, the possible values for $\sigma(x',y)$ are $\lambda^+,\lambda^\sim$ and $\lambda^-$. Moreover the sphere $S_{(1,0)}(x)$ can be partitionned into three sets of respective cardinality $q^2,q,1$ such that $\sigma(x',y)=\lambda^+$ on the first, $\lambda^\sim$ on the second and $\lambda^-$ on the third.
\end{lemma}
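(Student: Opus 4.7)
The plan is to work in the link of $x$, which is a generalized hexagon, i.e.\ the incidence graph of a projective plane of order $q$. The set $S_{(1,0)}(x)$ is naturally identified with the $q^2+q+1$ vertices of one type (``points'') of this plane; so already the total cardinality $1 + q + q^2$ matches the claimed partition. The case $\lambda=(0,0)$ is trivial since $\lambda^+=\lambda^\sim=\lambda^-=(0,1)$ and $\sigma(x',y)=\sigma(x',x)=(0,1)$ for every $x'\in S_{(1,0)}(x)$, so from now on assume $\lambda=(i,j)\neq(0,0)$.

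The first step is to identify the distinguished simplex in $\lk(x)$ coming from $y$. Using Theorem~\ref{thm:isomappart} take an apartment $\Sigma$ containing $\Conv(x,y)$. If $i>0$ let $p_0\in\Sigma$ be the unique vertex of $\Conv(x,y)$ with $\sigma(x,p_0)=(1,0)$; if $j>0$ let $\ell_0\in\Sigma$ be the unique vertex of $\Conv(x,y)$ with $\sigma(x,\ell_0)=(0,1)$. When both $i,j>0$ the vertices $p_0,\ell_0$ are adjacent in $\lk(x)$. I now partition $S_{(1,0)}(x)$ in three blocks:
\begin{itemize}
\item[(C)] $x'=p_0$ (only when $i>0$); this gives $1$ element.
\item[(B)] $x'$ incident with $\ell_0$ in $\lk(x)$ but $x'\neq p_0$ (only when $j>0$, using the line $\ell_0$; when $j=0$ pick instead any fixed line $\ell_0\sim_{x,1}p_0$, see below); this gives $q$ elements.
\item[(A)] the remaining $q^2$ elements.
\end{itemize}
The boundary bookkeeping $j=0$ (no canonical $\ell_0$) and $i=0$ (no $p_0$) is the only fiddly part: when $j=0$ all $q+1$ lines through $p_0$ in $\lk(x)$ are equivalent, so one just chooses one arbitrarily to define block (B); when $i=0$, block (C) is empty and block (B) consists of all $q+1$ points on $\ell_0$.

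The second step is to compute $\sigma(x',y)$ on each block by constructing an appropriate apartment.
\begin{itemize}
\item For (C), $x'=p_0$ lies on $[x,y]\cap\Sigma$, so the parallelogram $\Conv(p_0,y)\subset \Sigma$ shows $\sigma(x',y)=(i-1,j)$ when $i\geq 1, j\geq 0$; this is exactly $\lambda^-$ in all subcases.
\item For (B), the triangle $\{x,x',\ell_0\}$ is a simplex. Apply Lemma~\ref{lem:extensionroot} to the half-apartment of $\Sigma$ bounded by the wall through $x$ and $\ell_0$ that lies on the \emph{opposite} side from $p_0$ and $y$, together with the simplex containing $\{x,x',\ell_0\}$: this produces an apartment $\Sigma'$ containing $x,x',\ell_0$ and the whole segment $[\ell_0,y]$. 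In $\Sigma'$ one reads off $\Conv(x',y)$ as the reflected parallelogram and computes $\sigma(x',y)=(i+1,j-1)$ if $j\geq 1$, which is $\lambda^\sim$; the singular case $j=0$ gives $(i,1)$ matching the definition $\lambda^\sim=(i,1)$.
\item For (A), apply Lemma~\ref{lem:extensionroot} twice (or iteratively) to attach first the simplex around $[x,x']$ and then enlarge by $[x,y]$ itself, producing an apartment containing $x,x',y$ in which $\sigma(x',y)=(i,j+1)=\lambda^+$. This works uniformly because $x'$ is in neither distinguished block, so the initial direction toward $y$ lies in a Weyl chamber disjoint from the edge $[x,x']$.
\end{itemize}

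The main obstacle is the case analysis in step~2, and in particular verifying that the apartment produced by Lemma~\ref{lem:extensionroot} in case (B) really contains all of $\Conv(x',y)$ so that the parallelogram formula applies; this is where the ``folding at a wall'' phenomenon that explains the exceptional values of $\lambda^-$ and $\lambda^\sim$ when $i=0$ or $j=0$ is used. Once the three shapes of $\sigma(x',y)$ are pinned down, the cardinalities $1,q,q^2$ are forced by the counts of points on a line in the projective plane of order $q$.
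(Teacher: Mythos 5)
You are supplying a proof that the paper itself does not give: in the text this lemma is simply quoted from Cartwright--M\l{}otkowski (their Lemma~2.1), so a self-contained geometric argument is a legitimately different route. Your partition is the right one (it is exactly the projection chamber $\{x,p_0,\ell_0\}$ that also drives Corollary~\ref{cor:respectivepositions}), and block (C) is correct. However, the two remaining blocks are not actually proved as written.

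For block (B) you apply Lemma~\ref{lem:extensionroot} to the half-apartment of $\Sigma$ bounded by the wall through $[x,\ell_0]$ on the side \emph{away} from $p_0$ and $y$. That lemma then only yields an apartment $\Sigma'$ containing that half-apartment and the chamber $\{x,x',\ell_0\}$; the segment $[\ell_0,y]$ lies strictly on the other side of the wall (except for its endpoint), so your assertion that $\Sigma'$ contains $[\ell_0,y]$ --- hence $y$ --- does not follow, and the parallelogram computation cannot be read off. The step is repaired by extending the \emph{other} half-apartment, the one containing $p_0$ and $y$ (hence all of $\Conv(x,y)$): then $\Sigma'\supseteq\Conv(x,y)\cup\{x'\}$, and since $x'$ is not in that half-apartment it occupies the position reflected across the wall, giving $\sigma(x',y)=\lambda^\sim$ directly. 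For block (A) there is a genuine gap: producing \emph{some} apartment containing $x,x',y$ does not determine $\sigma(x',y)$ --- one must use the hypothesis that $x'$ is neither $p_0$ nor incident with $\ell_0$ to pin down where $x'$ sits relative to $\Conv(x,y)$ --- and your ``apply Lemma~\ref{lem:extensionroot} twice'' is not a valid application, since no half-apartment containing an edge of the simplex being attached is exhibited; the closing sentence merely restates the desired conclusion. A clean fix: the edge $[x,x']$ and the vertex $y$ lie in a common apartment $\Sigma_3$, which contains $\Conv(x,y)$ (Proposition~\ref{thm:apmt_convex}), hence contains $p_0$ and $\ell_0$; in $\Sigma_3$ the vertex $x'$ must be one of the three point-neighbours of $x$, and the two non-generic positions are exactly $p_0$ and a point forming a chamber with $x$ and $\ell_0$, both excluded by the hypothesis defining (A), so $x'$ occupies the remaining position and $\sigma(x',y)=\lambda^+$. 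The same device proves at one stroke that only $\lambda^+,\lambda^\sim,\lambda^-$ can occur and handles the folded cases $i=0$ or $j=0$, where the identifications of Lemma~\ref{lem:important_fact} make your arbitrary choices of $\ell_0$ harmless.
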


A basic but important observation is

\begin{lemma}\label{lem:important_fact}
Let $\lambda = (i,j)$. If $j = 0$ then $\lambda^+ = \lambda^\sim$. If $i = 0$ then $\lambda^\sim = \lambda^-$.\qed
\end{lemma}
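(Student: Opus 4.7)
The statement is a direct unpacking of the case distinctions in the definitions of $\lambda^+$, $\lambda^-$, $\lambda^\sim$ given just above the lemma, so the plan is simply to read off both equalities from those formulas.

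First I would dispose of the degenerate case $\lambda=(0,0)$: here $i=j=0$ simultaneously, and by definition $\lambda^+=\lambda^-=\lambda^\sim=(0,1)$, so both asserted equalities hold trivially. I would then assume $\lambda=(i,j)\neq(0,0)$ for the remainder.

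For the first equality, suppose $j=0$, which forces $i\neq 0$. By the definition of $\lambda^+$ we have $\lambda^+=(i,j+1)=(i,1)$. Since $j=0$, we are in the second branch of the definition of $\lambda^\sim$, giving $\lambda^\sim=(i,1)$. Hence $\lambda^+=\lambda^\sim$. For the second equality, suppose $i=0$, so $j\neq 0$. The second branch in the definition of $\lambda^-$ applies, giving $\lambda^-=(1,j-1)$, while the first branch in the definition of $\lambda^\sim$ applies, giving $\lambda^\sim=(i+1,j-1)=(1,j-1)$. Hence $\lambda^\sim=\lambda^-$.

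There is no real obstacle here: the lemma only records that the two ``exceptional'' branches in the piecewise definitions were arranged precisely so that the collision of directions illustrated in the right-hand side of Figure~\ref{fig:lambda_star} (the folding of the building when $y$ lies on a wall through $x$) is encoded symbolically by the coincidences $\lambda^+=\lambda^\sim$ and $\lambda^\sim=\lambda^-$.
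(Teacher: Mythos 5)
Your proof is correct and matches the paper's treatment: the lemma is stated with \qed precisely because it follows by reading off the two exceptional branches of the definitions of $\lambda^+$, $\lambda^-$, $\lambda^\sim$, which is exactly what you do (including the degenerate case $\lambda=(0,0)$).
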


This observation implies, in particular, that the values of $\sigma(x',y)$ in Lemma~\ref{CartwrightMlotkowski} need not be distinct: if $\lambda = (i,0)$ there will be a set of cardinality $q^2+q$ on which $\sigma(x',y) = \lambda^+ = \lambda^\sim$ and if $\lambda = (0,j)$ then there will be a set of cardinality $q+1$ on which $\sigma(x',y) = \lambda^\sim = \lambda^-$. For future reference we also record the following consequence of Lemma~\ref{CartwrightMlotkowski}.

\begin{corollary}\label{cor:unique_plus_inverse}
Let $\lambda=(i,j) \in \Lambda$, and let $x',y \in X$ with $\sigma(x',y) = \lambda^+$. There is a unique $x \in X$ with $\sigma(x,x') = (1,0)$ and $\sigma(x,y) = \lambda$.
\end{corollary}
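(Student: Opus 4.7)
The plan is to view the corollary as the inverse to Lemma~\ref{CartwrightMlotkowski} and to deduce it from that lemma applied in the dual building $\overline{X}$. The key identification is that the set $\{x : \sigma(x, x') = (1, 0)\}$ equals $S_{(0, 1)}(x') = \overline{S}_{(1, 0)}(x')$ (see Notation~\ref{notation:bar}): swapping the two arguments of $\sigma$ conjugates it by $\lambda \mapsto \overline{\lambda}$, which is also the effect of passing from $X$ to $\overline{X}$.

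I would first apply Lemma~\ref{CartwrightMlotkowski} inside $\overline{X}$ to the pair $(x', y)$. Setting $\mu \defeq \overline{\sigma}(x', y) = \overline{\lambda^+}$, the lemma partitions $\overline{S}_{(1, 0)}(x')$ into three subsets of respective cardinalities $q^2, q, 1$ on which $\overline{\sigma}(x, y)$ takes the values $\mu^+$, $\mu^\sim$, $\mu^-$. The target condition $\sigma(x, y) = \lambda$ translates to $\overline{\sigma}(x, y) = \overline{\lambda}$, so the problem reduces to verifying the identity $\overline{\lambda} = \mu^-$.

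That identity is a short case inspection. For $\lambda = (i, j) \neq (0, 0)$ one has $\mu = \overline{\lambda^+} = (j + 1, i)$; since its first coordinate $j + 1 \geq 1$, the first branch of the definition of $(\cdot)^-$ gives $\mu^- = (j, i) = \overline{\lambda}$. The remaining case $\lambda = (0, 0)$ is analogous: $\mu = (1, 0)$ and $\mu^- = (0, 0) = \overline{\lambda}$. Since $\mu^-$ is attained on a singleton, we obtain both existence and uniqueness of $x$ at once. The only real work is this case check and, behind it, justifying that swapping the arguments of $\sigma$ has the same effect as applying $\overline{\cdot}$; the latter follows from the fact that the opposition involution of the spherical $A_2$ Weyl chamber exchanges the two simple root directions, and can also be read off the type constraint $\typ(y) - \typ(x) \equiv i + 2j \pmod 3$ inherent in the definition of $\sigma$.
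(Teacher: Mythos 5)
Your proposal is correct and follows essentially the same route as the paper: pass to the dual building, apply Lemma~\ref{CartwrightMlotkowski} to the pair $(x',y)$ there, and check the arithmetic identity that $\overline{\mu}=\lambda^+$ forces $\mu^-=\overline{\lambda}$, so the unique $x$ is the one element of the cardinality-one piece of the partition. The only (shared, harmless) implicit point is that $\mu^-$ differs from $\mu^+$ and $\mu^\sim$, which holds here since the first coordinate of $\mu=\overline{\lambda^+}$ is $j+1\geq 1$, so the folding degeneracies of Lemma~\ref{lem:important_fact} do not interfere.
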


\begin{proof}
For $\lambda = (i,j)$ recall that $\bar{\lambda} = (j,i)$. Swapping the order of types $S_{(0,1)}(x')$ is partitioned as in Lemma~\ref{CartwrightMlotkowski} into $x$ according to the value of $\overline{\sigma}(x,y)$. The claim now follows by observing that $\bar{\mu} = \lambda^+$ implies $\mu^- = \bar{\lambda}$.
\end{proof}

Let $(x,x') \in X^{(1)}$ and $\lambda \in \Lambda$. Phrasing Lemma~\ref{CartwrightMlotkowski} differently we see that $S_\lambda(x)$ decomposes into up to three different sets, obtained by imposing one of the three possible combinatorial distances from $x'$. These are the \emph{spheres}
\begin{equation*}
S_\lambda^\truc(x,x') = \{y \in X\mid \sigma(x,y)=\lambda,\sigma(x',y) = \lambda^\truc \}
\end{equation*}
for $\truc\in \{+,-,\sim\}$.

For a vector space $E$, we can define analogues of the operators $A_\lambda$ from \eqref{eq:def_Alambda} as the linear maps from the functions $X\to E$ to the functions $X^{(1)}\to E$, defined by
\begin{equation}\label{eq:def_Alambdatruc}
A_\lambda^\truc(f)(x,x') = \frac{1}{\abs{S_\lambda^\truc(x,x')}} \sum_{y \in S_\lambda^\truc(x,x')}f(y).
\end{equation}
We will use the following consequence of Lemma~\ref{CartwrightMlotkowski}.
\begin{lemma}\label{lem:relation_Alambda_Alambdatruc} For every $\lambda \in \Lambda$, every $\truc \in \{+,-,\sim\}$, every $f \colon X \to E$ and every $(x,x')\in X^{(1)}$,
  \[ A_\lambda f(x) = \frac{1}{q^2+q+1}\left(q^2 A_\lambda^+f(x,x') + q A_\lambda^\sim f(x,x')+ A_\lambda^-f(x,x')\right).\]
\end{lemma}
\begin{proof} By Lemma~\ref{lemma:counting}, $\abs{S_\lambda^\truc(x,x')}$ depends only on $\lambda$ and $\truc$ and not on $(x,x') \in X^{(1)}$; denote by $\abs{S_\lambda^\truc}$ this common value. In the generic case ($\lambda = (i,j)$ with $i,j>0$), summing over $x' \in S_{(1,0)}(x)$ and using Lemma~\ref{CartwrightMlotkowski} gives $(q^2+q+1) \abs{S_\lambda^+} = q^2 \abs{S_\lambda}$, $(q^2+q+1) \abs{S_\lambda^\sim} = q \abs{S_\lambda}$ and $(q^2+q+1) \abs{S_\lambda^-} = \abs{S_\lambda}$. The lemma follows.

  When $\lambda=(i,0)$ with $i>0$, a similar reasoning leads to $\abs{S_\lambda^+}=\abs{S_\lambda^\sim} = \frac{q^2+q}{q^2+q+1}\abs{S_\lambda}$ and $\abs{S_\lambda^-}=\frac{1}{q^2+q+1}\abs{S_\lambda}$. This also implies the lemma. The other singular cases ($\lambda = (0,j), j > 0$ and $\lambda = (0,0)$) are proved in the same way. 
\end{proof}

For $o \in X$, we define
\[
\P^{(1)}_o \defeq \{(p,o')\in \P\times \L_{o,1} \mid p \sim_{o,1} o'\}\text{.}
\]
Note that if $p \in \P$ then $(p_{o,i},p_{o,i+1}) \in X^{(1)}$. Note also that if $(p,o') \in \P^{(1)}_o$ then $(p_{o,i},p_{o',i-1}) \in X^{(1)}$ by Lemma~\ref{lem:Position_poi_loj_p'oi}.

We define
\begin{align*}
\lun{ X} &=\{(x,x') \in X\times X \mid \sigma(x,x') = (0,1)\}\\
&=\{(x,x') \in X \times X \mid (x',x) \in X^{(1)}\}\text{.}
\end{align*}
We also define $\lun{\Omega} = \{(x,x') \mid (x',x) \in \Omega\}$ and note that it is a set of representatives for $\Gamma \backslash \lun{X}$.

For  $(x,x') \in \lun X$ and $\lambda=(i,j) \in \Lambda$ we define 
\begin{align*}
\lplus \lambda &=(i+1,j)\text{,} &
\lmoins \lambda &= \begin{cases} (i,j-1) & \textrm{if }j\neq 0\\
(i-1,1) & \textrm{if }j=0,\end{cases} &
\lsim \lambda &= \begin{cases} (i-1,j+1) & \textrm{if }i\neq 0\\
 (1,j) & \textrm{if }i=0.\end{cases} &
\end{align*}

And we also denote accordingly
\begin{equation*}
\ltruc S_\lambda(x,x') = \{y \in X\mid \sigma(x,y)=\lambda,\sigma(x',y) = \ltruc\lambda \}
\end{equation*}
for $\truc\in \{+,-,\sim\}$.

Recall that $\overline X$ is the same building as $X$ with the types exchanged, and that we use $\overline{\cdot}$ for the object corresponding to $\cdot$, but in $\overline X$. For example, $\overline{\ltruc S_\lambda}$ denotes $\ltruc S_\lambda$ in the building $\overline{X}$. 
\begin{lemma}\label{lemma:spheres_opposite_building}
We have $\lun{ X} =\smash{\overline{X}}^{(1)}$. Moreover, for $(x,x')\in X^{(1)} = \overline{\lun{X}}$ 
we have $\overline{\ltruc S_\lambda}(x,x')=S^\truc_{\overline \lambda}(x,x')$.
\end{lemma}

\begin{proof}
For $x,x' \in X$, we have 
\[ (x,x') \in \smash{\overline{X}}^{(1)} \iff \overline{\sigma}(x,x')=(1,0) \iff \sigma(x,x')=(0,1) \iff (x,x') \in \lun{ X}.\]
This proves that $\lun{ X} =\smash{\overline{X}}^{(1)}$. Now for $(x,x') \in X^{(1)}$, for every $y \in X$ we have
\begin{align*} y \in \overline{\ltruc S_\lambda}(x,x') &\iff \overline{\sigma}(x,y) = \lambda, \overline{\sigma}(x',y) = \ltruc\lambda\\
& \iff \sigma(x,y) = \overline{\lambda}, \sigma(x',y) = \overline{\ltruc\lambda}.\end{align*}
But it follows from the definitions that $\overline{\ltruc\lambda} = \overline{\lambda}^\truc$. We therefore obtain
\begin{align*} y \in \overline{\ltruc S_\lambda}(x,x')& \iff \sigma(x,y) = \overline{\lambda}, \sigma(x',y) = \overline{\lambda}^\truc\\
& \iff y \in S_{\overline{\lambda}}^\truc(x,x').
\end{align*}
\end{proof}
Combining Lemma~\ref{CartwrightMlotkowski} and Lemma~\ref{lemma:spheres_opposite_building} we see that
$S_\lambda(x)$ is partitioned into $\lplus S_\lambda(x,x')$, $\lsim S_\lambda(x,x')$, and $\lmoins S_\lambda(x,x')$ when $(x,x') \in \lun{}X$, with respective cardinalities easily computed.

We define an operator $\ltruc A_\lambda$, similar to $A_\lambda^\truc$, from the space of functions $X\to E$ to the space of functions $X\to \lun E$ 
 by the formula 
\begin{equation}\label{eq:def_ltrucAlambda}
\ltruc A_\lambda (f)(x,x') = \frac{1}{\abs{\ltruc S_\lambda(x,x')}} \sum_{y \in \ltruc S_\lambda(x,x')}f(y).
\end{equation}

\begin{lemma}\label{lem:relationAlambda_ltruc}
 For every $\lambda \in \Lambda$, writing $\lambda=(i,j)$, we have
 for every $f \colon X \to E$ and every $(x,x')\in X^{(1)}$,
 \begin{enumerate}
     \item If $i,j>0$:
         \[A_\lambda f(x') = \frac{1}{q^2+q+1} \left( q^2 A_{\lplus\lambda}^- f(x,x') + q A_{\lsim\lambda}^\sim f(x,x') + A_{\lmoins\lambda}^+ f(x,x')
         \right),\]
    \item If $i>0,j=0$:
        \[A_\lambda f(x') = \frac{1}{q^2+q+1} \left( q^2 A_{\lplus\lambda}^- f(x,x') + (q+1) A_{\lsim\lambda}^\sim f(x,x')
 \right),\]
    \item If $i=0,j>0$:
        \[A_\lambda f(x') = \frac{1}{q^2+q+1} \left( (q^2+q) A_{\lplus\lambda}^- f(x,x') +  A_{\lmoins\lambda}^+ f(x,x')
 \right),\]
        
 \end{enumerate}

\end{lemma}

\begin{proof}
Applying Lemma~\ref{lem:relation_Alambda_Alambdatruc} to the building $\overline X$ we get that for every $(x',x)\in \overline X^{(1)}$ and $\lambda\in \Lambda$
\[ \overline A_\lambda f(x') = \frac{1}{q^2+q+1}\left(q^2 \overline A_\lambda^+f(x',x) + q \overline A_\lambda^\sim f(x',x)+ \overline A_\lambda^-f(x',x)\right).\]
Using Lemma~\ref{lemma:spheres_opposite_building} we get that $\overline A_\lambda^\truc = \ltruc A_{\overline\lambda}$, so that, applying the previous equality to $\overline \lambda$, we get that for every $\lambda\in \Lambda$
\[ A_\lambda f(x') = \frac{1}{q^2+q+1}\left(q^2 \cdot \lplus A_\lambda f(x',x) + q\cdot \lsim A_\lambda f(x',x)+ \lmoins A_\lambda f(x',x)\right).\]

But note that $\lplus S_\lambda(x',x) = \{y\mid \sigma(x,y) = \lplus \lambda , \sigma(x',y) = \lambda \} = S_{\lplus\lambda}^- (x,x') $ since $(\lplus \lambda)^- = \lambda$. Therefore $\lplus A_\lambda f(x',x) = A_{\lplus\lambda}^- f(x,x')$, for every $\lambda$.

If $i,j>0$ we can easily check that $(\lmoins \lambda)^+=(\lsim \lambda)^\sim =\lambda$, so that we obtain $\lmoins A_\lambda f(x',x)= A_{\lmoins\lambda}^+ f(x,x')$ and $\lsim A_\lambda f(x',x)= A_{\lsim\lambda}^\sim f(x,x')$. The first claim follows.

If $i>0$ and $j=0$ we have in fact $\lmoins \lambda = \lsim \lambda$, and we still get $(\lmoins \lambda)^\sim = (\lsim \lambda)^\sim = \lambda$, so that $\lsim A_\lambda f(x',x) = \lmoins A_\lambda f(x',x) = A_{\lsim\lambda}^\sim f(x,x')$, and the second claim follows. Similarly for $i=0$ and $j>0$
we get $\lsim \lambda=\lplus \lambda$ and therefore $(\lsim\lambda)^- = (\lplus\lambda)^- = \lambda$. 
\end{proof}

The next lemma is a direct consequence of Lemma~\ref{lem:sigma_gromov} and of Lemma~\ref{lem:Position_poi_loj_p'oi} using the notation of this section.  

\begin{lemma}\label{lemma:respective_position_in_X1} Let $i \ge j\geq s>0$ be integers, and let $\lambda=(s,i+j-2s)$.

If $p \in \P$ and $\ell \in \L$ satisfy $(p,\ell)_o =s$, then
\[ \sigma(p_{o,i},\ell_{o,j})=\lambda\textrm{ and }\sigma(p_{o,i+1},\ell_{o,j})=\lambda^+.\]

If $(p,o') \in \P^{(1)}_o$ and $\ell \in \P$ satisfy $(p,\ell)_o =s$, then 
\begin{equation*} \sigma(p_{o,i},\ell_{o,j})=\lambda\textrm{ and }\sigma(p_{o',i-1},\ell_{o,j}) = \begin{cases}\lambda^- & \textrm{if }\ell_{o,1} = o'\\ \lambda^\sim & \text{otherwise.}\end{cases}
\end{equation*}
\par\vspace{-2\baselineskip}\flushright\qed
\end{lemma}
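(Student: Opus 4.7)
The plan is to observe that this lemma is essentially a bookkeeping exercise that repackages Lemma~\ref{lem:sigma_gromov} and Lemma~\ref{lem:Position_poi_loj_p'oi} into the $\lambda^+,\lambda^-,\lambda^\sim$ notation. No new geometry is needed; the only work is to match the piecewise definitions of $\lambda^+,\lambda^-,\lambda^\sim$ against the cases supplied by those lemmas.

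For the first assertion, the key observation is that the hypothesis $i\geq j\geq s>0$ forces $\min(i,j,s)=s$, so Lemma~\ref{lem:sigma_gromov} applied to $x=p_{o,i}$ and $y=\ell_{o,j}$ yields $\sigma(p_{o,i},\ell_{o,j})=(s,i+j-2s)=\lambda$. Replacing $i$ by $i+1$ preserves the inequality $\min(i+1,j,s)=s$, so the same lemma gives $\sigma(p_{o,i+1},\ell_{o,j})=(s,i+j+1-2s)$; since $\lambda\neq(0,0)$, the definition of $\lambda^+$ is exactly $(s,(i+j-2s)+1)$, so the second identity follows.

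For the second assertion, the identity $\sigma(p_{o,i},\ell_{o,j})=\lambda$ is obtained exactly as above. The formula for $\sigma(p_{o',i-1},\ell_{o,j})$ is then the content of Lemma~\ref{lem:Position_poi_loj_p'oi}, and I would simply match its three cases against the definitions. When $\ell_{o,1}=o'$ the lemma gives $(s-1,i+j-2s)$, which equals $\lambda^-$ because $s>0$. When $\ell_{o,1}\neq o'$ and $i>s$, the lemma gives $(s+1,i+j-2s-1)$; since $i>s$ and $j\geq s$ imply $i+j-2s>0$, the second coordinate of $\lambda$ is nonzero and the definition of $\lambda^\sim$ is precisely $(s+1,i+j-2s-1)$. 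In the degenerate case $i=s$, the hypothesis $j\leq i$ combined with $j\geq s$ forces $j=s$, so $\lambda=(s,0)$; the definition of $\lambda^\sim$ then branches to $(s,1)$, matching the third case of the lemma.

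The only place where care is required is aligning the two piecewise definitions: the case distinction in Lemma~\ref{lem:Position_poi_loj_p'oi} is $i=s$ versus $i>s$, while the case distinction in the definition of $\lambda^\sim$ is whether the second coordinate of $\lambda$ vanishes. What makes the translation painless is the observation that, under $i\geq j\geq s$, the condition $i=s$ is equivalent to $\lambda=(s,0)$, so the two piecewise splits coincide. This is the only conceptual point of the proof; everything else is arithmetic.
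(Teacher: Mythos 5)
Your proof is correct and is exactly the paper's argument: the paper states the lemma as a direct consequence of Lemma~\ref{lem:sigma_gromov} and Lemma~\ref{lem:Position_poi_loj_p'oi}, and your write-up simply makes that case-matching explicit, including the one point worth noting (that under $i\geq j\geq s$ the split $i=s$ versus $i>s$ coincides with the vanishing or not of the second coordinate of $\lambda$).
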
 

\begin{corollary}\label{cor:respectivepositions}
Let $i \ge j\geq s>0$ and let $\lambda=(s,i+j-2s)$.  Fix $o\in X$, $p\in \P_{o,i}$ and $\ell\in \L_{o,j}$ such that $(p,\ell)_o=s$.

Then, if  $i>s$, we have
\begin{enumerate}
\item $\{x'\in S_{(1,0)}(p_{o,i})\mid \sigma (x',\ell_{o,j})=\lambda^+\} = \{p'_{o,i+1}\mid p'\in \P,p'=_{o,i}p\}$
\item $\{x'\in S_{(1,0)}(p_{o,i})\mid \sigma (x',\ell_{o,j})=\lambda^-\} = \{p_{o',i-1}\mid o'=\ell_{o,1}\}$
\item $\{x'\in S_{(1,0)}(p_{o,i})\mid \sigma (x',\ell_{o,j})=\lambda^\sim\} = \{p'_{o',i-1}\mid (p',o')\in \P^{(1)},p'=_{o,i}p,  o'\neq \ell_{o,1}\}$ 
\end{enumerate}

When $i=j=s$ the second equality remains true while 
\begin{multline*}
   \{x'\in S_{(1,0)}(p_{o,i})\mid \sigma (x',\ell_{o,j})=\lambda^+\} = \{p'_{o,i+1}\mid p'\in \P,p'=_{o,i}p\}\\\cup \{p'_{o',i-1}\mid (p',o')\in \P^{(1)},p'=_{o,i}p,  o'\neq \ell_{o,1}\}
\end{multline*}
\end{corollary}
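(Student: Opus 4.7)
The plan is to deduce the stated set equalities from Lemma~\ref{lemma:respective_position_in_X1} combined with the Cartwright--M{\l}otkowski trichotomy of Lemma~\ref{CartwrightMlotkowski}. Lemma~\ref{lemma:respective_position_in_X1} already computes the $\sigma$-distance from $\ell_{o,j}$ to each of the three families of vertices on the right-hand side: $p'_{o,i+1}$ with $p'=_{o,i}p$ gives $\lambda^+$, $p_{\ell_{o,1}, i-1}$ gives $\lambda^-$, and $p'_{o',i-1}$ with $p'=_{o,i}p$ and $o' \neq \ell_{o,1}$ gives $\lambda^\sim$. Each such vertex is moreover adjacent to $p_{o,i}$ with the correct relative type, yielding the easy inclusions ``right-hand side $\subseteq$ left-hand side''.

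For the reverse inclusions I would use a cardinality count. Lemma~\ref{CartwrightMlotkowski} partitions the sphere of edge-neighbors of $p_{o,i}$ (of the appropriate type) into three subsets of sizes $q^2$, $q$, $1$ according to whether $\sigma(\cdot,\ell_{o,j})$ equals $\lambda^+$, $\lambda^\sim$, or $\lambda^-$; when $i > s$ or $j > s$ these three values are pairwise distinct by direct inspection of the definitions. Matching cardinalities: the last statement of Lemma~\ref{lem:NumberEquilateral} (with $t=i$ and $s=i+1$) gives $q^2$ for the first set; the second is manifestly a singleton; and the third has $q$ elements, since in the link of $o$ there are $q+1$ line-type neighbors $o'$ incident with $p_{o,1}$ (lines through a point in a projective plane of order $q$), of which exactly one is $\ell_{o,1}$ and the rest parametrize pairwise distinct vertices in the third set. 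Once these three cardinalities match the partition sizes, all three inclusions are forced to be equalities.

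The degenerate case $i = j = s$ follows by the same strategy, now using Lemma~\ref{lem:important_fact}: since $\lambda = (s,0)$ has second coordinate zero, $\lambda^+ = \lambda^\sim$, so the Cartwright--M{\l}otkowski partition collapses to two classes of sizes $q^2 + q$ and $1$, and the $\lambda^+$-class is described by the union of the two generic sets, exactly as stated. The main subtlety of the whole argument is the distinctness verification for the third set: once $o'$ is fixed, the vertex $p'_{o',i-1}$ depends only on $o'$ and $p_{o,i}$ (not on the extension of $p'$ past level $i$, since it is determined by the parallelogram spanned by $o$, $o'$ and $p_{o,i}$), so one genuinely has $q$ candidate vertices indexed by $o' \neq \ell_{o,1}$, and distinctness across different $o'$ is then forced by the size-$q$ upper bound from Lemma~\ref{CartwrightMlotkowski}.
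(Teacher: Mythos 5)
Your strategy is exactly the paper's: the inclusions ``right-hand side $\subseteq$ left-hand side'' come from Lemma~\ref{lemma:respective_position_in_X1}, and equality is then forced by matching the cardinalities $q^2$, $1$, $q$ (resp.\ $q^2+q$, $1$ in the degenerate case $i=j=s$, where Lemma~\ref{lem:important_fact} gives $\lambda^+=\lambda^\sim$) of the right-hand sets against the partition of the sphere of neighbours of $p_{o,i}$ from Lemma~\ref{CartwrightMlotkowski}. Your count of the first family via the last statement of Lemma~\ref{lem:NumberEquilateral}, and your observation that for fixed $o'$ the vertex $p'_{o',i-1}$ does not depend on the choice of $p'$ (it is a corner of a combinatorial convex hull determined by $o'$ and $p_{o,i}$), are both correct and in fact more careful than the paper, whose proof simply asserts the three cardinalities.

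The one step that does not work as written is your justification of distinctness in the third family. You need the $q$ candidates $p'_{o',i-1}$, indexed by the $q$ lines $o'\neq\ell_{o,1}$ through $p_{o,1}$ in $\lk(o)$, to be pairwise distinct, and you claim this is ``forced by the size-$q$ upper bound from Lemma~\ref{CartwrightMlotkowski}''. That is circular: the lemma only says the left-hand $\lambda^\sim$-set has exactly $q$ elements; if two candidates coincided, the right-hand set would simply have fewer than $q$ elements, the inclusion would be strict, and nothing in the counting would detect it. The fix is short. For $i\geq 2$ and $v=p'_{o',i-1}$ one checks (in an apartment containing $o'$ and an initial segment of $[o,p')$) that $\sigma(o,v)=(i-1,1)$, while $\sigma(o,o')=(0,1)$ and $\sigma(o',v)=(i-1,0)$; by the uniqueness part of Lemma~\ref{lem:characterization_of_Sa} applied to $o$ and $v$, the vertex $o'$ is the unique vertex with these two properties, hence is recovered from $v$, so distinct $o'$ give distinct $v$. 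For $i=1$ the candidates are the vertices $o'$ themselves and distinctness is trivial. The same remark completes the count $q^2+q$ in the degenerate case, where the two families are disjoint because their elements lie at distances $i+1$ and $i$ from $o$ respectively. With this replacement your argument is complete and coincides with the intended proof.
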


\begin{proof}
By Lemma \ref{lemma:respective_position_in_X1} the three sets on the right are included in the sets of the left. But their cardinalities are respectively $q^2,1$ and $q$, which is the same on the left by Lemma \ref{CartwrightMlotkowski}. If $i=j=s$ the same argument applies except that $\lambda^+=\lambda^\sim$, so that the set on the left is of cardinality $q^2+q$.
\end{proof}

By similar arguments, one can check the following.

\begin{lemma}\label{lem:respectivepositions_l}
Let $i \ge j\geq s>0$ and let $\lambda=(s,i+j-2s)$.  Fix $o\in X$, $p\in \P_{o,i}$ and $\ell\in \L_{o,j}$ such that $(p,\ell)_o=s$.

Then, if $i>s$, we have
\begin{enumerate}
\item $\{x'\in S_{(0,1)}(p_{o,i})\mid \sigma (x',\ell_{o,j})=\lplus\lambda\} = \{p'_{o',i}\mid (p',o')\in \P^{(1)},p'=_{o,i}p,  o'\neq \ell_{o,1}\}$ 
\item $\{x'\in S_{(0,1)}(p_{o,i})\mid \sigma (x',\ell_{o,j})=\lmoins \lambda\} = \{p_{o,i-1}\}$
\item $\{x'\in S_{(0,1)}(p_{o,i})\mid \sigma (x',\ell_{o,j})=\lsim \lambda\} = \{p'_{o',i}\mid (p',o')\in \P^{(1)},p'=_{o,i}p,  o'=\ell_{o,1}\}$ 
\end{enumerate}

When $i=j=s$ the first equality remains true while 
$$
   \{x'\in S_{(0,1)}(p_{o,i})\mid \sigma (x',\ell_{o,j})=\lmoins \lambda\} = \{p_{o,i-1}\}\cup \{p'_{o',i}\mid (p',o')\in \P^{(1)},p'=_{o,i}p,  o'=\ell_{o,1}\}$$
\end{lemma}

Finally, we prove the following refinement of Proposition~\ref{prop:double_counting}.

\begin{proposition}\label{prop:double_counting_diamond}
 Assume that the action of $\Gamma$ is type-rotating.
Let $i \ge j \geq s>0$ and $\lambda = (s,i+j-2s)$. Let  $F\colon X^{(1)} \times X \to \C$ be a $\Gamma$-invariant function. Assume that $\Gamma$ is cocompact or $F$ is real non-negative.  We have
\begin{multline}\label{eq:double_counting_+}\sum_{o \in \Omega} \frac{1}{|\Gamma_o|} \E_o[F(p_{o,i},p_{o,i+1},\ell_{o,j}) \mid (p,\ell)_o=s] = \\ \frac{1}{q^2+q+1}\sum_{(x,x')\in \Omega^{(1)}} \frac{1}{|\Gamma_{x,x'}|} \frac{1}{|S_\lambda^+(x,x')|} \sum_{y \in S_\lambda^+(x,x')} F(x,x',y). \end{multline}
\begin{multline}\label{eq:double_counting_-} \sum_{o \in \Omega} \frac{1}{|\Gamma_o|} \sum_{o' \in \L_{o,1}} \E_o[F(p_{o,i},p_{o',i-1},\ell_{o,j}) 1_{\ell_{o,1}=o'} \mid (p,\ell)_o=s] =\\ \frac{1}{q^2+q+1}\sum_{(x,x') \in \Omega^{(1)}} \frac{1}{|\Gamma_{x,x'}|} \frac{1}{|S_\lambda^-(x,x')|} \sum_{y \in S_\lambda^-(x,x')} F(x,x',y).
\end{multline}
\begin{multline}\label{eq:double_counting_sim} \sum_{o \in \Omega} \frac{1}{q|\Gamma_o|} \sum_{o' \in \L_{o,1}} \E_o[F(p_{o,i},p_{o',i-1},\ell_{o,j}) 1_{p \sim_{o,1} o',\ell_{o,1} \neq o'} \mid (p,\ell)_o=s] = \\ \frac{1}{q^2+q+1}\sum_{(x,x')\in \Omega^{(1)}} \frac{1}{|\Gamma_{x,x'}|} \frac{1}{|S_\lambda^\sim(x,x')|} \sum_{y \in S_\lambda^\sim(x,x')} F(x,x',y). \end{multline}
\end{proposition}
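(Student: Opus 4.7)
The plan is to mimic the double-counting argument of Proposition~\ref{prop:double_counting}, now keeping track of the auxiliary edge datum $x'$. For each $\truc \in \{+, -, \sim\}$ I introduce the set of quadruples
\[
A^\truc \defeq \{(o, x, x', y) \in X^4 \mid \sigma(o, x) = (i, 0),\ \sigma(o, y) = (0, j),\ \sigma(x, x') = (1, 0),\ \sigma(x, y) = \lambda,\ \sigma(x', y) = \lambda^\truc\}\text{,}
\]
to which, in the $\sim$ case, I also attach a line-vertex $o' \in \L_{o, 1}$ satisfying $p \sim_{o, 1} o'$ and $o' \neq \ell_{o, 1}$, so as to account for the outer $\sum_{o'}$ in \eqref{eq:double_counting_sim}. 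Under our hypotheses (cocompactness of $\Gamma$ or non-negativity of $F$), the $\Gamma$-orbital sum
\[
I^\truc \defeq \sum_{[(o, x, x', y)] \in \Gamma \backslash A^\truc} \frac{F(x, x', y)}{|\Gamma_{o, x, x', y}|}
\]
is well-defined, and I will compute it in two different ways.

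First I will disintegrate over $o \in \Omega$, writing $I^\truc = \sum_o |\Gamma_o|^{-1} \sum_{(x, x', y) \in A^\truc_o} F(x, x', y)$. Combining Lemma~\ref{lemma:respective_position_in_X1} with Corollary~\ref{cor:respectivepositions} identifies the triples of $A^\truc_o$ with the images of pairs $(p, \ell) \in \P \times \L$ satisfying $(p, \ell)_o = s$ (enriched by $o'$ in the $\sim$ case) under the maps appearing on the left-hand sides of \eqref{eq:double_counting_+}, \eqref{eq:double_counting_-}, \eqref{eq:double_counting_sim}. Since $\proba_o$ is an inverse limit of uniform probability measures, the pushforward of $\proba_o(\cdot \mid (p, \ell)_o = s)$ to $A^\truc_o$ is uniform, and by Lemma~\ref{lemma:counting} the cardinality $|A^\truc_o|$ is independent of $o$. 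This identifies each left-hand side with $I^\truc / |A^\truc_o|$ (the $1/q$ factor in \eqref{eq:double_counting_sim} being absorbed when the outer sum over admissible $o'$ is converted into the associated uniform average).

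Second I will disintegrate over $(x, x') \in \Omega^{(1)}$. For this I need that the number $N^\truc$ of admissible $o$ is independent of $(x, x', y) \in A^\truc$ with $y \in S_\lambda^\truc(x, x')$, which follows from a three-sphere intersection argument based on Lemma~\ref{lemma:counting}. It yields $I^\truc = N^\truc \sum_{(x, x') \in \Omega^{(1)}} |\Gamma_{x, x'}|^{-1} \sum_{y \in S_\lambda^\truc(x, x')} F(x, x', y)$. Equating the two expressions for $I^\truc$ proves each identity up to the multiplicative constant $N^\truc |S_\lambda^\truc(x, x')| / |A^\truc_o|$, which I claim equals $1/(q^2+q+1)$. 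In the cocompact case I will check this by specializing to $F \equiv 1$: the left-hand side becomes $\sum_{o \in \Omega} |\Gamma_o|^{-1}$, and the elementary orbit-counting identity $\sum_{(x, x') \in \Omega^{(1)}} |\Gamma_{x, x'}|^{-1} = (q^2 + q + 1) \sum_{x \in \Omega} |\Gamma_x|^{-1}$ (each vertex has $q^2 + q + 1$ outgoing edges in $X^{(1)}$) reduces the right-hand side to the same quantity. In the non-cocompact case the constant has to be computed directly from the explicit cardinalities, in the spirit of the concluding argument of Proposition~\ref{prop:double_counting}.

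The hard part will be the boundary case $i = j = s$, where by Lemma~\ref{lem:important_fact} one has $\lambda^+ = \lambda^\sim$. Here $S_\lambda^+(x, x')$ and $S_\lambda^\sim(x, x')$ coincide as sets, while Corollary~\ref{cor:respectivepositions} takes a modified form in which the $+$ set is a disjoint union of two distinct parameterizations. The left-hand sides of \eqref{eq:double_counting_+} and \eqref{eq:double_counting_sim} then select complementary portions of this enlarged set, and a careful separate bookkeeping will be required to confirm that each identity holds individually with the correct normalization.
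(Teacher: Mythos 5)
Your direct double counting over quadruples $(o,x,x',y)$ is a genuinely different organization from the paper, which instead reduces each identity to the already-proven Proposition~\ref{prop:double_counting}: for \eqref{eq:double_counting_+} via the auxiliary function $G(x',y)=F(x,x',y)$ (using the unique $x$ from Corollary~\ref{cor:unique_plus_inverse}) applied with the triple $(i+1,j,s)$, and for the other two via the averaged function $K(x,y)=\frac{1}{N_\truc}\sum_{x'}F(x,x',y)$, followed by a second, purely combinatorial double counting to pass from sums over $\Omega$ to sums over $\Omega^{(1)}$ and to fix the constant $\frac{1}{q^2+q+1}$. In the generic regime ($i>s$ or $j>s$, and always for $\truc=-$) your scheme can be made to work: the count of admissible origins for a fixed triple only involves $\abs{S_{(0,i)}(x)\cap S_{(j,0)}(y)}$, so Lemma~\ref{lemma:counting} suffices, and your normalization checks ($F\equiv 1$ in the cocompact case, explicit cardinalities otherwise) give the right constant. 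The price of your route is that the uniformity of the pushforward of $\proba_o(\cdot\mid (p,\ell)_o=s)$ onto $A^\truc_o$, and the absorption of the $\frac1q\sum_{o'}$ into a uniform average, have to be verified directly (including the fact that $o'$ and $p_{o',i-1}$ are determined by the remaining data), whereas the paper inherits these from Proposition~\ref{prop:double_counting}.

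The genuine gap is the case $i=j=s$, which you flag but do not prove, and which is needed (it is exactly the boundary rays $\lambda=(s,0)$ used later). There your central identification is not merely delicate, it is false as stated: since $\lambda^+=\lambda^\sim$ (Lemma~\ref{lem:important_fact}), your sets satisfy $A^+=A^\sim$, and by Corollary~\ref{cor:respectivepositions} the fibre over $(o,x,y)$ contains $q^2+q$ vertices $x'$ of which only $q^2$ arise as $p'_{o,i+1}$ and only $q$ as $p'_{o',i-1}$; hence neither left-hand side equals the uniform average of $F$ over $A^\truc_o$, and equating the two disintegrations of $I^\truc$ proves neither \eqref{eq:double_counting_+} nor \eqref{eq:double_counting_sim} in this case. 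To repair it inside your framework you would have to split the quadruple set by the origin-dependent condition distinguishing the two families (e.g. by the value of $\sigma(o,x')$) and then show that the number of origins at prescribed positions from all three of $x,x',y$ is constant — a three-center count that Lemma~\ref{lemma:counting} does not provide. The paper avoids this entirely: its proof of \eqref{eq:double_counting_+} via Corollary~\ref{cor:unique_plus_inverse} is insensitive to the merging and covers $i=j=s$, and \eqref{eq:double_counting_sim} is then deduced from the identity $R=\frac{q^2}{q^2+q}L_++\frac{q}{q^2+q}L_\sim$ together with $L_+=R$. Without this extra idea (or the missing three-center counting lemma), your proposal does not establish the boundary case.
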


\begin{proof}
Let us first prove \eqref{eq:double_counting_+}.  Given $(x',y) \in X$ satisfying $\sigma(x',y)=\lambda^+$, by Corollary~\ref{cor:unique_plus_inverse} there is a unique $x \in X$ such that $(x,x') \in X^{(1)}$ and $\sigma(x,y) = \lambda$.
We can therefore define a $\Gamma$-invariant function $G \colon X\times X \to \C$ by $G(x',y) = F(x,x',y)$ if $\sigma(x,y) = \lambda$ and $\sigma(x',y)=\lambda^+$ and $G(x',y)=0$ otherwise. We apply Proposition~\ref{prop:double_counting} with the values $i+1, j, s$ to the function $G$. Note that the vector in question is then $(s, (i+1) + j - 2s) = \lambda^+$. The left-hand sides of \eqref{eq:double_counting} and \eqref{eq:double_counting_+} coincide because $G(p_{o,i+1},\ell_{o,j}) = F(p_{o,i},p_{o,i+1},\ell_{o,j})$ if $(p,\ell)_o=s$ in view of Lemma~\ref{lemma:respective_position_in_X1}. 

To identify the right-hand sides, we use another double counting argument; let $A=\{(x,x',y)\mid (x,x')\in X^{(1)} \text{ and } y\in S_\lambda^+(x,x')\}$. Let $I$ be the integral of $F$ on $\Gamma\backslash A$ with respect to the measure giving weight $\frac{1}{|S_{\lambda^+}(x')||\Gamma_{x,x',y}|}$ to the orbit of $(x,x',y)$. Let $A_{x'}=\{(x,y)\mid (x,x',y)\in A\}$ ; note that $A_{x'}$ is in bijection with $S_{\lambda^+}(x')$. We have 
\begin{align*}
   I &= \sum_{x'\in \Omega}\sum_{(x,y)\in \Gamma_{x'}\backslash A_{x'}}\frac{1}{|S_{\lambda^+}(x')||\Gamma_{x,x',y}|}F(x,x',y) \\
   &=\sum_{x'\in \Omega}\frac{1}{|\Gamma_{x'}|} \frac{1}{|S_{\lambda^+}(x')|}\sum_{(x,y)\in A_x} F(x,x',y)\\
   &= \sum_{x'\in \Omega}\frac{1}{|\Gamma_{x'}|} \frac{1}{|S_{\lambda^+}(x')|} \sum_{y\in S_{\lambda^+}(x')}G(x',y)
\end{align*}
So the left-hand side of \eqref{eq:double_counting_+} is equal to $I$.

Now let us proceed to \eqref{eq:double_counting_-} and \eqref{eq:double_counting_sim}.
Let $\truc\in \{-,\sim\}$. For $x\in X$ and $y\in S_\lambda(x)$, let $N^\truc$ be the number of $x'\in S_{(0,1)}(x)$ such that $y\in S_\lambda^\truc(x,x')$ (recall that $N^\truc$ can be calculated by Lemma~\ref{CartwrightMlotkowski}, and depends only on $\truc$ and $\lambda$). Consider the function 
\[
K(x,y) = \frac{1}{N^\truc}\sum_{\substack{x' \in S_{(1,0)}(x) \\\sigma(x',y)=\lambda^\truc}} F(x,x',y)\quad\text{if}\quad\sigma(x,y)=\lambda
\]
and $0$ if $\sigma(x,y)\neq \lambda$.
We apply Proposition~\ref{prop:double_counting} to $K$. In view of Corollary~\ref{cor:respectivepositions}, if either $i>s$ or $\truc=-$, the left-hand side of \eqref{eq:double_counting} for $K$ is the same as the one of Proposition~\ref{prop:double_counting_diamond} in each case.
The right-hand side on the other hand is 
\[R := \sum_{x\in \Omega} \frac{1}{|\Gamma_x|} \frac{1}{|S_\lambda(x)|}\sum_{y\in S_\lambda(x)}K(x,y)\]

Now let $A=\{(x,x',y)\in X\mid (x,x')\in X^{(1)} \text{ and } y\in S^\truc_\lambda(x,x')\}$. We consider again the integral $I$ of $F$ on $\Gamma\backslash A$ with respect to the measure given weight $\frac{1}{|\Gamma_{x,x',y}|}$ on the orbit of $(x,x',y)$. Using the orbit formula we get 
\begin{align*}
  I= \sum_{(x,x',y)\in \Gamma\backslash A}\frac{1}{|\Gamma_{x,x',y}|} F(x,x',y) & = \sum_{x\in \Omega}\frac{1}{|\Gamma_x|} \sum_{y\in S_{\lambda}(x)} \sum_{\substack{x' \in S_{(1,0)}(x) \\\sigma(x',y)=\lambda^\truc}} F(x,x',y)\\
   &= \sum_{x\in \Omega} \frac{1}{|\Gamma_x|} \sum_{y\in S_{\lambda(x)} }N^\truc K(x,y)\\
   &= N^\truc |S_{\lambda}(x)| R
\end{align*}
(since $|S_\lambda(x)|$ does not depend on $x$).
Now disintegrating with respect to $(x,x')$ we get
$$I= \sum_{(x,x')\in \Omega^{(1)}} \frac{1}{|\Gamma_{x,x'}|} \sum_{y\in S_{\lambda}^\truc(x,x')}F(x,x',y)$$
which is $(q^2+q+1)|S_\lambda^{\truc}(x,x')|$ times the right-hand side of the three equalities in Proposition~\ref{prop:double_counting_diamond}. Hence in each case, the right-hand side of the equalities is $\frac{(q^2+q+1)N^\truc |S_\lambda(x)|}{S_\lambda^{\truc}(x,x')} R$.

To evaluate this quantity, for $x\in X$, let $B=\{(x',y) \mid x\in S_{(1,0)}(x) \text{ and } y\in S_{\lambda}^\truc(x,x')  \}$. Then $|B|=\sum_{x'\in S_{(1,0)}(x)} |S_{\lambda}^\truc(x,x')| = (q^2+q+1) |S_\lambda^\truc(x,x')|$. On the other hand, $|B|=\sum_{y\in S_\lambda(x)} N^\truc = |S_\lambda(x)| N^\truc$, so that $\frac{(q^2+q+1)N^\truc |S_\lambda(x)|}{S_\lambda^{\truc}(x,x')} =1$, which concludes the proof in the case when $i>s$ or $\truc = -$.

Now assume that $i=j=s$. Then it remains to prove \eqref{eq:double_counting_sim}. Let $R$ be the right-hand side of \eqref{eq:double_counting_sim} (and therefore of \eqref{eq:double_counting_+} since $\lambda^+=\lambda^\sim$), $L_+$ be the left-hand side of \eqref{eq:double_counting_+} and $L_\sim$ be the left-hand side of \eqref{eq:double_counting_sim}. We have already proven that $L_+=R$. Furthermore the above argument, using again Corollary~\ref{cor:respectivepositions}, also proves that $R= \frac{q^2}{q^2+q} L_+ + \frac{q}{q^2+q}L_\sim$. Since $L_+=R$ it follows that $L_\sim=R$, which concludes the proof.
\end{proof}

The following proposition is the analogue of the previous one with $X^{(1)}$ replaced by $\lun{X}$. It is not a formal consequence, by applying the previous proposition to $\overline{X}$, because the roles of points and lines are not swapped.

\begin{proposition}\label{prop:double_counting_diamond_left}
 Assume that the action of $\Gamma$ is type-rotating.
Let $i \ge j \geq s>0$ and $\lambda = (s,i+j-2s)$. Let  $F\colon \lun X \times X \to \C$ be a $\Gamma$-invariant function. Assume that $\Gamma$ is cocompact or $F$ is real non-negative.  We have

\begin{multline}\label{eq:double_counting_lmoins}\sum_{o \in \Omega} \frac{1}{|\Gamma_o|} \E_o[F(p_{o,i},p_{o,i-1},\ell_{o,j}) \mid (p,\ell)_o=s] = \\ \frac{1}{q^2+q+1}\sum_{(x,x')\in \lun \Omega} \frac{1}{|\Gamma_{x,x'}|} \frac{1}{|\lmoins S_\lambda(x,x')|} \sum_{y \in \lmoins S_\lambda(x,x')} F(x,x',y). \end{multline}
\begin{multline}\label{eq:double_counting_lsim} \sum_{o \in \Omega} \frac{1}{|\Gamma_o|} \sum_{o' \in \L_{o,1}} \E_o[F(p_{o,i},p_{o',i},\ell_{o,j}) 1_{\ell_{o,1}=o'} \mid (p,\ell)_o=s] =\\ \frac{1}{q^2+q+1}\sum_{(x,x') \in \lun \Omega} \frac{1}{|\Gamma_{x,x'}|} \frac{1}{|\lsim S_\lambda(x,x')|} \sum_{y \in \lsim S_\lambda(x,x')} F(x,x',y).
\end{multline}
\begin{multline}\label{eq:double_counting_lplus} \sum_{o \in \Omega} \frac{1}{q|\Gamma_o|} \sum_{o' \in \L_{o,1}} \E_o[F(p_{o,i},p_{o',i},\ell_{o,j}) 1_{p \sim_{o,1} o',\ell_{o,1} \neq o'} \mid (p,\ell)_o=s] = \\ \frac{1}{q^2+q+1}\sum_{(x,x')\in \lun\Omega} \frac{1}{|\Gamma_{x,x'}|} \frac{1}{|\lplus S_\lambda(x,x')|} \sum_{y \in \lplus S_\lambda(x,x')} F(x,x',y). \end{multline}
\end{proposition}

\begin{proof}
First, as in Proposition \ref{prop:double_counting_diamond}, we apply Proposition \ref{prop:double_counting} to the function 
\[
K(x,y) = \frac{1}{\ltruc N}\sum_{\substack{x' \in S_{(0,1)}(x) \\\sigma(x',y)=\ltruc \lambda}} F(x,x',y)\quad\text{if}\quad\sigma(x,y)=\lambda
\]
where $\ltruc N$ is the number of $x' \in S_{(0,1)}(x)$ such that $\sigma(x',y)=\ltruc \lambda$ (which is independent of $x$ and $y$).

The same calculation as in Proposition \ref{prop:double_counting_diamond}, using Lemma \ref{lem:respectivepositions_l} instead of Corollary \ref{cor:respectivepositions}, then proves the proposition in the cases when $i>s$ or  $\truc =+$. In the case when $i=j=s$, denoting  $\lmoins L$  the left-hand side of \eqref{eq:double_counting_lmoins},$\lsim L$  the left-hand side of \eqref{eq:double_counting_lsim}, and $R$ their common right-hand side, this argument proves that $R= \frac{1}{1+q}\lmoins L + \frac{q}{1+q}\lsim L$. 

In order to conclude the proof, it therefore suffices to prove that $\lmoins L=R$ (in the case when $i=j=s$). 
We use yet another double counting argument. Let 
$A$ denote the set of all tuples $(o,x,x',y)$ in $X$ such that $x=p_{o,s}$ , $y=\ell_{o,s}$, $x'=p_{o,s-1}$ for some $p\in \P, \ell\in \L$ with $ (p,\ell)_o=s$, 
and let $I$ be the integral of $F(x,x',y)$ on $\Gamma\backslash A$ with respect to the measure giving weight $\frac{1}{|\Gamma_{o,x,x',y}|}$ to the orbit of $(o,x,x',y)$. For $o\in X$, let $A_o=\{(x,x',y)\mid (o,x,x',y)\in A\}$. Note that for every $o$ we have $|A_o| =(q^2+q+1)(q+1)q^{3s-3}$. Indeed, $x$ can be any point of the form $p_{o,s}$, and there are $(q^2+q+1)q^{2(s-1)}$ such points, by Lemma~\ref{lem:NumberEquilateral} (as there are $q^2+q+1$ possible choices for $p_{o,1}$). Then $x'=p_{o,s-1}$ is uniquely determined, and for each choice of $x$ there are $(q+1)q^{s-1}$ possible choices for $y=\ell_{o,s}$, again by Lemma~\ref{lem:NumberEquilateral}.

Thus we get
\begin{align*}
    I&= \sum_{o\in \Omega} \sum_{(x,x',y) \in A_o} \frac{1}{|\Gamma_{o}|} F(x,x',y)\\
    &= (q^2+q+1)(q+1)q^{3s-3}\sum_{o\in \Omega} \frac{1}{|\Gamma_{o}|} \E_o[F(p_{o,s},p_{o,s-1},\ell_{o,s}) \mid (p,\ell)_o=s]
\end{align*}

On the other hand, fix $(x,x')\in \lun X$ and $y\in \lmoins S_\lambda(x,x')$. Then $o\in X$ is such that $(o,x,x',y)\in A$ if and only if $o=\ell_{x,s}$ and $x'=\ell_{x,1}$ for some $\ell \in \L$ with $(\ell_s,y)_x=s$ . By
Lemma~\ref{lem:NumberEquilateral}, it follows that the set  $ \{ o \mid (o,x,x',y)\in A\}$ has cardinality $q^{s-1}$ and therefore 

\begin{align*}
    I&= \sum_{(x,x')\in \lun X} \frac{1}{|\Gamma_{x,x'}|} \sum_{y\in \lmoins S_\lambda(x,x')} q^{s-1} F(x,x',y)
\end{align*}

Since $\lmoins S_\lambda(x,x')$ has cardinality $(q+1)q^{2s-2}$ (using again  Lemma~\ref{lem:NumberEquilateral}) this proves \eqref{eq:double_counting_lmoins} for $i=j=s$, and \eqref{eq:double_counting_lsim} follows by the above argument.
\end{proof}

% ==============================================================================
\section{Convergence to projection on harmonic functions}\label{sec:convergence_harmonic}
% ==============================================================================

Throughout this section we fix an admissible Banach space $E$ and let $L$ and $\theta$ be appropriate constants in \eqref{eq:E-valued_Hoelder_bound}. We consider a proper, cocompact and type-preserving action of a finitely generated group $\Gamma$ on $X$. The case where $\Gamma$ is not cocompact will be dealt with in Section~\ref{sec:non-uniform}. Let also $a$ and $b$ denote the quasi-isometry constants from Lemma~\ref{lem:orbit_map_quasiisometric}. We fix
\begin{equation}\label{eq:alpha_small_enough}
\alpha < \frac{\theta}{3a}
\end{equation}
and let $\pi$ be a representation of $\Gamma$ on $E$ satisfying
\begin{equation}\label{eq:assumption_on_growth_rate_of_pi}
\norm{\pi(\gamma)} \leq C e^{\alpha |\gamma |}\text{.}
\end{equation}

Our goal is to prove the first main ingredient to Theorem~\ref{thm:strong_T_on_building}. Recall from Section~\ref{sec:outline} that $\tilde{E}$ is the induced module associated to $E$ and the action of $\Gamma$ on $X$, that $A_\lambda \colon \tilde{E} \to \tilde{E}$ is the averaging operator on the $\lambda$-sphere $S_\lambda$ around a vertex. Recall also that a function $f \in \tilde{E}$ is $\Lambda_0$-harmonic if it is $A_\lambda$-invariant for all $\lambda \in \Lambda_0$. Observe that, through the duality \eqref{eq:duality_pairing}, $\Lambda_0$-harmonic elements of $\diE$ correspond to elements of $(\iE)^*$ which are $A_\lambda^*$-invariant for all $\lambda \in \Lambda_0$.

\begin{proposition}\label{prop:harmonic_limit}
Assume that $E$ is admissible.
The net $(A_{\lambda})_{\lambda \in \Lambda_0}$ converges in the norm of $B(\iE)$ to an
operator $P$ that is characterized by the following two properties:
\begin{enumerate}
\item $P$ is a projection on the space of $\Lambda_0$-harmonic elements of $\iE$.
\item $P^*$ is a projection on the space of $\Lambda_0$-harmonic elements of $\diE$.
\end{enumerate}
The same is true with $\Lambda_0$ replaced by $\Lambda_1$ or $\Lambda_2$.
\end{proposition}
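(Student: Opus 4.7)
The plan is to show $(A_\lambda)_{\lambda\in\Lambda_0}$ is Cauchy in $B(\iE)$ and to identify its limit $P$. The starting point is to express matrix coefficients of $A_\lambda$ via the operators $T_{o,s}$ of Section~\ref{sec:norm_estimates_projective}: applying Proposition~\ref{prop:double_counting} to the $\Gamma$-invariant function $F(x,y)=\langle g(x),f(y)\rangle$ (invariant by Lemma~\ref{lem:fundamental_domain_irrelevant}), for any $\lambda=(s,i+j-2s)$ with $i,j\ge s$ one obtains
\begin{equation}\label{eq:plan_key}
\langle g,A_\lambda f\rangle_{\iE}=\sum_{o\in\Omega}\frac{1}{|\Gamma_o|}\langle T_{o,s}f_{o,j},g_{o,i}\rangle,
\end{equation}
where $f_{o,j}(\ell):=f(\ell_{o,j})\in L^2(\L;E)$ and $g_{o,i}(p):=g(p_{o,i})\in L^2(\P;E^*)$. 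Combining $|\gamma|\le ad(o,y)+b$ from Lemma~\ref{lem:orbit_map_quasiisometric}, the growth $\|\pi(\gamma)\|\le Ce^{\alpha|\gamma|}$, and a double counting on pairs $(o,y)$ with $\sigma(o,y)=(0,j)$, I would show
\[
\sum_{o\in\Omega}\frac{1}{|\Gamma_o|}\|f_{o,j}\|^2\le C_0^2\, e^{2\alpha a j}\|f\|_{\iE}^2,
\]
with an analogous bound for $g_{o,i}$ in $i$.

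To establish Cauchyness, I would exploit that two neighbors $\lambda=(s,t)$ and $\lambda'=(s+1,t-2)$ of $\Lambda_0$ can be realised in \eqref{eq:plan_key} with the \emph{same} $(i,j)$ (both imposing $i+j=2s+t$, and requiring $i,j\ge s+1$). Subtracting gives
\[
\langle g,(A_\lambda-A_{\lambda'})f\rangle=\sum_o\frac{1}{|\Gamma_o|}\langle (T_{o,s}-T_{o,s+1})f_{o,j},g_{o,i}\rangle,
\]
and admissibility \eqref{eq:E-valued_Hoelder_bound}, Cauchy--Schwarz and the growth bounds yield
\[
\|A_\lambda-A_{\lambda'}\|_{B(\iE)}\le L'\,e^{(2\alpha a-\theta)s+\alpha a t}.
\]
Iterating the move $(s,t)\leadsto(s+1,t-2)$ down to $t\in\{0,1\}$ (summing a geometric series with ratio $e^{-\theta}$) reduces $A_{(s,t)}$ to $A_{(s+\lfloor t/2\rfloor,\,t\bmod 2)}$ up to the same exponent; an analogous comparison between $A_{(s',0)}$ and $A_{(s'-1,2)}$ (using $(i,j)=(s',s')$) then controls the motion along boundary values as $s'\to\infty$. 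The standing hypothesis $\alpha<\theta/(3a)$ is precisely what is needed for the accumulated exponent to tend to $-\infty$ as $\lambda\to\infty$ in $\Lambda_0$.

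Once $P:=\lim_\lambda A_\lambda$ exists in operator norm, I would identify it as the required projection. Commutativity of the algebra of biradial operators gives $A_{\lambda_0}A_\mu=A_\mu A_{\lambda_0}$; since $A_{\lambda_0}A_\mu$ is a positive combination of $A_\nu$'s whose parameters tend to infinity with $\mu$, passing to the limit $\mu\to\infty$ yields $A_{\lambda_0}P=P$ for every $\lambda_0\in\Lambda_0$, whence $P^2=P$ and $\mathrm{Im}(P)$ lies in the space of $\Lambda_0$-harmonic elements; conversely any such $f$ satisfies $Pf=\lim_\lambda A_\lambda f=f$. The dual statement follows either by taking adjoints of the norm-convergent net $A_\lambda\to P$, or by running the argument on $\diE$ with the roles of $\P$ and $\L$ exchanged (noting that $\overline\lambda\in\Lambda_0$ iff $\lambda\in\Lambda_0$). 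The hard part is the large-$t$ regime: the growth factor $e^{\alpha a(2s+t)}$ in \eqref{eq:plan_key} is not directly beaten by $e^{-\theta s}$, and the iterative reduction, which trades $t$ for $s$, is essential together with the sharp hypothesis $\alpha<\theta/(3a)$.
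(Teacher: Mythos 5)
Your setup is the paper's: the identity $\langle g,A_\lambda f\rangle=\sum_{o\in\Omega}\frac{1}{|\Gamma_o|}\langle I_{o,i}(g),T_{o,s}J_{o,j}(f)\rangle$ obtained from Proposition~\ref{prop:double_counting} applied to $F(x,y)=\langle g(x),f(y)\rangle$, the exponential bounds on $I_{o,i},J_{o,j}$, the resulting estimate $\|A_{(s,t)}-A_{(s+1,t-2)}\|\le L'e^{(2\alpha a-\theta)s+\alpha a t}$, and the identification of the limit via the Cartwright--M\l otkowski product formula all match. The genuine gap is in the Cauchyness step: you only produce estimates for the moves $(s,t)\mapsto(s+1,t-2)$, and these moves preserve the quantity $2s+t$. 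Iterating them can therefore never compare $A_\lambda$ and $A_{\lambda'}$ when $2s+t\neq 2s'+t'$; in particular your ``comparison between $A_{(s',0)}$ and $A_{(s'-1,2)}$'' takes place entirely on the single line $2s+t=2s'$ (it merely undoes one step of your own reduction) and does not control the variation of the boundary values $A_{(s',0)}$ as $s'$ grows --- it gives no bound at all on, say, $\|A_{(3,0)}-A_{(300,0)}\|$. Moreover, in the regime $t\gg s$ the accumulated error $\frac{L'}{1-e^{-\theta}}e^{(2\alpha a-\theta)s+\alpha a t}$ blows up, so this single estimate is also quantitatively useless there; the hypothesis $\alpha<\theta/(3a)$ cannot rescue a bound growing like $e^{\alpha a t}$.

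What is missing is the second, symmetric family of estimates $\|A_{(s,t)}-A_{(s-2,t+1)}\|\le C''e^{-t(\theta-2\alpha a)+s\alpha a}$ for $s\ge 2$, obtained by exchanging the roles of points and lines at infinity, i.e.\ by running your computation in the dual building $\overline{X}$ --- this is precisely why admissibility \eqref{eq:E-valued_Hoelder_bound} is required to hold for $\overline{X}$ as well, a half of the hypothesis your argument never uses. With both families one concludes by the zig-zag argument of Lemma~\ref{lem:lafforgue}: use the first estimate in the region $s\ge t$ and the second in the region $t\ge s$, compare each $A_{(s,t)}$ to a diagonal value $A_{(u,u)}$ with $u$ roughly $\min(s+2t,t+2s)/3$, and connect consecutive diagonal values by one move of each type; the condition $E=\alpha a<D=\theta-2\alpha a$, i.e.\ $\alpha<\theta/(3a)$, is exactly what makes this gluing summable (not your one-directional iteration, where you attributed its role). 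Your limit-identification and duality steps are fine and agree with the paper: for $P^*$ note that, via the pairing \eqref{eq:duality_pairing}, the $\Lambda_0$-harmonic elements of $\diE$ correspond to the $A_\lambda^*$-invariant functionals, so taking adjoints of the norm-convergent net suffices.
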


The main work will be to show that the limit exists, based on the estimates from the last section.

For $o \in \Omega$ and $i,j \in \N$ we define operators $I_{o,i} \colon \diE \to L^2(\P;E^*)$ and $J_{o,j} \colon \tilde{E} \to L^2(\L;E)$ by $I_{o,i}(g)(p) = g(p_{o,i})$ and $J_{o,j}(f)(\ell) = f(\ell_{o,j})$.

\begin{lemma}\label{lem:norm_of_alphai}
For $o \in \Omega$ and $i, j \in \N$ the operators $I_{o,i}$ and $J_{o,j}$ have operator norms bounded by 
\[ \norm{I_{o,i}} \le \max_{x \in X} \sqrt{|\Gamma_{x}|} Ce^{\alpha(ai + b)}.\]
and
\[ \norm{J_{o,j}} \le \max_{x \in X} \sqrt{|\Gamma_{x}|} Ce^{\alpha(aj + b)}.\]
More precisely,
\[
\left(\sum_{o \in \Omega} \frac{1}{\abs{\Gamma_o}} \norm{I_{o,i}(g)}^2\right)^{\frac 1 2} \le Ce^{\alpha(ai + b)} \|g\|,
\]
and similarly for $J_{o,j}(f)$.
\end{lemma}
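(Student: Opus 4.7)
The plan is to combine the equivariance of $g \in \diE$ (resp.\ $f \in \iE$) with Lemma~\ref{lem:orbit_map_quasiisometric} to reduce every value $\|g(y)\|$ (resp.\ $\|f(y)\|$) to a value at a vertex in the fundamental domain $\Omega$, paying a factor controlled by the exponential growth of $\pi$. Concretely, for $y \in \P_{o,i}$ one has $d(o,y)=i$, so Lemma~\ref{lem:orbit_map_quasiisometric} furnishes $\gamma_y \in \Gamma$ with $\gamma_y y \in \Omega$ and $|\gamma_y| \le ai + b$. Using $g(y) = \pi(\gamma_y)^{*}g(\gamma_y y)$ together with $\|\pi(\gamma)\| \le Ce^{\alpha|\gamma|}$, one obtains the pointwise bound $\|g(y)\| \le Ce^{\alpha(ai+b)}\|g(\gamma_y y)\|$, and similarly $\|f(y)\| \le Ce^{\alpha(aj+b)}\|f(\gamma_y y)\|$ for $y \in \L_{o,j}$.

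The operator norm bounds then follow immediately. Indeed $\|I_{o,i}(g)\|^2 = \int_\P \|g(p_{o,i})\|^2 \, dp \le \max_p \|g(p_{o,i})\|^2$, and combining the pointwise bound with the trivial inequality $\|g(x)\|^2 \le \|g\|^2$ for $x\in\Omega$ (which follows directly from the definition of the norm on $\diE$) yields $\|I_{o,i}\| \le Ce^{\alpha(ai+b)}$, which in particular implies the stated (weaker) bound. For $J_{o,j}$ the same scheme applies, but now the weight $1/|\Gamma_x|$ in the norm on $\iE$ only gives $\|f(x)\|^2 \le |\Gamma_x|\,\|f\|^2$ for $x\in\Omega$; this is exactly what produces the factor $\max_x\sqrt{|\Gamma_x|}$.

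For the refined inequality, a naive use of the operator norm bound would produce a stray factor of $(\sum_{o \in \Omega}1/|\Gamma_o|)^{1/2}$, so I would instead invoke a $\Gamma$-invariant double-counting identity: for any $\Gamma$-invariant nonnegative function $F$ on $X \times X$,
\[
\sum_{o\in\Omega}\frac{1}{|\Gamma_o|}\sum_{y\in X}F(o,y) \;=\; \sum_{y\in\Omega}\frac{1}{|\Gamma_y|}\sum_{o\in X}F(o,y),
\]
both sides being equal to $\sum_{\mathcal{O}\in \Gamma\backslash(X\times X)} F(x_\mathcal{O},y_\mathcal{O})/|\Gamma_{(x_\mathcal{O}, y_\mathcal{O})}|$. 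I apply this to $F(o,y)=\|g(\gamma_y y)\|^2 \,\1_{y\in\P_{o,i}}$, which is $\Gamma$-invariant since the $\Omega$-representative $\gamma_y y$ depends only on the $\Gamma$-orbit of $y$. Since $|\P_{o,i}|$ is $o$-independent and equals $|\{o\in X:y\in \P_{o,i}\}|$ (both being $|S_{(i,0)}|=|S_{(0,i)}|$ by Lemma~\ref{lemma:counting} and the point--line duality in the building), the combinatorial factors cancel and one obtains $\sum_{o\in\Omega}\frac{1}{|\Gamma_o|}\|I_{o,i}(g)\|^2 \le C^2 e^{2\alpha(ai+b)}\,\|g\|^2$. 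The argument for $J_{o,j}$ runs identically, with the $1/|\Gamma_x|$ weight in the $\iE$-norm now precisely compensating the $1/|\Gamma_y|$ emerging from the exchange.

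The main obstacle I anticipate is the numerical cancellation in the exchange step: one must verify that $|\P_{o,i}|$, the size of the Hjelmslev point-sphere around $o$, coincides with $|\{o \in X : y\in\P_{o,i}\}|=|S_{(0,i)}(y)|$. This rests on the symmetry $\sigma(y,o)=\overline{\sigma(o,y)}$ together with the point--line duality $|S_{(i,0)}|=|S_{(0,i)}|$ (equivalently $|\P_{o,i}|=|\L_{o,i}|$ at every level), both specific to the $\tilde A_2$ setting. Once this identification is in place, the proof closes cleanly.
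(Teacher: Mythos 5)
Your proposal is correct and follows essentially the same route as the paper: the pointwise bound $\norm{g(p_{o,i})}\leq Ce^{\alpha(ai+b)}\norm{g(\gamma p_{o,i})}$ via equivariance and Lemma~\ref{lem:orbit_map_quasiisometric}, followed by an exchange of summation over orbits, which is precisely what the paper compresses into ``rearranging the terms.'' The cancellation you flag as the main obstacle is exactly the regularity fact $\abs{S_{(i,0)}(x)}=\abs{S_{(0,i)}(x)}=(q^2+q+1)q^{2(i-1)}$ (Lemma~\ref{lem:NumberEquilateral}, and invoked again in Lemma~\ref{lem:2steprep}), so your argument closes as claimed.
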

\begin{proof}
Let $p \in \P$ and $g \in \diE$. By Lemma \ref{lem:orbit_map_quasiisometric}, there is $\gamma \in \Gamma$ such that $\gamma p_{o,i} \in \Omega$ and $|\gamma| \leq ai+b$. This implies that $g(p_{o,i}) = \pi(\gamma)^* g(\gamma p_{o,i})$ and
\[
\norm{g(p_{o,i})}_{E^*} \leq \norm{\pi(\gamma)} \norm{g(\gamma p_{o,i})}_{E^*} \leq C e^{\alpha(ai+b)} \norm{g(\gamma p_{o,i})}\text{.}
\]
We can therefore bound $\sum_{o \in \Omega} \frac{1}{\abs{\Gamma_o}} \norm{I_{o,i}(g)}^2$ by
\[
\sum_{o \in \Omega}\sum_{o' \in \Omega} \frac{1}{\abs{\Gamma_o}} C^2 e^{2\alpha(ai+b)}\|g(o')\|^2 \proba_o(p_{o,i} \in \Gamma o'),\]
which is equal to $C^2e^{2\alpha(ai+b)} \norm{g}^2$ by rearranging the terms. The argument for $J_{o,s}$ is analogous.
\end{proof}

The following lemma reduces the proof that $(A_\lambda)_{\lambda \in \Lambda_i}$ converges to establishing two estimates \eqref{eq:vertical_estimates} and \eqref{eq:nonvertical_estimates}. Its proof follows \cite[Proposition 3.6]{Lafforgue08}. The reason for formulating it separately is that it will be used again in the following sections.

\begin{figure}
\begin{center}
\begin{tikzpicture}[scale=.7]
%\clip(-10,-1) rectangle (20,9);
\foreach \r in {0,...,10}
\foreach \s[evaluate={\t=int(mod(99+\r - \s,3));\ma=int(max((\r+\s),10))}] in {0,...,10}
{%
\ifthenelse{\ma=10}{
\ifthenelse{\t=0}{
\node[dot,fill=white] at ($(60:\r)+(120:\s)$) {};}
{\ifthenelse{\t=1}
{\node[dot,fill=gray] at ($(60:\r)+(120:\s)$) {};}
{\node[dot] at ($(60:\r)+(120:\s)$) {};}}
{}}}
\node[anchor=north] at (0,0) {$(0,0)$};
\node[anchor=east] at (120:1) {$(1,0)$};
\node[anchor=west] at (60:1) {$(0,1)$};
\draw[dashed] (60:3) -- (120:6);
\node[anchor=west] at (60:3) {$\{(s,t) \mid s+ 2t =6\}$};
\draw[dashed] (0,0) -- ($(60:5)+(120:5)$);
\node[anchor=south] at ($(60:5)+(120:5)$) {$\{(s,t) \mid s = t\}$};
\end{tikzpicture}
\end{center}
\caption{Part of $\Lambda$. The elements of $\Lambda_0$ are drawn in white. Some subsets that occur in the proof of Lemma~\ref{lem:alambda_converge} are indicated.}
\label{fig:lambda_paths}
\end{figure}
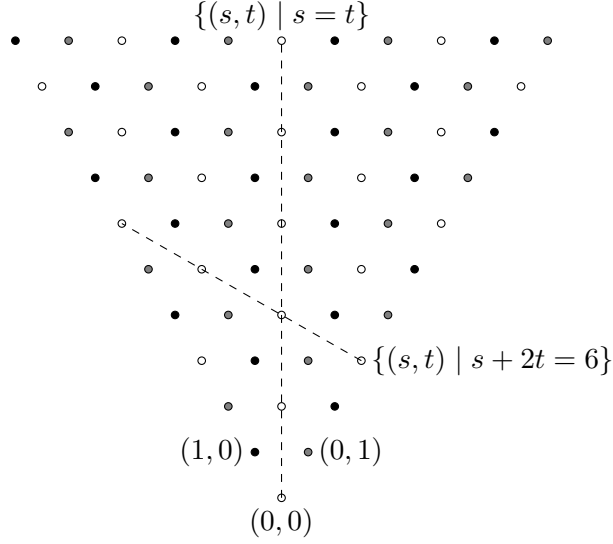

\begin{lemma}[Zig-zag argument]\label{lem:lafforgue}
Let $B$ be a complete metric space with metric $d$. Let $(A_\lambda)_{\lambda \in \Lambda}$ be a net in $B$ satisfying the following conditions: there exists $C, D, E \geq 0$ with $E<D$ such that for every $(s,t)\in \mathbb N^2$ we have
\begin{align}\label{eq:vertical_estimates}
d(A_{(s,t)},A_{(s+1,t-2)}) \leq C e^{-sD +tE}&& \text{if }t \ge 2\text{,}\\
\label{eq:nonvertical_estimates}d(A_{(s,t)},A_{(s-2,t+1)}) \leq C e^{-tD +sE}&& \text{if }s \ge 2\text{.}
\end{align} 
The net $(A_{\lambda})_{\lambda \in \Lambda_0}$ converges. In other words, if $(\lambda_n)_{n \in \N}$ tends to infinity in $\Lambda_0$ then $(A_{\lambda_n})_{n \in \N}$ converges to an element $A_\infty$ that does not depend on the sequence. The same is true with $\Lambda_0$ replaced by $\Lambda_1$ or $\Lambda_2$.

Moreover, the limits $P_i$ satisfy 
\begin{equation}\label{eq:convergence_rate_Alambda} d(A_\lambda,P_i) \leq \frac{3C e^{4D}}{e^D-e^E} e^{-\max(s+2t,t+2s) \frac{D-E}{3}}\end{equation} whenever $\lambda=(s,t)$ belongs to $\Lambda_i$.
\end{lemma}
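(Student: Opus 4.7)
The plan is a zig-zag/triangle-inequality argument in two steps. First I would produce a candidate limit $P_i$ via a "diagonal" sequence $(\mu_n)$ in $\Lambda_i$ along which $(A_{\mu_n})_n$ is Cauchy; then I would show that every $A_\lambda$ with $\lambda = (s,t) \in \Lambda_i$ of large norm is close to some $A_{\mu_n}$, with an explicit rate depending on $\max(s+2t,t+2s)$.

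For the diagonal in $\Lambda_0$ I take $\mu_n = (n,n)$; the analogous sequences $(n+1,n)$ and $(n,n+1)$ work for $\Lambda_1$ and $\Lambda_2$. The key estimate is $d(A_{\mu_n}, A_{\mu_{n+1}}) \le 2Ce^{D+2E} e^{n(E-D)}$ for $n \ge 1$, obtained via the length-two path $(n,n) \to (n+2, n-1) \to (n+1, n+1)$. The first leg uses \eqref{eq:nonvertical_estimates} read backwards at $(n+2,n-1)$, with bound $Ce^{-(n-1)D+(n+2)E}$; the second leg uses \eqref{eq:vertical_estimates} read backwards at $(n+1,n+1)$, with bound $Ce^{-(n+1)D+(n+1)E}$. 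Summing the resulting geometric tail (ratio $e^{E-D} < 1$) makes $(A_{\mu_n})_n$ Cauchy with a limit $P_0$ satisfying $d(A_{\mu_N}, P_0) \le \frac{2Ce^{2D+2E}}{e^D-e^E}e^{-N(D-E)}$.

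For Step 2, given $\lambda=(s,t) \in \Lambda_0$ with $s \le t$ (the case $s > t$ is symmetric, using \eqref{eq:vertical_estimates} in place of \eqref{eq:nonvertical_estimates}), I would iterate the inverse of \eqref{eq:nonvertical_estimates}, producing the path $(s+2j, t-j)$ for $j=0,\ldots,k$ with $k = (t-s)/3 \in \N$ (integrality comes from $\lambda \in \Lambda_0$). The path ends at the diagonal vertex $(n,n)$ with $n = (s+2t)/3 = \max(s+2t, t+2s)/3$. Each step is bounded by $Ce^{-(t-j-1)D+(s+2j+2)E}$, and these form a geometric sequence with ratio $e^{D+2E} > 1$, so the total is dominated by its last term $Ce^{-n(D-E)}$. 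Using $\frac{1}{1-e^{-(D+2E)}} \le \frac{e^D}{e^D-e^E}$ one obtains $d(A_\lambda, A_{(n,n)}) \le \frac{Ce^D}{e^D-e^E}e^{-n(D-E)}$.

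Combining the two steps via the triangle inequality and the crude majoration $Ce^D + 2Ce^{2D+2E} \le 3Ce^{4D}$ (valid since $E \le D$) yields the claimed rate $\frac{3Ce^{4D}}{e^D-e^E}e^{-\max(s+2t,t+2s)(D-E)/3}$. Independence of the limit from the chosen sequence is then automatic: for any $\lambda_n \to \infty$ in $\Lambda_i$, both $A_{\lambda_n}$ and the diagonal subsequence satisfy the rate estimate against the same $P_i$. The calculation is straightforward; the one thing to verify carefully is that each step in the paths above stays in $\N \times \N$ and satisfies the hypothesis $s \ge 2$ or $t \ge 2$ of the given bounds, which is immediate since $k \le t$ in Step 2 and the relevant coordinate always exceeds $1$ in Step 1. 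A handful of small-$\max$ vertices (in particular those where no step of the construction applies) can be absorbed into the constant $3Ce^{4D}$, which provides ample slack.
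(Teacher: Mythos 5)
Your proposal is correct and follows essentially the same route as the paper: the same length-two detour $(n,n)\to(n+2,n-1)\to(n+1,n+1)$ to make the (near-)diagonal sequence Cauchy, the same zig-zag paths along the singular directions with geometrically growing bounds dominated by the last term, and the same triangle-inequality combination with the crude absorption into $3Ce^{4D}$. The only difference is organizational (you build the limit from the diagonal subsequence and then compare each $A_\lambda$ to its diagonal companion, while the paper shows the whole net is Cauchy directly), which is immaterial; like the paper, you only carry out the constant bookkeeping in full for $\Lambda_0$ and treat $\Lambda_1,\Lambda_2$ as analogous.
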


\begin{proof}
We first prove the statement for $\Lambda_0$. It follows from \eqref{eq:vertical_estimates} and \eqref{eq:nonvertical_estimates} that
\begin{align*}
d(A_{(s,t)},A_{(s+k,t-2k)}) &\leq \frac{C}{1-e^{-(D+2E)}}  e^{(-sD+tE)} &&\text{if } 0 \leq k \leq \frac{t}{2}\text{ and}\\
d(A_{(s,t)},A_{(s-2k,t+k)}) &\leq \frac{C}{1-e^{-(D+2E)}} e^{(-tD + sE)} && \text{if } 0 \leq k \leq \frac{s}{2}.
\end{align*}

These inequalities say that the variation of $A_\lambda$ on the strips $\{(s,t) \in \Lambda \mid t \leq s+10, t+2s = n\}$ and $\{(s,t) \in \Lambda \mid s \leq t+10, 2t+s = n\}$ is exponentially small in $n$, see Figure~\ref{fig:lambda_paths} for orientation. 
In particular, considering the intersection of these lines with the line $s=t$, we deduce for $(s,t) \in \Lambda_0$ that
\begin{align*}
d(A_{(s,t)}, A_{(\frac{t+2s}{3},\frac{t+2s}{3})}) &\leq C' e^{-\frac{t+2s}{3}(D-E)} &&\textrm{ if } 0 \leq t \leq s\text{ and}\\
d(A_{(s,t)}, A_{(\frac{2t+s}{3},\frac{2t+s}{3})}) &\leq C' e^{-\frac{2t+s}{3}(D-E)} &&\textrm{ if } 0 \leq s \leq t
\end{align*}
where $C' = \frac{C}{1-e^{-(D+2E)}}$. We also deduce
\[
d(A_{s,s}, A_{s+1,s+1}) \leq d(A_{s,s}, A_{s+2,s-1}) + d(A_{s+2,s-1}, A_{s+1,s+1}) \leq C'' e^{-s(D-E)}
\]
where $C''=C (e^{D+2E} +e^{-D+E})$. It follows that for $(s,t), (s',t') \in \Lambda_0$
\begin{align*}
d(A_{s,t}, A_{s',t'}) &\le d(A_{s,t}, A_{u,u}) + d(A_{u,u}, A_{u',u'}) + d(A_{u',u'}, A_{s',t'})\\
& \leq C' e^{-u(D-E)} + \frac{C''}{1-e^{-(D-E)}}) e^{-\min(u,u')(D-E)} + \leq C' e^{-u(D-E)}
\end{align*}
where $u = (t+2s)/3$ if $s \ge t$ and $u = (2t+s)/3$ if $t \ge s$ and similarly for $u'$. The lemma and the estimate \eqref{eq:convergence_rate_Alambda} follow, using the rough estimates $C' \leq \frac{C e^D}{e^D-e^{D}}$ and $C'' \leq 2C e^{3D}$.

The only modification needed for $\Lambda_1$ and $\Lambda_2$ is that $\frac{t+2s}{3} \in \{(t,s) \mid t+2s = n\} \cap \{(t,s) \mid t=s\}$ needs to be replaced by the intersection of $\{(t,s) \mid t+2s = n\}$ with $\{(t,s) \mid t = s+1\}$ respectively $\{(t,s) \mid t+1 = s\}$.
\end{proof}

\begin{lemma}\label{lem:alambda_converge}
The net $(A_{\lambda})_{\lambda \in \Lambda_0}$ converges. In other words, if $(\lambda_n)_{n \in \N}$ tends to infinity in $\Lambda_0$ then $(A_{\lambda_n})_{n \in \N}$ converges to an operator that does not depend on the sequence. The same is true with $\Lambda_0$ replaced by $\Lambda_1$ or $\Lambda_2$.
\end{lemma}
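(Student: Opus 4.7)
The plan is to verify the hypotheses of the zig-zag argument (Lemma~\ref{lem:lafforgue}) in the Banach space $B(\widetilde E)$. Concretely, I need to exhibit constants $C, D \geq 0$ and $E' < D$ (writing $E'$ to avoid collision with the Banach space $E$) such that
\[
\norm{A_{(s,t)} - A_{(s+1, t-2)}}_{B(\widetilde E)} \le C e^{-sD + tE'} \quad (t \geq 2)
\]
together with the symmetric bound $\norm{A_{(s,t)} - A_{(s-2, t+1)}} \le C e^{-tD + sE'}$ for $s \geq 2$.

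The bridge from the combinatorial operators $A_\lambda$ to the analytic estimates of Section~\ref{sec:norm_estimates_projective} is Proposition~\ref{prop:double_counting} applied to the $\Gamma$-invariant pairing $F(x,y) = \langle f(y), g(x)\rangle$ (the invariance being Lemma~\ref{lem:fundamental_domain_irrelevant}), where $f \in \widetilde E$ and $g \in \widetilde{E^*}$. Using Corollary~\ref{cor:Position_poi_loj} and the fact that $\mathbb P_o((p,\ell)_o = s)$ is independent of $p$ (Lemma~\ref{lem:NumberEquilateral}), I would identify
\[
\gen{A_{(s,\,i+j-2s)} f,\,g} = \sum_{o \in \Omega} \frac{1}{|\Gamma_o|} \gen{T_{o,s} J_{o,j} f,\ I_{o,i} g}
\]
whenever $i, j \ge s$.

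Given $(s,t)$ with $t \ge 2$, I would then pick $i = s + \lceil t/2\rceil$ and $j = s + \lfloor t/2\rfloor$, so that $i+j = 2s + t$ and $i, j \ge s+1$; this choice works simultaneously for $\lambda = (s,t)$ and $\lambda' = (s+1,t-2)$. Subtracting the two identities yields
\[
\gen{(A_{(s,t)} - A_{(s+1, t-2)}) f,\,g} = \sum_o \frac{1}{|\Gamma_o|} \gen{(T_{o,s} - T_{o,s+1}) J_{o,j} f,\ I_{o,i} g}.
\]
Bounding $\norm{T_{o,s} - T_{o,s+1}} \le L e^{-\theta s}$ by admissibility, applying Cauchy--Schwarz in $o$ with weight $1/|\Gamma_o|$, and invoking Lemma~\ref{lem:norm_of_alphai} for both $J_{o,j} f$ and $I_{o,i} g$, I obtain
\[
|\gen{(A_{(s,t)} - A_{(s+1, t-2)}) f,\, g}| \le C^2 L e^{2\alpha b}\, e^{-s(\theta - 2\alpha a) + t\,\alpha a}\, \norm{f}\norm{g}.
\]
Combined with the duality of Lemma~\ref{lemma:dual}, this gives the first estimate needed, with $D = \theta - 2\alpha a$ and $E' = \alpha a$; the condition $E' < D$ is exactly the inequality $3\alpha a < \theta$ recorded in \eqref{eq:alpha_small_enough}. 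The symmetric estimate is obtained by running the same argument in the dual building $\overline X$: since $\overline\sigma(x,y) = (t,s)$ when $\sigma(x,y) = (s,t)$, we have $A^X_{(s,t)} - A^X_{(s-2,t+1)} = A^{\overline X}_{(t,s)} - A^{\overline X}_{(t+1,s-2)}$, and admissibility of $E$ for $\overline X$ is built into the definition. Lemma~\ref{lem:lafforgue} then provides convergence on $\Lambda_0$, $\Lambda_1$ and $\Lambda_2$.

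The main obstacle is essentially bookkeeping: one must pick $i,j$ depending on $\lambda$ and $\lambda'$ so that the two conditional expectations in Proposition~\ref{prop:double_counting} can be expressed via the same pair $(J_{o,j}, I_{o,i})$ and differ only by the operator $T_{o,s} - T_{o,s+1}$, and one must keep track of the $e^{\alpha a(i+j)}$ factors coming from the growth of $\pi$ to ensure that the exponent in the final estimate is dominated by $s$, not by $t$. Once this is arranged the constraint $\alpha < \theta/(3a)$ gives $E' < D$ automatically.
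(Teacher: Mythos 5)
Your proposal is correct and follows essentially the same route as the paper: apply Proposition~\ref{prop:double_counting} to $F(x,y)=\langle g(x),f(y)\rangle$ to get $\langle g, A_{(s,i+j-2s)}f\rangle = \sum_{o\in\Omega}\frac{1}{|\Gamma_o|}\langle I_{o,i}(g), T_{o,s}J_{o,j}(f)\rangle$, difference the identity in $s$, bound via admissibility, Cauchy--Schwarz over $o$, Lemma~\ref{lem:norm_of_alphai} and Lemma~\ref{lemma:dual} to get $\norm{A_{(s,t)}-A_{(s+1,t-2)}}\le C'' e^{-s(\theta-2\alpha a)+t\alpha a}$, treat the other singular direction by duality of points and lines, and conclude with Lemma~\ref{lem:lafforgue} under \eqref{eq:alpha_small_enough}. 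Your explicit choice of $i,j$ and the passage through $\overline X$ are just slightly more spelled-out versions of steps the paper leaves implicit.
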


\begin{proof}
Let $i,j,s$ be such that $i,j > s$. Write $t = i+j-2s$. Let further $f \in \iE$ and $g \in \diE$ be arbitrary. 
We define $F(x,y) = \gen{g(x),f(y)}$ and note that this is a $\Gamma$-equivariant function by Lemma \ref{lem:fundamental_domain_irrelevant}. So we can apply Proposition~\ref{prop:double_counting}. The left-hand side of the equation in the conclusion is
\begin{align*}
&\mathrel{\phantom{=}}\sum_{o\in\Omega} \frac{1}{\abs{\Gamma_o}} \E_o[\gen{g(p_{o,i}),f(\ell_{o,j})} \mid (p,\ell)_o=s]\\
&=\sum_{o\in\Omega}\frac{1}{\abs{\Gamma_o}}\E_o[\gen{I_{o,i}(g)(p),J_{o,j}(f)(\ell)} \mid (p,\ell)_o=s]\\
&=\sum_{o\in\Omega}\frac{1}{\abs{\Gamma_o}} \gen{I_{o,i}(g),T_{o,s} J_{o,j}(f)}
\end{align*}
by definition of $T_{o,s}$ (Section~\ref{sec:norm_estimates_projective}).

The right-hand side is,
\begin{align*}
&\mathrel{\phantom{=}}\sum_{x \in \Omega} \frac{1}{\abs{\Gamma_x}} \frac{1}{\abs{S_\lambda(x)}} \sum_{y \in S_\lambda(x)} \langle g(x),f(y)\rangle\\
&= \sum_{x\in\Omega} \frac{1}{\abs{\Gamma_x}} \langle g(x),A_{(s,t)} f(x)\rangle\\
&=\langle g,A_{(s,t)} f\rangle
\end{align*}
by definition of $A_{\lambda}$ and the pairing between $\diE$ and $\iE$ (Section~\ref{sec:outline}).

In summary we have
\begin{equation}\label{eq:a_lambda_t_s}
\sum_{o \in \Omega} \frac{1}{\abs{\Gamma_o}} \gen{I_{o,i}(g),T_{o,s}J_{o,j}(f)} = \gen{ g,A_{(s,t)} f}\text{.}
\end{equation}

We can apply Lemma~\ref{lem:norm_of_alphai} and take the difference of \eqref{eq:a_lambda_t_s} for $s$ and $s+1$ and obtain
\begin{align*}
\abs{\gen{g,(A_{s,t} - A_{s+1,t-2})(f)}} &\le \sum_{o \in \Omega}\frac{1}{\abs{\Gamma_o}} \norm{T_{o,s} - T_{o,s+1}} \norm{I_{o,i}(g)} \norm{J_{o,j}(f)}\\
& \le L e^{-\theta s}C^2 e^{2\alpha b + \alpha(i+j)} \norm{f} \norm{g}
\end{align*}
by \eqref{eq:E-valued_Hoelder_bound} which holds by the standing assumption from the beginning of the section.

Taking the supremum over $f$ and $g$ in the unit balls of $\iE$ and $\diE$ respectively and using Lemma \ref{lemma:dual} we obtain
\[
\|A_{(s,t)} - A_{(s+1,t-2)}\| \leq C'' e^{-s(\theta - 2\alpha a) +t\alpha a}
\]
for every $(s,t) \in \Lambda$ with $t \geq 2$, where $C''=C'LC^2 e^{2\alpha b}$.

By applying the same argument, but in the building $\overline X$, we get that 
$
\|\overline A_{(t,s)} - \overline A_{(t+1,s-2)}\| \leq C'' e^{-t(\theta - 2\alpha a) +s\alpha a}
$,
that is
\[
\|A_{(s,t)} - A_{(s-2,t+1)}\| \leq C'' e^{-t(\theta - 2\alpha a) +s\alpha a}
\]
for every $(s,t) \in \Lambda$ with $s \ge 2$. The claim now follows from Lemma~\ref{lem:lafforgue}, as the assumption \eqref{eq:alpha_small_enough} is equivalent to the inequality $\alpha a < \theta - 2\alpha a$.
\end{proof}

\begin{proof}[Proof of Proposition~\ref{prop:harmonic_limit}]
Let $P$ be the limit of the $A_\lambda$ over $\lambda \in \Lambda_0$ which exists by Lemma~\ref{lem:alambda_converge}.

We first show that every function in the image of $P$ is $\Lambda_0$-harmonic, equivalently that $A_\lambda P=P$ for every $\lambda \in \Lambda_0$. Fix $\lambda \in \Lambda_0$, and take another $\lambda' \in \Lambda_0$ which will go to infinity. By (a simple induction on) \cite[Proposition 2.3]{CartwrightMlotkowski94} we can decompose $A_\lambda A_{\lambda'}$ as an average $\sum_{\lambda''} p_{\lambda,\lambda'}(\lambda'') A_{\lambda''}$ with each $\lambda'' \in \Lambda_0$ lying at $\ell_1$-distance $\leq \abs{\lambda}_1$ from $\lambda'$. In the limit $\lambda' \to \infty$ it follows that $A_\lambda P=P$.

Letting now $\lambda$ go to infinity, we get $P^2 = P$, that is, $P$ is a projection. The argument for $P^*$ is analogous.
\end{proof}

The proof of Proposition~\ref{prop:harmonic_limit} gives slightly more. To formulate it, for $k \in \mathbb Z/3\mathbb Z$ let $P_k = \lim_{\lambda \in \Lambda_k} A_\lambda$ which exist by Lemma~\ref{lem:alambda_converge}.

\begin{corollary}
For $i,j \in \mathbb Z/3\mathbb Z$ and $\lambda \in \Lambda_i$ we have $A_\lambda P_j = P_{i+j} = P_j A_\lambda$ and $P_i P_j = P_{i+j}$.
\end{corollary}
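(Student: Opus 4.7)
The plan is to leverage the Cartwright--M\l otkowski multiplication formula for the algebra of biradial operators together with the norm convergence from Proposition~\ref{prop:harmonic_limit}. Fix $\lambda \in \Lambda_i$. As already used in the proof of Proposition~\ref{prop:harmonic_limit}, \cite[Proposition~2.3]{CartwrightMlotkowski94} (iterated) gives a decomposition
\[
A_\lambda A_\mu = \sum_\nu p_{\lambda,\mu}(\nu) A_\nu
\]
in which the sum is finite, the coefficients $p_{\lambda,\mu}(\nu)$ are nonnegative and sum to $1$, every $\nu$ occurring satisfies $\typ(\nu) = \typ(\lambda) + \typ(\mu)$ in $\Z/3\Z$, and $\nu$ lies at $\ell_1$-distance at most $\abs{\lambda}_1$ from $\mu$. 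These properties are forced by the combinatorial origin of $A_\lambda$ as a uniform average: $A_\lambda A_\mu f(x)$ averages $f(z)$ over paths $(x,y,z)$ with $\sigma(x,y) = \lambda$ and $\sigma(y,z) = \mu$, and Lemma~\ref{lemma:counting} guarantees that the total weight on $z$ depends only on $\nu = \sigma(x,z)$.

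Now let $\mu$ tend to infinity in $\Lambda_j$. By the support bound, every index $\nu$ appearing in the decomposition also tends to infinity while staying in $\Lambda_{i+j}$. By Proposition~\ref{prop:harmonic_limit}, $A_\nu$ converges in norm to $P_{i+j}$ uniformly over these $\nu$; since the coefficients are nonnegative and sum to $1$, $A_\lambda A_\mu$ converges in norm to $P_{i+j}$. On the other hand, $A_\mu \to P_j$ in norm and $A_\lambda$ is bounded, so $A_\lambda A_\mu \to A_\lambda P_j$. Hence $A_\lambda P_j = P_{i+j}$. Because the algebra of biradial operators is commutative (cf.~Section~\ref{sec:outline}), $A_\lambda A_\mu = A_\mu A_\lambda$ for every $\mu$; letting $\mu \to \infty$ in $\Lambda_j$ on the other side yields $P_j A_\lambda = P_{i+j}$.

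For the last identity $P_i P_j = P_{i+j}$, fix $j$ and let $\lambda$ tend to infinity in $\Lambda_i$ in the relation $A_\lambda P_j = P_{i+j}$. The left-hand side converges in norm to $P_i P_j$ by Proposition~\ref{prop:harmonic_limit} and boundedness of $P_j$, while the right-hand side is constant in $\lambda$, so $P_i P_j = P_{i+j}$. The only genuine subtlety is the stated structure of the coefficients $p_{\lambda,\mu}(\nu)$---in particular the support bound forcing $\nu \to \infty$ when $\mu \to \infty$---which is precisely what is tacitly used in the proof of Proposition~\ref{prop:harmonic_limit} and requires only a direct inspection of the multiplication rule in $\mathcal{A}$.
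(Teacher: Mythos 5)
Your proof is correct and follows essentially the same route as the paper: the iterated Cartwright--M\l otkowski decomposition $A_\lambda A_\mu = \sum_\nu p_{\lambda,\mu}(\nu) A_\nu$, with $\nu \in \Lambda_{i+j}$ lying at $\ell^1$-distance at most $\abs{\lambda}_1$ from $\mu$, combined with the norm convergence from Proposition~\ref{prop:harmonic_limit}, letting first $\mu$ and then $\lambda$ tend to infinity. The only (harmless) deviation is that you derive $P_j A_\lambda = P_{i+j}$ from the commutativity of the algebra $\mathcal{A}$ (as asserted in Section~\ref{sec:outline}), whereas the paper instead reuses the decomposition with the distance bound taken from $\lambda$ rather than from $\mu$; both arguments are equally valid.
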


\begin{proof}
As in the proof of Proposition~\ref{prop:harmonic_limit} $A_\lambda A_\lambda'$ can be decomposed as an average over $A_{\lambda''}$. If $\lambda \in \Lambda_k$ and $\lambda \in \Lambda_l$ then the $\lambda''$ lie in $\Lambda_{k+l}$. Letting now $\lambda'$ go to infinity gives the first equation. Letting then $\lambda$ go to infinity gives the third equation. The second equation is obtained a decomposition with $\lambda''$ lying at $\ell_1$-distance $\le \abs{\lambda'}_1$ from $\lambda$ and letting $\lambda$ go to infinity.
\end{proof}

Note that since $A_{(0,0)}$ is the identity operator, saying that $f \in \iE$ is $\Lambda_0$-harmonic can also be phrased as saying that $A_\lambda f$ is independent of $\lambda \in \Lambda_0$. More generally we have:

\begin{corollary}\label{cor:about_harmonic_functions}
Let $k \in \mathbb Z/3\mathbb Z$. If $f \in\iE$ is $\Lambda_0$-harmonic then $A_\lambda f$ is independent of $\lambda \in \Lambda_k$. If $g \in \diE$ is $\Lambda_0$-harmonic then $A_\lambda^* g$ is independent of $\lambda \in \Lambda_k$.
\end{corollary}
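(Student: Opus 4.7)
The plan is to deduce this directly from the previous corollary, by reinterpreting $\Lambda_0$-harmonicity in terms of the limit projection $P_0$. The key observation is that the condition $A_\mu f = f$ for all $\mu \in \Lambda_0$ can be upgraded to $P_0 f = f$ by passing to the norm limit as $\mu \to \infty$ in $\Lambda_0$, which is legitimate thanks to Proposition~\ref{prop:harmonic_limit}. Symmetrically, the $\Lambda_0$-harmonicity of $g \in \diE$ is equivalent to $P_0^* g = g$, using that $A_\mu^* \to P_0^*$ in norm (since taking adjoints is an isometry of $B(\iE)$).

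Once $f = P_0 f$ is in hand, I would apply the identity $A_\lambda P_0 = P_k$ from the preceding corollary (with $i = k$, $j = 0$) to obtain
\[
A_\lambda f \;=\; A_\lambda P_0 f \;=\; P_k f
\]
for every $\lambda \in \Lambda_k$. The right-hand side does not depend on $\lambda$, which yields the first claim. The dual statement is completely parallel: for $g \in \diE$ satisfying $P_0^* g = g$ and any $\lambda \in \Lambda_k$, I would write
\[
A_\lambda^* g \;=\; A_\lambda^* P_0^* g \;=\; (P_0 A_\lambda)^* g \;=\; P_k^* g,
\]
invoking the identity $P_0 A_\lambda = P_k$ from the previous corollary (same indices) and concluding that $A_\lambda^* g$ is independent of $\lambda \in \Lambda_k$.

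I do not foresee any genuine obstacle: the substantive work is already accomplished in the preceding corollary, which captures the multiplicative behaviour of the family $(P_i)_{i \in \Z/3\Z}$ under composition with the operators $A_\lambda$. The present statement is then a two-line algebraic manipulation, with the only minor point to verify being the equivalence between pointwise $\Lambda_0$-harmonicity and being fixed by $P_0$, which is immediate from the norm-convergence $A_\mu \to P_0$.
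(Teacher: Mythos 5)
Your argument is correct and is essentially the paper's own proof: the paper also writes $A_\lambda f = A_\lambda P_0 f = P_k f$ (using $A_\lambda P_0 = P_k$ from the preceding corollary), with the identification $P_0 f = f$ for $\Lambda_0$-harmonic $f$ coming from the norm convergence $A_\mu \to P_0$, and treats $g$ dually in the same way. Your explicit justification of $P_0 f = f$ and of the adjoint step $A_\lambda^* P_0^* = (P_0 A_\lambda)^* = P_k^*$ merely spells out what the paper leaves implicit.
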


\begin{proof}
We have $P_k(f) = A_\lambda P_0(f) = A_\lambda(f)$ and similarly for $g$.
\end{proof}

% ==============================================================================
\section{Harmonic functions are constant}\label{sec:harmonic_constant}
% ==============================================================================

As in the previous section, we fix an admissible Banach space $E$ throughout and let $L$ and $\theta$ be the constants in \eqref{eq:E-valued_Hoelder_bound}. We also fix a proper, cocompact and type-preserving action of a finitely generated group $\Gamma$ on $X$. Again, the non-cocompact case will be dealt with in Section~\ref{sec:non-uniform}. In this section we will complete the proof of Theorem~\ref{thm:strong_T_on_building} by showing:

\begin{proposition}\label{prop:harmonic=constants}
Assume that $E$ is admissible, and that $\pi$ is a representation of $\gamma$ on $E$ satisfying~\eqref{eq:assumption_on_growth_rate_of_pi} for some $C>0$ and $\alpha>0$ satisfying~ \eqref{eq:alpha_small_enough}
Every $\Lambda_0$-harmonic element of $\iE$ is constant on vertices of each type.
\end{proposition}

\begin{remark}\cite[Lemme 3.5]{Lafforgue08} The proposition is very easy in the case when $E$ is strictly convex and $\pi$ is a representation by isometries. Indeed, in that case if $f \in \tilde{E}$ and $k \in \Z/3\Z$, the fact that there are only finitely many $\Gamma$-orbits of vertices implies that there is $x \in X$ of type $k$ such that $\|f(x)\|$ is maximal among all vertices of type $k$. If $f$ is moreover assumed to be $\Lambda_0$-harmonic, then for every $\lambda \in \Lambda_0$, the hypothesis $f(x) = A_\lambda f(x)$ expresses $f(x)$ as an average of vectors $f(y)$ for $y \in S_\lambda(x)$ of norm less than $\|f(x)\|$. By strict convexity, this is possible only if all these vectors are equal to $f(x)$. Since $\lambda \in \Lambda_0$ is arbitrary, this proves that $f(y) = f(x)$ for every $y$ of type $k$, and that $f$ is constant on each type.
\end{remark}

In the general case, the proof is much more involved, and is an elaboration of the proof of Proposition \ref{prop:harmonic_limit}.
Consider the Banach spaces associated to $X^{(1)}$:
\begin{align*}
\iE^{(1)} &= \{f \colon X^{(1)} \to E\mid \forall \gamma \in \Gamma, (x,x') \in X^{(1)}, f(\gamma x,\gamma x') = \pi(\gamma) f(x,x')\}\text{ and}\\
\diE^{(1)} &= \{g \colon X^{(1)} \to E^*\mid \forall \gamma \in \Gamma, (x,x') \in X^{(1)}, g(\gamma x,\gamma x') = \pi(\gamma^{-1})^* g(x,x')\}\text{.}
\end{align*}
We equip $\iE^{(1)}$ and $\diE^{(1)}$ with the norms
\[
\norm{f} \defeq \bigg(\sum_{(x,x') \in \Omega^{(1)}} \frac{1}{\abs{\Gamma_{x,x'}}} \norm{f(x,x')}^2\bigg)^{\frac{1}{2}}\quad\text{and}\quad\norm{g} \defeq \bigg(\sum_{(x,x') \in \Omega^{(1)}}\frac{1}{\abs{\Gamma_{x,x'}}}\norm{g(x,x')}^2\bigg)^{\frac{1}{2}}\text{.}
\]

We define analogously $\lun\iE$ and $\lun \diE$ with analogous formulas, but with $X^{(1)}$ replaced by $\lun X$.

Recall that we defined operators $A_\lambda^\truc$ in \eqref{eq:def_Alambdatruc} and $\ltruc A_\lambda$ in \eqref{eq:def_ltrucAlambda} by the formulas
\begin{equation*}
A_\lambda^\truc(f)(x,x') = \frac{1}{\abs{S_\lambda^\truc(x,x')}} \sum_{y \in S_\lambda^\truc(x,x')}f(y).
\end{equation*}
and
\begin{equation*}
\ltruc A_\lambda (f)(x,x') = \frac{1}{\abs{\ltruc S_\lambda(x,x')}} \sum_{y \in \ltruc S_\lambda(x,x')}f(y).
\end{equation*} 
It is easy to see that these operators map equivariant functions to equivariant functions, and so restrict to operators (still denoted the same) $A_\lambda^\truc:\iE\to \iE^{(1)}$ and $\ltruc A_\lambda:\iE\to \lun \iE$.

For $\lambda \in \Lambda$ and $\truc \in \{+,\sim,-\}$ denote, in consistence with Notation \ref{notation:bar}, $\overline{S}_\lambda^\truc(x,x')$ and $\overline {A}_\lambda^\truc$ the spheres and operators defined above, but for $\overline{X}$. It follows from Lemma~\ref{lemma:spheres_opposite_building} that $\overline A_\lambda^\truc = \ltruc A_{\overline \lambda}$, and that $\ltruc\overline{A}_\lambda= A_{\overline \lambda}^\truc$.

Proposition~\ref{prop:double_counting_diamond} motivates the definition of operators
\[
I_{o,i}^+, I_{o,o',i}^\simeq \colon \diE^{(1)} \to L^2(\P;E^*) \quad \text{and} \quad J^-_{o,o',j}, J^\sim_{o,o',j} \colon \tilde{E} \to L^2(\L;E)
\]
for $o, o' \in X$ with $\sigma(o,o') = (0,1)$ and $i,j \in \N$
by $I_{o,i}^+g(p)=g(p_{o,i},p_{o,i+1})$ and $I_{o,o',i}^\simeq g(p)= g(p_{o,i},p_{o',i-1})1_{p \sim_{o,1} o'}$ as well as $J_{o,o',j}^-f(\ell) = f(\ell_{o,j})1_{\ell_{o,1}=o'}$ and $J_{o,o',j}^\sim f(\ell) = f(\ell_{o,j})1_{\ell_{o,1}\neq o'}$. In particular, $J^-_{o,o',j} + J^\sim_{o,o',j} = J_{o,j}$. 

Similarly define $\lmoins I_{o,i}g(p)=g(p_{o,i},p_{o,i-1})$, $\lplussim I_{o,o',i} g(p)=g(p_{o,i},p_{o',i})1_{p \sim_{o,1} o'}$ motivated by Proposition~\ref{prop:double_counting_diamond_left}.

\begin{lemma}\label{lem:norm_of_alphai2}
For $o \in \Omega$, $o' \in \L_{o,1}$ and $i,j \in \N$ the above operators have operator norms bounded by
\[
\norm{I_{o,i}^+}, \norm{I_{o,o',i}^\simeq}, \norm{\lmoins I_{o,i}g}, \norm{\ \lplussim I_{o,o',i} g} \le Ce^{\alpha(ai + b)}
\]
and
\[\norm{J^-_{o,o',j}}, \norm{J^\sim_{o,o',j}} \le Ce^{\alpha(aj + b)}
\]
for some constant $C$.
\end{lemma}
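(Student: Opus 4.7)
The plan is to mimic the proof of Lemma~\ref{lem:norm_of_alphai} with two minor adjustments: working on the set $X^{(1)}$ of oriented edges rather than on vertices, and handling the extra indicator functions. Since $\Gamma$ acts properly and cocompactly on $X^{(1)}$, Lemma~\ref{lem:orbit_map_quasiisometric} has an obvious analogue for $X^{(1)}$: for every $o \in \Omega$ and every $(x,x')\in X^{(1)}$ there is $\gamma \in \Gamma$ such that $(\gamma x, \gamma x') \in \Omega^{(1)}$ and $|\gamma| \leq a d(o,x) + b$, possibly after enlarging $b$ (the point being that one first applies Lemma~\ref{lem:orbit_map_quasiisometric} to the vertex $x$, then adjusts by a bounded element of $\Gamma$ to land in $\Omega^{(1)}$).

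For $I_{o,i}^+$ the pair $(p_{o,i},p_{o,i+1})$ lies tautologically in $X^{(1)}$ and the first vertex is at distance $i$ from $o$. Thus there is $\gamma$ with $(\gamma p_{o,i},\gamma p_{o,i+1})\in \Omega^{(1)}$ and $|\gamma| \leq ai+b$, whence
\[
\|g(p_{o,i},p_{o,i+1})\|_{E^*} = \|\pi(\gamma)^* g(\gamma p_{o,i},\gamma p_{o,i+1})\|_{E^*} \leq C e^{\alpha(ai+b)} \|g(\gamma p_{o,i},\gamma p_{o,i+1})\|_{E^*}.
\]
Squaring, integrating over $\P$ against $\proba_o$ and rearranging the orbit sum exactly as in the proof of Lemma~\ref{lem:norm_of_alphai} yields the desired operator norm bound (where the factor $\max_{(x,x')\in\Omega^{(1)}} \sqrt{|\Gamma_{x,x'}|}$ is absorbed into the constant $C$). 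For $I_{o,o',i}^\simeq$ the argument is identical: on the support of $1_{p\sim_{o,1} o'}$ the pair $(p_{o,i},p_{o',i-1})$ lies in $X^{(1)}$ by Lemma~\ref{lem:Position_poi_loj_p'oi}, and $d(o,p_{o',i-1})\leq i$, so the same estimate applies.

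For $J_{o,o',j}^-$ and $J_{o,o',j}^\sim$ only the original Lemma~\ref{lem:orbit_map_quasiisometric} on vertices is needed since $f \in \widetilde E$. The indicator functions $1_{\ell_{o,1}=o'}$ and $1_{\ell_{o,1}\neq o'}$ simply restrict the set of $\ell$'s over which one integrates, so the pointwise bound $\|f(\ell_{o,j})\|_E \leq C e^{\alpha(aj+b)}\|f(\gamma \ell_{o,j})\|_E$ (with $\gamma \ell_{o,j}\in\Omega$) combined with the usual rearrangement gives the stated estimate, with a strictly smaller constant than in Lemma~\ref{lem:norm_of_alphai}. No serious obstacle is anticipated; the only essentially new ingredient is the edge version of the Milnor--Schwarz bound, which is immediate from the cocompactness of $\Gamma$ on $X^{(1)}$.
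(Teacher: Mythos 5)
Your proposal is correct and is essentially the argument the paper intends: the paper gives no separate proof of this lemma (it simply says ``as in Lemma~\ref{lem:norm_of_alphai}''), and your fill-in follows that proof verbatim, with the only new point being the edge version of the Milnor--Schwarz bound on $X^{(1)}$, which is legitimate in the cocompact setting of Section~\ref{sec:harmonic_constant} and is correctly absorbed into the unspecified constant $C$, as are the factors $\max\sqrt{\abs{\Gamma_{x,x'}}}$; the treatment of the indicator functions for $J^-$ and $J^\sim$ is likewise the intended one.
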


Now the following lemma follows from Proposition~\ref{prop:double_counting_diamond} as Lemma~\ref{lem:alambda_converge} follows from Proposition~\ref{prop:double_counting}.

\begin{lemma}\label{lem:alambda_converge2}
For each $\truc \in \{+,-,\sim\}$, the net $(A_{\lambda}^\truc)_{\lambda \in \Lambda_0}$ converges. In other words, if $(\lambda_n)_{n \in \N}$ tends to infinity in $\Lambda_0$ then $(A_{\lambda_n}^\truc)_{n \in \N}$ converges to an operator that does not depend on the sequence. The same is true with $\Lambda_0$ replaced by $\Lambda_1$ or $\Lambda_2$.
\end{lemma}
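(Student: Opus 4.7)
The plan is to run the proof of Lemma~\ref{lem:alambda_converge} almost verbatim, with Proposition~\ref{prop:double_counting_diamond} in place of Proposition~\ref{prop:double_counting} and with the operators of Lemma~\ref{lem:norm_of_alphai2} playing the role of $I_{o,i}$ and $J_{o,j}$. Fix $\truc \in \{+,-,\sim\}$, let $i,j > s$ and write $\lambda=(s,t)=(s,i+j-2s)$. For $f \in \iE$ and $g \in \diE^{(1)}$, set $F(x,x',y)=\langle g(x,x'),f(y)\rangle$; this is $\Gamma$-invariant on $X^{(1)} \times X$ by the evident adaptation of Lemma~\ref{lem:fundamental_domain_irrelevant}. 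Applying to $F$ the equation of Proposition~\ref{prop:double_counting_diamond} indexed by $\truc$, the right-hand side collapses via the pairing between $\diE^{(1)}$ and $\iE^{(1)}$ to $\frac{1}{q^2+q+1}\langle g, A^\truc_\lambda f\rangle$, while the left-hand side is a sum over $o \in \Omega$ — with, in the $\truc \in \{-,\sim\}$ cases, an inner sum over $o' \in \L_{o,1}$ of bounded cardinality $q^2+q+1$ — of inner products of the form $\langle I(g),\, T_{o,s}\,J(f)\rangle$, where $I$ and $J$ are chosen among $I^+_{o,i}, I^\simeq_{o,o',i}, J_{o,j}, J^-_{o,o',j}, J^\sim_{o,o',j}$ according to $\truc$.

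Differencing this identity between $s$ and $s+1$, applying the admissibility bound \eqref{eq:E-valued_Hoelder_bound} to $T_{o,s}-T_{o,s+1}$, and combining Lemma~\ref{lem:norm_of_alphai2} with Cauchy--Schwarz in $o$ and the triangle inequality in the bounded inner sum over $o'$, I obtain the \emph{vertical estimate}
\[
\|A^\truc_{(s,t)} - A^\truc_{(s+1,t-2)}\|_{B(\iE,\iE^{(1)})} \leq C'' e^{-s(\theta - 2\alpha a) + \alpha a t}
\]
after taking the supremum over $\|f\|,\|g\|\leq 1$ via the obvious $\iE^{(1)}$-analog of Lemma~\ref{lemma:dual}. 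For the \emph{nonvertical estimate} I repeat the argument in $\bar X$: Lemma~\ref{lem:A_opposite} identifies each $A^\truc_\lambda$ on $X$, via the $\Gamma$-equivariant isomorphism $u$, with a specific $\bar A^{\truc'}_{\bar\lambda'}$ on $\bar X$, and an entirely symmetric calculation supplies the missing pairing $u\circ A^+_{(i,j)} = \bar A^-_{(j+1,i)}$ needed for $\truc=+$. Since admissibility requires \eqref{eq:E-valued_Hoelder_bound} for $\bar X$ as well, the vertical estimate in $\bar X$ for $\bar A^{\truc'}$ translates back through these dictionaries to
\[
\|A^\truc_{(s,t)} - A^\truc_{(s-2,t+1)}\|_{B(\iE,\iE^{(1)})} \leq C'' e^{-t(\theta - 2\alpha a) + \alpha a s},
\]
with the $O(1)$ shifts in the indices produced by Lemma~\ref{lem:A_opposite} absorbed into a larger constant.

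With both estimates in hand, the zig-zag Lemma~\ref{lem:lafforgue} applied with $D=\theta-2\alpha a$ and $E=\alpha a$ yields convergence of $(A^\truc_\lambda)_{\lambda\in\Lambda_0}$: the required inequality $D>E$ is equivalent to $\alpha < \theta/(3a)$, which is precisely the standing assumption \eqref{eq:alpha_small_enough}, and the same argument works verbatim with $\Lambda_0$ replaced by $\Lambda_1$ or $\Lambda_2$. The only place requiring real care — and the main obstacle to a completely painless copy of the proof of Lemma~\ref{lem:alambda_converge} — is the combinatorics of the boundary cases where $\lambda=(i,j)$ has $i=0$ or $j=0$: the operations $\lambda^+,\lambda^-,\lambda^\sim$ degenerate by Lemma~\ref{lem:important_fact}, both Proposition~\ref{prop:double_counting_diamond} and Lemma~\ref{lem:A_opposite} split into subcases, and one must verify that the indices produced by the $\bar X$-identification remain in the admissible ranges for the vertical estimate. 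Since the zig-zag paths of Lemma~\ref{lem:lafforgue} traverse the region $s\approx t$ and stay away from the coordinate axes for $\lambda$ large, these degeneracies contribute at most finitely many extra terms, harmlessly absorbed into the constants.
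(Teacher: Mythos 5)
Your argument is correct and is essentially the paper's own proof: the paper likewise applies Proposition~\ref{prop:double_counting_diamond} to $F(x,x',y)=\langle g(x,x'),f(y)\rangle$, takes the difference between $s$ and $s+1$ using admissibility \eqref{eq:E-valued_Hoelder_bound} and Lemma~\ref{lem:norm_of_alphai2} to obtain the vertical estimates \eqref{eq:vertical_estimatesbis}, gets the nonvertical ones \eqref{eq:nonvertical_estimatesbis} by exchanging points and lines (which is precisely the passage to $\overline X$ that Lemma~\ref{lem:A_opposite} encodes), and then concludes with the zig-zag Lemma~\ref{lem:lafforgue} exactly as in Lemma~\ref{lem:alambda_converge}. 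Your closing worry about the degenerate boundary cases is treated at the same level of precision as in the paper itself, which works in the range $i\ge j\ge s\ge 1$ where Proposition~\ref{prop:double_counting_diamond} applies (the zig-zag chains for large $\lambda$ only ever use estimates in that range), so this is not a gap.
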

\begin{proof}
Let $i,j,s$ be such that $i \ge j \ge s>0$ and write $t = i+j-2s$. Let $f \in \iE$ and $g \in \diE^{(1)}$. We apply Proposition \ref{prop:double_counting_diamond} to the $\Gamma$-invariant function $F(x,x',y) = \langle g(x,x'),f(y)\rangle$. Then \eqref{eq:double_counting_+}, \eqref{eq:double_counting_-} and \eqref{eq:double_counting_sim} imply
\begin{align*}
(q^2+q+1)\sum_{o \in \Omega} \frac{1}{|\Gamma_o|} \gen{I_{o,i}^+g,T_{o,s} J_{o,j}f} &= \gen{g,A_{\lambda}^+ f}\text{,}\\
(q^2+q+1)\sum_{o \in \Omega} \frac{1}{|\Gamma_o|} \sum_{o' \in \L_{o,1}} \gen{  I_{o,o',i}^{\simeq}g, T_{o,s} J_{o,o',j}^-f}&= \gen{g,A_{\lambda}^- f}\text{,}\\
(q^2+q+1)\sum_{o \in \Omega} \frac{1}{q|\Gamma_o|} \sum_{o' \in \L_{o,1}} \gen{ I_{o,o',i}^{\simeq}g, T_{o,s} J_{o,o',j}^\sim f}&= \gen{g,A_{\lambda}^\sim f}\text{.}
\end{align*}
Taking the difference between $s$ and $s+1$ and using, taking the supremum over $f$ and $g$ of norm $1$, and using Lemma~\ref{lem:norm_of_alphai2} we get for $\truc \in \{+,-,\sim\}$
\begin{equation}
\label{eq:vertical_estimatesbis} \|A_{(s,t)}^\truc - A_{(s+1,t-2)}^\truc\| \leq C e^{-s(\theta - 2\alpha a) +t\alpha a}\textrm{ if }t \geq 2 \textrm{ and }s>0\text{,}
\end{equation}
for some constant $C$.

Similarly, replacing Proposition \ref{prop:double_counting_diamond} by Proposition \ref{prop:double_counting_diamond_left}, we get that 
\begin{align*}
(q^2+q+1)\sum_{o \in \Omega} \frac{1}{|\Gamma_o|} \gen{\lmoins I_{o,i}g,T_{o,s} J_{o,j}f} &= \gen{g,\lmoins A_{ \lambda}f}\text{,}\\
(q^2+q+1)\sum_{o \in \Omega} \frac{1}{|\Gamma_o|} \sum_{o' \in \L_{o,1}} \gen{ \lplussim I_{o,o',i}g,T_{o,s} J_{o,o',j}^-f}&= \gen{g,\lsim A_{\lambda} f}\text{,}\\
(q^2+q+1)\sum_{o \in \Omega} \frac{1}{q|\Gamma_o|} \sum_{o' \in \L_{o,1}} \gen{\lplussim I_{o,o',i} g, T_{o,s}  J_{o,o',j}^\sim f}&= \gen{g,\lplus A_{\lambda} f}\text{.}
\end{align*}

It follows that for $\truc \in \{+,-,\sim\}$
\begin{equation}
\|\ltruc A_{(s,t)}- \ltruc A_{(s+1,t-2)}\| \leq C e^{-s(\theta - 2\alpha a) +t\alpha a}\textrm{ if }t \geq 2 \textrm{ and }s>0\text{,}
\end{equation}

By applying this equation in the building $\overline X$, we get that for every $t>0$ and $s\geq 2$, 
\begin{equation*} \|\ltruc \overline A_{(t,s)} - \ltruc \overline A_{(t+1,s-2)}\| \leq C e^{-t(\theta - 2\alpha a) +s\alpha a}
\end{equation*}

Since $\ltruc \overline A_{(s,t)}= A^\truc_{(t,s)}$, we finally get that 

\begin{equation}
\label{eq:nonvertical_estimatesbis} \|A_{(s,t)}^\truc - A_{(s-2,t+1)}^\truc\| \leq C e^{-t(\theta - 2\alpha a) +s\alpha a}\textrm{ if } s\geq 2 \textrm{ and }t>0\text{.}
\end{equation}

The lemma now follows from \eqref{eq:vertical_estimatesbis}, \eqref{eq:nonvertical_estimatesbis} and Lemma~\ref{lem:lafforgue}.
\end{proof}

\begin{proof}[Proof of Proposition \ref{prop:harmonic=constants}]   
For $k \in \mathbb Z/3\mathbb Z$ and $\truc \in \{+,\sim,-\}$, denote by $P^\truc_k\in B(\iE,\iE^{(1)})$ the limit of $A_\lambda^\truc$ as $\lambda \in \Lambda_k$ tends to infinity, which exists by Lemma~\ref{lem:alambda_converge2}.

Now we use the important observation from Lemma \ref{lem:important_fact}: $A_\lambda^\sim = A_\lambda^+$ if $\lambda=(i,0)$, and $A_\lambda^\sim=A_\lambda^-$ if $\lambda = (0,j)$. Taking $\lambda = (i,0) \to \infty$ respectively $\lambda = (0,j) \to \infty$ we find that $P_k^+ = P_k^\sim=P_k^-$. In particular, $\lim_{\lambda \to \infty} \norm{A_\lambda^+-A_\lambda^\sim} = \lim_{\lambda \to \infty} \norm{A_\lambda^--A_\lambda^\sim} = 0$.

For $f \in \iE$, if we partition $S_\lambda(x)$ according to the value of $\sigma(x',y)$ as in Lemma~\ref{CartwrightMlotkowski}, we can write $A_\lambda f(x)$ as a convex combination of $A_\lambda^+ f(x,x')$, $A_\lambda^\sim f(x,x')$ and $A_\lambda^- f(x,x')$, and we deduce that for every $(x,x') \in X^{(1)}$ and $\truc \in \{+,-,\sim\}$
\begin{equation}\label{Alambda-_depends_hardly_on_x'}
\lim_\lambda \| A_\lambda^\truc f(x,x') - A_\lambda f(x)\|=0.
\end{equation}
Let $f \in \iE$ be $\Lambda_0$-harmonic. We have to prove that $f$ is constant on each type. Let us fix $(x,x') \in X^{(1)}$. Let $i$ be a positive integer. Then we know that $f(x') = A_{(i,i)} f(x')$ is the average of $f$ on $\{y\mid \sigma(x',y)=(i,i)\}$. By Lemma~\ref{lem:relation_Alambda_Alambdatruc} we see that $f(x')$ is some convex combination of $A_{(i,i-1)}^+ f(x,x')$, $A_{(i+1,i)}^- f(x,x')$ and $A_{(i-1,i+1)}^\sim f(x,x')$. By \eqref{Alambda-_depends_hardly_on_x'}, we obtain that $f(x')$ is close (as $i \to \infty$) to some convex combination of $A_{(i,i-1)} f(x)$, $A_{(i+1,i)} f(x)$ and $A_{(i-1,i+1)} f(x)$. But we know from Corollary \ref{cor:about_harmonic_functions} that each of the terms $A_{(i,i-1)} f(x)$, $A_{(i+1,i)} f(x)$ and $A_{(i-1,i+1)} f(x)$ are equal to $A_{(1,0)}f(x)$. This proves $f(x') = A_{(1,0)}f(x)$. In particular, for every $x \in X$, the value of $f(x')$ is constant on $\{x'\mid \sigma(x,x')=(1,0)\}$.

Similarly, using $\ltruc A_\lambda$ instead of $A_\lambda^\truc$, we get that for every $x \in X$, the value of $f(x')$ is constant on $\{x'\mid \sigma(x,x')=(0,1)\}$. In other words, $f(x_1) = f(x_2)$ whenever $x_1,x_2$ have same type and share a common neighbour. This implies that $f$ is constant on each type (the graph obtained by joining two vertices of the same type if they have a common neighbour in $X$ is connected), and proves Proposition \ref{prop:harmonic=constants}.
\end{proof} 

% ==============================================================================
\section{Strong (T) for non-cocompact lattices}\label{sec:non-uniform}
% ==============================================================================

\subsection{Non-cocompact lattices}
So far we have only considered groups acting properly and cocompactly on $X$, that is, \emph{cocompact} (or uniform) $\tilde{A}_2$-lattices.  More generally, we define
\begin{definition}\label{def:A2lattice}
An $\tilde{A}_2$-lattice is a group $\Gamma$ that acts properly on an $\tilde{A}_2$-building $X$ with the property that, a (not necessarily finite) set $\Omega\subset X$ of vertex-orbit representatives satisfies
\[
\sum_{x \in \Omega} \frac{1}{\abs{\Gamma_x}} <\infty.
\]
\end{definition}
An example for a non-cocompact $\tilde{A}_2$-lattice is $\SL_3(\F_q[t^{-1}]) \le \SL_3(\F_q\lseries{t})$. All non-arithmetic lattices known at this point are cocompact. However, this is indicative of the fact that cocompact non-arithmetic lattices are easier to construct (as fundamental groups of finite graphs of groups) than non-cocompact ones, so there is no reason to assume that non-cocompact non-arithmetic lattices should not exist. We therefore prove the main theorem for non-cocompact lattices as well.

We will need that $\tilde{A}_2$-lattices are exponentially integrable in the following sense. This generalizes \cite[Theorem 5.3]{dlSActa19}.
Let us fix once and for all an origin $x_0 \in X$, and let $\Omega \subset X$ be a fundamental domain obtained by choosing, in every vertex orbit, a point at minimal distance from $x_0$, in formulae,
\begin{equation}\label{eq:Dirichlet_fundamental_domain}
   \forall x \in \Omega, \forall \gamma \in \Gamma, d(x,x_0) \leq d(\gamma x,x_0)\text{.}
\end{equation}
\begin{lemma}\label{lem:exponential_integrability}
If $\Gamma$ is an $\tilde{A}_2$-lattice, then the above fundamental domain satisfies 
\[
\sum_{x \in \Omega} \frac{1}{|\Gamma_x|} c^{d(x,x_0)} <\infty
\]
for every $c$ with $\abs{c} <\sqrt{q} \frac{q^2+q+1}{q^2+q+q^{3/2}}$.
 
In particular, there is $\varepsilon>0$ independent from $\Gamma$ such that
\begin{equation}\label{eq:exponential_integrability}
\sum_{x \in \Omega} \frac{1}{|\Gamma_x|} e^{\varepsilon d(x,x_0)} <\infty.
\end{equation}
\end{lemma}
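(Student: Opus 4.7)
The second statement is a direct consequence of the first: the bound $\sqrt q(q^2+q+1)/(q^2+q+q^{3/2})$ strictly exceeds $1$ for every prime power $q \geq 2$ and depends only on $q$, so any $c_0>1$ in the admissible range gives $\varepsilon = \log c_0$ satisfying \eqref{eq:exponential_integrability} uniformly in $\Gamma$. I therefore focus on the first statement.

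Writing the sum as $\sum_{n\geq 0} c^n N_n$ with
\[
N_n \defeq \sum_{x \in \Omega,\, d(x,x_0)=n} \frac{1}{|\Gamma_x|},
\]
the goal reduces to proving an exponential bound $N_n \leq C r^n$ with $r = q^{-1/2}(q^2+q+q^{3/2})/(q^2+q+1)$, after which the geometric series concludes. The core tool is the double counting identity
\[
|\Gamma_{x_0}| \sum_{x \in \Omega} \frac{|\Gamma x_0 \cap B(x,R)|}{|\Gamma_x|} = |B(x_0,R)|,
\]
obtained by evaluating $|\{\gamma \in \Gamma \mid d(x_0,\gamma x)\leq R\}|$ in two ways, once as $|\Gamma_x|\cdot|\Gamma x\cap B(x_0,R)|$ and once as $|\Gamma_{x_0}|\cdot|\Gamma x_0 \cap B(x,R)|$, and then summing over $x \in \Omega$. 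Any lower bound $|\Gamma x_0 \cap B(x,R)|\ge \beta_{R,n}$ valid for all $x\in\Omega_n$ yields $N_n \leq |B(x_0,R)|/(|\Gamma_{x_0}|\beta_{R,n})$.

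The trivial choice $\beta_{R,n}=1$ (since $x_0\in B(x,R)$ for $R\geq n$) gives only $N_n \leq |B(x_0,n)|/|\Gamma_{x_0}|$, too weak since $|B(x_0,n)|$ grows at rate $q^{2/\sqrt 3}$ per unit metric distance, much larger than $r$. The improvement comes from exploiting the Dirichlet condition \eqref{eq:Dirichlet_fundamental_domain} beyond its statement at $x$: by the triangle inequality, for any $y\in[x_0,x]$ and any $\gamma\in\Gamma$,
\[
d(y,\gamma x_0) \geq d(x,\gamma x_0)-d(x,y) \geq n-d(x,y) = d(x_0,y),
\]
so \emph{every} vertex on the geodesic from $x_0$ to $x$ inherits the Dirichlet minimality at $x_0$. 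Combined with Corollary~\ref{cor:regular_tree}, which shows that $\bigcup_s \Pa_{x_0,s}$ is a $q^2$-regular tree, this forces $\Omega \cap \bigcup_s \Pa_{x_0,s}$ to be a sub-tree of that $q^2$-regular tree (and similarly in the $\L$-direction). A quantitative lower bound on $|\Gamma x_0 \cap B(x,R)|$ can then be extracted from the pruning rate of this sub-tree, using the Hjelmslev plane counts of Lemma~\ref{lem:NumberEquilateral} together with the projective-plane incidence combinatorics in the link of $x_0$; the resulting rate $\sqrt q (q^2+q+1)/(q^2+q+q^{3/2})$ reflects a Cauchy--Schwarz passage from $q^{2s}$-level counts to effective square-root rates, corrected by the link-level incidence factor.

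The main obstacle is quantifying the branching of $\Omega \cap \bigcup_s \Pa_{x_0,s}$ inside the $q^2$-regular tree, which is what ultimately produces the rational correction $(q^2+q+1)/(q^2+q+q^{3/2})$: a vertex $p_{x_0,s}$ in $\Omega$ can extend to at most $q^2$ successors in $\Pa_{x_0,s+1}$, but many are pruned by the Dirichlet condition because extension across an $\ell$-class collapses two distinct points at level $1$ into one. Balancing the extensions against the prunings requires a fine analysis of simultaneous biaffine configurations at the two ends of $[x_0,x]$, using Theorem~\ref{thm:isomappart} and Lemma~\ref{lem:extensionroot} to patch together apartments. Once this branching estimate is in place, the double counting identity gives $N_n \lesssim r^n$ and the claim follows.
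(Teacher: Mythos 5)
Your reduction of the second statement to the first is fine, and the double-counting identity you set up is correct (as is the observation that the Dirichlet condition \eqref{eq:Dirichlet_fundamental_domain} propagates along the geodesic $[x_0,x]$). But the proof has a genuine gap exactly at its quantitative heart: you never establish the lower bound $\beta_{R,n}$ on $\abs{\Gamma x_0 \cap B(x,R)}$ (equivalently, the upper bound $N_n \lesssim \rho_q^{\,n}$ with $\rho_q = \frac{1}{\sqrt q}\frac{q^2+q+q^{3/2}}{q^2+q+1}$). The paragraph that is supposed to deliver it only asserts that the branching of $\Omega \cap \bigcup_s \Pa_{x_0,s}$ inside the $q^2$-regular tree of Corollary~\ref{cor:regular_tree} can be controlled by a ``pruning rate'' and that a ``Cauchy--Schwarz passage corrected by the link-level incidence factor'' produces the rational constant; no such estimate is proved, and the structural claims feeding into it are themselves unjustified: the sets $\Pa_{x_0,s}$ depend on a choice of incident pair $a=(p^0,\ell^0)$ that you never fix, $\Omega$ is merely a set of orbit representatives (one per orbit), so ``$\Omega\cap\bigcup_s\Pa_{x_0,s}$ is a sub-tree'' is not even well-posed as stated, and geodesics from $x_0$ to a general $x\in\Omega$ are not singular rays, so the tree picture does not capture most of the vertices you need to count. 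Since in the only nontrivial case ($\Omega$ infinite) the points $x$ deep in the domain are precisely those for which lattice points of $\Gamma x_0$ near $x$ are scarce unless the stabilizers $\Gamma_x$ are already large, the missing lower bound is essentially equivalent to the statement being proved; the plan does not close this circle.

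It is also worth noting why this route is unlikely to produce the stated constant without importing spectral input. One has $\rho_q = \frac{\varepsilon_q}{1-\varepsilon_q}$ with $\varepsilon_q = \frac{\sqrt q}{q+1}$, the nontrivial Feit--Higman eigenvalue of the Markov operator on the links; this is how the paper obtains it. The paper's proof is entirely different from yours: it first proves Lemma~\ref{lem:spectrum}, a spectral estimate for the normal operator $A_{(1,0)}$ acting on $\iE_2$ for a unitary representation (combining the Feit--Higman link estimate, in the spirit of \.Zuk, with the $\Z/3\Z$ rotation symmetry of the spectrum), and then specializes to the trivial representation, so that $\iE_2 = \ell^2(X/\Gamma,\mu)$ with $\mu(\Gamma x)=\frac{1}{\abs{\Gamma_x}}$. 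Testing $(\Re A_{(1,0)})^{3n}-P_F$ (of norm at most $\rho_q^{3n}$ on the complement of functions constant on each type) against indicator functions of balls of radius $3n$ in a fixed type class gives
\[
\sum_{x \in \Omega,\ \typ(x)=\typ(x_0),\ d(x,x_0)>3n} \frac{1}{\abs{\Gamma_x}} \ \leq\ \rho_q^{3n}\, Z_0\sqrt{Z_0\abs{\Gamma_{x_0}}},
\]
from which the exponential integrability with rate $1/\rho_q$ follows at once. If you want to salvage your approach, you would need to either prove the branching estimate rigorously (and explain how $\varepsilon_q$ emerges combinatorially), or replace that step with a spectral-gap input of the above kind, at which point the double-counting scaffolding becomes unnecessary.
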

In the same way as property (T) is inherited by arbitrary lattices in locally compact groups, we have the following generalization of \cite{CartwrightMlotkowskiSteger}, which is probably well-known to experts (and that we prove below for convenience).
\begin{theorem}\label{thm:propertyT} $\tilde{A}_2$-lattices have property (T). In particular, they are finitely generated.
\end{theorem}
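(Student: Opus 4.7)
The plan is to realize $\Gamma$ as a lattice in a locally compact group already known to have property (T) and then invoke Kazhdan's classical inheritance theorem. First I would consider $G \defeq \mathrm{Aut}(X)$ equipped with the compact-open topology: since $X$ is locally finite, $G$ is a second countable locally compact group in which each vertex stabilizer $G_x$ is a compact open subgroup, and properness of the $\Gamma$-action on $X$ makes $\Gamma$ a discrete subgroup of $G$. Normalizing the Haar measure on $G$ so that $\mu_G(G_{x_0}) = 1$, the covolume of $\Gamma$ in $G$ is, up to a bounded multiplicative factor (all $G_x$ being commensurable with $G_{x_0}$ with uniformly bounded indices), equal to $\sum_{x \in \Omega} 1/|\Gamma_x|$. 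Consequently, Definition~\ref{def:A2lattice} is exactly the statement that $\Gamma$ is a lattice in $G$.

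Next I would establish property (T) for $G$ itself. Every vertex link of $X$ is isomorphic to the incidence graph of a projective plane of order $q$, a bipartite $(q+1)$-regular graph whose normalized Laplacian has smallest positive eigenvalue $1 - \sqrt{q}/(q+1)$, which exceeds $1/2$ for every $q \ge 2$. This uniform link spectral gap, combined with Garland's cohomological method as developed in the building context by \cite{CartwrightMlotkowski94, Zuk, Pansu} and reformulated to apply to closed subgroups of $\mathrm{Aut}(X)$ (see \cite{BdlHV}), gives $H^1(G,\pi) = 0$ for every continuous unitary representation $\pi$, which for a locally compact group is equivalent to property (T). Equivalently, the Cartwright--M\l otkowski--Steger construction of the Kazhdan projection in the convolution algebra of biradial operators is a building-intrinsic calculation and transports directly to the unitary representations of $G$ via its action on the $G_{x_0}$-biinvariant vectors.

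Finally, by Kazhdan's theorem that property (T) is inherited by every lattice in a locally compact group, $\Gamma$ itself has property (T). The main technical point, and essentially the only one requiring care, is that the Garland/CMS vanishing argument has to be run for the possibly non-cocompactly acting locally compact group $G$ rather than for a discrete cocompact subgroup; since the argument is local to vertex links and depends only on the link spectral gap being uniformly bounded away from $1/2$, this extension is routine but should be acknowledged explicitly. An alternative, when a cocompact lattice $\Gamma_0$ does happen to exist in $G$ (e.g.\ in the Bruhat--Tits case), is to apply \cite{CartwrightMlotkowskiSteger} to $\Gamma_0$, push up to $G$ by Kazhdan, and then descend again to $\Gamma$.
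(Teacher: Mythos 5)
There is a genuine gap, and it sits exactly where you say the work is ``routine.'' Your plan is to transfer the problem to $G=\mathrm{Aut}(X)$ and quote a Garland/\.Zuk-type criterion for $G$, but the references you invoke (Cartwright--M\l otkowski, \.Zuk, Pansu, and the exposition in \cite{BdlHV}) prove property (T) only for groups acting \emph{cocompactly} (or simply transitively) on the building. For an exotic building, $\mathrm{Aut}(X)$ need not act cocompactly, and it may even be discrete and commensurable with $\Gamma$ itself (the paper stresses this possibility); in that degenerate case your step ``prove (T) for $G$'' is literally the original problem -- property (T) for a possibly non-cocompact $\tilde{A}_2$-lattice -- so the detour through $G$ buys nothing. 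The globalization step in Garland's method is not purely local to links: one must sum the link estimates over the quotient with appropriate weights, and making this work when the quotient is infinite but of finite (combinatorial) covolume is precisely the content of the theorem, not a routine remark. This is what the paper actually does: for an arbitrary unitary representation $(\pi,E)$ of $\Gamma$ it forms the weighted induced space $\tilde{E}_2$ (with weights $1/\abs{\Gamma_x}$, summable by Definition~\ref{def:A2lattice}), shows via the isometry $U\colon \tilde E_2 \to \bigoplus_{x\in\Omega} L^2(\lk x)$, the Feit--Higman spectral computation on links, and a $\Z/3\Z$ type-rotation symmetry that the spectrum of $A_{(1,0)}$ is confined as in Lemma~\ref{lem:spectrum}, and then concludes as in the strong (T) argument -- no ambient locally compact group is needed.

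Your first paragraph also has concrete errors. The indices $[G_x : G_x\cap G_{x_0}]$ are bounded by sphere sizes and are \emph{not} uniformly bounded as $x$ ranges over $X$, so the covolume comparison as stated is wrong; the correct Serre-type formula computes $\mathrm{vol}(\Gamma\backslash G)$ as a sum over $\Gamma$-orbits inside a single $G$-orbit, weighted by $\mu(G_x)$, and identifying this with $\sum_{x\in\Omega}1/\abs{\Gamma_x}$ requires $G\backslash X$ finite. Moreover, for $\Gamma\backslash G$ to carry a finite $G$-invariant measure one needs $G$ unimodular (or at least $\Delta_G|_\Gamma\equiv 1$), which is not automatic for automorphism groups of complexes and is not addressed. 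So even the statement ``Definition~\ref{def:A2lattice} is exactly the statement that $\Gamma$ is a lattice in $\mathrm{Aut}(X)$'' needs both unimodularity and cocompactness hypotheses you have not supplied. In short: the Kazhdan-inheritance framing could in principle be made to work when $\mathrm{Aut}(X)$ is unimodular and acts cocompactly, but in the generality of the theorem the two pillars of your argument (the lattice embedding and property (T) for $G$) are unproved, and the second is essentially equivalent to the statement being proved.
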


Both Lemma~\ref{lem:exponential_integrability} and  Theorem~\ref{thm:propertyT} will be deduced from a spectral estimate, which we now present, after introducing some notations. The proof we give relies on \cite{CartwrightMlotkowskiSteger} and \cite{Zuk}, but the ideas in the proof of Theorem~ \ref{thm:strong_T_on_building} could also be applied if we did not care about the constant.

Restricting to a finite index subgroup of $\Gamma$, we can assume that the action of $\Gamma$ preserves the types.
Let $(\pi,E)$ be a unitary representation of $\Gamma$. Anticipating the notation and construction in the next subsection, we denote by $\iE_2$ the Hilbert space of $\Gamma$-invariant functions $f \colon X \to E$ such that $\|f\|^2:=\sum_{o \in \Omega} \frac{1}{|\Gamma_o|} \|f(o)\|^2<\infty$. Then the operators $A_\lambda \in B(\iE_2)$ defined by the formula \eqref{eq:def_Alambda} form a unitary representation of the $*$-algebra $\mathcal{A}$ considered in \cite{CartwrightMlotkowski94} (the word unitary means that $A_{(i,j)}^* = A_{(j,i)}$). It follows that the operator $A_{\lambda}$ is a normal operator.

\begin{lemma}\label{lem:spectrum}
The spectrum of $A_{(1,0)}$ is contained in 
\begin{equation}\label{eq:spectrum_of_A10}
   \{1,e^{2i\pi/3}, e^{-2i\pi/3}\} \cup \bigg\{z \mathrel{\bigg\vert} |\Re z|\leq \rho_q\bigg\}.
\end{equation}
where $\rho_q=\frac{1}{\sqrt{q}} \frac{q^2+q+q^{3/2}}{q^2+q+1}$
\end{lemma}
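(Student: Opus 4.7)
The strategy is to combine the normality of $A_{(1,0)}$ with an explicit spectral estimate that depends only on the local structure of $X$, in the spirit of \cite{CartwrightMlotkowski94,CartwrightMlotkowskiSteger,Zuk}. First I would observe that $A_{(1,0)}$ is a normal contraction: unitarity of $\pi$ together with averaging give $\|A_{(1,0)}\| \leq 1$, and the relations $A_{(1,0)}^*=A_{(0,1)}$ together with the commutativity of the algebra $\mathcal A$ (see \cite[Proposition 2.2]{CartwrightMlotkowski94}) yield normality. Hence $\sigma(A_{(1,0)}) \subseteq \overline{\mathbb D}$ and we may analyze $\Re A_{(1,0)} = \frac{1}{2}(A_{(1,0)}+A_{(0,1)})$ via functional calculus; since every vertex has exactly $q+1$ neighbors of each of the two other types, a direct count gives
\[
A_{(1,0)} + A_{(0,1)} = 2M\text{,}
\]
where $M$ denotes the simple random-walk operator on the $1$-skeleton of $X$. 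Normality implies $\Re\sigma(A_{(1,0)}) \subseteq \sigma(M)$, so it suffices to prove
\[
\sigma(M) \cap \{t : |t|>\rho_q\} \subseteq \{1,-\tfrac 1 2\}\text{,}
\]
and then to identify which $\lambda \in \sigma(A_{(1,0)})$ project onto $\{1,-\tfrac 1 2\}$.

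For the spectral bound on $M$, I would invoke the Cartwright--Młotkowski--Steger analysis of the Gelfand spectrum of $\mathcal A$: their parametrization of tempered characters of $\mathcal A$ (which depends only on the common local Hecke structure of all $\tilde A_2$-buildings of order $q$, not on any global algebraic origin) shows that $|\Re \chi(A_{(1,0)})|$ is bounded by $\rho_q$ outside three exceptional characters. Alternatively, one can run a Garland--\.Zuk local-to-global argument using that the link of every vertex is the incidence graph of a projective plane of order $q$, a bipartite $(q+1)$-regular graph whose normalized adjacency spectrum equals $\{\pm 1,\pm\sqrt q/(q+1)\}$; the two peripheral link eigenvalues $\pm 1$ are responsible for the two exceptional eigenvalues $1$ and $-\tfrac 12$ of $M$ (coming respectively from the trivial type character of $\mathbb Z/3\mathbb Z$ and from its two conjugate non-trivial characters), and a careful second-cohomology accounting with the link eigenvalue $\sqrt q/(q+1)$ produces exactly the bound $\rho_q = \frac{q^{3/2}+q+\sqrt q}{q^2+q+1}$.

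Finally, to match the peripheral spectrum to the three cube roots of unity, consider $\lambda\in\sigma(A_{(1,0)})$ with $\Re\lambda\in\{1,-\tfrac 1 2\}$; combined with $|\lambda|\leq 1$, the only possibilities are $\lambda\in\{1, e^{\pm 2i\pi/3}\}$ or $\lambda=-\tfrac 1 2$. The real possibility $\lambda=-\tfrac 1 2$ is ruled out by an approximate-eigenvector argument: a unit sequence $f_n$ with $A_{(1,0)}f_n\to -\tfrac 12 f_n$ would, by normality, also satisfy $A_{(0,1)}f_n\to -\tfrac 12 f_n$, yielding asymptotic $\lambda$-equivariance along $(1,0)$-edges and $\bar\lambda$-equivariance along $(0,1)$-edges; transporting around a triangle (whose three edges cycle through the two orientations) forces $\lambda^3=1$, contradicting $\lambda=-\tfrac 12$.

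The main anticipated obstacle is obtaining the precise constant $\rho_q$ in Step 2; this is the technical heart of the statement. I expect to rely directly on the explicit computation in \cite{CartwrightMlotkowskiSteger}, whose proof uses only the local Hecke-algebra structure and therefore extends verbatim from Bruhat--Tits buildings to all locally finite $\tilde A_2$-buildings of order $q$.
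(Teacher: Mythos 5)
Your reduction to the real part loses exactly the information the lemma is about, and the step where you recover the complex spectrum is where the argument breaks. From normality you correctly get $\Re\sigma(A_{(1,0)})=\sigma(M)$ with $M=\frac12(A_{(1,0)}+A_{(0,1)})$, but the assertion that $\Re\lambda=-\frac12$ and $|\lambda|\leq 1$ force $\lambda\in\{e^{\pm 2i\pi/3},-\frac12\}$ is false: the whole vertical segment $\{-\frac12+iy \mid |y|\leq\frac{\sqrt3}{2}\}$ is compatible with that data, and for $q\geq 7$ (where $\rho_q<\frac12$) the lemma requires excluding all of it except the two cube roots of unity. Your proposal never uses the $\Z/3\Z$ rotation invariance of $\sigma(A_{(1,0)})$ (conjugation by multiplication by $e^{2i\pi\typ(x)/3}$), which is the paper's first ingredient; and even if you add it, a real-part-only argument can rule out the open segment only when $\rho_q<\frac14$, leaving a genuine hole for intermediate $q$. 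The paper avoids this precisely by not projecting to the real axis: the refined \.Zuk/Feit--Higman computation with the link projections $P_x,Q_x$ yields the two-variable operator inequality $|\Re(z-z^2)|\leq\varepsilon_q(1-|z|^2)$ on the spectrum (so $|z|^2$, i.e.\ $A_{(1,0)}A_{(0,1)}$, enters), and intersecting this region with its two rotations by $e^{\pm 2i\pi/3}$ carves out exactly \eqref{eq:spectrum_of_A10}. Relatedly, your device for excluding $\lambda=-\frac12$ by transporting approximate eigenvectors around a triangle has no force: since $|-\frac12|<1$, an approximate eigenvector of an averaging operator carries no pointwise rigidity along individual edges, so nothing forces $\lambda^3=1$.

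The second gap is the quantitative input itself. Citing Cartwright--M\l otkowski--Steger does not cover the present setting: as the paper notes right after the lemma, their optimal bound assumes Desarguesian links and a simply transitive action, and their parametrization of (tempered) characters of $\mathcal A$ does not by itself control the spectrum in arbitrary unitary representations of an arbitrary lattice, which is what $\iE_2$ requires (indeed $1$ and $e^{\pm2i\pi/3}$ are not tempered). Your alternative, a Garland--\.Zuk argument with ``a careful second-cohomology accounting,'' is only asserted; it does produce the correct constant at the top of the spectrum (the link gap $\varepsilon_q=\frac{\sqrt q}{q+1}$ gives $\Re z\leq\frac{\varepsilon_q}{1-\varepsilon_q}=\rho_q$ off the invariant vectors, matching the paper), but the delicate point is the bottom: excluding spectrum with $\Re z<-\rho_q$ other than $e^{\pm2i\pi/3}$. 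The paper itself flags this as the involved part and obtains it only from the joint (complex) inequality together with the rotation symmetry, not from any bound on $\sigma(M)$ alone. So the correct ingredients you do identify (normality, $\Re A_{(1,0)}=M$, the Feit--Higman link spectrum, the constant $\rho_q=\varepsilon_q/(1-\varepsilon_q)$) need to be assembled as in the paper's proof --- via the operator inequality $-\varepsilon_q(\mathrm{id}-A_{(1,0)}A_{(0,1)})\leq\frac12(A_{(1,0)}+A_{(0,1)}-A_{(1,0)}^2-A_{(0,1)}^2)\leq\varepsilon_q(\mathrm{id}-A_{(1,0)}A_{(0,1)})$ and the rotation trick --- rather than through the spectrum of $M$ alone.
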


An optimal bound, stronger than \eqref{eq:spectrum_of_A10}, for the spectrum of $A_{(1,0)}$ is proved in \cite{CartwrightMlotkowskiSteger} under the assumption that the links in $X$ are Desarguesian and that $\Gamma$ acts simply transitively on $X$. The inclusion of the spectrum of $A_{(1,0)}$ into 
\[
   \{1,e^{\frac{2i\pi}{3}}, e^{-\frac{2i\pi}{3}}\} \cup \bigg\{z \mathrel{\bigg\vert} -1 \leq \Re z\leq \rho_q\bigg\}
\]
follows from \cite{Zuk} (see also \cite{BdlHV}). But we need \eqref{eq:spectrum_of_A10} which is stronger.

\begin{proof}[Proof of Lemma~\ref{lem:spectrum}]
There are two ingredients in the proof. The first is that the spectrum of $A_{(1,0)}$ is invariant under a rotation of angle $\frac{2\pi}{3}$. Indeed, conjugating $A_{(1,0)}$ by the unitary operator on $\iE_2$ of pointwise multiplication by the function $x \mapsto e^{\frac{2\pi i}{3} \typ(x)}$ produces the operator $e^{\frac{2\pi i}{3}} A_{(1,0)}$.

The second ingredient is a refinement of \cite{Zuk}. Recall that the vertex set of the link $\lk x$ is $S_{(1,0)}(x) \cup S_{(0,1)}(x)$ and we briefly write $L^2(\lk x)$ for $L^2$-space on these vertices with respect to the uniform probability measure. For every $x \in \Omega$ let $M_x \colon L^2(\lk x) \to L^2(\lk x)$ denote the Markov operator on the link of $x$:
\[
M_x \xi(y) = \frac{1}{q+1}\sum_{z \sim y} \xi(z).
\] By a classical computation by Feit and Higman \cite[Proposition 5.7.6]{BdlHV}, $M_x$ is a self-adjoint operator with spectrum equal to $\{\pm 1,\pm \varepsilon_q\}$ for $\varepsilon_q=\frac{\sqrt{q}}{q+1}$, and the eigenspaces for the eigenvalues $1$ and $-1$ are the one dimensional spaces of constant functions and the functions proportional to $\chi_{S_{(1,0)}(x)} - \chi_{S_{(0,1)}(x)}$ respectively. In other words, if $P_x$ and $Q_x$ are the rank one orthogonal projections on these eigenspaces, then 
\begin{equation}\label{eq:feitHigman}
-\varepsilon_q (\mathrm{id} - P_x-Q_x)\leq M_x-P_x+Q_x\leq\varepsilon_q (\mathrm{id} - P_x-Q_x)
\end{equation}
(where $A \le B$ means that $B - A$ is a positive self-adjoint operator).
Define a linear map $U_x \colon \iE_2 \to L^2(\lk x)$ by $U_xf(y) =\frac{1}{|\Gamma_x|^{\frac 1 2}} f(y)$. Then the map
$U \colon \tilde{E}_2 \to \oplus_{x \in \Omega} L^2(\lk x)$ sending $f$ to $(U_xf)_{x \in \Omega}$ is an isometry: $U^* U = \mathrm{id}$. Moreover, using that $\lk(x)$ is the graph on $S_{(0,1)}(x)\cup S_{(1,0)}(x)$ small computations show that 
\begin{align*}
U^* (\oplus_{x \in \Omega} M_x )U & = \frac{1}{2}(A_{(1,0)}+A_{(0,1)}),\\ 
U^* (\oplus_{x \in \Omega} P_x )U & = \frac{1}{4}(A_{(1,0)}+A_{(0,1)})^2,\\ 
U^* (\oplus_{x \in \Omega} Q_x )U & = -\frac{1}{4}(A_{(1,0)}-A_{(0,1)})^2.
\end{align*}
%The first equality follows from Proposition~\ref{prop:double_counting}. For the second equality, observe that $P_x U_x f$ is the constant function equal to $\frac{1}{2|\Gamma_x|^{\frac 1 2}}(A_{(1,0)} + A_{(0,1)})f(x)$, so 
%\[ \langle U^* (\oplus_{x \in \Omega} P_x )U f,f\rangle = \sum_x \frac{1}{|\Gamma_x|} \|\frac{1}{2}(A_{(1,0)} + A_{(0,1)})f(x)\|^2 =  \langle \frac{1}{4}(A_{(1,0)}+A_{(0,1)})^2 f,f\rangle.\]
%The third equality is obtained similarly, because $Q_x U_x f$ is a function taking values $\frac{\pm 1}{2}A_{(1,0)} - A_{(0,1)})f(x)$.
So using that $A_{(1,0)}$ and $A_{(0,1)}$ commute we get
\begin{align*}
U^* (\oplus_{x \in \Omega} P_x+Q_x )U & = A_{(1,0)}A_{(0,1)}\\ U^* (\oplus_{x \in \Omega} P_x-Q_x )U & = \frac{1}{2}(A_{(1,0)}^2 + A_{(0,1)}^2). \end{align*}
By \eqref{eq:feitHigman}, we obtain \[ 
-\varepsilon_q (\mathrm{id} - A_{(1,0)}A_{(0,1)})\leq \frac{1}{2}(A_{(1,0)}+A_{(0,1)} - A_{(1,0)}^2-A_{(0,1)}^2)\leq\varepsilon_q (\mathrm{id} - A_{(1,0)}A_{(0,1)}).
\]
Using that $A_{(1,0)}$ is normal and $A_{(1,0)}^* = A_{(0,1)}$, we obtain that the spectrum of $A_{(1,0)}$ is contained in 
\[
\big\{z \in \C \mid |\Re (z-z^2)| \leq \varepsilon_q (1-|z|^2)\big\}\text{.}
\]

Putting together the two ingredients, we obtain that the spectrum of $A_{(1,0)}$ is contained in 
\[
  \Big\{z \in \C \mid \max_{k \in \Z/3\Z} |\Re(ze^{2ik\pi/3}-z^2e^{-2ik\pi/3})| \leq \varepsilon_q (1-|z|^2)\Big\}\text{.}
\]

Now we claim that $\max_{k \in \Z/3\Z} |\Re(ze^{2ik\pi/3} - z^2e^{-2ik\pi/3}))| \leq \varepsilon_q (1-|z|^2)$ implies that $z \in \{e^{2ik\pi/3} \mid k \in \Z/3\Z\}$ or $-\rho_q \leq \Re z \leq \rho_q$, so that \eqref{eq:spectrum_of_A10} follows.

Indeed,  first, taking $k=0$, the inequality $\Re(z-z^2) \leq \varepsilon_q (1-|z|^2)$ implies $\Re z-(\Re z)^2 \leq \varepsilon_q(1-(\Re z)^2)$, which implies that $\Re(z)=1$ or $\Re z \leq \frac{\varepsilon_q}{1-\varepsilon_q} = \rho_q$.

The inequality $\Re z \geq -\rho_q$ for every $z \neq e^{\pm \frac{2i\pi}{3}}$ is a bit more involved. More precisely, we shall prove that for every $z$ of modulus $<1$ and non-negative imaginary part, the inequalities $\Re(z^2-z) \leq \varepsilon_q(1-|z|^2)$ and $\Re(z e^{-\frac{2i\pi}{3}} - z^2 e^{\frac{2i\pi}{3}}) \leq \varepsilon_q(1-|z|^2)$ imply that $\Re(z) \geq -\rho_q$. The case of negative imaginary part follows by symmetry. Indeed, the first inequality defines the interior of a hyperbola that is symmetric with respect to the real axis, and which intersects the line $\Re z=-\rho_q$ at the point $-\rho_q+i \frac{\sqrt{2}\varepsilon_q}{(1-\varepsilon_q)^{\frac 3 2}}$ (and its complex conjugate). If $\varepsilon_q \geq \frac 1 3$, this point is outside the unit disc, which shows that for $z$ in the unit disc, the single inequality $\Re(z^2-z) \leq \varepsilon_q(1-|z|^2)$ implies $\Re(z) \geq -\rho_q$. 
If $\varepsilon<\frac 1 3$, this point lies inside the unit disc, but outside the area $\Re(z e^{-\frac{2i\pi}{3}} - z^2 e^{\frac{2i\pi}{3}}) \leq \varepsilon_q(1-|z|^2)$. So this shows that for any complex number with nonnegative real part, the inequalities $\Re(z^2-z) \leq \varepsilon_q(1-|z|^2)$ and $\Re(z e^{-\frac{2i\pi}{3}} - z^2 e^{\frac{2i\pi}{3}}) \leq \varepsilon_q(1-|z|^2)$ imply that $\Re(z) \geq -\rho_q$. This concludes the proof.
%
%To say the same proof differently: if $\varepsilon_q \geq \frac 1 3$, the point $e^{-\frac{2i\pi}{3}}$ is an accumulation point of the region
% \[ \{z \mid |z| \leq 1, \Im z \geq 0, \min(\Re(z^2-z),\Re(z e^{-\frac{2i\pi}{3}} - z^2 e^{\frac{2i\pi}{3}}) ) \leq \varepsilon_q(1-|z|^2)\}\]
%and is the point with smallest real part $-\frac 1 2$, which is indeed $\geq -\rho_q$. If $\varepsilon_q<\frac 1 3$, $e^{-\frac{2i\pi}{3}}$ is an isolated point, and the point in this region with the second smallest real part is the point satisfying $\Re(z^2-z) = \Re(z e^{-\frac{2i\pi}{3}} - z^2 e^{\frac{2i\pi}{3}}) )= \varepsilon_q(1-|z|^2)$, which indeed has real part $\geq -\rho_q$.
\end{proof}

\begin{proof}[Proof of Theorem~\ref{thm:propertyT}]
The eigenspaces of $A_{(1,0)}$ for the eigenvalues $ \{1,e^{\frac{2i\pi}{3}}, e^{-\frac{2i\pi}{3}}\}$ span the three-dimensional space $F$ of functions that are constant on each type. In particular, we have that, in restriction to the orthogonal of $F$, $(\Re A_{(1,0)})^n = 2^{-n}(A_{(1,0)}+A_{(0,1)})^n$ has norm $\leq \rho_q^n$, and that $\|(\Re A_{(1,0)})^{3n} - P_F\| \leq \rho_q^{3n}$, where $P_F$ is the orthogonal projection on $F$. As $\rho_q<1$ for $q>1$, this easily implies that $\Gamma$ has property (T), with the same argument as the one showing that Theorem~\ref{thm:strong_T_on_building} implies the main theorem. 
\end{proof}

\begin{proof}[Proof of Lemma~\ref{lem:exponential_integrability}]
Take for $(\pi,E)$ the trivial representation of $\Gamma$ on $\C$, so that $\iE_2$ becomes just $\ell^2(X/\Gamma,\mu)$ for the finite measure $\mu(\Gamma x) = \frac{1}{|\Gamma_x|}$. Let again $P_F$ be the orthogonal projection on the space $F$ of functions which are constant on each type. Denote by $Z_0 = \sum_{x \in \Omega, \typ(x)=\typ(x_0)} \frac{1}{|\Gamma_x|}$. For every $n$, denote by $f_n$ the indicator function of the $\Gamma$-orbit of $\{x \in \Gamma \mid d(x,x_0) \leq 3n, \typ(x)=\typ(x_0)\}$. Then we have 
\[ \langle \Re{A_{(1,0)}}^{3n} f_n,f_0\rangle = \frac{1}{\abs{\Gamma_{x_0}}} (\Re A_{(1,0)})^{3n} f_n)(x_0) = \frac{1}{\abs{\Gamma_{x_0}}},\]
whereas
\[ \langle P_F f_n,f_0\rangle = \frac{1}{Z_0\abs{\Gamma_{x_0}}} 
\langle f_n, 1\rangle.\]
So we deduce from the preceding that
\begin{align*} |\langle f_n,1\rangle - Z_0| & = Z_0\abs{\Gamma_{x_0}} \abs{\langle (\Re A_{(1,0)}^{3n} -P_F)f_n,f_0\rangle}\\
& \leq  Z_0 \abs{\Gamma_{x_0}} \cdot \rho_q^{3n} \cdot \|f_n\| \cdot \|f_0\|\\
& \leq \rho_q^{3n} Z_0 \sqrt{Z_0 |\Gamma_{x_0}|}.
\end{align*}
Or simply
\[ \sum_{x \in \Omega,\typ(x) = \typ(x_0),d(x,x_0)>3n} \frac{1}{|\Gamma_x|} \leq \rho_q^{3n} Z_0 \sqrt{Z_0 |\Gamma_{x_0}|}.\]
We deduce that
\[ \sum_{x \in \Omega,\typ(x) = \typ(x_0)} \frac{c^{d(x,x_0)}}{|\Gamma_x|}<\infty\]
for every $c<1/\rho_q$. Doing the same for the other types yields the lemma.
\end{proof}

It follows from Theorem \ref{thm:propertyT} that $\tilde{A}_2$-lattices are finitely generated. Let us equip $\Gamma$ with the word-length with respect to some finite generating set. We will say that $\Gamma$ is \emph{undistorted} if some orbit map is a quasi-isometric embedding:
\begin{equation}\label{eq:nondistorted}
\exists a,b>0,\forall \gamma \in \Gamma, |\gamma| \leq a d(x_0,\gamma x_0) + b.
\end{equation}
We fix such an $a$ and $b$ for the rest of the section. It follows that all orbit maps are quasi-isometries:
\[ \forall x \in X,\forall \gamma \in \Gamma, |\gamma| \leq a d(x,\gamma x) + (b+ 2 d(x_0,x)).\]
The celebrated Lubotzky-Mozes-Raghunathan theorem \cite{MR1828742} asserts that arithmetic lattices are undistorted. However, we do not know whether all $\tilde{A}_2$-lattices are undistorted.

We record for later use the following easy but important consequence of \eqref{eq:nondistorted}.
\begin{lemma}\label{lemma:undistorted}
If $\Gamma$ is undistorted, then for every $x \in \Omega$ and $\gamma \in \Gamma$, we have
\[ |\gamma| \leq 2a(d(x_0,x) + d(\gamma x,\Omega)) + b.\]
\end{lemma}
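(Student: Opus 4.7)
The plan is to reduce the bound on $|\gamma|$ to the undistortedness inequality \eqref{eq:nondistorted} and then to control $d(x_0, \gamma x_0)$ by combining triangle inequalities with the defining Dirichlet-type property \eqref{eq:Dirichlet_fundamental_domain} of $\Omega$. Since \eqref{eq:nondistorted} gives $|\gamma| \leq a\, d(x_0, \gamma x_0) + b$, and the triangle inequality together with the fact that $\gamma$ is an isometry yields
\[
d(x_0, \gamma x_0) \leq d(x_0, \gamma x) + d(\gamma x, \gamma x_0) = d(x_0, \gamma x) + d(x_0, x),
\]
it suffices to establish $d(x_0, \gamma x) \leq d(x_0, x) + 2\, d(\gamma x, \Omega)$.

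To this end, I would pick $z \in \Omega$ realizing $d(\gamma x, z) = d(\gamma x, \Omega)$ (the infimum is attained since any bounded set in $X$ contains only finitely many vertices, and hence only finitely many points of $\Omega$). A first triangle inequality gives $d(x_0, \gamma x) \leq d(x_0, z) + d(\gamma x, \Omega)$, so the remaining task is to bound $d(x_0, z)$.

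The key move is to apply the Dirichlet property \eqref{eq:Dirichlet_fundamental_domain} of $\Omega$ to the element $z \in \Omega$ with $\gamma^{-1} \in \Gamma$, which gives $d(z, x_0) \leq d(\gamma^{-1} z, x_0)$. A second triangle inequality combined with the isometry property of $\gamma$ then yields
\[
d(\gamma^{-1} z, x_0) \leq d(\gamma^{-1} z, x) + d(x, x_0) = d(z, \gamma x) + d(x_0, x) = d(\gamma x, \Omega) + d(x_0, x).
\]
Chaining these estimates gives $d(x_0, \gamma x) \leq d(x_0, x) + 2\,d(\gamma x, \Omega)$, and inserting this into the initial reduction produces the claimed bound $|\gamma| \leq 2a(d(x_0, x) + d(\gamma x, \Omega)) + b$.

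There is no serious obstacle in this argument: the proof is essentially a short chain of triangle inequalities. The only slightly non-obvious ingredient is the use of the defining property of $\Omega$ via $\gamma^{-1}$ to trade a bound on $d(z, x_0)$ for a bound on $d(\gamma^{-1}z, x_0)$, exploiting that representatives in $\Omega$ are chosen to minimize distance to $x_0$ within their $\Gamma$-orbits.
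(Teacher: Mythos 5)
Your proof is correct and is essentially the paper's argument: both rest on the undistortedness bound $|\gamma|\leq a\,d(x_0,\gamma x_0)+b$, the Dirichlet property \eqref{eq:Dirichlet_fundamental_domain} applied with $\gamma^{-1}$ to the point of $\Omega$ nearest $\gamma x$, and a chain of triangle inequalities. The only difference is the order in which you split $d(x_0,\gamma x_0)$ (the paper goes through $x'=z$ first, you go through $\gamma x$ first), which is an immaterial rearrangement.
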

\begin{proof}
Let $x' \in \Omega$ such that $d(\gamma x,\Omega) = d(\gamma x,x')$. We can use the assumption \eqref{eq:nondistorted} that $\Gamma$ is undistorted and the triangle inequality to bound
\[    |\gamma| \leq a d(x_0,\gamma x_0) +b \leq a(d(x_0,x') + d(x',\gamma x_0)) + b.\]
By our definition \eqref{eq:Dirichlet_fundamental_domain} of $\Omega$, we have $d(x_0,x') \leq d(x_0,\gamma^{-1}x')=d(x',\gamma x_0)$, so we obtain $|\gamma| \leq 2a d(x',\gamma x_0) +b$, that we can bound using the triangle inequality by
\[
|\gamma| \leq 2a (d(x',\gamma x) + d(\gamma x,\gamma x_0)) + b \leq 2a (d(\gamma x,\Omega) + d(x_0,x))+b.\qedhere
\]

\end{proof}

\subsection{Strategy of the proof}\label{subsection:outline_nonuniform}
The goal of the remainder of the section is to prove the main theorem for undistorted $\tilde{A}_2$-lattices that may not be cocompact.
We now explain the proof which is obtained by combining the ingredients in the proof of the cocompact case with ideas in \cite{dlSActa19}. Let $\Gamma$ be an $\tilde{A}_2$-lattice. As in the cocompact case, restricting to a subgroup of index $\leq 6$, we can and will assume that $\Gamma$ acts on an $\tilde{A}_2$-building $X$ preserving the types.

Given a representation $\pi$ of $\Gamma$ on a Banach space $E$, let $C,\alpha$ be such that 
\[ \norm{\pi(\gamma)} \leq C e^{\alpha |\gamma|}\]
for every $\gamma \in \Gamma$. We define as in the cocompact case
\[
\iE = \{f \colon X \to E \mid \forall \gamma \in \Gamma, x \in X, f(\gamma x) =\pi(\gamma) f(x)\}.
\]
The space $\iE$ carries several natural pseudonorms: for every $r \in [1,\infty]$, we define
\begin{align*}
\norm{f}_r &= \left(\sum_{x \in \Omega} \frac{1}{\abs{\Gamma_x}} \norm{f(x)}^r\right)^{1/r}&\text{and}\\
\norm{f}_\infty &= \sup_{x \in \Omega} \norm{f(x)}
\end{align*}
and denote by $\iE_r$ the subspace of $\iE$  consisting of elements with $\norm{f}_r<\infty$. By Hölder's inequality and the assumption that $\sum_{x \in \Omega} \frac{1}{\abs{\Gamma_x}}<\infty$, we have $\iE_{r_1} \subset \iE_{r_2}$ whenever $r_1 \geq r_2$. 
Of course, when $\Omega$ is finite, $\iE = \iE_r$ for every $r$ and the spaces $\iE_r$ neither depend on $r$ nor on $\Omega$, but otherwise the space $\iE_r$ are all distinct and depend on $\Omega$. As in the cocompact case, $A_\lambda$ maps $\iE$ to itself because the action of $\Gamma$ on $\partial X$ is type-preserving. However, there is no reason to expect that $A_\lambda$ maps $\iE_r$ to itself in general unless  when $\Omega$ is finite or $\pi$ is uniformly bounded.

The basic idea in the proof is to follow the same strategy as for cocompact lattices, but with varying values of $r$ in order to absorb all the small exponentials that arise from the infiniteness of $\Omega$ and the unboundedness of $\norm{\pi(\gamma)}$. This phenomenon which, in presence of a group, was called $2$-step representations in \cite{dlSActa19}, is captured by the following lemma. Recall that we have defined $|\lambda|_1=s+t$ if $\lambda=(s,t)$.
\begin{lemma}\label{lem:2steprep}
Let $r_1 \geq r_2$. Assume that $\alpha \leq \frac{\varepsilon}{2a}(\frac{1}{r_2} -\frac{1}{r_1})$. Then there is an $M \in \R_+$ such that for every $\lambda \in \Lambda$ and $f\in \iE$: 
\[
\norm{A_\lambda f}_{r_2} \le \left(  \sum_{x \in \Omega}\frac{1}{\abs{\Gamma_x}} \frac{1}{|S_\lambda(x)|} \sum_{y \in S_\lambda(x)} \|f(y)\|^{r_2}\right)^{\frac{1}{r_2}} \leq C M e^{2 a \alpha |\lambda|_1} \norm{f}_{r_1}.\label{eq:r2_r1_estimate}
\]
In particular, the operators $A_\lambda$ have norm $\leq C M e^{2 a \alpha |\lambda|_1}$ from $\iE_{r_1}$ to $\iE_{r_2}$.
\end{lemma}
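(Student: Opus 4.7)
The first inequality follows from Jensen's inequality applied to the convex function $t \mapsto t^{r_2}$ (valid for $r_2 \geq 1$):
\[\|A_\lambda f(x)\|^{r_2} \leq \frac{1}{|S_\lambda(x)|}\sum_{y \in S_\lambda(x)} \|f(y)\|^{r_2},\]
and then one weights by $1/|\Gamma_x|$ and sums over $x \in \Omega$. The extreme case $r_2 = \infty$ is a trivial supremum estimate.

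For the main inequality, my plan is to transport each value $\|f(y)\|$ to a value $\|f(y^*)\|$ at the orbit representative $y^* \in \Omega$. Writing $y = \gamma_y y^*$, equivariance of $f$ and the hypothesis on $\pi$ give $\|f(y)\| \leq Ce^{\alpha|\gamma_y|}\|f(y^*)\|$. The key bound on $|\gamma_y|$ will come from Lemma~\ref{lemma:undistorted} applied with $x = y^*$ and $\gamma = \gamma_y$, combined with the Dirichlet property \eqref{eq:Dirichlet_fundamental_domain} giving $d(x_0,y^*) \leq d(x_0,y)$, and the fact that $d(y,\Omega) \leq d(y,x) \leq |\lambda|_1$ whenever $y \in S_\lambda(x)$ with $x \in \Omega$. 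Together these give
\[|\gamma_y| \leq 2a\,d(x_0,y^*) + 2a|\lambda|_1 + b,\qquad \|f(y)\|^{r_2} \leq C^{r_2}e^{r_2\alpha b}\,e^{2r_2 a\alpha |\lambda|_1}\,e^{2r_2 a\alpha\,d(x_0,y^*)}\|f(y^*)\|^{r_2}.\]

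The crucial observation is that $F(x,y) := e^{2r_2 a\alpha\,d(x_0,y^*)}\|f(y^*)\|^{r_2}$ is $\Gamma$-invariant in the diagonal sense, which is exactly the hypothesis needed to apply a double-counting argument in the spirit of Proposition~\ref{prop:double_counting}: disintegrating $\Gamma \backslash \{(x,y) : \sigma(x,y) = \lambda\}$ via the second coordinate instead of the first rewrites
\[\sum_{x \in \Omega}\frac{1}{|\Gamma_x|\,|S_\lambda(x)|}\sum_{y \in S_\lambda(x)} F(x,y) = \sum_{y \in \Omega} \frac{e^{2r_2 a\alpha\,d(x_0,y)}\|f(y)\|^{r_2}}{|\Gamma_y|},\]
since for $y \in \Omega$ one has $y^* = y$ and $F(x,y)$ is constant in $x$, so the cardinalities $|S_\lambda(x)| = |S_{\bar\lambda}(y)|$ cancel.

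Finally, I would apply Hölder's inequality on $\Omega$ with conjugate exponents $r_1/r_2$ and $r_1/(r_1-r_2)$ to split
\[\sum_{y\in\Omega}\frac{e^{2r_2 a\alpha\,d(x_0,y)}\|f(y)\|^{r_2}}{|\Gamma_y|} \leq \|f\|_{r_1}^{r_2}\left(\sum_{y\in\Omega}\frac{e^{c\,d(x_0,y)}}{|\Gamma_y|}\right)^{(r_1-r_2)/r_1},\]
with $c = 2r_1 r_2 a\alpha/(r_1-r_2)$. The hypothesis $\alpha \leq \frac{\varepsilon}{2a}(1/r_2 - 1/r_1)$ is precisely what forces $c \leq \varepsilon$, so the residual sum is finite by Lemma~\ref{lem:exponential_integrability}; this defines the constant $M$. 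Taking $r_2$-th roots yields the stated bound. The degenerate cases ($r_1 = r_2$, which forces $\alpha = 0$; and $r_1 = \infty$, where Hölder becomes a trivial $\ell^\infty$ bound) are handled as easy variants. The main delicate point is the choice of the $\Gamma$-invariant weight $F$: it must match the bound on $|\gamma_y|$ \emph{exactly} so that the double-counting step delivers a single factor $e^{2a\alpha|\lambda|_1}$ on the right, and the Hölder step exactly uses up the allowed $\alpha$.
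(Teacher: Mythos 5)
Your proof is correct and follows essentially the same route as the paper's: bound $\|f(y)\|$ by transporting to the orbit representative via Lemma~\ref{lemma:undistorted} (with $d(y,\Omega)\le|\lambda|_1$), convert the sphere average into a sum over $\Omega$ with the exponential weight by the same counting identity the paper carries out inline (using $|S_\lambda(x)|=|S_{\bar\lambda}(x')|$ and the partition of $X$ into orbits), and finish with H\"older plus Lemma~\ref{lem:exponential_integrability}, the hypothesis on $\alpha$ giving exactly $c\le\varepsilon$. The only cosmetic differences are that you pick a single $\gamma_y$ rather than averaging over the stabilizer coset and phrase the counting step as a double-counting in the spirit of Proposition~\ref{prop:double_counting}; both are fine.
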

\begin{proof}
For every $y \in X$, there is a unique $x' \in \Omega$ in the $\Gamma$-orbit of $y$. For such $x'$, the number of $\gamma$ such that $\gamma x'=y$ is equal to $\abs{\Gamma_{x'}}$. So we can write
\[ \|f(y)\|^{r_2} = \sum_{x' \in \Omega} \frac{1}{\abs{\Gamma_{x'}}} \sum_{\gamma \in \Gamma, y=\gamma x'} \|\pi(\gamma) f(x')\|^{r_2}.\]
By Lemma \ref{lemma:undistorted}, if $y =\gamma x' \in S_\lambda(x)$ for some $x \in \Omega$, we have $|\gamma| \leq 2a (|\lambda|_1 + d(x_0,x'))+b$.
Therefore we have
\[ \|\pi(\gamma) f(x')\| \leq C e^{\alpha |\gamma|} \|f(x')\| \leq C e^{\alpha b} e^{2a\alpha(|\lambda|_1 + d(o,x'))}\|f(x')\|.\] 
Summing over all $y$ and $x$, we obtain
\begin{multline*}
    \left(\sum_{x \in \Omega}\frac{1}{\abs{\Gamma_x}} \frac{1}{|S_\lambda(x)|} \sum_{y \in S_\lambda(x)} \|f(y)\|^{r_2} \right)^{\frac{1}{r_2}}\\
   \leq C e^{\alpha b+2a \alpha |\lambda|_1} \left(\sum_{x' \in \Omega} \frac{1}{|\Gamma_{x'}|} e^{2a\alpha r_2 d(o,x')} \|f(x')\|^{r_2} \sum_{x \in \Omega} \frac{1}{|\Gamma_x|} \frac{1}{|S_\lambda(x)|} \sum_{\gamma \in \Gamma, \sigma(x,\gamma x') =\lambda} 1 \right)^{\frac{1}{r_2}}
\end{multline*}
The quantity $\frac{1}{|\Gamma_x|} \sum_{\gamma \in \Gamma, \sigma(x,\gamma x') =\lambda} 1$ is the cardinality of the intersection of the $\Gamma$-orbit of $x$ with $S_{\overline{\lambda}}(x')$. By chamber regularity, $|S_\lambda(x)|=|S_{\overline{\lambda}}(x')|$ for every $x,x'$ so we obtain
\begin{multline*}
    \left(\sum_{x \in \Omega}\frac{1}{\abs{\Gamma_x}} \frac{1}{|S_\lambda(x)|} \sum_{y \in S_\lambda(x)} \|f(y)\|^{r_2} \right)^{\frac{1}{r_2}}\\
   \leq C e^{\alpha b+2a \alpha |\lambda|_1} \left(\sum_{x' \in \Omega} \frac{1}{|\Gamma_{x'}|} e^{2a\alpha r_2 d(o,x')} \|f(x')\|^{r_2}  \right)^{\frac{1}{r_2}}.
\end{multline*}
The second inequality now follows using Hölder's inequality and \eqref{eq:exponential_integrability}.
%Indeed, $\left(\sum_{x' \in \Omega} \frac{1}{|\Gamma_{x'}|} e^{2a\alpha r_2 d(o,x')} \|f(x')\|^{r_2}  \right)^{\frac{1}{r_2}}$ is the $L^{r_2}(\Omega,m)$-norm (for the measure $m(x)=\frac{1}{|\Gamma_x|}$) of the map $x\mapsto e^{2a\alpha d(o,x)} \|f(x)\|$. If $r_3$ is defined by $\frac 1{r_2} = \frac{1}{r_1} + \frac{1}{r_3}$, this $L^{r_2}$-norm is less than the product of the $L^{r_1}(\Omega,m)$-norm of $x \mapsto \|f(x)\|$ (that is $\|f\|_{r_1}$) and the $L^{r_3}(\Omega,m)$ norm of $x \mapsto e^{2a\alpha d(o,x)}$, which is finite because $2a\alpha r_3 \leq \varepsilon$.

For the first inequality we first apply the triangle inequality and then Hölder's inequality to see that
\[
\|A_\lambda f(x)\|^{r_2} \leq \frac{1}{|S_\lambda(x)|} \sum_{y \in S_\lambda(x)} \|f(y)\|^{r_2}\text{.}\qedhere
\]
\end{proof}

The following proposition is the counterpart of  Theorem~\ref{thm:strong_T_on_building} in the non-cocompact case. It is a combination of Propositions~\ref{prop:harmonic_limit_nonuniform} and~\ref{prop:harmonic_constant_nonuniform} below (the analogs of Propositions~\ref{prop:harmonic_limit} and~\ref{prop:harmonic=constants}).

\begin{proposition}\label{prop:strong_T_on_buildingunbdd}
Let $\Gamma$ be an undistorted lattice \eqref{eq:nondistorted}. Let $1<r_2<r_1<\infty$. If 
\[
\alpha \leq \min\left(\frac{\varepsilon}{8 a},\frac{\theta}{6a} \right) \min\left(\frac{1}{r_2} - \frac{1}{r_1},1 - \frac{1}{r_2}\right)
\]
then, as $\lambda \in \Lambda_0$ tends to infinity, the sequence $(A_\lambda)_{\lambda}$ converges in the norm of $B(\iE_{r_1},\iE_{r_2})$ towards a map $P \in B(\iE_{r_1}, \iE_{r_2})$ whose image is made of functions which are constant on the vertices of each type.
\end{proposition}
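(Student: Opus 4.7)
The proof parallels the cocompact case, decomposing into an analog of Proposition~\ref{prop:harmonic_limit} (convergence of $(A_\lambda)_{\lambda \in \Lambda_0}$ in $B(\iE_{r_1},\iE_{r_2})$ to some $P$) and an analog of Proposition~\ref{prop:harmonic=constants} (image of $P$ consists of type-constant functions). The new feature is that $A_\lambda$ no longer preserves any single $\iE_r$: its growth is only controlled from $\iE_{r_1}$ to $\iE_{r_2}$ by Lemma~\ref{lem:2steprep}, so the zig-zag argument of Lemma~\ref{lem:lafforgue} has to be run inside the complete metric space $B(\iE_{r_1},\iE_{r_2})$.

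For convergence, the crux is to replace Lemma~\ref{lem:norm_of_alphai} by the estimates
\[
\Big(\sum_{o \in \Omega} \frac{\|J_{o,j}(f)\|_{L^2(\L;E)}^{2}}{|\Gamma_o|}\Big)^{1/2} \le K\, e^{(2a\alpha + \kappa)j}\|f\|_{r_1},
\]
\[
\Big(\sum_{o \in \Omega} \frac{\|I_{o,i}(g)\|_{L^2(\P;E^*)}^{2}}{|\Gamma_o|}\Big)^{1/2} \le K\, e^{(2a\alpha + \kappa)i}\|g\|_{r_2'},
\]
valid for $f \in \iE_{r_1}$ and $g \in \widetilde{E^*}_{r_2'}$ (with $1/r_2+1/r_2'=1$). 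Both are proved by expressing the $L^2$-integrals as uniform averages over the finite spheres $\L_{o,j}$ (respectively $\P_{o,i}$), applying the change of summation from the proof of Lemma~\ref{lem:2steprep} (with the undistortedness Lemma~\ref{lemma:undistorted} substituting for Lemma~\ref{lem:orbit_map_quasiisometric}), and then invoking Hölder's inequality to separate the $r_1$- or $r_2'$-norm from the exponentially weighted series $\sum_{x' \in \Omega} e^{\kappa' d(x_0,x')}/|\Gamma_{x'}|$, which is finite by the exponential integrability \eqref{eq:exponential_integrability}.

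Combining these with the admissibility bound \eqref{eq:E-valued_Hoelder_bound} and the duality identity \eqref{eq:a_lambda_t_s} (interpreted through a version of Lemma~\ref{lemma:dual} between $\widetilde{E^*}_{r_2'}$ and $\iE_{r_2}$) via Cauchy--Schwarz delivers
\[
\|A_{(s,t)} - A_{(s+1,t-2)}\|_{B(\iE_{r_1},\iE_{r_2})} \le K' e^{-sD + tE}, \quad \|A_{(s,t)} - A_{(s-2,t+1)}\|_{B(\iE_{r_1},\iE_{r_2})} \le K' e^{-tD + sE},
\]
with $D > E$ thanks to the hypothesis on $\alpha$. Lemma~\ref{lem:lafforgue} then yields convergence along $\Lambda_0$ (and similarly along $\Lambda_1$ and $\Lambda_2$). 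The identification of the image follows the scheme of Section~\ref{sec:harmonic_constant}: the same technique gives convergence of the edge operators $A^\truc_\lambda \colon \iE_{r_1} \to \iE_{r_2}^{(1)}$ for $\truc \in \{+,-,\sim\}$ (the analog of Lemma~\ref{lem:alambda_converge2}); the folding identities in Lemma~\ref{lem:important_fact} force their three limits to coincide; and applied to any $f$ in the image of $P$, together with Corollary~\ref{cor:about_harmonic_functions}, this shows $f$ is constant along every edge in $X^{(1)}$ and $\overline{X}^{(1)}$, hence constant on each type.

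The main obstacle is the delicate balance in the first step: the parameters $r_1, r_2$, the integrability exponent $\varepsilon$ from \eqref{eq:exponential_integrability}, the quasi-isometry constant $a$ from \eqref{eq:nondistorted}, and the admissibility decay $\theta$ all interact. The explicit hypothesis on $\alpha$ in the statement is calibrated precisely so that each Hölder application closes -- the factor $1/r_2 - 1/r_1$ arising when applying Hölder on the $J_{o,j}$ side (as in Lemma~\ref{lem:2steprep}), and the factor $1 - 1/r_2$ encoding the room available in the dual exponent $r_2'$ when handling $I_{o,i}$ -- and so that the exponents $D > E$ required by the zig-zag argument remain strictly positive.
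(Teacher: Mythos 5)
Your two-step skeleton (zig-zag convergence, then the folding argument to show the image is type-constant) is the paper's plan, and your endgame matches Propositions~\ref{prop:harmonic_limit_nonuniform} and~\ref{prop:harmonic_constant_nonuniform}. But the step you call the crux --- the weighted-$\ell^2$ replacements for Lemma~\ref{lem:norm_of_alphai} --- is false in the stated generality, and it is exactly the point where the non-cocompact case needs a new idea. Test your first estimate on the trivial representation ($E=\C$) with $f\in\iE$ the equivariant indicator of a single orbit $\Gamma x'$: a double count over $o\in\Omega$ gives exactly
\[
\sum_{o\in\Omega}\frac{1}{|\Gamma_o|}\,\bigl\|J_{o,j}(f)\bigr\|_{L^2(\L)}^2=\frac{1}{|\Gamma_{x'}|},
\qquad\text{while}\qquad \|f\|_{r_1}=|\Gamma_{x'}|^{-1/r_1},
\]
so your bound would force $|\Gamma_{x'}|^{\frac1{r_1}-\frac12}\le K e^{(2a\alpha+\kappa)j}$ for fixed $j$. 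In a non-cocompact lattice the stabilizer orders are unbounded (indeed \eqref{eq:exponential_integrability} forces $|\Gamma_x|\ge e^{\varepsilon d(x,x_0)}/S$), so the estimate fails whenever $r_1<2$; symmetrically, the $I_{o,i}$ estimate against $\|g\|_{r_2'}$ fails whenever $r_2'<2$, i.e.\ $r_2>2$. Both configurations are allowed by the proposition ($1<r_2<r_1<\infty$ is arbitrary), and the same obstruction explains why the Hölder step you invoke cannot ``separate'' an $\ell^{r}$-norm from an exponentially weighted series when the outer exponent $2$ exceeds $r$: Lemma~\ref{lem:2steprep} only runs downhill in the exponent. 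Even in the window $r_2\le 2\le r_1$ your route needs $\alpha\lesssim\frac{\varepsilon}{2a}\bigl(\frac12-\frac1{r_1}\bigr)$ (and the dual analogue), which the stated hypothesis on $\alpha$ does not provide (it degenerates as $r_1\downarrow 2$).

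The paper closes this gap differently: in the proof of Proposition~\ref{prop:harmonic_limit_nonuniform} the maps $I_{o,i}$ and $J_{o,j}$ are never measured in $L^2$-valued spaces. Functionals on $\iE_{r_2}$ are represented in $\diE[r_3']$ for an auxiliary exponent via Lemma~\ref{lem:duality_nonuniform}, the maps land in $L^{r_2'}(\P;E^*)$ and $L^{r_2}(\L;E)$ with exponents matched to the modules (this only needs $\alpha\le\frac{\varepsilon}{2a}(\frac1{r_{i+1}}-\frac1{r_i})$ along a chain of interpolated exponents between $r_2$ and $r_1$), and --- this is the key extra ingredient --- the admissibility decay \eqref{eq:E-valued_Hoelder_bound} is transferred from $L^2$-valued to $L^{r_2}$-valued spaces by the vector-valued Riesz--Thorin theorem of Subsection~\ref{subsection:interpolation}: $T_{o,s}-T_{o,s+1}$ has norm $\le Le^{-\theta s}$ on $L^2(\cdot;E)$ and $\le 2$ on $L^1$ and $L^\infty$, hence $\le 2Le^{-\theta(1-|1-2/r_2|)s}$ on $L^{r_2}(\cdot;E)$; the factor $1-|1-2/r_2|=2\min(\frac1{r_2},1-\frac1{r_2})$ is precisely where the $1-\frac1{r_2}$ in the hypothesis on $\alpha$ comes from. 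You need this interpolation device (or an equivalent one) for the convergence step; with it, the rest of your outline goes through as in Proposition~\ref{prop:harmonic_constant_nonuniform}.
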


As in the cocompact case, this proposition is proved in two distinct steps. We first show in Proposition~\ref{prop:harmonic_limit_nonuniform} that the operators $A_\lambda$ are Cauchy, and therefore converge to an operator whose image is made of $\Lambda_0$-harmonic functions. And then in Proposition~\ref{prop:harmonic_constant_nonuniform}, we show that $\Lambda_0$-harmonic functions in $\iE_r$ are constant on each type if $r>1$ and $\alpha$ is small enough.

The main~theorem follows from the  proposition with almost the same proof as in the cocompact case. As before we define $u(\xi)(y)=\pi(m_y)(\xi)$ for $y \in X$, $v(f) = \frac{1}{Z} \sum_{x \in \Omega} \frac{1}{\abs{\Gamma_x}}f(x)$ for $f \in \iE$  and a probability measure $\mu_\lambda$ on $\Gamma$ by~\eqref{eq:def_mulambda} so that as in \eqref{eq:mu_lambda}
\begin{equation}\label{eq:mu_lambda_unbdd}
v \circ A_\lambda \circ u = \pi(\mu_\lambda)\text{.}
\end{equation}

The first modification is that we regard $v$ as an operator $\iE_1 \to E$, so that it is clearly bounded. The second is that we need to verify that $u \colon E \to \iE_r$ is bounded provided $\alpha$ is small enough:

\begin{lemma}
The map $u \colon E \to \iE_r$ defined by $u(\xi)(y) = \pi(m_y) \xi$ is bounded if $\alpha \leq \frac{\varepsilon}{2ra}$.
\end{lemma}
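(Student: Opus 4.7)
The plan is to compute $\|u(\xi)\|_r$ directly, evaluating at vertices $y \in \Omega$. The key observation is that for $y \in \Omega$, the set $\{\gamma \in \Gamma \mid y \in \gamma\Omega\}$ coincides with the stabilizer $\Gamma_y$ (since $\Omega$ meets each orbit exactly once, any $\gamma$ with $\gamma^{-1} y \in \Omega$ must send $y$ to itself). Hence $m_y$ is the uniform probability measure on $\Gamma_y$, and
\[
\|\pi(m_y)\xi\| \leq \max_{\gamma \in \Gamma_y} \|\pi(\gamma)\| \cdot \|\xi\|.
\]

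First I would bound the word length of stabilizer elements. For $\gamma \in \Gamma_y$ with $y \in \Omega$, the image $\gamma y = y$ already lies in $\Omega$, so Lemma~\ref{lemma:undistorted} (applied with $x = y$) yields $|\gamma| \leq 2a \, d(x_0, y) + b$. Combined with the growth hypothesis $\|\pi(\gamma)\| \leq C e^{\alpha |\gamma|}$ this gives
\[
\|\pi(m_y)\xi\| \leq C\, e^{\alpha b}\, e^{2a\alpha \, d(x_0, y)}\, \|\xi\|
\]
for every $y \in \Omega$ and $\xi \in E$.

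Next I would raise this to the $r$-th power and sum with weights $1/|\Gamma_y|$, obtaining
\[
\|u(\xi)\|_r^r \leq C^r e^{r\alpha b} \|\xi\|^r \sum_{y \in \Omega} \frac{1}{|\Gamma_y|}\, e^{2ra\alpha\, d(x_0, y)}.
\]
The hypothesis $\alpha \leq \tfrac{\varepsilon}{2ra}$ ensures $2ra\alpha \leq \varepsilon$, so by Lemma~\ref{lem:exponential_integrability} the sum on the right is finite and $u$ is bounded.

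No serious obstacle is expected: all the ingredients (identification of $m_y$, the undistortion estimate for stabilizers, and the exponential integrability of $\Omega$) are already at hand, and the argument is a direct combination of them. The only subtle point worth double-checking is that the sum $\sum_y \frac{1}{|\Gamma_y|}$ runs only over a single representative of each $\Gamma$-orbit in $\Omega$, so the exponential integrability of Lemma~\ref{lem:exponential_integrability} applies verbatim with the chosen Dirichlet fundamental domain.
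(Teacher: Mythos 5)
Your argument is correct and is essentially the paper's own proof: bound $\|\pi(m_y)\xi\|$ for $y\in\Omega$ by $\max_{\gamma\in\Gamma_y}\|\pi(\gamma)\|\,\|\xi\|$, use Lemma~\ref{lemma:undistorted} (with $d(\gamma y,\Omega)=0$) to get $|\gamma|\leq 2a\,d(x_0,y)+b$, and then sum via Lemma~\ref{lem:exponential_integrability} under the hypothesis $2ra\alpha\leq\varepsilon$. The only difference is that you make explicit the identification of $m_y$ with the uniform measure on $\Gamma_y$ for $y\in\Omega$, which the paper leaves implicit.
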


\begin{proof}
For every $y \in \Omega$, we can bound
\[ \norm{u(\xi)(x)} \leq \max_{\gamma \in \Gamma_x} \norm{\pi(\gamma)} \norm{\xi}.\]
Using Lemma~\ref{lemma:undistorted}, we deduce that
\[ \norm{u(\xi)(x)} \leq  C e^{\alpha b} e^{2a\alpha d(x,o)} \norm{\xi}.\]
By Lemma \ref{lem:exponential_integrability} to obtain that $u(\xi) \in \iE_r$ if $2a\alpha r \leq \varepsilon$.
\end{proof}

\begin{proof}[Proof of the main theorem for general $\Gamma$]
We shall prove that the hypothesis of Lemma~\ref{lem:truncation} holds with the measures $\mu_\lambda$ if $\alpha \leq \min(\frac{\varepsilon}{18a},\frac{2\theta}{27a})$.

Our hypothesis on $\alpha$ is exactly the one in Proposition~\ref{prop:strong_T_on_buildingunbdd} for $r_1=9$ and $r_2 =\frac 95$. So let $P$ be given by this proposition for this choice of $r_1,r_2$. As in the cocompact case one verifies that the operator $v \circ P \circ u \colon E  \to E$ is a projection onto the space $E^\Gamma$ of $\pi(\Gamma)$-invariant vectors.

Now Lemma~\ref{lem:2steprep} implies that $v \circ A_\lambda \circ u$ is well-defined for all $\lambda$ and by Proposition~\ref{prop:strong_T_on_buildingunbdd} these operators converge in norm to $v \circ P \circ u$ as $\lambda \in \Lambda_0$ tends to infinity:
\[
\norm{v \circ (A_\lambda - P) \circ u}_{B(E)} \leq \norm{v}_{B(\iE_{r_2},E)} \norm{A_\lambda -P}_{B(\iE_{r_1},\iE_{r_2})} \norm{u}_{B(E,\iE_{r_1})} \to 0 
\]
In view of \eqref{eq:mu_lambda_unbdd} this completes the proof. 
\end{proof}

%\begin{remark}
%A little more care in the proof could probably reduce this bound to $\alpha \leq \min(\frac{\varepsilon}{4a},\frac{\theta}{6a})$, closer to the bound in the cocompact case. We prefer to keep the proof simpler.
%\end{remark}

The remainder of the section is devoted to the proof of Proposition \ref{prop:strong_T_on_buildingunbdd}.

\subsection{Induction and duals}
Replacing $E$ by $E^*$ and $\pi$ by the dual representation $\gamma \mapsto \pi(\gamma^{-1})^*$ in the definitions of Section~\ref{subsection:outline_nonuniform}, we can define in the same way the space $\diE$ and $\diE[r]$.

Throughout this section, we denote by $r'$ the dual exponent of $r$: $\frac 1 r + \frac 1 {r'}=1$.
For every $r$ we define a duality pairing by 
\begin{equation}\label{eq:duality_pairingv2}
\gen{g,f}= \sum_{x \in \Omega} \frac{1}{\abs{\Gamma_x}} \gen{g(x),f(x)}\text{.}
\end{equation}
whenever $f \in \iE$, $g \in \diE$ and the series converges.

As in the cocompact case, $\iE_r$ is isometric to the $\ell^r$-direct sum $\bigoplus_{x \in \Omega} E^{\Gamma_x}$ via the isomorphism
\begin{align}
\bigoplus_{x \in \Omega} E^{\Gamma_x} &\to \iE\label{eq:etilde_decompositionv2}\\
(\xi_x)_{x \in \Omega} & \mapsto (\gamma.x \mapsto |\Gamma_x|^{\frac 1 r}\pi(\gamma)(\xi))\text{.}\nonumber
\end{align}
And similarly, $\diE[r']$ is isomorphic to the $\ell^{r'}$ direct sum
$\bigoplus_{x \in \Omega} (E^*)^{\Gamma_x}$ . However, this does not yield to an isomorphism between $\iE_r$ and $\diE[r']$ as the spaces $(E^*)^{\Gamma_x}$ and $(E^{\Gamma_x})^*$ are only isomorphic up to a constant $\leq \| \frac{1}{|\Gamma_x|} \sum_{\gamma \in \Gamma_x} \pi(\gamma)\|$, which is a priori unbounded as $x$ varies in $\Omega$. However, the duality pairing \eqref{eq:duality_pairingv2} provides continuous injective maps $\diE[r'] \to  (\iE_r)^* \to \diE[r_2]$ for some $r_2 < r'$, that can be taken arbitrarily close to $r'$ by taking $\alpha$ small enough. This is the content of the next lemma.
\begin{lemma}\label{lem:duality_nonuniform}
\begin{enumerate}
\item For every $g \in \diE[r']$, the map $f \mapsto \langle g,f\rangle$ is a continuous linear form on $\iE_r$ of norm $\leq \|g\|$.
\item Let $r_2 <r'$. If $\alpha \leq \frac{\varepsilon}{2a}(\frac{1}{r_2} - \frac 1 {r'})$, then there is a constant $C'$ such that for every continuous linear form $\varphi \colon \iE_r \to \C$, there is a unique $g \in \diE[r_2]$ such that $\varphi(f) = \langle g,f\rangle$ for every $f \in \iE$. It has norm $\norm{g}_{r_2} \leq C' \norm{\varphi}$.
\end{enumerate}
\end{lemma}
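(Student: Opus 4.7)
Part (1) is immediate from Hölder's inequality on the measure space $(\Omega,\mu)$ with $\mu(\{x\}) := 1/|\Gamma_x|$:
$|\langle g,f\rangle| \leq \sum_{x \in \Omega} \|g(x)\|_{E^*} \|f(x)\|_E/|\Gamma_x| \leq \|g\|_{r'} \|f\|_r$.
For Part (2) I would follow the strategy of the cocompact Lemma~\ref{lemma:dual}, introducing the auxiliary functions $f_{x,\xi}(x') := \frac{1}{|\Gamma_x|}\sum_{\gamma \in \Gamma,\, \gamma x = x'} \pi(\gamma)\xi$ for $x \in \Omega$ and $\xi \in E$. Each $f_{x,\xi}$ is supported on the single orbit $\Gamma x$, so with $P_x := \frac{1}{|\Gamma_x|}\sum_{\gamma \in \Gamma_x}\pi(\gamma)$ and $M_x := \max_{\gamma \in \Gamma_x}\|\pi(\gamma)\|$ one has $\|f_{x,\xi}\|_r = |\Gamma_x|^{-1/r}\|P_x\xi\| \leq |\Gamma_x|^{-1/r} M_x\|\xi\|$. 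The crucial estimate on $M_x$ comes from Lemma~\ref{lemma:undistorted} applied to $\gamma \in \Gamma_x$ (which fixes $x \in \Omega$, so $d(\gamma x,\Omega)=0$): $|\gamma| \leq 2ad(x_0,x)+b$, hence $M_x \leq Ce^{\alpha(2ad(x_0,x)+b)}$. Define $g$ on $\Omega$ by $\langle g(x),\xi\rangle := |\Gamma_x|\varphi(f_{x,\xi})$ and extend by equivariance; the properties $g(x) \in (E^*)^{\Gamma_x}$, the identity $\varphi = \langle g,\cdot\rangle$ via the decomposition $f = \sum_{x \in \Omega} f_{x,f(x)}$ (which converges in $\iE_r$ by the disjointness of supports), and uniqueness of $g$ all follow as in the cocompact case.

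The heart of the argument is the bound $\|g\|_{r_2} \leq C'\|\varphi\|$. After truncating to a finite $\Omega_n \uparrow \Omega$ so that every quantity is a priori finite, use Hahn--Banach to choose $\xi_x \in E$ with $\|\xi_x\| = \|g(x)\|^{r_2-1}$ and $\langle g(x),\xi_x\rangle \geq (1-\epsilon)\|g(x)\|^{r_2}$, so that
\[
(1-\epsilon)\|g\|_{r_2}^{r_2} \leq \sum_{x} \frac{\langle g(x),\xi_x\rangle}{|\Gamma_x|} = \varphi\bigl(\textstyle\sum_x f_{x,\xi_x}\bigr) \leq \|\varphi\|\,\bigl\|\textstyle\sum_x f_{x,\xi_x}\bigr\|_r.
\]
Using the disjoint supports, $\bigl\|\sum_x f_{x,\xi_x}\bigr\|_r^r \leq \sum_x M_x^r\|g(x)\|^{r(r_2-1)}/|\Gamma_x|$. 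Apply Hölder with conjugate exponents $p := r_2/(r(r_2-1))$ and $p'$; the condition $p>1$ is exactly $r_2 < r'$. Setting $\delta := 1/p'$, this produces the factorization
\[
\sum_x \frac{M_x^r \|g(x)\|^{r(r_2-1)}}{|\Gamma_x|} \leq \|g\|_{r_2}^{r(r_2-1)} \bigg(\sum_x \frac{M_x^{r/\delta}}{|\Gamma_x|}\bigg)^{\delta},
\]
and taking $r$-th roots, substituting, and cancelling $\|g\|_{r_2}^{r_2-1}$ yields $(1-\epsilon)\|g\|_{r_2} \leq \|\varphi\|\bigl(\sum_x M_x^{r/\delta}/|\Gamma_x|\bigr)^{\delta/r}$.

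I expect the main obstacle---and simultaneously the reason the hypothesis on $\alpha$ takes its specific form---to be the arithmetic identity $\delta/r = \frac{1}{r_2} - \frac{1}{r'}$, which one obtains by unpacking $\delta = 1 - r(r_2-1)/r_2$. With this identity in hand, the residual sum is dominated by $C^{r/\delta}e^{\alpha br/\delta}\sum_x e^{(2a\alpha r/\delta)d(x_0,x)}/|\Gamma_x|$, and Lemma~\ref{lem:exponential_integrability} guarantees convergence precisely when $2a\alpha r/\delta \leq \varepsilon$, equivalently $\alpha \leq \frac{\varepsilon}{2a}(\frac{1}{r_2}-\frac{1}{r'})$---the standing hypothesis. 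Letting $\epsilon \to 0$ and then passing to the limit $n \to \infty$ by monotone convergence on the truncations yields $\|g\|_{r_2} \leq C'\|\varphi\|$ with $C'$ depending only on $r,r_2,a,b,C,\varepsilon$, completing the proof.
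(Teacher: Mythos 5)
Your proposal is correct and follows essentially the same route as the paper's proof: the same auxiliary functions $f_{x,\xi}$ with the bound $\max_{\gamma\in\Gamma_x}\|\pi(\gamma)\|\leq Ce^{\alpha b+2a\alpha d(x_0,x)}$ from Lemma~\ref{lemma:undistorted}, the same truncation to finite subsets of $\Omega$, a near-optimizing choice of $\xi_x$, and the same Hölder step whose conjugate exponent encodes $\frac{1}{r_2}-\frac{1}{r'}$ so that Lemma~\ref{lem:exponential_integrability} applies under the hypothesis on $\alpha$, followed by cancelling $\|g\|_{r_2}^{r_2-1}$. The only differences are cosmetic (where the exponential bound on $M_x$ is inserted relative to the Hölder step, and the $\epsilon$ versus $1+\delta$ normalization of $\xi_x$).
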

\begin{proof} 
This is essentially the same proof as Lemma~\ref{lemma:dual}. The first point is a direct consequence of Hölder's inequality.

For the second point, let $\varphi \in \iE_r \to \C$ be a continuous linear form. Define $f_{x,\xi} \in \iE_r$ and $g \in \diE$ as in the proof of Lemma~\ref{lemma:dual}. For every $\xi \in E$, using Lemma~\ref{lemma:undistorted} we can bound, for $x \in \Omega$,
\begin{equation}\label{eq:norm_fxxi_v2}
    \norm{f_{x,\xi}(x)} \leq \max_{\gamma \in \Gamma_x} \|\pi(\gamma)\| \|\xi\| \leq C e^{b\alpha + 2a\alpha d(x_0,x)} \|\xi\|.
\end{equation} 
Let us pick, for every $x \in \Omega$, a vector $\xi_x \in E$ such that  $\langle g(x), \xi_x\rangle = \|g(x)\|^{r_2}$ and $\|\xi_x\|\leq (1+\delta)\|g(x)\|^{r_2-1}$ for some $\delta>0$. For every finite subset $A \subset \Omega$, let $g_A$ denote the restriction of $g$ to the $\Gamma$-orbit of $A$. We can bound the norm of $g_A$ in $\diE[r_2]$ as follows.
\begin{multline}
   \norm{g_A}_{r_2}^{r_2}  = \sum_{x \in A} \frac{1}{\abs{\Gamma_x}} \langle g(x), \xi_x\rangle
    = \varphi(\sum_{x \in A} f_{x,\xi_x})\\
    \leq (1+\delta) C e^{b\alpha} \|\varphi\| \left(\sum_{x \in A}  \frac{1}{\abs{\Gamma_x}} e^{2ar\alpha d(x_0,x)} \|g(x)\|^{r(r_2-1)}   \right)^{\frac 1 r}.
\end{multline}
Using Hölder's inequality, writing $\frac{1}{q} = \frac{1}{r_2}-\frac{1}{r'}$, we can bound
\begin{equation*}
       \left(\sum_{x \in A}  \frac{1}{\abs{\Gamma_x}} e^{2ar\alpha d(o,x)} \|g(x)\|^{r(r_2-1)}   \right)^{\frac 1 r} 
   \\\leq \norm{g_A}_{r_2}^{r_2-1} \left(\sum_{x \in \Omega}  \frac{1}{\abs{\Gamma_x}} e^{2aq\alpha d(o,x)} \right)^{\frac 1 q}.
\end{equation*}
By Lemma~\ref{lem:exponential_integrability} this last quantity is finite because $2 a \alpha q \leq \varepsilon$. If we make $\delta\to 0$ and we denote by $C'$ its product with $C e^{b\alpha}$, we obtain
\[ \norm{g_A}_{r_2}^{r_2} \leq C' \norm{\varphi} \norm{g_A}_{r_2}^{r_2-1},\]
that we simplify to $\norm{g_A}_{r_2} \leq C' \norm{\varphi}$. Since $A$ is arbitrary, we obtain that $g$ belongs to $\diE[r_2]$ with norm $\leq C' \norm{\varphi}$. Finally, the fact that $\langle g,f\rangle$ converges for every $f \in \iE_r$ and is equal to $\varphi(f)$ follows as in Lemma \ref{lemma:dual} by decomposing $f = \sum_{x \in \Omega} f_{x,f(x)}$ (convergence in $\iE_r$). The uniqueness of $g$ is clear with the same argument.
\end{proof}

\subsection{Convergence to projection on harmonic functions}\label{subsection:convergence_nonuniform}
We now generalize the content of Section~\ref{sec:convergence_harmonic} to the setting of undistorted $\tilde{A}_2$-lattices.

Recall $\pi$ is a representation of $\Gamma$ on $E$, and $C,\alpha \in [0,\infty)$ are such that
\[
\norm{\pi(\gamma)} \leq C e^{\alpha |\gamma |}\text{.}
\]
\begin{proposition}\label{prop:harmonic_limit_nonuniform} Let $1 \leq r_4 < r_1\leq \infty$. If 
\[\alpha < \min\left(\frac{\varepsilon}{8 a},\frac{\theta}{6a} \right)\left(\frac{1}{r_4} - \frac{1}{r_1}\right),\]
then, as $\lambda \in \Lambda_0$ tends to infinity, $A_\lambda$ converges in $B(\iE_{r_1},\iE_{r_4})$ towards a map $P \in B(\iE_{r_1},\iE_{r_4})$ whose image is made of $\Lambda_0$-harmonic elements.
\end{proposition}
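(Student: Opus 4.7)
The plan is to adapt the cocompact argument (Lemma~\ref{lem:alambda_converge} and Proposition~\ref{prop:harmonic_limit}) to the scale of spaces $\iE_{r}$. Since Lemma~\ref{lem:lafforgue} is a purely metric statement, it suffices to establish in $B(\iE_{r_1},\iE_{r_4})$ the vertical and non-vertical zig-zag estimates
\[
\|A_{(s,t)}-A_{(s+1,t-2)}\| \le C e^{-sD+tE}, \qquad \|A_{(s,t)}-A_{(s-2,t+1)}\| \le C e^{-tD+sE},
\]
with $D>E\ge 0$ depending on $\theta,\alpha$.

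The starting point is the matrix-coefficient identity from the proof of Lemma~\ref{lem:alambda_converge}. Applying Proposition~\ref{prop:double_counting} to $F(x,y)=\langle g(x), f(y)\rangle$, which is $\Gamma$-invariant by Lemma~\ref{lem:fundamental_domain_irrelevant}, for $f\in\iE_{r_1}$ and $g\in\diE[r_3]$ (where $r_3<r_4'$ is an auxiliary exponent to be chosen) and for $i,j\ge s$,
\[
\langle g, A_{(s,i+j-2s)}f \rangle = \sum_{o \in \Omega} \tfrac{1}{|\Gamma_o|} \langle I_{o,i}(g), T_{o,s} J_{o,j}(f) \rangle.
\]
Absolute convergence of the series uses Lemma~\ref{lem:exponential_integrability} combined with $f\in\iE_{r_1}$, $g\in\diE[r_3]$. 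Taking the difference at $s$ and $s+1$, the admissibility estimate $\|T_{o,s}-T_{o,s+1}\|\leq Le^{-\theta s}$ and Cauchy--Schwarz over $o$ reduce the matter to bounding the quantities $\sum_o \tfrac{1}{|\Gamma_o|}\|I_{o,i}(g)\|_{L^2}^2$ and $\sum_o \tfrac{1}{|\Gamma_o|}\|J_{o,j}(f)\|_{L^2}^2$. These are exactly the sphere averages
\[
\sum_o \tfrac{1}{|\Gamma_o|}\tfrac{1}{|S_{(0,j)}(o)|}\sum_{y\in S_{(0,j)}(o)}\|f(y)\|^2 \quad\text{and}\quad \sum_o \tfrac{1}{|\Gamma_o|}\tfrac{1}{|S_{(i,0)}(o)|}\sum_{y\in S_{(i,0)}(o)}\|g(y)\|^2
\]
controlled by Lemma~\ref{lem:2steprep} with $r_2=2$ (and its dual version, applied to $\pi^*$, for the $g$-sum). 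Under the three conditions $\alpha\le \tfrac{\varepsilon}{2a}(\tfrac{1}{2}-\tfrac{1}{r_1})$, $\alpha\le \tfrac{\varepsilon}{2a}(\tfrac{1}{2}-\tfrac{1}{r_3})$ and $\alpha\le \tfrac{\varepsilon}{2a}(\tfrac{1}{r_3}-\tfrac{1}{r_4'})$, combining all bounds and using $i+j=2s+t$ yields
\[
\big|\langle g,(A_{(s,t)}-A_{(s+1,t-2)})f\rangle\big| \le C'' e^{-s(\theta-4a\alpha)+2a\alpha t}\|f\|_{r_1}\|g\|_{r_3}.
\]
Lemma~\ref{lem:duality_nonuniform} converts this into the required estimate on $\|A_{(s,t)}-A_{(s+1,t-2)}\|_{B(\iE_{r_1},\iE_{r_4})}$, and exchanging points and lines in $\partial X$ gives the symmetric non-vertical one. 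Lemma~\ref{lem:lafforgue} then yields convergence to some $P\in B(\iE_{r_1},\iE_{r_4})$, with zig-zag exponents $D=\theta-4a\alpha$ and $E=2a\alpha$; the strict inequality $D>E$ is the condition $\alpha<\tfrac{\theta}{6a}$, while the three compatibility conditions above can be met simultaneously by some $r_3$ in the window $(2,r_4')$ provided $\alpha<\tfrac{\varepsilon}{8a}(\tfrac{1}{r_4}-\tfrac{1}{r_1})$ (the natural range being $r_4<2<r_1$). Together these give the stated bound on $\alpha$.

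The fact that the image of $P$ consists of $\Lambda_0$-harmonic functions is obtained verbatim as at the end of the proof of Proposition~\ref{prop:harmonic_limit}: the product formula $A_\lambda A_{\lambda'}=\sum_{\lambda''\in\Lambda_0}p_{\lambda,\lambda'}(\lambda'') A_{\lambda''}$ from~\cite{CartwrightMlotkowski94}, combined with norm convergence $A_{\lambda'}\to P$ along $\Lambda_0$, yields $A_\lambda P=P$ for every $\lambda\in\Lambda_0$. The main technical point is the bookkeeping verifying that the three compatibility inequalities plus the condition $D>E$ all fit inside the assumption on $\alpha$: this is what forces the constants $\tfrac{\varepsilon}{8a}$ and $\tfrac{\theta}{6a}$ (roughly halved from the cocompact Lemma~\ref{lem:alambda_converge}) and what makes the hypothesis depend on the interpolation room $\tfrac{1}{r_4}-\tfrac{1}{r_1}$.
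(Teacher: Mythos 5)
Your proposal follows the same skeleton as the paper's proof (represent a functional on $\iE_{r_4}$ via Lemma~\ref{lem:duality_nonuniform}, control $I_{o,i}$ and $J_{o,j}$ via Lemma~\ref{lem:2steprep}, use the admissibility decay of $T_{o,s}-T_{o,s+1}$, conclude with the zig-zag Lemma~\ref{lem:lafforgue} and the Cartwright--M\l otkowski product formula), but there is a genuine gap: you anchor every vector-valued estimate at the exponent $2$, and the resulting numerology neither covers the stated range of $(r_4,r_1)$ nor is implied by the stated bound on $\alpha$. Your scheme needs $2<r_1$ and an auxiliary exponent $r_3\in(2,r_4')$, hence $r_4<2$; but the proposition is asserted for all $1\le r_4<r_1\le\infty$, so the cases $2\le r_4<r_1$ and $r_4<r_1\le 2$ are simply out of reach of the argument. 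Worse, even when $r_4<2<r_1$, your second and third compatibility conditions can hold simultaneously for some $r_3$ only if $\alpha\le\frac{\varepsilon}{4a}\bigl(\frac{1}{r_4}-\frac12\bigr)$ (optimize over $r_3$ in the window, whose total width in reciprocals is $\frac12-\frac{1}{r_4'}=\frac{1}{r_4}-\frac12$), and this is \emph{not} a consequence of $\alpha<\frac{\varepsilon}{8a}\bigl(\frac{1}{r_4}-\frac{1}{r_1}\bigr)$ unless $r_4\le r_1'$: with $r_4$ close to $2$ and $r_1$ large the hypothesis stays of size $\frac{\varepsilon}{2a}\cdot\frac18$ while $\frac{1}{r_4}-\frac12$ is arbitrarily small (a similar failure occurs for your first condition when $r_1$ is close to $2$). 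Already for the exponents $r_1=9$, $r_4=\frac95$ used later in the paper, the hypothesis permits $\alpha$ four times larger than your three conditions tolerate, so your last paragraph's claim that they ``fit inside the assumption on $\alpha$'' is false.

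The missing idea is precisely the interpolation step that lets one leave the exponent $2$: choose $r_2,r_3$ strictly between $r_4$ and $r_1$ with $\frac{1}{r_2}=\frac12\bigl(\frac{1}{r_4}+\frac{1}{r_1}\bigr)$ and $\frac{1}{r_3}=\frac34\,\frac{1}{r_4}+\frac14\,\frac{1}{r_1}$, so that every gap fed into Lemma~\ref{lem:2steprep} and Lemma~\ref{lem:duality_nonuniform} equals $\frac14\bigl(\frac{1}{r_4}-\frac{1}{r_1}\bigr)$ --- this is exactly where the constant $\frac{\varepsilon}{8a}$ comes from --- and then bound $T_{o,s}-T_{o,s+1}$ on $\LL^{r_2}(\cdot;E)$ by interpolating (Subsection~\ref{subsection:interpolation}) the admissibility bound $Le^{-\theta s}$ at $\LL^2$ against the trivial bounds $\le 2$ at $\LL^1$ and $\LL^\infty$. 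This gives decay $2Le^{-\theta(1-|1-\frac{2}{r_2}|)s}$, and since $1-|1-\frac{2}{r_2}|\ge\frac{1}{r_4}-\frac{1}{r_1}$ for all $1\le r_4<r_1\le\infty$, the zig-zag requirement $D>E$ (with $D=\theta(1-|1-\frac{2}{r_2}|)-4a\alpha$, $E=2a\alpha$) is then guaranteed by $\alpha<\frac{\theta}{6a}\bigl(\frac{1}{r_4}-\frac{1}{r_1}\bigr)$. A secondary point you gloss over: $A_\lambda$ is not bounded on $\iE_{r_4}$, so $A_\lambda P=P$ cannot be deduced from norm convergence of $A_{\lambda'}$ alone; one uses that $A_\lambda f(x)$ is a finite convex combination of values of $f$, so convergence in $\iE_{r_4}$ yields the required pointwise convergence.
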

\begin{proof}
With the results proven so far in this section, the proof is essentially the same as Proposition~\ref{prop:harmonic_limit}. 

Define $r_4< r_3<r_2<r_1$ by $\frac{1}{r_2}= \frac{1}{2r_4}+\frac{1}{2r_1}$ and $\frac{1}{r_3} = \frac{3}{4r_4}+ \frac{1}{4r_1}$, so that 
$\alpha \leq \frac{\varepsilon}{2a}(\frac{1}{r_{i+1}} - \frac{1}{r_i})$ for every $i=1,2,3$.

Let $f \in \iE_{r_1}$ and $\varphi \in (\iE_{r_4})^*$ of norm $1$. By Lemma~\ref{lem:duality_nonuniform}, $\varphi$ can be represented by an element of $\diE[r_3']$ of norm $\leq C'$. Let $i,j,s$ such that $i\geq j > s$. Write $\lambda = (s,i+j-2s)$. In a similar way as in Proposition~\ref{prop:harmonic_limit} we define, for every $x \in \Omega$, linear maps $I_{x,i}:\diE \to \LL^0(\P,E^*)$ and $J_{x,j}:\iE\to \LL^0(\L,E)$. Lemma~\ref{lem:2steprep} with $\lambda =(0,j)$ and $(i,0)$ respectively implies that $I_{x,i}$ and $J_{x,j}$ map $\diE[r_3']$ to $\LL^{r_2'}(\P,E^*)$ and $\iE_{r_1}$ to $\LL^{r_2}(\L,E)$ respectively, and moreover we have
\[ \left(\sum_{x \in \Omega} \frac{1}{\abs{\Gamma_x}} \|I_{x,i}(g)\|_{\LL^{r_2}(\P;E^*)}^{r'_2}\right)^{\frac{1}{r_2}} \leq C'' e^{2a\alpha i}\]
and
\[ \left(\sum_{x \in \Omega} \frac{1}{\abs{\Gamma_x}} \|J_{x,j}(f)\|_{\LL^{r_2}(\L;E)}^{r_2}\right)^{\frac{1}{r_2}} \leq C'' e^{2a\alpha j}\]
for some $C''$.
Applying Proposition~\ref{prop:double_counting} to the function $F(x,y) = \langle g(x),f(y)\rangle$ we get
\[ \varphi(A_\lambda f) = \sum_{x \in \Omega} \frac{1}{\abs{\Gamma_x}} \langle I_{x,i}(g),T_{x,s} J_{x,j}(f)\rangle.\]

Finally, by the hypothesis~\eqref{eq:E-valued_Hoelder_bound}, the operator
$T_{x,s} -T_{x,s+1}$ has norm $\leq L e^{-\theta s}$ as an operator from  $\LL^2(\P;E)$ to $\LL^2(\L;E)$, and has clearly norm $\leq 2$ as an operator from $\LL^1(\P;E)$ to $\LL^1(\L;E)$ and from $\LL^\infty(\P;E)$ to $\LL^\infty(\L;E)$. Moreover, by interpolation (see Subsection~\ref{subsection:interpolation}) its norm from $\LL^r(\P;E)$ to $\LL^r(\L;E)$ is a log-convex function of $\frac 1 r$. So we deduce that it has norm $\leq 2 L e^{-\theta (1-|1-\frac {2}{r_2}|)s}$ from $\LL^{r_2}(\P;E)$ to $\LL^{r_2}(\L;E)$. Putting everything together, we obtain
\[\varphi( A_\lambda f - A_{\lambda + (1,-2)}f) \leq 2L C'' e^{2a\alpha(i+j)} e^{-\theta(1-|1-\frac{2}{r_2}|)s}.\]
Taking the supremum over $f$ and $\varphi$, we obtain
\[
\norm{A_\lambda f - A_{\lambda + (1,-2)}f}_{B(\iE_{r_1},\iE_{r_4})} \le 2L C'' e^{-s D + (i+j-2s) E}\]
where $D = \theta(1-|1-\frac{2}{r_2}|) - 4a\alpha$ and $E=2 a \alpha$. Note that $E<D$ by our assumption on $\alpha$.

Exchanging the roles of points and lines, we obtain the same upper bound for the other singular direction. We conclude by Lemma~\ref{lem:lafforgue} that $A_\lambda$ converges as $\lambda \in \Lambda_0$.

That the image of the limit consists of $\Lambda_0$-harmonic elements can be seen as before but a little care is needed formally. First, note that the operators $A_\lambda \colon \tilde{E} \to \tilde{E}$ are well-defined as abstract linear operators, without norms or boundedness involved and the definition of harmonicity makes sense as such. We need to show that
\[
A_\lambda \lim_{\lambda'} A_{\lambda'} = \lim_{\lambda'} A_\lambda A_{\lambda'} = \lim_{\lambda''} A_{\lambda''}
\]
where $A_\lambda \colon \iE_{r_4} \to \iE_{r_4}$ is unbounded and $A_{\lambda'}, A_{\lambda''} \colon \iE_{r_1} \to \iE_{r_4}$ are bounded. The second equality follows as before from the decomposition $A_\lambda A_{\lambda'} = \sum_{\lambda''} p_{\lambda,\lambda'}(\lambda'') A_{\lambda''}$ in \cite[Proposition 2.3]{CartwrightMlotkowski94}. The first is true by linearity because for $f \in \iE$ and $x \in X$ the value $A_\lambda f(x)$ is a finite convex combination.
\end{proof}

\subsection{Harmonic functions are constant} The following proposition generalizes the content of Section~\ref{sec:harmonic_constant}.
\begin{proposition}\label{prop:harmonic_constant_nonuniform}
 If $r>1$ and
 \[\alpha \leq \min\left(\frac{\varepsilon}{8a},\frac{\theta}{6a}\right) \left(1-\frac 1 r\right),\]
then every $\Lambda_0$-harmonic element of $\iE_r$ is constant on each type.
\end{proposition}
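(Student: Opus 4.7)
The plan is to transfer Section~\ref{sec:harmonic_constant} to the non-cocompact setting by running the interpolation-and-duality machinery of Section~\ref{subsection:convergence_nonuniform} on the three refined averaging operators $A_\lambda^\diamond$, $\diamond\in\{+,-,\sim\}$, instead of on the $A_\lambda$.

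First I would introduce the Banach spaces $\iE_r^{(1)}$ and $\diE[r]^{(1)}$ of $\Gamma$-equivariant maps $X^{(1)}\to E$ (respectively $X^{(1)}\to E^*$) with the natural $\ell^r$-norms indexed by $\Omega^{(1)}$. A verbatim adaptation of Lemma~\ref{lem:2steprep}, based on Lemma~\ref{lemma:undistorted}, shows that $A_\lambda^\diamond$ extends to a bounded operator $\iE_{r_1}\to\iE_{r_2}^{(1)}$ of norm $\le CM e^{2a\alpha|\lambda|_1}$ whenever $\alpha\le\frac{\varepsilon}{2a}(\frac{1}{r_2}-\frac{1}{r_1})$, and the auxiliary evaluation maps $I^+_{o,i},I^\simeq_{o,o',i},J^-_{o,o',j},J^\sim_{o,o',j}$ of Section~\ref{sec:harmonic_constant} enjoy exactly the same single-vertex growth bounds as $I_{o,i}$ and $J_{o,j}$ did in Proposition~\ref{prop:harmonic_limit_nonuniform}.

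Second, I would prove the analogue of Proposition~\ref{prop:harmonic_limit_nonuniform} for each of the families $(A_\lambda^\diamond)_{\lambda\in\Lambda_k}$. Fix a chain $1<r_4<r_3<r_2<r_1=r$ and represent every continuous linear form on $\iE_{r_4}^{(1)}$ by an element of $\diE[r_3']^{(1)}$ via Lemma~\ref{lem:duality_nonuniform}. Combining the $L^2$-bound \eqref{eq:E-valued_Hoelder_bound} for $T_{o,s+1}-T_{o,s}$ with the trivial $L^1$- and $L^\infty$-bounds of norm $\le 2$ and interpolating (cf.\ Subsection~\ref{subsection:interpolation}) gives a $B(L^{r_2}(\P;E),L^{r_2}(\L;E))$-norm of order $e^{-\theta(1-|1-2/r_2|)s}$. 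Plugging this into the three pairing formulas displayed in the proof of Lemma~\ref{lem:alambda_converge2} and using the bounds from the first step to control the $I^\bullet$ and $J^\bullet$ maps yields, for each $\diamond$, estimates of the form
\[\norm{A^\diamond_{(s,t)}-A^\diamond_{(s+1,t-2)}}_{B(\iE_{r_1},\iE_{r_4}^{(1)})}\le C''e^{-sD+tE},\qquad \norm{A^\diamond_{(s,t)}-A^\diamond_{(s-2,t+1)}}_{B(\iE_{r_1},\iE_{r_4}^{(1)})}\le C''e^{-tD+sE},\]
with $E<D$ as soon as $\alpha$ is below the bound in the statement. The zig-zag argument of Lemma~\ref{lem:lafforgue} then produces limits $P_k^\diamond\in B(\iE_{r_1},\iE_{r_4}^{(1)})$ along each $\Lambda_k$.

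Third, I would conclude as in the proof of Proposition~\ref{prop:harmonic=constants}. The singular-direction identities of Lemma~\ref{lem:important_fact} force $P_k^+=P_k^\sim=P_k^-$, and Lemma~\ref{CartwrightMlotkowski} (expressing $A_\lambda f(x)$ as a convex combination of the three $A_\lambda^\diamond f(x,x')$ with weights depending only on the type of $\lambda$) upgrades this to $\lim_{\lambda\in\Lambda_k}\norm{A_\lambda^\diamond f-A_\lambda f}_{\iE_{r_4}^{(1)}}=0$. Given a $\Lambda_0$-harmonic $f\in\iE_r$, one picks $1<r_4<r_1=r$ and, for any $(x,x')\in\Omega^{(1)}$, writes $f(x')=A_{(i,i)}f(x')$ as the convex combination of $A^+_{(i,i-1)}f(x,x')$, $A^-_{(i+1,i)}f(x,x')$ and $A^\sim_{(i-1,i+1)}f(x,x')$ exactly as in the cocompact case. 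Since evaluation at a fixed orbit representative is continuous $\iE^{(1)}_{r_4}\to E$, letting $i\to\infty$ along $\Lambda_0$ and using the analogue of Corollary~\ref{cor:about_harmonic_functions} (which follows from Proposition~\ref{prop:harmonic_limit_nonuniform} together with the algebra relations $A_\lambda A_{\lambda'}=\sum p_{\lambda,\lambda'}(\lambda'')A_{\lambda''}$ of \cite{CartwrightMlotkowski94}, applied in the appropriate pair $\iE_{r_1}\to\iE_{r_4}$) yields $f(x')=f_1(x)$, where $f_1\in\iE_{r_4}$ is the common value of $A_\lambda f$ for $\lambda\in\Lambda_1$. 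Swapping points and lines via Lemma~\ref{lem:A_opposite} gives the symmetric statement for edges in $\overline X^{(1)}$, and the graph-connectivity argument at the end of the cocompact proof then forces $f$ to be constant on each type.

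The main obstacle is the exponent bookkeeping. One has to choose $1<r_4<r_3<r_2<r_1=r$ so that the growth-absorbing inequalities of Lemmas~\ref{lem:2steprep} and~\ref{lem:duality_nonuniform} hold simultaneously for consecutive pairs, and so that the interpolated $L^{r_2}$-decay rate $\theta(1-|1-2/r_2|)$ exceeds $6a\alpha$ (yielding $E<D$ in the zig-zag). Optimizing this chain is precisely what produces the bound $\alpha\le\min(\frac{\varepsilon}{8a},\frac{\theta}{6a})(1-\frac1r)$; once the exponents are set the building-theoretic content of the argument is identical to that of Section~\ref{sec:harmonic_constant}.
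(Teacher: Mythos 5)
Your proposal is correct and follows essentially the same route as the paper: the paper's proof likewise introduces $\iE^{(1)}_r$, runs the Proposition~\ref{prop:harmonic_limit_nonuniform} machinery on the three operators $A_\lambda^\truc$ (the paper takes the target space to be $\iE^{(1)}_1$, which is what makes the factor $1-\frac{1}{r}$ appear directly, whereas you take $\iE^{(1)}_{r_4}$ with $r_4>1$ close to $1$), and then concludes with the identities $P^+=P^\sim=P^-$ and the convex-combination and connectivity argument of the cocompact case. The only difference is this choice of target exponent, which is immaterial to the argument.
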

\begin{proof}
As in the cocompact case, we can define vector spaces $\iE^{(1)}$ and Banach subspaces $\iE^{(1)}_r$  for the norm
\[
\norm{f} \defeq \bigg(\sum_{(x,x') \in \Omega^{(1)}} \frac{1}{\abs{\Gamma_{x,x'}}} \norm{f(x,x')}^r\bigg)^{\frac{1}{r}}.\]
Here $\Omega^{(1)}$ is a fundamental domain for $\Gamma \backslash X^{(1)}$, that we assume to be contained in $\Omega \times X$.

Again, some care is needed as $\iE^{(1)}_r$ depend on $r$.

As in the proof of Proposition \ref{prop:harmonic_limit_nonuniform}, the assumptions imply that, for every $\truc  \in\{+,\sim,-\}$, the operators $A_\lambda^\truc$ converge in $B(\iE_r,\iE_1^{(1)})$ to some operator $P^\truc$ as $\lambda \in \Lambda_0$. Morever $P^+ = P^\sim=P^-$. We can exploit the same argument as in the cocompact case to deduce that harmonic elements $\iE_r$ are constant.
\end{proof}

% ==============================================================================
\section{Vanishing of $\ell^p$-cohomology}\label{sec:lp_cohomology}
% ==============================================================================
The $\ell^r$-cohomology\footnote{Recall that we avoid the usual term $\ell^p$-cohomology because the letter $p$ is extensively used to denote points in the building at infinity} is defined for every locally finite simplicial complex, see \cite{MR3356973}. In the particular case when $X$ is connected and simply connected (for example an $\tilde A_2$-building) and $k=1$, it can be defined as follows.
Let $V$ and $E$ be the vertices and oriented edges of $X$, respectively, and let $\ell^0(V)$ and $\ell^0(E)$ denote the spaces of functions $V \to \C$ and $E \to \C$ (where orientation change translates into a change of sign).
Define the coboundary map $\delta \colon \ell^0(V) \to \ell^0(E)$ by
\[
\delta f(e) = f(e^+) - f(e^-)\text{.}
\]
Then the first $\ell^r$-cohomology of $X$ is
\[
\ell^r H^1(X) = \delta(\ell^0(V)) \cap \ell^r(E) / \delta(\ell^r(V)).
\]
We shall use the following general lemma, which identifies the $\ell^r$-cohomology with the harmonic cocycles. Recall that the Laplacian of $f \colon V \to \C$ is defined by
\[
\Delta f(x) = \frac{1}{d(x)} \sum_{e \in E, e^- = x} f(x) - f(e^+) .
\]

The following lemma is certainly well-known to experts (see \cite{MR1078111} for manifolds, \cite{MR3497258} for groups, \cite[Lemma~1.18]{MR1926649} for $r = 2$), but we could not locate a proof, so we include one for reference.

\begin{lemma}\label{lemma:harmonic-representatives} If $X$ is a simply connected simplicial complex, and if the underlying graph has bounded degree and positive Cheeger constant, then for every $1<r<\infty$ every class $\ell^r H^1(X)$ is represented by a harmonic function $f\colon V \to \C$ satisfying $\delta f \in \ell^r(E)$, and such a representation is unique up to an additive constant. Consequently,
 \[
 \ell^r H^1(X) \cong \{f \colon V \to \C \mid \delta f \in \ell^r(E), \Delta f=0\} / \C.
 \]
\end{lemma}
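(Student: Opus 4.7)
The plan is to reduce both existence and uniqueness to a single fact: the normalized Laplacian $\Delta$ is a bounded invertible operator on $\ell^r(V)$ for every $1<r<\infty$. Granted this, existence proceeds as follows. A representative $f$ with $\delta f\in\ell^r(E)$ satisfies $\Delta f\in\ell^r(V)$ by bounded degree, via the pointwise bound $|\Delta f(x)|\le d(x)^{-1}\sum_{e^-=x}|\delta f(e)|$. Setting $g:=\Delta^{-1}(\Delta f)\in\ell^r(V)$, the function $f':=f-g$ is harmonic, and $\delta f'=\delta f-\delta g\in\ell^r(E)$ (since $\delta g\in\ell^r(E)$ by bounded degree), so $f'$ represents the same cohomology class as $f$. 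Uniqueness is analogous: if $f_1,f_2$ are two harmonic representatives of the same class, then $\delta(f_1-f_2)=\delta k$ for some $k\in\ell^r(V)$; since $X$ is connected, $f_1-f_2-k$ is a constant $c$, so $k=f_1-f_2-c$ is harmonic. Injectivity of $\Delta$ on $\ell^r(V)$ then forces $k=0$, whence $f_1-f_2=c$.

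The technical heart of the proof is therefore the invertibility of $\Delta$ on $\ell^r(V)$. I would derive it in two steps. First, the classical discrete Cheeger inequality (proved by the coarea formula on graphs) uses exactly the hypotheses of positive Cheeger constant and bounded degree to provide a spectral gap: the spectrum of $\Delta$ on the weighted $\ell^2(V,d)$ is contained in $[\lambda_0,2]$ for some $\lambda_0>0$. Under bounded degree this weighted space is isomorphic to $\ell^2(V)$, so $\Delta$ is invertible on $\ell^2(V)$ with $\|\Delta^{-1}\|\le\lambda_0^{-1}$. Second, I pass from $\ell^2$ to $\ell^r$ using the heat semigroup $P_t=e^{-t\Delta}$. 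Since $\Delta=I-P$ where $P$ is the transition operator of the simple random walk, the expansion $P_t=e^{-t}\sum_n(tP)^n/n!$ shows that $P_t$ is a contraction on $\ell^1(V)$ and on $\ell^\infty(V)$. The spectral gap gives $\|P_t\|_{\ell^2\to\ell^2}\le e^{-\lambda_0 t}$, and Riesz–Thorin interpolation between $\ell^2$ and $\ell^1$ (for $1<r\le 2$) or between $\ell^2$ and $\ell^\infty$ (for $2\le r<\infty$) yields $\|P_t\|_{\ell^r\to\ell^r}\le e^{-c_r t}$ for some $c_r>0$. Integrating, $\Delta^{-1}=\int_0^\infty P_t\,dt$ is a bounded operator on $\ell^r(V)$ for every $1<r<\infty$.

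The only obstacle I anticipate is the transfer of the spectral gap from $\ell^2(V)$ to $\ell^r(V)$; this is resolved by the interpolation argument above, whose key input is the sub-Markovian character of $P_t$ (itself an immediate consequence of $\Delta=I-P$ with $P$ Markov). The remainder of the argument is essentially bookkeeping, with the bounded-degree hypothesis ensuring that the normalized and unnormalized $\ell^r$-spaces are equivalent up to universal constants, so no distinction between weighted and unweighted spaces affects boundedness.
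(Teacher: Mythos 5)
Your proof is correct and follows essentially the same route as the paper: both reduce the lemma to the invertibility of $\Delta$ on $\ell^r(V)$, obtained from the Cheeger-given $\ell^2$ spectral gap by interpolation against the trivial $\ell^1/\ell^\infty$ bounds (you via the semigroup $e^{-t\Delta}$, the paper via the Neumann series for $1-\tfrac{\Delta}{2}$), and then correct a representative by subtracting $\Delta^{-1}(\Delta f)$, which is the function-level form of the paper's decomposition $\ell^r(E)=\delta(\ell^r(V))\oplus\ker\partial$. The only point to tidy (shared by the paper's own write-up) is that the random-walk operator is a contraction on $\ell^1$ only for the degree-weighted measure, so the interpolation should be run on the weighted spaces and transferred to $\ell^r(V)$ by the bounded-degree norm equivalence you already mention.
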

\begin{proof} 
By Cheeger's inequality and the bounded degree assumption, the positive Cheeger constant assumption is equivalent to saying that $\Delta$ is invertible on $\ell^2(V)$. By interpolation arguments, this implies that $\Delta$ is invertible on $\ell^r(V)$. Indeed, the invertibility of $\Delta$ on $\ell^2(V)$ is equivalent to $\|1 - \frac{\Delta}{2}\|_{B(\ell^2(V))}<1$. Note also that $\|1 - \frac{\Delta}{2}\|_{B(\ell^r(V))}\leq 1$ for $r=1,\infty$. Thus, the Riesz--Thorin theorem implies that $\|1 - \frac{\Delta}{2}\|_{B(\ell^r(V))}<1$ for every $1<r<\infty$. This in turn implies that $\Delta/2 = 1-(1-\Delta/2)$ is invertible.

Consider now the boundary map $\partial \colon \ell^r(E) \to \ell^r(V)$ mapping $f$ to
\[
\partial f(x) = \frac{1}{d(x)} \sum_{e \in E, e^+=x} f(e)\text{.}
\]
It is a bounded operator satisfying $\partial \circ \delta = \Delta$. By the invertibility of $\Delta$ on $\ell^r(V)$, we deduce that we can decompose $\ell^r(E)$ as $\delta(\ell^r(V)) \oplus \ker \partial$. So we obtain that $\ell^r H^1(X) = (\delta(\ell^0(V)) \cap \ell^r(E)) \cap \ker \partial$, from which the lemma follows.
 \end{proof} 
 
For the rest of the section let $X$ be a locally finite $\tilde{A}_2$-building. The spectrum of $\Delta$ on $\ell^2(V)$ is known to be equal to the interval $[1-\frac{3q}{q^2+q+1},1+\frac{3q}{2(q^2+q+1)}]$ (it is equal to $\{1-\Re(z) \mid z \in \Sigma\}$ for $z$ in the set $\Sigma$ in \cite[Proposition 4.4]{CartwrightMlotkowski94}). In particular, the graph $(V,E)$ has positive Cheeger constant. Therefore, Theorem~\ref{thm:lp_cohomology} is equivalent to the following:

\begin{theorem}\label{thm:lp_cohomology_concrete}
Every harmonic function $f$ on $X$ such that $\delta f \in \ell^r$ for some $r<\infty$ is constant.
\end{theorem}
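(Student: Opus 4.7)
Set $g := \delta f \in \ell^r(X^{(1)})$; we aim to show $g = 0$. The plan mirrors the two-step strategy of Sections~\ref{sec:convergence_harmonic}--\ref{sec:harmonic_constant} in the non-equivariant scalar setting, with $\ell^r$-control of $\delta f$ replacing $\Gamma$-equivariance and norm growth hypotheses.

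The first step is to show that $(A_\lambda f)_{\lambda \in \Lambda_0}$ is Cauchy in the seminorm $h \mapsto \|\delta h\|_{\ell^r}$. Although $f$ need not lie in $\ell^r(V)$, each difference $A_\lambda f(x) - A_{\lambda'} f(x)$ depends only on $g$: the value $f(y) - f(x)$ is a path-independent integral of $g$, so $A_\lambda f(x) - f(x) = \frac{1}{|S_\lambda(x)|} \sum_{y \in S_\lambda(x)} (f(y) - f(x))$ is a bounded linear function of $g$. Applying Proposition~\ref{prop:double_counting} with trivial $\Gamma$ (valid since we take $F$ real non-negative) to $F(x,y) = |f(y) - f(x)|^r$ reduces $\ell^r$-norms of $A_\lambda f - A_{\lambda'} f$ to operator norms of $T_{o,s} - T_{o,s+1}$ averaged over the origin $o$. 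The exponential decay $\|T_{o,s} - T_{o,s+1}\|_{\ell^r \to \ell^r} = O(e^{-\theta s})$ follows from Proposition~\ref{prop:operator_norm_of_Ts-Ts-1} (upgraded to $\ell^r$ by Riesz--Thorin interpolation with the trivial $\ell^1$- and $\ell^\infty$-bounds), and combined with the zig-zag Lemma~\ref{lem:lafforgue} yields the Cauchy property and a pointwise limit $\phi_\infty$.

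The second step parallels Section~\ref{sec:harmonic_constant}: the analogous edge-nets $(A^\truc_\lambda f)_\lambda$ for $\truc \in \{+,\sim,-\}$ also converge, and their limits coincide by Lemma~\ref{lem:important_fact} (the degeneracies $\lambda^+ = \lambda^\sim$ when $\lambda = (i,0)$ and $\lambda^\sim = \lambda^-$ when $\lambda = (0,j)$). This forces $\phi_\infty$ to be constant on each type; the $\ell^r$-bound on $\delta \phi_\infty$ then forces the three type-constants to be equal, so $\phi_\infty$ is globally a constant $c$. To conclude $f \equiv c$, we use harmonicity: since $A_\lambda$ commutes with $\Delta$, each $A_\lambda f$ is itself harmonic, and combining $A_\lambda f \to c$ pointwise with the mean-value identity $\frac{1}{2}(A_{(1,0)} + A_{(0,1)}) f = f$ (either iteratively, or via a spectral-gap argument for $\Delta$ on a homogeneous Sobolev-type space derived from Lemma~\ref{lem:spectrum} by interpolation as in the proof of Theorem~\ref{thm:propertyT}) forces $f = c$, hence $g = 0$.

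\textbf{Main obstacle.} The principal difficulty is that $f$ is not assumed in $\ell^r(V)$: only the gradient $\delta f$ is controlled. Consequently, the Banach-space framework of Sections~\ref{sec:outline}--\ref{sec:harmonic_constant} (built on $\iE$) must be replaced by a homogeneous Sobolev-type space, and every estimate must be phrased modulo constants in terms of $g$ rather than $f$. The last sub-step---concluding $f = c$ from the asymptotic convergence $A_\lambda f \to c$---is the most delicate, requiring the $\ell^r$-version of Lemma~\ref{lem:spectrum} and a careful separation between $f$, which may be large at infinity, and its smoothed averages $A_\lambda f$.
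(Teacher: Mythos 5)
Your first step assembles the right tools and is close to the paper's argument, but note how the hypotheses actually enter: the decay of $T_{o,s}-T_{o,s+1}$ is used through a bilinear pairing with an $\ell^{r'}$ test function on edges (cf.\ \eqref{eq:cohom_average}), not by inserting $F(x,y)=|f(y)-f(x)|^r$ into Proposition~\ref{prop:double_counting}; the device that compensates for $f\notin\ell^r$ is the recentering $\beta_{o,j}(\ell)=f(\ell_{o,j})-f(o)$, legitimate because $T_{o,s}-T_{o,s+1}$ annihilates constants, which makes $\sum_o\|\beta_{o,j}\|^r_{\LL^r}$ controlled by $j$ and $\|\delta f\|_r$; and since your second step runs the three edge nets $A^+_\lambda f, A^\sim_\lambda f, A^-_\lambda f$, you need the edge-level identities of Proposition~\ref{prop:double_counting_diamond} (with trivial $\Gamma$), not only the vertex-level statement. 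These points are repairable, and the interpolation to $\LL^r$ plus the zig-zag Lemma~\ref{lem:lafforgue} and the degeneracies of Lemma~\ref{lem:important_fact} are exactly as in the paper.

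The genuine gap is in the endgame. You assert that the coincidence of the $+,\sim,-$ limits forces the limit of $A_\lambda f$ to be constant on each type, then argue the three type-constants agree and finally that $f$ equals that constant. But the mechanism that yields ``constant on each type'' in Section~\ref{sec:harmonic_constant} crucially uses $\Lambda_0$-harmonicity, i.e.\ $A_\lambda f=f$ for all $\lambda\in\Lambda_0$; here $f$ is only $\Delta$-harmonic, so that argument is unavailable and nothing in your sketch replaces it. The paper neither proves nor needs such constancy: it shows $f(x)=f_\infty(x,x')$ by iterating the lazy mean-value identity $\frac13(\operatorname{id}+A_{(1,0)}+A_{(0,1)})f=f$ (transience pushes the mass of the walk to large $\lambda$, and laziness makes the type of $x_n$ exactly uniform, so the limit is $\frac13(f_{\infty,0}+f_{\infty,1}+f_{\infty,2})(x,x')$), and then obtains the second identity $f(x')=f_\infty(x,x')$ by repeating the whole argument with points and lines exchanged, i.e.\ in $\overline X$, before concluding via connectivity of the $(1,0)$-graph. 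Your iteration of $\frac12(A_{(1,0)}+A_{(0,1)})$ only yields $f(x)$ as an average of the three type-class limits at $x$; without the duality step (or a proof that those limits are a single global constant) this does not give $f(x)=f(x')$, so the proof does not close. The alternative ``spectral-gap argument on a homogeneous Sobolev-type space'' is likewise unsupported: Lemma~\ref{lem:spectrum} is an $\ell^2$ statement and $f$ need not lie in any $\ell^r$; also, your claim $\delta\phi_\infty\in\ell^r$ presupposes a Cauchy property for $\delta(A_\lambda f)$ that the double-counting identities do not directly provide.
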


As before, we identify $X$ with its $0$-skeleton $V$. We equip $\ell^0(X) = \ell^0(V)$, the space of all functions $X \to \C$, with the $\ell^r$ pseudo-distance. This means that in general, two elements of $\ell^0(X)$ are at infinite distance apart, unless their difference lies in $\ell^r(X)$. 

The main technical ingredient in the proof of Theorem~\ref{thm:lp_cohomology_concrete} is Lemma~\ref{lem:cv_of_Alambdaf_ellr} and deals with the convergence of the averaging operators defined in \eqref{eq:def_Alambda} applied to functions with finite $r$-energy. As in the proof of strong property (T), we denote by $X^{(1)}$ the set of pairs $(x,x') \in X\times X$ satisfying $\sigma(x,x') = (1,0)$. 

\begin{lemma}\label{lem:cv_of_Alambdaf_ellr} Let $f \in \ell^0(X)$ be a function such that $\delta f \in \ell^r(E)$. For every $k \in \Z/3\Z$, and $x \in X$, $\lim_{\lambda \in \Lambda_k, |\lambda|\to \infty} A_\lambda f(x)$ exists. Moreover, for every $(x,x') \in X^{(1)}$ and every $k \in \Z/3\Z$ we have
\[
 \lim_{\lambda \in \Lambda_{k}, |\lambda|\to \infty} A_\lambda f(x) = \lim_{\lambda \in \Lambda_{k+2}, |\lambda|\to \infty} A_\lambda f(x').
\]
\end{lemma}

%\begin{proof}[Direct proof of Theorem~\ref{thm:lp_cohomology}]
%Let $f$ be as in Lemma~\ref{lem:cv_of_Alambdaf_ellr}. Denote by $g(x)$ the limit of $A_\lambda f(x)$ as $\lambda \in \Lambda_0$ goes to infinity. An inspection of the proof of Lemma~\ref{lem:cv_of_Alambdaf_ellr} shows that the convergence of $A_\lambda f$ to $g$ happens in $\ell^r(X)$. Moreover, it is easy to see that $f - A_\lambda f \in \ell^r(X)$ for every $\lambda$, so $f -g$ belongs to $\ell^r(X)$. We will be done if we prove that $g$ in constant.

%The conclusion of Lemma~\ref{lem:cv_of_Alambdaf_ellr} implies that $g$ is constant on each type. But it also satisfies $\delta g = \delta f + \delta (g-f) \in \ell^r(E)$. But a function that is constant on each type and satisfies $\delta g \in \ell^r(X)$ is necessarily constant.
%\end{proof}

Before we prove this lemma, let us explain how it is used when $f$ is moreover harmonic.
\begin{proof}[Proof of Theorem~\ref{thm:lp_cohomology_concrete}]
Let $f \in \ell^0(X)$ be a harmonic function such that $\delta f \in \ell^r(E)$. 

We use the probabilistic/potential theoretic formulation of harmonicity. For $x \in X$, denote by $\proba_x$ the law of the random walk $(x_n)_{n\geq 0}$ starting at $x$ with transition probabilities
\[ p(y,z) = \begin{cases} \frac 1 3 & \textrm{if }y=z\\
 \frac{2}{3(q^2+q+1)} & \textrm{if }d(y,z)=1.\end{cases}\]
This means that its Markov operator is $\frac{1}{3}(\operatorname{id}+A_{(1,0)} + A_{(0,1)}) = \operatorname{id} + \frac{2}3 \Delta$, so we have $f(x) = \mathbb E_x(f(x_n))$ for every $n$. We denote $p_{n,x}(\lambda) = \mathbb P_x( \sigma(x,x_n)=\lambda)$ (in our case it does not depend on $x$ but we will not use that fact). 
Harmonicity of $f$ then gives 
\[ f(x) = \mathbb E_x (f(x_n)) = \sum_\lambda p_{n,x}(\lambda) A_\lambda f(x).\]
By the transience of the random walk in $X$, we have that $\lim_n p_{n,x}(\lambda)=0$ for every $\lambda$. Moreover, under $\proba_x$, the type of $x_n$ is uniformly distributed in $\Z/3\Z$ for every $n\geq 1$ (this was the reason for taking a lazy random walk rather than the usual random walk $p(y,z) = \frac{1}{2(q^2+q+1)}$ if $d(y,z)=1$), so taking the limit $n \to \infty$ and using Lemma~\ref{lem:cv_of_Alambdaf_ellr} we obtain 
\begin{equation}\label{eq:fxequalfinfty} f(x) = \frac{1}{3} \sum_{k \in \Z/ 3\Z} \lim_{\lambda \in \Lambda_k, |\lambda|\to \infty} A_\lambda f(x).\end{equation}
This is valid for every $x \in X$. Taking $(x,x') \in X^{(1)}$, we see from Lemma~\ref{lem:cv_of_Alambdaf_ellr} that the right-hand side of \eqref{eq:fxequalfinfty} remains unchanged if we replace $x$ by $x'$. As a consequence, $f(x)=f(x')$ for every $(x,x') \in X^{(1)}$. This proves that $f$ is constant and concludes the proof, as the graph obtained by joining two vertices $x$ and $x'$ if $\sigma(x,x') =(1,0)$ is connected.\end{proof}

It remains to prove Lemma~\ref{lem:cv_of_Alambdaf_ellr}.
In the notation of Lemma~\ref{lem:relation_Alambda_Alambdatruc}, for $f \in \ell^0(X)$, and for every $\truc \in \{+,-,\sim\}$ we define $f_{\lambda}^\truc = A_\lambda^\truc f$ that is,
 \[f_{\lambda}^\truc(x,x') = \frac{1}{|S^\truc_\lambda(x,x')|} \sum_{y \in S^\truc_\lambda(x,x')} f(y).\]
 We also equip $\ell^0(X^{(1)})$ with the $\ell^r$ pseudo-distance. 
 \begin{lemma}\label{lem:10.3}
 Let $\theta = \min(\frac 1 r,1 - \frac 1 r)>0$. For every $f \in \ell^0(X)$ such that $\delta f \in \ell^r(E)$, 
 \begin{itemize}
     \item 
 For every $s\geq 1$, $t \geq 2$ and $\truc \in \{+,\sim,-\}$, \begin{equation}\label{eq:ftruc_in_ellr} \|f_{s,t}^\truc - f_{s+1,t-2}^\truc\|_r \leq 4 (s+1) (q^2+q+1)q^{-\theta s} \|\delta f\|_r.
\end{equation}
\item For every $t\geq 1$, $s \geq 2$ and $\truc \in \{+,\sim,-\}$.
\begin{equation}\label{ftruc_dual_direction} \| f_{s,t}^\truc -  f_{s-2,t+1}^\truc\|_r \leq 4 (s+1) (q^2+q+1) t q^{-\theta t} \|\delta f\|_r.
\end{equation}
\end{itemize}
\end{lemma}
\begin{proof} 
We can normalize $f$ so that $\|\delta f\|_r =1$. 
Let ${r'}$ be the dual exponent of $r$: $\frac 1 r + \frac 1 {r'}=1$. Fix $g \in \ell^{r'}(X^{(1)})$ of norm $1$ and finitely supported. For $o \in X$ and integers $i\geq j>s>0$, denote $\alpha_{o,i}^+(p) = g(p_{o,i},p_{o,i+1})$ and $\beta_{o,j}(\ell) = f(\ell_{o,j})-f(o)$. Then \eqref{eq:double_counting_+} in Proposition \ref{prop:double_counting_diamond} for $\Gamma$ the trivial group gives
 \begin{equation}\label{eq:cohom_average}
 \sum_{o \in X} \langle \alpha_{o,i}^+, (T_{o,s} - T_{o,s+1})(\beta_{o,j})\rangle = \frac{1}{q^2+q+1}\langle g, f_{s,i+j-2s}^+ - f^+_{s+1,i+j-2s-2}\rangle.
\end{equation}
The operator $T_{o,s} - T_{o,s+1}$ has norm $\leq 4 q^{-\frac s 2}$ from $\LL^2(\L,\proba_o)$ to $\LL^2(\P,\proba_o)$ by Proposition~\ref{prop:Schatten_norm_of_Ts-Ts-1}, and norm $\leq 2$ from $\LL^1$ to $\LL^1$ and $\LL^\infty$ to $\LL^\infty$. By the Riesz--Thorin theorem it follows that it has norm $\leq 4 q^{-\theta s}$ from $\LL^r(\L,\proba_o)$ to $\LL^r(\P,\proba_o)$. So the left-hand side of \eqref{eq:cohom_average} is less than 
 \[
 \sum_{o\in X} 4 q^{-\theta s} \|\alpha_{o,i}^+\|_{\LL^{r'}(\P)} \|\beta_{o,j}\|_{\LL^{r}(\L)},
 \]
 which by Hölder's inequality is bounded above by 
 \[
 4 q^{-\theta s} (\sum_o \|\alpha_{o,i}^+\|^{r'}_{\LL^{r'}(\P)})^{\frac 1 {r'}}(\sum_o  \|\beta_{o,j}\|_{\LL^{r}(\L)}^{r})^{\frac 1 {r}}.
 \]
 We can bound the left factor by
 \begin{multline*}
   \bigg(\sum_o \|\alpha_{o,i}^+\|_{\LL^{r'}(\P)}^{r'}\bigg)^{\frac{1}{r'}} = \bigg(\sum_{(x,x') \in X^{(1)}} |g(x,x')|^{r'} \sum_o \proba_o(p_{o,i}=x,p_{o,i+1}=x')\bigg)^{\frac{1}{r'}} 
 \\
 \le \bigg(\sum_{(x,x')\in X^{(1)}} |g(x,x')|^{r'} \sum_o \proba_o(p_{o,i}=x) \bigg)^{\frac{1}{r'}} = \|g\|_{r'} = 1
\end{multline*}
and the right factor by
\[
  \sum_o \|\beta_{o,j}\|^r_{L_r(\L)} \le j \|\delta f\|_{r}=j\text{.}
\]
So taking the supremum over $g$ we get
 \[ \|f_{s,i+j-2s}^+ - f_{s+1,i+j-2s-2}^+\|_r \leq 4 j(q^2+q+1) q^{-\theta s} \]
 for every $i\geq j \geq s+1$. 
 
Let us explain how we obtain similar estimates for $+$ replaced by $-$ and $\sim$. To do so, consider $g$ as above, and for every $(o,o') \in X^{(1)}$, denote
\begin{align*}
  \alpha^\simeq_{o,o',i}(p) &= g(p_{o,i},p_{o',i-1})1_{p \sim_{o,1} o'}\\
  \beta^-_{o,o',j}(\ell) &= (f(\ell_{o,j})-f(o))1_{\ell_{o,1} = o'}\\
  \beta^\sim_{o,o',j}(\ell) &= \frac{1}{q} (f(\ell_{o,j})-f(o))1_{\ell_{o,1} \neq o'}\text{.}
\end{align*}
Then \eqref{eq:double_counting_-} and \eqref{eq:double_counting_sim} in Proposition \ref{prop:double_counting_diamond} for $\Gamma$ the trivial group give for $\truc \in \{-,\sim\}$
\[ \sum_{(o,o') \in X^{(1)}} \langle \alpha^\simeq_{o,o',i},(T_{o,s} - T_{o,s+1})\beta^\truc_{o,o',j}\rangle =\frac{1}{q^2+q+1} \langle g, f_{s,i+j-2s}^\truc - f^\truc_{s+1,i+j-2s-2}\rangle.\]
The left-hand side is bounded above by
\[ 
\bigg(\sum_{(o,o') \in X^{(1)}}  \|\alpha^\simeq_{o,o',i}\|_{L^{r'}}^{r'}
\bigg)^{\frac 1 r'} \bigg(\sum_{(o,o') \in X^{(1)}}  \|\beta^\truc_{o,o',i}\|_{L^{r}}^r
\bigg)^{\frac 1 r}.\]
This first factor is equal to $\|g\|_{r'} \leq 1$, whereas the second term is bounded above by $j\|\delta f\|_r=j$, so taking the supremum over $g$ we obtain
\[ \|f_{s,i+j-2s}^\truc - f_{s+1,i+j-2s-2}^\truc\|_r \leq 4 j (q^2+q+1)q^{-\theta s}\]
for every $i\geq j \geq s+1$ and $\truc \in \{\sim,-\}$. Taking $j=s+1$ and denoting $t=i+j-2s$, we obtain \eqref{eq:ftruc_in_ellr}. 

By similar arguments, using Proposition~\ref{prop:double_counting_diamond_left} instead of Proposition~\ref{prop:double_counting_diamond}, denoting $\ltruc f_\lambda=\ltruc A_\lambda f$, we can prove that 
\[
\|\ltruc f_{s,t}-\ltruc f_{s+1,t-2}\|_r \leq 4 (s+1) (q^2+q+1)q^{-\theta s} 
\]
for every $s\geq 1,t\geq 2$ and $\truc \in \{+,\sim,-\}$.

Applying the previous equation in the building $\overline X$, we get that for every $t\geq 1$, $s \geq 2$ and $\truc \in \{+,\sim,-\}$.
\[
\| \ltruc {\overline{f}_{t,s}}-\ltruc{\overline {f}_{t+1,s-2}}\|_r \leq 4 (s+1) (q^2+q+1) q^{-\theta s}.
\]
By Lemma~\ref{lemma:spheres_opposite_building}, this is exactly \eqref{ftruc_dual_direction}.
\end{proof} 

\begin{proof}[Proof of Lemma~\ref{lem:cv_of_Alambdaf_ellr}]

By Lemma \ref{lem:lafforgue} and the inequalities \eqref{eq:ftruc_in_ellr} and \eqref{ftruc_dual_direction} of Lemma~\ref{lem:10.3}, we obtain that, for each $k \in \Z/3\Z$, the nets $f_\lambda^\truc$ for $\lambda \in \Lambda_k$ are Cauchy for the $\ell^r$-distance and therefore converge to some $f_{\infty,k}^\truc$ in $\ell^r$-distance.
 
On the boundary rays $\N \times \{0\}$ and $\{0\} \times \N$ of $\Lambda$, we have the exceptional identities $f_{(s,0)}^\sim = f_{(s,0)}^+$ and $f_{(0,s)}^\sim = f_{(0,s)}^-$ (Lemma~\ref{lem:important_fact}), so taking $s \to \infty$ we obtain $f_{\infty,k}^\sim = f_{\infty,k}^+= f_{\infty,k}^-$. Denote this common function $X^{(1)} \to \C$ by $f_{\infty,k}$.

Let $(x,x') \in X^{(1)}$. By Lemma \ref{lem:relation_Alambda_Alambdatruc} we can write
\[ A_\lambda f(x) = \frac{1}{q^2+q+1}\left(q^2 A_\lambda^+f(x,x') + q A_\lambda^\sim f(x,x')+ A_\lambda^-f(x,x')\right).\]
From this we deduce
\begin{equation}\label{eq:convergence_Alambdafx}
f_{\infty,k}(x,x') = \lim_{\lambda \in \Lambda_k, |\lambda|\to \infty} A_\lambda f(x).
\end{equation}

On the other hand, using Lemma \ref{lem:relationAlambda_ltruc}, we can write $A_\lambda f(x')$ as a convex combination of $A_\mu^\truc f(x,x')$ (for $\mu$ such that $\mu^\truc = \lambda$). In particular since such $\mu$ belongs to $\Lambda_{k+1}$ if $\lambda$ belongs to $\Lambda_k$, we obtain 
\begin{equation}\label{eq:convergence_Alambdafxprime}
f_{\infty,{k+1}}(x,x') = \lim_{\lambda \in \Lambda_k, |\lambda|\to \infty} A_\lambda f(x').
\end{equation}
The lemma follows by comparing \eqref{eq:convergence_Alambdafx} for $k$ and \eqref{eq:convergence_Alambdafxprime} for $k+2$.
\end{proof}

% ==============================================================================
\section{Application to approximation properties of operator algebras}\label{sec:operator_algebras}
% ==============================================================================

This section contains applications of the tools developped in Sections~\ref{sec:hjemlslev_biaffine},~\ref{sec:norm_estimates} and \ref{sec:averaging_basepoint} to operator algebras. We refer to \S~\ref{sec:OSpreliminaries} and the references therein for the terminology.

Throughout the section we let $\Gamma$ be an $\tilde{A}_2$-lattice in the sense of Definition~\ref{def:A2lattice}. We stress that, unlike to the results from the previous sections, we need not assume that $\Gamma$ be undistorted. The following results generalize the results from \cite{LafforguedlS} to $\tilde{A_2}$-buildings that are not Bruhat--Tits.

\begin{theorem}\label{nonAP_for_A2groups}
For $r \in (1,\infty) \setminus [\frac 4 3,4]$ the space $L^r(\VN(\Gamma))$ does not have the operator space approximation property. For $r=\infty$ the space $C^*_{\mathrm{red}}(\Gamma)$ does not have the operator space approximation property, and $\VN(\Gamma)$ does not have the weak-* operator space approximation property.
\end{theorem}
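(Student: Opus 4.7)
The proof extends the strategy of \cite{LafforguedlS} from the Bruhat--Tits case $SL_3(F)$ to arbitrary $\tilde{A}_2$-buildings, using the Schatten-class estimates of Proposition~\ref{prop:Schatten_norm_of_Ts-Ts-1} as the principal analytic input in place of the harmonic analysis on $SL_3(F)$. I describe the plan for $C^*_{\mathrm{red}}(\Gamma)$; the statement for $L^r(\VN(\Gamma))$ with $r \in (1,\infty) \setminus [\frac{4}{3}, 4]$ is analogous (using the non-commutative $L^r$-valued counterparts of the Schatten bounds via complex interpolation and duality), and the $\VN(\Gamma)$ statement is a standard consequence of the $C^*$-algebra statement.

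The plan is to construct a sequence of finitely supported biradial functions $\phi_n \colon \Gamma \to \C$, supported on the $\Gamma$-orbit of a sphere $S_{\lambda_n}(x_0)$ for some sequence $\lambda_n$ with $|\lambda_n|_1 \to \infty$, such that the associated Fourier multipliers $m_{\phi_n}$ on $\VN(\Gamma)$ tend pointwise to $0$ (so that $m_{\phi_n}\to 0$ in the stable point-norm topology of $\CB(C^*_{\mathrm{red}}(\Gamma))$) while their cb-norms remain bounded below by a positive constant. Such a sequence contradicts OAP: if $(\Phi_k)$ is a net of finite rank completely bounded maps converging to the identity in the stable point-norm topology, then a standard argument shows that $\mathrm{id} \otimes m_{\phi_n}$ can be approximated uniformly in $n$ by finite rank maps, forcing $\|m_{\phi_n}\|_{cb}$ to vanish in the limit, which is incompatible with the lower bound.

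The lower bound on $\|m_{\phi_n}\|_{cb}$ is obtained by a transference argument that identifies (up to multiplicative constants depending only on the quotient structure) the cb-norm of $m_{\phi_n}$ with the norm of a building-theoretic operator built from the biradial averages $T_{o,s}$. This identification relies on the double-counting formulas of Proposition~\ref{prop:double_counting} together with the basepoint-averaging machinery of Section~\ref{sec:averaging_basepoint}, and is essential for handling $\tilde{A}_2$-lattices that need not be vertex-transitive. Combined with Proposition~\ref{prop:Schatten_norm_of_Ts-Ts-1} and a sharpness argument, it yields the required lower bound precisely in the range $r \notin [\frac{4}{3}, 4]$; the case $r < \frac{4}{3}$ is then obtained from $r > 4$ by duality of non-commutative $L^r$-spaces. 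The main obstacle I anticipate is the transference identification in the generality of non-vertex-transitive lattices: in \cite{LafforguedlS} it is performed via the spherical function calculus of $SL_3(F)$, whereas here one must work directly with the building geometry through the basepoint-averaging framework.
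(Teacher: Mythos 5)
Your starting point is right---replace the $\SL_3(\LocalField)$ harmonic analysis of \cite{LafforguedlS} by the Schatten estimates of Proposition~\ref{prop:Schatten_norm_of_Ts-Ts-1} together with the double-counting and basepoint-averaging machinery of Propositions~\ref{prop:double_counting} and Section~\ref{sec:averaging_basepoint}---but the step where you derive the failure of OAP has a genuine gap, and it sits exactly at the point that distinguishes OAP from weak amenability. You propose finitely supported multipliers $\phi_n$ with $m_{\phi_n}\to 0$ in the stable point-norm topology while $\|m_{\phi_n}\|_{cb}\geq c>0$, and claim that a finite-rank net $\Phi_k\to\mathrm{id}$ would let you approximate $m_{\phi_n}$ \emph{uniformly in $n$} by finite-rank maps and hence force $\|m_{\phi_n}\|_{cb}\to 0$. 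This does not work: OAP imposes no bound at all on the cb-norms of the approximating net, and stable point-norm approximation of a map by finite-rank maps never makes its cb-norm small (the identity of any infinite-dimensional space with OAP is such a limit and has cb-norm $1$). A sequence of multipliers tending to zero pointwise with cb-norms bounded below is simply not an obstruction to OAP; such lower bounds are the kind of input relevant to CBAP-type statements, not to OAP.

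The obstruction must be dual, and this is how the paper argues: one exhibits a \emph{stably point-norm continuous} functional on $\CB(V)$---an element of (the closure of) the operator space projective tensor product $V\widehat{\otimes}V^*$---which vanishes on all finite-rank operators yet equals $1$ on the identity. The candidates are $u_\lambda=\sum_\gamma \mu_\lambda(\gamma)\,\gamma\otimes\gamma^{-1}$: each is stably point-norm continuous with $u_\lambda(\mathrm{id})=1$, and because the measures $\mu_\lambda$ escape to infinity the limit kills finite-rank maps. The whole analytic work is to show that $(u_\lambda)_{\lambda\in\Lambda_0}$ is a Cauchy net in the relevant norm, so that the limit is still stably point-norm continuous; this is Proposition~\ref{prop:limit_of_multipliers}, proved via Proposition~\ref{prop:limit_of_multipliers_on_X}. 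Note that the quantitative input is used in the direction \emph{opposite} to your plan: via Haagerup's trick $\|\varphi_T\|_{r\text{-cb}}\leq\|T\|_{cb}$, one needs an \emph{upper} bound on the oscillation $|\int\varphi\,d\mu_\lambda-\int\varphi\,d\mu_{\lambda'}|$ valid for \emph{every} completely bounded Herz--Schur multiplier $\varphi$ on $S^r$, which is exactly what Propositions~\ref{prop:double_counting} and~\ref{prop:Schatten_norm_of_Ts-Ts-1} (fed into the zig-zag Lemma~\ref{lem:lafforgue}) deliver in the range $r\notin[\frac43,4]$. No lower bound on cb-norms of sphere multipliers, no transference identification of such cb-norms with building operators, and no sharpness argument are needed---and even if you established such a lower bound, by the previous paragraph it would still not contradict OAP.
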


The case $r=\infty$ means that $\Gamma$ does not have the Approximation Property of Haagerup and Kraus \cite{HaagerupKraus}. In particular this proves that $\tilde{A}_2$-lattices are not weakly amenable, answering a question of Ozawa \cite{OzawaSurvey}. Recall that a discrete group is weakly amenable if $C^*_{\mathrm{red}}(\Gamma)$ has the completely bounded approximation property.

\begin{corollary}
  $\tilde{A}_2$-lattices are not weakly amenable.
\end{corollary}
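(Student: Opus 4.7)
The plan is to derive this corollary directly from the $r=\infty$ case of Theorem~\ref{nonAP_for_A2groups}, which states that $C^*_{\mathrm{red}}(\Gamma)$ does not have the operator space approximation property (OAP). Weak amenability, as recalled at the end of the excerpt, is the statement that $C^*_{\mathrm{red}}(\Gamma)$ has the completely bounded approximation property (CBAP). Thus the task reduces to the well-known implication $\mathrm{CBAP} \Rightarrow \mathrm{OAP}$ for operator spaces.

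Concretely, suppose toward contradiction that $\Gamma$ were weakly amenable. Then, by definition, there is a constant $C$ such that every $T \in \CB(C^*_{\mathrm{red}}(\Gamma))$ is the stable point-norm limit of a net of finite rank operators $T_i$ with $\|T_i\|_{cb} \leq C \|T\|_{cb}$. In particular, for any fixed $T$, the net $(T_i)$ provides an approximation of $T$ by finite rank operators in the stable point-norm topology, which is exactly what OAP requires (see \S~\ref{sec:OSpreliminaries}). Hence $C^*_{\mathrm{red}}(\Gamma)$ has the OAP, contradicting Theorem~\ref{nonAP_for_A2groups}.

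There is essentially no obstacle: the entire weight of the argument is carried by Theorem~\ref{nonAP_for_A2groups}, and the only additional fact used is the trivial observation that a bound on the cb-norm of approximating operators is a strictly stronger requirement than merely producing such an approximating net. I would simply present these two sentences as the body of the proof.
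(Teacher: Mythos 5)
Your proposal is correct and is exactly the paper's route: the corollary is deduced from the $r=\infty$ case of Theorem~\ref{nonAP_for_A2groups} via the immediate implication that CBAP (with its uniform cb-norm bound) implies OAP, so failure of OAP for $C^*_{\mathrm{red}}(\Gamma)$ rules out weak amenability. Nothing further is needed.
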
 

By passing to a finite index subgroup, we can and do assume throughout that the action of $\Gamma$ on $X$ is type-preserving.

Recall that, if $I$ is a set, a function $\varphi \colon I \times I \to \C$ is called a \emph{completely bounded Schur multiplier} on $S^r$ if there is a real number $C$ such that, for every integer $n$ and every $i_1,\dots,i_n,j_1,\dots,j_n \in I$, the linear map on $S^r_n$ sending $(a_{k,l})_{1 \leq k,l \leq n}$ to  $(\varphi(i_k,j_l)a_{k,l})_{1 \leq k,l \leq n}$ has norm $\leq C$. The smallest such $C$ is called the \emph{completely bounded norm} of the multiplier $\varphi$ on $S^r$ and denoted $\|\varphi\|_{r\text{-cb}}$.

A function $\varphi \colon \Gamma \to \C$ is called a \emph{completely bounded Herz--Schur multiplier} on $S^r$ if the map $(\gamma,\gamma') \mapsto \varphi(\gamma^{-1}\gamma')$ is a completely bounded Schur multiplier on $S^r$. The corresponding completely bounded norm is also denoted $\|\varphi\|_{r\text{-cb}}$.

The main technical ingredient to Theorem~\ref{nonAP_for_A2groups} is
the following. Recall that, for $\lambda\in \Lambda$, we defined $\mu_\lambda$ by  
\[
\mu_\lambda \defeq \frac{1}{Z} \sum_{x \in \Omega} \frac{1}{\abs{\Gamma_x}} \frac{1}{\abs{S_\lambda(x)}} \sum_{y \in S_\lambda(x)} m_{y}\text{,}
\]
where $Z = \sum_{x \in \Omega} \frac{1}{\abs{\Gamma_x}}$ and denote by $m_{y}$ the uniform probability measure on the finite set of all $\gamma \in \Gamma$ such that  $y\in \gamma\Omega$. 

\begin{proposition}\label{prop:limit_of_multipliers} Let $r \in [1,\infty]\setminus[\frac 4 3,4]$. There is a constant $C_r$ such that the following holds. Let $\varphi \colon \Gamma \to \C$ be a completely bounded Herz--Schur multiplier on $S^r$. There exists a complex number $\varphi_\infty$ such that for every $\lambda = (i,j) \in \Lambda_0$, if $N(\lambda)$ denotes $\max(\frac{i+2j}{3},\frac{j+2i}{3})$
\[
\dabs{\int \varphi \ud\mu_\lambda - \varphi_\infty} \leq C_r q^{-N(\lambda)(\frac 1 2 - \frac 2 r)} \|\varphi\|_{r\text{-cb}}.
\]
\end{proposition}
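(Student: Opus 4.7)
The plan is to apply the zig-zag argument (Lemma~\ref{lem:lafforgue}) to the complex-valued net $(a_\lambda)_{\lambda\in\Lambda}$ given by $a_\lambda=\int\varphi\,d\mu_\lambda$, using the Schatten norm estimates of Section~\ref{sec:schatten_norms} in combination with cb-Schur-multiplier duality. First I would reduce to the case $r\in[4,\infty]$: since $(S^r)^*=S^{r'}$, a Herz--Schur multiplier has the same cb norm on $S^r$ and on $S^{r'}$ (via conjugation), so the range $r\in[1,\frac{4}{3}]$ will follow from the corresponding statement for $r'\in[4,\infty]$ applied to $\bar\varphi$.

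The core of the argument is to rewrite $\int\varphi\,d\mu_\lambda$ as a sum of trace pairings involving the projective-sphere averaging operators $T_{o,s}$ from Section~\ref{sec:norm_estimates_projective}. Starting from the identity $\pi(\mu_\lambda)=v\circ A_\lambda\circ u$ and the disintegration~\eqref{eq:a_lambda_t_s}, applied with $\pi$ the left-regular representation of $\Gamma$ on $\ell^2(\Gamma)$, one constructs $\Gamma$-equivariant vectors $a\colon X\to\ell^2(\Gamma)$ (for instance $a(x)=|\Gamma_{\omega(x)}|^{-1/2}\sum_{\gamma\omega(x)=x}\delta_\gamma$) and expresses
\[
Z\int\varphi\,d\mu_\lambda=\sum_{o\in\Omega}\frac{1}{|\Gamma_o|}\,\operatorname{tr}\bigl(M_{\tilde\varphi}(T_{o,s}\otimes\operatorname{id})\,K_{o,i,j}\bigr),\qquad \lambda=(s,i+j-2s),
\]
where $\tilde\varphi(\alpha,\beta)=\varphi(\alpha^{-1}\beta)$ is the Schur multiplier on $\Gamma\times\Gamma$, the operator $T_{o,s}\otimes\operatorname{id}$ acts on vector-valued $L^2$ spaces on the boundary plane, and $K_{o,i,j}$ is a rank-one type operator built from the vector-valued lifts $I_{o,i}^\sharp,J_{o,j}^\sharp$ obtained by replacing scalars with $a$ in the vectors $I_{o,i},J_{o,j}$ from Section~\ref{sec:convergence_harmonic}. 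Taking the difference in $s$ inside this formula, the duality bound
\[
|\operatorname{tr}(M_{\tilde\varphi}(T)K)|\leq\|\varphi\|_{r\text{-cb}}\,\|T\|_{S^r}\,\|K\|_{S^{r'}}
\]
together with Proposition~\ref{prop:Schatten_norm_of_Ts-Ts-1} and a uniform control $\sum_o|\Gamma_o|^{-1}\|K_{o,i,j}\|_{S^{r'}}\leq C$ (using Lemma~\ref{lem:exponential_integrability} in the possibly non-cocompact case to absorb the factors $e^{\alpha(ai+b)}$ coming from $i,j$) yields
\[
|a_\lambda-a_{\lambda+(1,-2)}|\leq C_r\,\|\varphi\|_{r\text{-cb}}\,q^{-s(\frac{1}{2}-\frac{2}{r})}\quad\text{if }t\geq 2,
\]
and swapping points and lines in $\partial X$ produces the mirror estimate $|a_\lambda-a_{\lambda+(-2,1)}|\leq C_r\|\varphi\|_{r\text{-cb}}q^{-t(\frac{1}{2}-\frac{2}{r})}$ if $s\geq 2$. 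Lemma~\ref{lem:lafforgue} applied with $D=(\log q)(\frac{1}{2}-\frac{2}{r})$ and $E=0$ then delivers both the existence of the limit $\varphi_\infty$ and the claimed quantitative rate, since the bound $q^{-\max(s+2t,t+2s)(D-E)/3}$ of~\eqref{eq:convergence_rate_Alambda} matches exactly $q^{-N(\lambda)(\frac{1}{2}-\frac{2}{r})}$.

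The main obstacle is the trace-formula step: precisely constructing the operators $K_{o,i,j}$ so that multiplication by $\tilde\varphi$ in the Schatten $S^r/S^{r'}$-duality bracket recovers $\int\varphi\,d\mu_\lambda$ after summation over $o$. This is the step where the group-theoretic data encoded by the measures $m_y$ in the definition of $\mu_\lambda$ has to be transferred into the Schur multiplier structure, while the building-theoretic data (averaging over the spheres $S_\lambda(x)$) is carried by the scalar operators $T_{o,s}$. In the Bruhat--Tits case treated in \cite{LafforguedlS} this factorization is essentially automatic from the algebraic structure of the ambient semisimple group; here one must rely exclusively on the combinatorial and geometric ingredients developed in Sections~\ref{sec:hjemlslev_biaffine}--\ref{sec:averaging_basepoint}, and in particular on Proposition~\ref{prop:double_counting} to transfer the double-counting identity from the Hilbert space setting to the $S^r$ setting.
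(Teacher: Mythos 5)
Your overall architecture is the same as the paper's: reduce to $r\geq 4$ by duality of Schur multipliers, convert the $\mu_\lambda$-averages into trace pairings against the boundary operators $T_{o,s}$ via the double-counting identity of Proposition~\ref{prop:double_counting}, invoke the Schatten estimate of Proposition~\ref{prop:Schatten_norm_of_Ts-Ts-1}, and finish with the zig-zag Lemma~\ref{lem:lafforgue} with $D=\frac12-\frac2r$, $E=0$, whose rate indeed matches $q^{-N(\lambda)(\frac12-\frac2r)}$. The genuine gap is exactly at the step you flag as the main obstacle, and as written it does not work. The Schur multiplier $M_{\tilde\varphi}$ has symbol $\varphi(\alpha^{-1}\beta)$ depending only on the $\Gamma$-coordinates, so applied to $T_{o,s}\otimes\operatorname{id}_{\ell^2(\Gamma)}$ (whose $(\alpha,\beta)$-block is $T_{o,s}\,\delta_{\alpha=\beta}$) it returns $\varphi(e)\,(T_{o,s}\otimes\operatorname{id})$; any trace pairing of this with a fixed $K_{o,i,j}$ can only see $\varphi(e)$ and cannot recover $\int\varphi\,d\mu_\lambda$. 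Moreover the bound you propose, $\|\varphi\|_{r\text{-cb}}\|T\|_{S^r}\|K\|_{S^{r'}}$ with $T=(T_{o,s}-T_{o,s-1})\otimes\operatorname{id}_{\ell^2(\Gamma)}$, is vacuous because an ampliation by an infinite-dimensional identity is never in $S^r$. To make the group data enter correctly one must apply the multiplier to an operator carrying the measures $m_y$ in its $\Gamma$-blocks, i.e.\ conjugate the ampliated multiplier by the isometries $W_1,W_2$ built from the unit vectors $|\operatorname{supp}m_x|^{-1/2}\sum_{\gamma}\delta_\gamma$. But $W_2^*M_{\tilde\varphi}\bigl(W_2\,A\,W_1^*\bigr)W_1 = M_\psi(A)$ where $\psi(x,y)=\int\!\!\int\varphi(\gamma_1^{-1}\gamma_2)\,dm_x(\gamma_1)\,dm_y(\gamma_2)$; that is, repairing your trace formula is precisely the paper's induction step, which produces the $\Gamma$-invariant building-level multiplier $\psi$ with $\|\psi\|_{r\text{-cb}}\leq\|\varphi\|_{r\text{-cb}}$ and then applies the \emph{scalar}, building-indexed multiplier $M_o$ (symbol $\psi(p_{o,i},\ell_{o,j})$) to the finite-rank operator $T_{o,s}-T_{o,s-1}$, paired with the rank-one averaging operator $E$, so that Proposition~\ref{prop:Schatten_norm_of_Ts-Ts-1} applies directly with no ampliation issues.

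Two further points. First, your appeal to the undistortion constants $a,b$ and to Lemma~\ref{lem:exponential_integrability} to "absorb factors $e^{\alpha(ai+b)}$" is both unnecessary and unavailable: there is no exponentially growing representation $\pi$ in this statement, the operators $K$ you would pair with are norm-one (rank-one in the building directions, built from unit vectors), and the only integrability used is the lattice condition $\sum_{x\in\Omega}|\Gamma_x|^{-1}<\infty$; indeed Section~\ref{sec:operator_algebras} explicitly does not assume $\Gamma$ undistorted, so an argument relying on \eqref{eq:nondistorted} would prove a weaker theorem. Second, a cosmetic difference: the paper's building-level statement (Proposition~\ref{prop:limit_of_multipliers_on_X}) is stronger than what the scalar net $a_\lambda=\int\varphi\,d\mu_\lambda$ requires, namely an $\ell^1(\Omega)$-summed estimate obtained by a unimodular-twist trick; your direct scalar formulation would not need that refinement, which is fine, but it does not compensate for the missing transfer step above.
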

It is not hard to deduce from the proposition that $L^r(\VN(\Gamma))$ and $C^*_{\mathrm{red}}(\Gamma)$ do not have CBAP. With the results of \cite{LafforguedlS} the proposition also implies that they do not have the OAP. 

We cannot resist to give a more direct derivation. This also provides a simpler proof of the results of \cite{LafforguedlS}, in the spirit of \cite{Vergara} (see also \cite{dlS_habil}):

\begin{proof}[Proof of Theorem~\ref{nonAP_for_A2groups}]
Denote by $V$ the space $L^r(\VN(\Gamma))$ (if $1 \leq r<\infty$) or $C^*_{\mathrm{red}}(\Gamma)$ (if $r=\infty$). The case of $\VN(\Gamma)$ is similar and left to the reader (it also follows from the case of $C^*_{\mathrm{red}}(\Gamma)$ by \cite[Theorem 2.1]{HaagerupKraus}). For $\gamma \in \Gamma$ we use the same symbol to denote the corresponding element of $V$ (this is often denoted $\lambda(\gamma)$ but doing so might cause confusion in our context).

For every $\lambda \in \Lambda_0$ and every linear map $T \colon V \to V$, denote by $u_\lambda(T) \in \C$ the complex number
\[
u_\lambda(T) = \int \tau( \gamma^{-1} T(\gamma)) \ud\mu_\lambda
\]
where $\tau \colon V \to \C$ is the usual trace on $C^*_{\mathrm{red}}(\Gamma)$ and $\mu_\lambda$ is as after Theorem~\ref{thm:strong_T_on_building}. There is no problem with the definition, as the integral is in fact a converging sum and $u_\lambda$ corresponds to the element $\sum_\gamma \mu_\lambda(\gamma) \gamma \otimes \gamma^{-1} \in V \otimes V^*$. We claim that $(u_\lambda)_{\lambda \in \Lambda_0}$ is a Cauchy net in the dual space of $\CB(V)$, and that the limit $u_\infty$ is stably point-norm continuous. The claim immediately implies the theorem as $u_\infty$ separates the identity $V \to V$, on which it takes the value $1$, from the finite rank operators, on which it is identically $0$ (as $\lim_{\gamma \to \infty} \tau( \gamma^{-1} T(\gamma)) =0$ for a finite rank map).

To prove the claim, we need the concept of operator space projective tensor product, see \cite{PisierOS,EffrosRuan}. Every element of $V \hat{\otimes} V^*$ defines a stable point-norm continuous linear form on $\CB(V)$.
%(we do not need it, but the converse also holds, see \cite[Lemma 11.2.4]{EffrosRuan}).
So it is enough to show that the net $u_\lambda$ is Cauchy in $V \hat{\otimes} V^*$. By the fundamental characterization of the operator space projective tensor product \cite[Theorem 4.1]{PisierOS}, the dual of $V \hat{\otimes} V^*$ is $\CB(V,V^{**})$, so we need to show that $u_\lambda$ is Cauchy in $\CB(V,V^{**})^*$. For that, we extend the definition of $u_\lambda$ to operators $V \to V^{**}$ by defining, for $T \colon V \to V^{**}$
\begin{enumerate}
\item the function $\varphi_{T}\colon \Gamma \to \C$ by $\varphi_T(\gamma) = \langle T(\gamma), v_\gamma\rangle$ where $v_\gamma \in V^*$ is defined by $v_\gamma(x) = \tau(\gamma^{-1}x)$, and
\item $u_\lambda(T) = \int \varphi_T d\mu_\lambda$.
\end{enumerate}
It follows by a very well-known trick of Haagerup that
\begin{equation}\label{HaagerupTrick}\| \varphi_T\|_{r\text{-cb}} \leq \|T\|_{\CB(V,V^{**})}.\end{equation} Indeed, for every integer $n$ and $s_1,\dots,s_n,t_1,\dots,t_n \in \Gamma$, we can consider the map $(a_{k,l})_{1 \leq k,l \leq n} \in M_n(\C) \mapsto (a_{k,l} s_k^{-1}t_l)_{1 \leq k,l \leq n} \in M_n(C^*_{\mathrm{red}}(\Gamma))$. It is a trace-preserving *-homomorphism. So for every $1 \leq r\leq \infty$, it extends to an isometric embedding $\iota_r \colon S^r_n \to S^r_n[V]$ with a norm $1$ conditional expectation $E_r \colon S^r_n[V] \to S^r_n$ sending $(x_{k,l})_{k,l}$ to $(v_{s_k^{-1}t_l}(x_{k,l}))_{k,l}$. Here $S^r_n[V]$ denotes $M_n(C^*_{\mathrm{red}}(\Gamma))$ if $r=\infty$ and $L^r(M_n( \VN(\Gamma)), \mathrm{Tr} \otimes \tau)$ if $1\leq r \leq p$. The notation is compatible with the notion of vector-valued non-commutative $L^r$ spaces \cite{PisierOS}. One computes that $E_r^{**} \circ T \circ \iota_r ((a_{k,l})_{1 \leq k,l \leq n}) = (\varphi(s_k t_l^{-1})a_{k,l})_{1 \leq k,l \leq n}$. So we obtain 
\[ \left\|(\varphi(s_k t_l^{-1})a_{k,l})_{1 \leq k,l \leq n}\right\|_{S^r_n} \leq \|T\|_{\cb} \left\|(a_{k,l})_{1 \leq k,l \leq n}\right\|_{S^r_n},\]
which proves \eqref{HaagerupTrick}. Proposition \ref{prop:limit_of_multipliers} therefore expresses that the maps $u_\lambda$ satisfy
\[|(u_\lambda - u_{\lambda'})(T)| \leq C_r( q^{-N(\lambda)(\frac 1 2 - \frac 2 r)} + q^{-|\lambda'|(\frac 1 2 - \frac 2 r)}) \|T\|_{\CB(V,V^{**})}.\]
 So $u_\lambda$ is Cauchy in $\CB(V,V^{**})^* = (V \hat{\otimes} V^*)^{**}$.
 \end{proof}

Proposition \ref{prop:limit_of_multipliers} will follow rather easily from the analogous statement on the building.

\begin{proposition}\label{prop:limit_of_multipliers_on_X}
Let $r \in [1,\infty]\setminus[\frac 4 3,4]$. There is $C_r>0$ such that the following holds. Let $\psi \colon X \times X \to \C$ be a $\Gamma$-invariant completely bounded Schur multiplier on $S^r$. For every $x \in \Omega$, there exists a complex number $\psi_\infty(x)$ such that for every $\lambda \in \Lambda_0$, 
 \[ \sum_{x \in \Omega} \frac{1}{\abs{\Gamma_x}}\dabs{\frac{1}{S_\lambda(x)} \sum_{y \in S_\lambda(x)} \psi(x,y) - \psi_\infty(x)} \leq C_r q^{-N(\lambda)(\frac 1 2 - \frac 2 r)} \|\psi\|_{r-cb}.\]
\end{proposition}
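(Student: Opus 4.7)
The plan is to show that the net $(A_\lambda \psi|_\Omega)_{\lambda \in \Lambda_0}$ is Cauchy in the weighted space $\ell^1(\Omega, 1/\abs{\Gamma_x})$ with the claimed quantitative rate, so that $\psi_\infty$ may be defined as the $\ell^1$ limit. The strategy parallels the proof of Lemma \ref{lem:alambda_converge} but replaces the Banach space-valued operator norm bounds with the Schatten norm estimates of Proposition \ref{prop:Schatten_norm_of_Ts-Ts-1}, and replaces the induction to a Banach module $\iE$ by the completely bounded Schur multiplier structure of $\psi$.

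First I would bound $\|A_\lambda \psi - A_{\lambda^+}\psi\|_{\ell^1(\Omega, 1/\abs{\Gamma_x})}$ for $\lambda = (s, i+j-2s)$ and $\lambda^+ = (s+1, i+j-2s-2)$ by duality: this norm equals the supremum over $\Gamma$-invariant $\varepsilon: X \to \C$ with $\abs{\varepsilon} \le 1$ of $\bigl|\sum_{x \in \Omega} \varepsilon(x)(A_\lambda - A_{\lambda^+})\psi(x)/\abs{\Gamma_x}\bigr|$. Applying Proposition \ref{prop:double_counting} to the $\Gamma$-invariant function $F(x, y) = \varepsilon(x) \psi(x, y)$ rewrites this inner sum as
\[
\sum_{o \in \Omega} \frac{1}{\abs{\Gamma_o}}\bigl(\E_o[\varepsilon(p_{o,i})\psi(p_{o,i},\ell_{o,j}) \mid (p,\ell)_o = s] - \E_o[\cdots \mid (p,\ell)_o = s+1]\bigr).
\]
The conditional expectation for fixed $o$ is then identified as a trace pairing $\operatorname{Tr}((V_o \otimes 1_\L)^* M_{\Phi_{o,i,j}}(T_{o,s}))$, where $V_o(p) := \varepsilon(p_{o,i}) \in L^2(\P)$, $\Phi_{o,i,j}(p,\ell) := \psi(p_{o,i},\ell_{o,j})$, and $M_{\Phi_{o,i,j}}$ denotes Schur multiplication by $\Phi_{o,i,j}$ on operators $L^2(\L) \to L^2(\P)$ (with $L^2$ spaces taken with respect to $\proba_o$); this follows from the fact that $T_{o,s}$ is the integral operator with kernel $K_s(p,\ell) = 1_{(p,\ell)_o=s}/\proba_o((p,\ell)_o=s)$.

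For $r \ge 4$, the $S^{r'}$–$S^r$ Schatten-Hölder inequality then bounds the pairing of the difference by $\|V_o \otimes 1_\L\|_{S^{r'}} \cdot \|M_{\Phi_{o,i,j}}(T_{o,s} - T_{o,s+1})\|_{S^r}$. The first factor is at most $1$ since $V_o \otimes 1_\L$ is rank one with $\|V_o\|_{L^2}\|1_\L\|_{L^2} \le 1$, while the second is controlled by $\|M_{\Phi_{o,i,j}}\|_{\mathrm{cb}, S^r} \cdot \|T_{o,s}-T_{o,s+1}\|_{S^r}$. The key structural point is that $M_{\Phi_{o,i,j}}$ is the tensor amplification of the Schur multiplier by $\psi|_{\P_{o,i} \times \L_{o,j}}$ (via the orthogonal decompositions $L^2(\P) \cong L^2(\P_{o,i}) \otimes L^2(\mathrm{fiber})$ and similarly for $\L$), so $\|M_{\Phi_{o,i,j}}\|_{\mathrm{cb}, S^r} \le \|\psi\|_{r\text{-cb}}$ since tensor amplifications preserve completely bounded norms. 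Combined with Proposition \ref{prop:Schatten_norm_of_Ts-Ts-1} this yields the per-$o$ bound $4 q^{3/r} \|\psi\|_{r\text{-cb}} q^{-s(1/2-2/r)}$. Summing over $o \in \Omega$ using the finite total mass $\sum_{o \in \Omega} 1/\abs{\Gamma_o}<\infty$ (Definition \ref{def:A2lattice}), and arguing symmetrically in the dual building $\overline X$ to handle the $(s-2, t+1)$ direction, produces the two Cauchy inputs needed for the zigzag Lemma \ref{lem:lafforgue} with $D = (1/2-2/r)\log q$ and $E = 0$; the resulting rate \eqref{eq:convergence_rate_Alambda} is exactly $q^{-N(\lambda)(1/2-2/r)}$. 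The case $r \in (1, 4/3]$ will follow by applying the case $r' \ge 4$ to $\overline\psi$, using the Schatten duality identity $\|\overline\psi\|_{r'\text{-cb}} = \|\psi\|_{r\text{-cb}}$ for Schur multipliers.

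The main obstacle I anticipate is the identification $\|M_{\Phi_{o,i,j}}\|_{\mathrm{cb}, S^r} \le \|\psi\|_{r\text{-cb}}$. This is precisely where the completely bounded (rather than merely bounded) hypothesis becomes essential: tensor amplification preserves cb norms but not uniform norms, so the step requires carefully unpacking the block-matrix structure induced by the projections $p \mapsto p_{o,i}$ and $\ell \mapsto \ell_{o,j}$, and observing that in the fiber decomposition $M_{\Phi_{o,i,j}}$ reduces to $M_{\psi|_{\P_{o,i}\times\L_{o,j}}} \otimes \mathrm{id}$. A secondary point is to check that the trace pairings are well-defined and summable over $o \in \Omega$ in the non-cocompact case, which is handled by the finite weight $\sum 1/\abs{\Gamma_o}<\infty$ together with the uniform-in-$o$ Schatten estimate.
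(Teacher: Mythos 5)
Your proposal is correct and follows essentially the same route as the paper: the double-counting identity of Proposition~\ref{prop:double_counting}, a trace pairing against a rank-one operator, the completely bounded Schur multiplier bound combined with the Schatten estimate of Proposition~\ref{prop:Schatten_norm_of_Ts-Ts-1}, the zig-zag Lemma~\ref{lem:lafforgue} with $E=0$, and duality to reduce $r<\frac43$ to $r>4$. The only (harmless) cosmetic differences are that the paper inserts the unimodular weights into the multiplier itself (via $\psi^\omega$) rather than into the rank-one test operator, and restricts $T_{o,s}$ to the finite levels $\P_{o,i},\L_{o,j}$ instead of phrasing the multiplier as a tensor amplification on the profinite $L^2$ spaces.
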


Let us first explain how Proposition~\ref{prop:limit_of_multipliers_on_X} implies Proposition~\ref{prop:limit_of_multipliers} follows.

\begin{proof}[Proof of Proposition~\ref{prop:limit_of_multipliers}]
We start with the induction procedure. We fix a fundamental domain $\Omega \subset X$ and define a function $\psi \colon X \times X \to \C$ by
\begin{align*}
\psi(x,y) &= \frac{\sum_{\gamma_1\in \Gamma, \gamma_1 x \in \Omega} \sum_{\gamma_2 \in \Gamma,\gamma_2 y \in \Omega} \varphi(\gamma_1 \gamma_2^{-1})}{ |\Gamma_x| |\Gamma_y|} \\& = \int \varphi(\gamma_1^{-1} \gamma_2) \ud m_x(\gamma_1) \ud m_y(\gamma_2)
\end{align*}
Then $\psi$ is $\Gamma$-invariant, and it is a completely bounded Schur multiplier on $S^r$ with norm
\[
\|\psi\|_{r\text{-cb}} \leq \|\varphi\|_{r\text{-cb}}.
\]
If we define $\varphi_\infty := \frac{1}{Z} \sum_{x \in \Omega}\frac{1}{\abs{\Gamma_x}} \psi_\infty(x)$ where $Z = \sum_{x \in \Omega}\frac{1}{\abs{\Gamma_x}}$, then Proposition \ref{prop:limit_of_multipliers_on_X} yields
\[
\dabs{\int \varphi dm_\lambda - \varphi_\infty} \leq C_r q^{-N(\lambda)(\frac 1 2 - \frac 2 r)} \|\varphi\|_{r\text{-cb}}
\]
as desired.
\end{proof}

\begin{proof}[Proof of Proposition~\ref{prop:limit_of_multipliers_on_X}]
By duality we can assume that $r>4$. Fix $s>1$. For $\lambda \in \Lambda$, denote $\psi_{\lambda}(x) = \frac{1}{S_\lambda(x)} \sum_{y \in S_\lambda(x)} \psi(x,y)$. We shall apply Proposition \ref{prop:double_counting} to the function $\psi$. Denote $\lambda = (s,i+j-2s)$ and $\lambda'=(s-1,i+j-2s+2)$. Consider the quantity $\Delta_{i,j}(s)$ defined as the difference of \eqref{eq:double_counting} between the values at $s$ and $s-1$. We first evaluate $\Delta_{i,j}(s)$ by looking at the right-hand side of \eqref{eq:double_counting}. We obtain
 \begin{align*} 
 \Delta_{i,j}(s) &= \sum_{x \in \Omega} \frac{1}{|\Gamma_x|} \left(\frac{1}{|S_\lambda(x)|} \sum_{y \in S_\lambda(x)} \psi(x,y) - \frac{1}{|S_{\lambda'}(x)|} \sum_{y \in S_{\lambda'(x)}} \psi(x,y)\right) \\
  & = \sum_{x \in \Omega} \frac{1}{|\Gamma_x|} (\psi_\lambda(x) - \psi_{\lambda'}(x)).
  \end{align*}
We then evaluate $\Delta_{i,j}(s)$ by looking at the left-hand side of \eqref{eq:double_counting}. Let $o \in \Omega$. The operator $T_{o,s}$ is defined $L^2(\P) \to L^2(\L)$ but its image is contained in $L^2(\L_s)$. So by (co)restriction we can view it as an operator $L^2(\P_i) \to L^2(\L_j)$, or as a matrix indexed by $\L_j \times \P_i$. Then we can write $\E_o[\psi(p_{o,i},\ell_{o,j})\mid (p,\ell)_o=s]$ as the trace of $M_o(T_{o,s}) E$ where:
 \begin{enumerate}
 \item $E \colon L^2(\L_j) \to L^2(\P_i)$ is the rank one and norm one operator sending a function $f$ on $\L_j$ to the constant function on $\P_i$ equal to the average of $f$, and
 \item $M_o$ is the Schur multiplier with symbol $\psi(p_{o,i},\ell_{o,j})$.
 \end{enumerate}
 In particular, we obtain
 \[ \Delta_{i,j}(s) = \sum_{o \in \Omega}\frac{1}{|\Gamma_o|} \mathrm{Tr}(M_o(T_{o,s} - T_{o,s-1}) E),\]
which together with Hölder's gives
 \[|\Delta_{i,j}(s)| \leq \sum_{o \in \Omega}\frac{1}{|\Gamma_o|} \|M_o(T_{o,s} - T_{o,s-1})\|_{S^r} \|E\|_{S^{r'}}.\]
 But $\|E\|_{S^{r'}} = 1$, and using Proposition \ref{prop:Schatten_norm_of_Ts-Ts-1}
 \[
 \|M_o(T_{o,s} - T_{o,s-1})\|_{S^r} \leq \| \psi\|_{r-cb} \|T_{o,s} - T_{o,s-1}\|_{S^r} \leq 4q^{\frac 3 r} q^{s(\frac 1 2 - \frac 2 r)} \|\psi\|_{r\text{-cb}} .
 \]
So all in all, if we denote $C_r = 4q^{\frac 3 r} \sum_{o \in \Omega}\frac{1}{|\Omega_o|}$, we obtain
 \[ |\sum_{x \in \Omega} \frac{1}{\abs{\Gamma_x}}(\psi_\lambda(x)-\psi_{\lambda'}(x) | \leq C_r |\Gamma_o| q^{s(\frac 2 r-\frac 1 2)} \|\psi\|_{r\text{-cb}}.\]
 
Now consider, for any $\Gamma$-invariant map $\omega \colon X \to \{z \in \C \mid \abs{z}=1\}$, the function $\psi^{\omega}(x,y) = \omega(x) \psi(x,y)$. It is still equivariant and satisfies $\|\psi^\omega\|_{r-cb} = \|\psi\|_{r-cb}$. Applying the preceding inequality to $\psi^\omega$, we obtain 
\[
|\sum_{x \in \Omega} \frac{\omega_x}{\abs{\Gamma_x}}(\psi_\lambda(x)-\psi_{\lambda'}(x)) | \leq C_r q^{s(\frac 2 r-\frac 1 2)} \|\psi\|_{r\text{-cb}}.
\]
And taking the supremum over $\omega$ we get
\[
\sum_{x \in \Omega} \frac{1}{\abs{\Gamma_x}}|\psi_\lambda(x)-\psi_{\lambda'}(x)| \leq C_r q^{s(\frac 2 r-\frac 1 2)} \|\psi\|_{r\text{-cb}}.
\]
Exchanging the roles of points and lines, we obtain the same inequalities for $\lambda = (i+j-2s,s)$ and $\lambda'=(i+j-2s+2,s-1)$. The proposition now follows from Lemma~\ref{lem:lafforgue} with $D=\frac 1 2 - \frac 2 r$ and $E=0$.
\end{proof}

\bibliographystyle{smfplain}
\bibliography{refs.bib}

\end{document}